\documentclass[11pt, reqno]{amsart}
\usepackage{amssymb}
\usepackage{amsmath}
\usepackage{enumerate}
\usepackage{mathrsfs}
\usepackage{mathtools}
\usepackage{amsfonts}
\usepackage{color}
\usepackage{vmargin}
\usepackage{amsthm}
\usepackage{hyperref}
\usepackage{caption}
\usepackage{csquotes}
\usepackage{graphicx} 
\usepackage{subcaption}
\usepackage{xcolor}
\usepackage{physics}
\usepackage{bbm}
\usepackage{capt-of}
\usepackage[normalem]{ulem}
\usepackage{comment}
\usepackage{isomath}
\usepackage[makeroom]{cancel}
\usepackage[nopar]{lipsum}
\usepackage{esint}
\usepackage{enumitem}

\usepackage{tikz}
\usepackage{pgfplots}
\pgfplotsset{compat=1.13}

\def\XXint#1#2#3{{\setbox0=\hbox{$#1{#2#3}{\int}$ }
		\vcenter{\hbox{$#2#3$ }}\kern-.6\wd0}}

\usepackage[utf8]{inputenc}
\usepackage[T1]{fontenc}

\DeclareMathOperator*{\diam}{diam}

\DeclareMathOperator{\interior}{int}

\newcommand\restr[2]{{% we make the whole thing an ordinary symbol
  \left.\kern-\nulldelimiterspace % automatically resize the bar with \right
  #1 % the function
  \littletaller % pretend it's a little taller at normal size
  \right|_{#2} % this is the delimiter
  }}

\newcommand{\littletaller}{\mathchoice{\vphantom{\big|}}{}{}{}}

\long\def\symbolfootnote[#1]#2{\begingroup%
	\def\thefootnote{\fnsymbol{footnote}}\footnote[#1]{#2}\endgroup}

\setmarginsrb{20mm}{20mm}{20mm}{20mm}{10mm}{10mm}{10mm}{10mm}

\newtheoremstyle{definition}%    % Name
{3ex}%                          % Space above
{3ex}%                          % Space below
{\upshape}%                      % Body font
{}%                              % Indent amount
{\bfseries}%                     % Theorem head font
{.}%                             % Punctuation after theorem head
{.5em}%                            % Space after theorem head, ' ', or \newline
{\thmname{#1}\thmnumber{ #2}\thmnote{ (#3)}}%  % Theorem head spec (can be left empty, meaning `normal')

\newtheoremstyle{remark}
{}{}{}{}{\bfseries}{.}{.5em}{{\thmname{#1 }}{\thmnumber{#2}}{\thmnote{ (#3)}}}
\theoremstyle{remboldstyle}

\RequirePackage{amsthm}
\newcommand{\brac}[1]{\left({#1}\right)}
\newcommand{\bracl}[1]{\left[#1\right]}
\newcommand{\Conn}{\textsf{Conn}^n_i}
\newcommand{\Sep}{\textsf{Sep}^n_i}

\theoremstyle{definition}
\newtheorem{defi}{Definition}[section]

\newtheorem{tw}[defi]{Theorem}
\newtheorem{lem}[defi]{Lemma}
\newtheorem{cor}[defi]{Corollary}
\newtheorem{obs}[defi]{Observation}

\newtheorem{prop}[defi]{Proposition}
\newtheorem{rem}[defi]{Remark}
\newtheorem*{obs*}{Simple well-known facts}
\newtheorem{ex}[defi]{Example}
\newtheorem*{tw*}{Theorem}

\newtheorem{twmainA}{Theorem}

\newtheorem{twmainB}{Theorem}

\newtheorem{questA}{Question}

\def\={\hspace{-3mm}&=&\hspace{-3mm}}

\begin{document}
	
\date{}

	\title[\bf Lebesgue Covering Theorem and level sets of continuous functions]{\bf Lebesgue Covering Theorem and level sets of continuous functions}
	
	\author{Micha{\l} Dybowski}%\\
	
	\maketitle
	\begin{abstract}
    We formulate and prove a dimension-theoretic generalization of a version of the Lebesgue Covering Theorem. A generalized $n$-dimensional version of the Steinhaus Chessboard Theorem, recently proved algorithmically in \cite{turzanskigenchess}, is a particular case of this result.
    
    Moreover, we study two types of sets associated with a continuous function $g \colon [0,1]^n \to \mathbb{R}$. Namely, the set of all points $p \in \mathbb{R}$ such that the fiber $g^{-1}\bracl{\left\{p\right\}}$ connects $i$th opposite faces of $[0,1]^n$, and the set of all points $p \in \mathbb{R}$ such that the fiber $g^{-1}\bracl{\left\{p\right\}}$ separates $i$th opposite faces of $[0, 1]^n$.
	\end{abstract}
	\bigskip
	
	\noindent
	{\bf Keywords}: fibers of continuous functions, connected components, dimension theory, Lebesgue Covering Theorem, Steinhaus Chessboard Theorem, Poincaré-Miranda Theorem
	\medskip
	
	\noindent
	\emph{Mathematics Subject Classification (2020):} 54F15, 54C05, 54F45, 51M99
	
\section{Introduction}\label{intro}

In the covering dimension theory of Euclidean spaces, opposite faces of the unit cube $I^n = [0, 1]^n$, denoted for $i \in [n]$ by
\begin{align*}
I^n_{i ,-} = \left\{z \in I^n \colon\, z_i = 0\right\},\; I^n_{i ,+} = \left\{z \in I^n \colon\, z_i = 1\right\},
\end{align*}
are of particular importance and the corresponding results play a~significant role. For instance, they are essential in the proof of the fundamental theorem of dimension theory \cite[Theorem 1.8.3]{engelkingdimension}, which states that the covering dimension of $\mathbb{R}^n$ is exactly $n$, as well as in the proof of the Mazurkiewicz Theorem \hbox{\cite[Theorem 1.8.18]{engelkingdimension}}, and in the proof that the Hilbert cube cannot be represented as a countable union of finite-dimensional subspaces \cite[Theorem 1.8.21]{engelkingdimension}. Another closely related and important result, which ultimately led to the definition of covering dimension and played a pivotal role in the development of dimension theory, is the Lebesgue Covering Theorem \cite[Theorem 1.8.20]{engelkingdimension}. There is an alternative version of this result \cite[Theorem 4.3]{karasev}, closely connected with the Hex Theorem \cite{gale}, the Steinhaus Chessboard Theorem \cite{kalejdoskop}, and its $n$-dimensional version \cite[Theorem 1]{tkacz} (see Theorem~\ref{tw:sctweak}). For the purposes of this paper, to distinguish this version from the Lebesgue Covering Theorem, we refer to it simply as the Lebesgue Theorem. Below, we state both results.

\begin{tw}[Lebesgue Covering Theorem]
   Let $n \in \mathbb{N}$. If $\mathcal{F}$ is a finite closed cover of $I^n$ no member of which meets two opposite faces of $I^n$, then the order of $\mathcal{F}$ is at least $n$.
\end{tw}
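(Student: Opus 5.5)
The plan is to reduce the statement to a fixed-point or no-retraction type fact, in the form of the Poincaré--Miranda Theorem. Recall that the order of $\mathcal{F}$ is at least $n$ exactly when some $n+1$ members of $\mathcal{F}$ have a common point. Write $\mathcal{F} = \{F_1, \dots, F_k\}$.

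\textbf{Step 1: colouring the cover.} First I would assign to each $F_j$ a ``colour'' $c(j) \in [n]$ and a sign. Since $F_j$ meets no pair of opposite faces, for each $i$ it misses $I^n_{i,-}$ or $I^n_{i,+}$. I will use this only to build functions. For each $i \in [n]$, let $A_i^-$ be the union of those members of $\mathcal{F}$ that meet $I^n_{i,-}$, and $A_i^+$ the union of those meeting $I^n_{i,+}$. Then $A_i^- \cap I^n_{i,+} = \emptyset$ and $A_i^+ \cap I^n_{i,-} = \emptyset$.

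\textbf{Step 2: the classical argument via a partition of unity.} Choose an open $\varepsilon$-enlargement $\{U_j\}$ of $\mathcal{F}$ with the same nerve; this is possible by compactness and finiteness. Take a partition of unity $\{\phi_j\}$ subordinate to $\{U_j\}$. Suppose, for contradiction, that the order of $\mathcal{F}$ is $< n$, so at most $n$ of the $U_j$ meet at any point. Map each $j$ to a vertex $v_j$ of $I^n$ chosen as follows:
\begin{itemize}
\item $(v_j)_i = 1$ if $F_j$ meets $I^n_{i,-}$;
\item $(v_j)_i = 0$ if $F_j$ meets $I^n_{i,+}$;
\item $(v_j)_i$ is arbitrary otherwise.
\end{itemize}
Define $f(x) = \sum_j \phi_j(x) v_j$. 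Then $f$ is continuous $I^n \to I^n$. Because at most $n$ of the $\phi_j$ are nonzero at each point, $f(x)$ lies in a convex hull of at most $n$ vertices. Hence $f(I^n)$ is contained in a finite union of simplices of dimension $\le n-1$, which has empty interior.

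Next consider $g(x) = x - f(x)$. If $x_i = 0$, then every $F_j$ with $\phi_j(x) > 0$ is $\varepsilon$-close to $x$. For $\varepsilon$ small, such an $F_j$ must meet $I^n_{i,-}$, or at least must fail to meet $I^n_{i,+}$; I need to choose $\varepsilon$ and the vertices so that $(v_j)_i = 1$ is forced. In that case $g_i(x) = -1 \le 0$. Symmetrically, $x_i = 1$ gives $g_i(x) \ge 0$.

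The first version of this argument only yields a zero of $g$, that is, a fixed point of $f$. That is not a contradiction by itself, so I adapt it. Apply the same reasoning to $g_a(x) = x - f(x) - a$ for any small vector $a$. The sign conditions persist: on $x_i = 0$ we have $g_{a,i} \le -1 + |a| < 0$, and similarly on the opposite face. So by the Poincaré--Miranda Theorem every $a$ in a small ball is attained by $x - f(x)$.

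\textbf{Step 3: the contradiction, and the main obstacle.} I will instead apply Poincaré--Miranda to $h(x) = f(x) - b$ for a point $b$ near the centre of the cube, using the boundary behaviour above: $f_i(x) = 1$ when $x_i = 0$, and $f_i(x) = 0$ when $x_i = 1$. This gives $h_i \ge 0$ on one face and $h_i \le 0$ on the opposite face. So $f$ attains every point of the interior, contradicting that $f(I^n)$ has empty interior.

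The delicate point is making $(v_j)_i$ consistent. When $x_i = 0$ and $\phi_j(x) > 0$, the set $F_j$ lies within $\varepsilon$ of $I^n_{i,-}$. Then $F_j$ cannot meet $I^n_{i,+}$, but it need not meet $I^n_{i,-}$. I handle this by choosing $v_j$ by distance: set $(v_j)_i = 1$ whenever $F_j \cap I^n_{i,+} = \emptyset$, and $0$ otherwise. With this choice, if $\phi_j(x) > 0$ and $x_i = 0$, then $F_j$ is within $\varepsilon$ of $x$. For $\varepsilon$ below the minimum positive distance from the sets $F_j$ to the faces they miss, $F_j$ misses $I^n_{i,+}$, so $(v_j)_i = 1$.

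If instead $x_i = 1$, then $F_j$ is within $\varepsilon$ of a point of $I^n_{i,+}$. If $F_j$ meets $I^n_{i,+}$, then $(v_j)_i = 0$ directly. If $F_j$ misses it, its distance to $I^n_{i,+}$ is positive, exceeding $\varepsilon$, which is impossible. This gives the boundary signs, and Poincaré--Miranda finishes the argument.
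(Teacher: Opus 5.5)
Your proof is correct. The parts that do no work can be dropped: the sets $A_i^{\pm}$ of Step~1, the first vertex rule, and the maps $g$ and $g_a$. What remains is a complete argument. Choose a single $\varepsilon$ that is below two thresholds: the one keeping the nerve of $\{U_j\}$ equal to that of $\mathcal{F}$, and the least positive distance from a member of $\mathcal{F}$ to a face it misses. The rule $(v_j)_i=1$ iff $F_j\cap I^n_{i,+}=\emptyset$ then gives $f_i\equiv 1$ on $I^n_{i,-}$ (the only place the hypothesis on $\mathcal{F}$ is used) and $f_i\equiv 0$ on $I^n_{i,+}$. Poincaré--Miranda applied to $b-f$, for $b\in(0,1)^n$, yields $(0,1)^n\subset f[I^n]$. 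On the other hand, order $<n$ confines $f[I^n]$ to finitely many convex sets of dimension at most $n-1$, which is the contradiction.

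The paper gives no proof of this statement: it is quoted from Engelking in the introduction as background. What the paper proves is the component version, Theorem~\ref{tw:lebesguecovering}. It passes to complements and invokes Proposition~\ref{prop:intofsepisconn}\,(ii), i.e.\ the fact that sets separating the $n$ pairs of opposite faces must meet, which rests on Lemma~\ref{lem:intofpartitionisconn} (Engelking's lemma on intersections of partitions).

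Your route is of a different kind. Instead of producing separating sets, you turn the cover into a map through its nerve and detect the missing $(n+1)$-fold intersection as a dimension defect of the image. It is short and needs only Poincaré--Miranda (equivalently Brouwer). It also gives a little more, because the Poincaré--Miranda step does not use the order assumption: $f[I^n]$ always has nonempty interior, so some $n+1$ members of $\mathcal{F}$ with a common point have affinely independent vectors $v_j$.

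The separation route uses no partitions of unity or convexity and stays purely topological. It is also the one that carries over to the connectedness statements the paper is really after, such as Theorem~\ref{tw:genlebesgue}.
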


\begin{tw}[Lebesgue Theorem]
Let $n \in \mathbb{N}$ and $\left\{F_i\right\}_{i=1}^n$ be a~family of closed sets that covers~$I^n$. Then, for some $i \in [n]$, there exists a connected component of $F_i$ that intersects both $I^n_{i, -}$ and~$I^n_{i, +}$.
\end{tw}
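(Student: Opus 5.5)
We argue by contradiction, with the usual strategy: use the failure of the conclusion to split each $F_i$ into two closed pieces, build a map into the cube, and contradict the Poincaré--Miranda Theorem (equivalently Brouwer).

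So suppose that for every $i \in [n]$ no connected component of $F_i$ meets both $I^n_{i,-}$ and $I^n_{i,+}$. We first show that each $F_i$ decomposes as $F_i = A_i \cup B_i$, where $A_i, B_i$ are disjoint closed sets with $A_i \cap I^n_{i,+} = \emptyset$ and $B_i \cap I^n_{i,-} = \emptyset$. This is the standard compactness fact about components of compact Hausdorff spaces: components coincide with quasi-components. Consider the compact space $K_i = F_i \cup I^n_{i,-} \cup I^n_{i,+}$. Adding the two faces cannot merge components in a way that joins the faces, because each face is connected and no component of $F_i$ meets both. More precisely, a component of $K_i$ containing points of both faces would have to contain a continuum inside $F_i$ joining them, and we would rule this out with the boundary bumping theorem. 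Granting this, the Šura-Bura lemma (quasi-components equal components in compacta) gives a clopen partition $K_i = P_i \cup Q_i$ with $I^n_{i,-} \subset P_i$ and $I^n_{i,+} \subset Q_i$. We then set $A_i = F_i \cap P_i$ and $B_i = F_i \cap Q_i$. The case $F_i = \emptyset$ is trivial, since then $A_i = B_i = \emptyset$.

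Next, by normality we choose continuous functions $f_i \colon I^n \to [-1,1]$ with $f_i = -1$ on $A_i \cup I^n_{i,-}$ and $f_i = 1$ on $B_i \cup I^n_{i,+}$. These two closed sets are disjoint: $A_i \cap I^n_{i,+} = \emptyset$ and $B_i \cap I^n_{i,-} = \emptyset$ by construction, $A_i \cap B_i = \emptyset$, and $P_i \cap Q_i = \emptyset$. We make this choice so that $f_i^{-1}(0)$ avoids $F_i$. (A sign or strict-inequality refinement, such as $f_i<0$ near $A_i$, is not needed.) The map $f=(f_1,\dots,f_n)$ then satisfies the Poincaré--Miranda boundary conditions: $f_i \le 0$ on $I^n_{i,-}$ and $f_i \ge 0$ on $I^n_{i,+}$. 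Hence there is a point $z$ with $f(z)=0$. Since $\{F_i\}$ covers $I^n$, we have $z \in F_j$ for some $j$, so $f_j(z) = \pm 1 \neq 0$, which is a contradiction.

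The main obstacle is the separation step, that is, passing from ``no component of $F_i$ meets both faces'' to a clopen splitting of $K_i$ separating the faces. The subtle point is that components of $K_i$ can be larger than components of $F_i$. To handle this, I would apply the quasi-component argument directly to the compact set $F_i$ with the closed sets $F_i \cap I^n_{i,\pm}$. Since no component meets both, compactness (via finite subcovers of the clopen sets separating each component) yields a clopen partition of $F_i$ itself separating $F_i\cap I^n_{i,-}$ from $F_i\cap I^n_{i,+}$. This avoids enlarging the space altogether.
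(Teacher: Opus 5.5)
Your proof is correct, but it takes a different route from the paper. The paper's proof is two lines on top of its Section~3 machinery. If no $F_i$ connects the $i$th faces, then the open sets $I^n\setminus F_i$ lie in $\textsf{Sep}^n_i$ directly from the definitions. Proposition~\ref{prop:intofsepisconn}~\ref{prop:intofsepisconn::1} then forces these sets to have a common point, contradicting $\bigcup_i F_i=I^n$. The real work is inside that proposition: open separating sets are approximated by compact separating sets avoiding the faces (Lemma~\ref{lem:topropseveralresultsonsep}, Proposition~\ref{prop:severalresultsonsep}). These are partitions, so Engelking's Lemma~\ref{lem:intofpartitionisconn} applies to them, and the conclusion is passed to a Hausdorff limit.

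You instead construct the partitions explicitly. The quasi-component ({\v{S}}ura-Bura) argument splits $F_i$ into disjoint closed pieces $A_i$ and $B_i$. Urysohn functions then make $f_i^{-1}(0)$ a partition between $I^n_{i,-}$ and $I^n_{i,+}$ that is disjoint from $F_i$. Poincar\'e--Miranda, equivalently the classical fact that $n$ such partitions must meet, gives the contradiction.

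Your splitting-plus-partition step is essentially the case $Z=\emptyset$ of the paper's Proposition~\ref{prop:closedpluszerodim}. The difference is that the paper first adjoins the two faces and uses boundary bumping (Lemma~\ref{lem:withoutface}~\ref{lem:withoutface::3}). Your final version works in $F_i$ itself with the compact sets $F_i\cap I^n_{i,\pm}$ and a finite subcover by clopen neighbourhoods of components, and it correctly shows that this enlargement is unnecessary.

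Your route gives a short, self-contained proof that uses only Brouwer. It is in effect a direct proof of the implication (iii)~$\Rightarrow$~(iv) in Theorem~\ref{tw:genndimchessequiv}, which the paper only cites. The paper's route buys uniformity: it needs no decomposition of $F_i$, and the same Proposition~\ref{prop:intofsepisconn} is reused for Theorem~\ref{tw:strongergenlebesgue}, the parametric Poincar\'e--Miranda theorem, and the level-set results.
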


A well-known related result is the Knaster–Kuratowski–Mazurkiewicz (KKM) Theorem \cite{knaster1929beweis}, which states that if a non-degenerate simplex $\Delta^n \subset \mathbb{R}^n$ is covered by a finite family $\mathcal{F}$ of closed sets such that no point is contained in more than $n$ sets, then at least one member of $\mathcal{F}$ intersects all faces of~$\Delta^n$. Karasev \cite[Theorem 5.3]{karasev} unified the KKM Theorem and the Lebesgue Covering Theorem into the following result. If a simple polytope $P \subset \mathbb{R}^n$ is covered by a finite family of closed sets $\mathcal{F}$ of order at most $n$, then some set in $\mathcal{F}$ intersects at least $n+1$ faces of $P$. 

Additionally, Karasev \cite[Theorems 4.1 and 4.3]{karasev} provided proofs of both of the aforementioned results of Lebesgue employing the cohomological properties of toric varieties. Using these techniques, he also obtained a generalization of the Lebesgue Covering Theorem \cite[Theorem 4.2]{karasev}. 

In a different direction, Barali{\'c} and {\v{Z}}ivaljevi{\'c} \cite[Theorem 3.1]{baralic} extended the Lebesgue Covering Theorem to the class of $n$-colorable simple polytopes as follows. If an $n$-colorable simple polytope $P^n$ is covered by a finite family of closed sets $\mathcal{F}$ such that each point is covered by at most $n$ sets, then a~connected component of some set $F \in \mathcal{F}$ intersects at least two distinct faces of $P^n$ of the same color.

It is worth noting that the Hex Theorem \cite{gale} and an $n$-dimensional version of the Steinhaus Chessboard Theorem \cite[Theorem 1]{tkacz} (see Theorem~\ref{tw:sctweak}) serve as discrete analogues of the Lebesgue Theorem. The latter asserts that if an $n$-dimensional cube is partitioned into $k^n$ small cubes colored with $n$ colors, then there exists an $i$-colored connected component for some $i \in [n]$ that intersects $i$th opposite faces of the cube (see Theorem \ref{tw:sct} for a generalization). In particular, this component consists of at least $k$ small cubes. 

Karasev \cite{karasev2} and Matdinov \cite{matdinov} independently investigated the setting where the number of colors is less than $n$. They proved that if the small cubes are colored using $m+1$ colors, then there exists a~monochromatic connected component containing at least $C_{n, m} \,k^{n-m}$ small cubes, where the constant $C_{n, m}$ depends only on $n$ and $m$. For further problems related to both of Lebesgue's results, we refer the reader to, for instance, \cite{matouvsek, crabb2024, tkacz2023multilabeled, turzanskigenchess, dybowskigorka}.
\vspace{5pt}

Among problems concerning opposite faces of the unit cube, there are also questions related to properties of fibers of continuous functions. A well-known example is the Poincaré–Miranda Theorem~\cite{poincarememoires, tkacz}, which states that if $f = (f_1, \ldots, f_n) \colon I^n \to \mathbb{R}^n$ is a continuous function satisfying $\restr{f_i}{I^n_{i, -}} \le 0$ and $\restr{f_i}{I^n_{i, +}} \ge 0$ for all $i \in [n]$, then the fiber $f^{-1}\bracl{\left\{0\right\}}$ is nonempty. It is worth noting that this theorem is equivalent to the Brouwer Fixed Point Theorem (see \cite{kulpa1997poincare, tkacz2011bolzano, idzik2021equivalent} for more information). There is also the following parametric generalization of this result due to Kulpa, Socha, and Turzański~\cite{kulpa2000parametric}. If $f \colon I^n \times I \to \mathbb{R}^n$ is a continuous function satisfying $\restr{f_i}{I^n_{i, -} \times I} \le 0$ and $\restr{f_i}{I^n_{i, +} \times I} \ge 0$ for each $i \in [n]$, then there exists a connected subset \hbox{$W \subset f^{-1}\bracl{\left\{0\right\}}$} such that $W \cap \brac{I^n \times \left\{0\right\}} \neq \emptyset \neq W \cap \brac{I^n \times \left\{1\right\}}$. 

Boroński and Turzański \cite{boronski2012approximation} provide a combinatorial result whose continuous counterpart can be formulated as follows. If $f \colon I^n \to \mathbb{R}$ is a continuous function and $\restr{f}{I^n_{i, -}} \le 0$ and $\restr{f}{I^n_{i, +}} \ge 0$ for some $i \in [n]$, then the fiber $f^{-1}\bracl{\left\{0\right\}}$ separates $I^n$ between $I^n_{i, -}$ and $I^n_{i, +}$. The combinatorial approach of the authors can be used to approximate a~connected separating component of $f^{-1}\bracl{\left\{0\right\}}$. Finally, the author and Górka \cite{dybowskigorka} show that for every continuous function $f \colon I^n \to \mathbb{R}^{n-1}$ there exist a point $p \in \mathbb{R}^{n-1}$ and a connected component $S$ of $f^{-1}\bracl{\left\{p\right\}}$ such that $S \cap I^n_{i, -} \neq \emptyset \neq S \cap I^n_{i, +}$ for some $i \in [n]$.

Problems of this type also include questions related to antipodal points of the sphere $S^n$, since they lie on \enquote{opposite sides} of the sphere, which is homeomorphic to the boundary of the cube $I^{n+1}$. Undoubtedly, the most well-known result in this direction is the Borsuk–Ulam Theorem \cite{matouvsekborsuk}, which can be stated in two equivalent forms. Namely, for every continuous function $f \colon S^n \to \mathbb{R}^n$ there exists a~point $x \in S^n$ such that $f(x) = f(-x)$. Equivalently, for every continuous odd function $f \colon S^n \to \mathbb{R}^n$ the fiber $f^{-1}\bracl{\left\{0\right\}}$ is nonempty. 

Moreover, the following result is proved by Sonneborn \cite{sonneborn} (cf. \cite{krasinkiewicz}). For any continuous function $f \colon S^n \to \mathbb{R}$, with $n \ge 2$, there exists a connected component $C$ of some fiber $f^{-1}\bracl{\left\{x\right\}}$ such that $C$ contains two antipodal points of $S^n$. Additionally, the following result is established by Krasinkiewicz~\cite{krasinkiewicz}. For any continuous function $f \colon S^n \to \mathbb{R}$, with $n \ge 2$, there exists a unique symmetric component $D$ of the Borsuk-Ulam set $A(f) = \left\{x \in S^n \colon\, f(x) = f(-x)\right\}$, which separates $S^n$ between antipodal points~$x, -x$ for each $x \in S^n \setminus D$. 

Finally, another significant result is due to Dyson~\cite{dyson} (see also \cite{livesay} for a~generalization). For any continuous function $f \colon S^2 \to \mathbb{R}$, there exist two points $p, q \in S^2$ such that \hbox{$f(p) = f(-p) = f(q) = f(-q)$} and the points $p, -p, q, -q$ form the vertices of a square whose center coincides with the center of $S^2$.

It is also worth mentioning that the study of fibers of continuous functions plays a significant role in dimension theory. The Hurewicz theorem on dimension-lowering mappings \cite[Theorem 1.12.4]{engelkingdimension} states that if $X$ and $Y$ are separable spaces and $f \colon X \to Y$ is a closed continuous mapping such that there exists $k \ge 0$ with $\dim f^{-1}\bracl{\left\{y\right\}} \le k$ for every $y \in Y$, then $\dim X \le \dim Y + k$. This result is closely related to the characterization \cite[Problem 1.12.H (b) and Theorem $1.4.5$]{engelkingdimension} asserting that a compact metric space $X$ satisfies $\dim X \le n$ if and only if there exists a continuous function $f \colon X \to \mathbb{R}^n$ such that every nonempty fiber $f^{-1}\bracl{\left\{p\right\}}$ is totally disconnected. For other interesting results of this type, we refer to \cite[Section 1.12]{engelkingdimension}.

There are also problems concerned with measuring how \enquote{big} fibers are, such as Gromov’s waist or the notion of Urysohn width, see e.g. \cite{gromov2003isoperimetry, memarian2011gromov, karasev2013waist, papasoglu2020uryson, balitskiy2021local, balitskiy2022waist}.

\vspace{12pt}

There are two main objectives of this paper. The first is to formulate and prove the following dimension-theoretic generalization of the Lebesgue Theorem.

\begin{twmainA}\label{tw:intro1}
Let $n \in \mathbb{N}$ and $\left\{A_i\right\}_{i=1}^n$ be a family of closed sets that covers $I^n$. Then, there exists $i \in [n]$ such that for every closed subset $Z \subset I^n$ satisfying $\dim Z \le i-2$, there exists a connected component of $A_i \setminus Z$ that intersects both $I^n_{i, -}$ and $I^n_{i, +}$.
\end{twmainA}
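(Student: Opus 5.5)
We argue by contradiction, converting ``no component joins the faces'' into a family of partitions between opposite faces and then using that such partitions always have a common point. Suppose the conclusion fails. Then for every $i\in[n]$ there is a closed set $Z_i\subset I^n$ with $\dim Z_i\le i-2$ such that no connected component of $A_i\setminus Z_i$ meets both $I^n_{i,-}$ and $I^n_{i,+}$. For $i=1$ this forces $Z_1=\emptyset$.

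We will construct, for each $i$, a partition $L_i$ of $I^n$ between $I^n_{i,-}$ and $I^n_{i,+}$ with two properties:
\begin{enumerate}[label=(\alph*)]
\item $L_i\cap A_i\subset Z_i$;
\item $L_1,\ldots,L_n$ are in general position with respect to the sets $Z_j$, namely
\[
\dim\brac{Z_j\cap L_1\cap\cdots\cap L_k}\le j-2-k \quad\text{for all } k<j,
\]
with the convention that $\dim\emptyset=-1$.
\end{enumerate}
Granting this, the theorem on partitions \cite[Theorem 1.8.1]{engelkingdimension} gives a point $x\in\bigcap_{i=1}^n L_i$. Since the $A_j$ cover $I^n$, we have $x\in A_j$ for some $j$. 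By (a), $x\in L_j\cap A_j\subset Z_j$, so $x\in Z_j\cap L_1\cap\cdots\cap L_{j-1}$. By (b) with $k=j-1$, this set has dimension at most $-1$, so it is empty. This is a contradiction.

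The partitions are built inductively in the order $i=1,2,\ldots,n$. At step $i$ we set
\[
T=\bigcup_{j>i} Z_j\cap L_1\cap\cdots\cap L_{i-1}.
\]
This is a finite union of closed pieces, the piece for $j$ having dimension at most $j-2-(i-1)$. By the countable sum theorem we may treat the pieces separately. We need a partition $L_i$ between $I^n_{i,\pm}$ satisfying $L_i\cap A_i\subset Z_i$ and lowering the dimension of each piece by one. The general-position ingredient is the standard fact that a partition between two disjoint closed sets of a compact metric space can be chosen to meet a given separable subspace of dimension at most $m$ in a set of dimension at most $m-1$ \cite[Theorem 1.5.13 and its consequences]{engelkingdimension}. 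Inside the open set $I^n\setminus A_i$ we are free to move $L_i$ in order to apply it.

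The main obstacle is producing a single closed partition $L_i$ with $L_i\cap A_i\subset Z_i$ from the hypothesis on $A_i\setminus Z_i$, which is not compact. The first attempt is a quasi-component argument in the compact space $A_i$ relative to the closed set $Z_i$. Let $P=A_i\cap I^n_{i,-}$ and $Q=A_i\cap I^n_{i,+}$.
\begin{enumerate}[label=(\roman*)]
\item Use the hypothesis to show that $A_i\setminus Z_i$ splits as a union of two sets $U\ni P\setminus Z_i$ and $V\ni Q\setminus Z_i$ that are disjoint and relatively open in $A_i\setminus Z_i$, with $\overline U\cap\overline V\subset Z_i$. One way is to apply the Šura-Bura lemma to the compacta $A_i\setminus B(Z_i,1/m)$ and pass to a limit of the resulting clopen splittings. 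An alternative is a Zorn/Kuratowski-type argument on the closed set $A_i$ viewed in the quotient $I^n/Z_i$, where the compactness issue disappears.
\item Given such a splitting, the closed sets $\overline U\cup I^n_{i,-}$ and $\overline V\cup I^n_{i,+}$ meet only inside $Z_i$.
\item Take a partition between them, modified near $Z_i$ by the standard technique for separating sets whose intersection is a given closed set. This yields $L_i$ with $L_i\cap A_i\subset Z_i$, while keeping enough freedom off $A_i$ to achieve the dimension-lowering in (b).
\end{enumerate}
I expect making (i) rigorous, and checking that the dimension-lowering can be performed without destroying (a) on the part of $T$ that lies inside $A_i$, to be the delicate points. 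The latter matters because points of $T\cap A_i$ must lie in $Z_i$, so we can only cut them along $Z_i$ itself.
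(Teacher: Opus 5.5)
Your final counting step is sound, but requirement (a) cannot be met in general, and step (i) is false. The space $A_i\setminus Z_i$ is not compact. So ``no component meets both faces'' does not imply that the two faces lie in different quasi-components, and a clopen splitting (equivalently, a partition $L_i$ with $L_i\cap A_i\subset Z_i$) needs exactly that. Concretely, take $n=i=2$, $Z=\{(0,1/2)\}$ (so $\dim Z=0=i-2$), and
\[
A=\brac{\{0\}\times I}\cup\textstyle\bigcup_{k\ge 3}\brac{\{1/k\}\times[1/k,\,1-1/k]}.
\]
The set $A$ is closed. Each segment $\{1/k\}\times[1/k,1-1/k]$ is clopen in $A\setminus Z$ and misses $I^2_{2,\pm}$. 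The remainder $\{0\}\times\brac{[0,1/2)\cup(1/2,1]}$ has two components, so $A\setminus Z\notin\textsf{Conn}_2^2$.

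Now suppose $U,V$ were as in (i). Then both are relatively clopen. Since $U\ni(0,0)$, it contains $(1/k,1/k)$, hence the whole $k$th segment, for all large $k$. Likewise $V\ni(0,1)$ contains $(1/k,1-1/k)$, hence the same segments. This contradicts $U\cap V=\emptyset$. Any partition $L$ between $I^2_{2,-}$ and $I^2_{2,+}$ with $L\cap A\subset Z$ would induce such a splitting, so no such $L$ exists. Consequently no \v{S}ura-Bura limit or quotient argument can rescue (i). This does not contradict the theorem, since no $A_1\notin\textsf{Conn}_1^2$ completes this to a cover. But your derivation of (a) uses only the single pair $(A_i,Z_i)$, so it is invalid. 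Your worry about cutting $T\cap A_i$ only along $Z_i$ is a symptom of the same over-strong requirement.

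The missing idea is to weaken (a) to $A_i\cap L_1\cap\cdots\cap L_i=\emptyset$, which is all your last step uses, and to exploit the order of construction. By (b) with $j=i$, $k=i-1$, the set $Z_i\cap L_1\cap\cdots\cap L_{i-1}$ is already empty when you reach step $i$. Hence $K_i:=A_i\cap L_1\cap\cdots\cap L_{i-1}$ is a \emph{compact} subset of $A_i\setminus Z_i$, so $K_i\notin\textsf{Conn}_i^n$. For compact sets the component/quasi-component issue disappears (Corollary~\ref{cor:connprelimfacts}~\ref{cor::connprelimfacts:3}). The proof of Proposition~\ref{prop:closedpluszerodim} then yields a partition $L_i$ between $I^n_{i,\pm}$ missing $K_i$ together with any prescribed zero-dimensional set, which also gives the dimension-lowering. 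This is essentially the paper's proof, written with $U_i=I^n\setminus L_i$ and concluded via Proposition~\ref{prop:intofsepisconn}.

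One smaller point remains. A single $L_i$ must lower all the pieces $Z_j\cap L_1\cap\cdots\cap L_{i-1}$, $j>i$, at once, and the countable sum theorem does not provide this. The paper handles it by decomposing everything in advance into one staggered chain, $Z_{k-1}\cup Z_k^1=Z_{k-1}^1\cup Z_{k-1}^0$ in its indexing (using Corollary~\ref{cor:Fsigma} and the closedness of $Z_{k-1}$). Then the $k$th step only has to avoid the single zero-dimensional set $Z_{k-1}^0$.
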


The idea of formulating such a generalization was inspired by a generalized $n$-dimensional version of the Steinhaus Chessboard Theorem \cite{turzanskigenchess, tkacz2023multilabeled}. Below, we state the aforementioned theorem along with its generalized version.

\begin{tw}[$n$-dimensional Steinhaus Chessboard Theorem]\label{tw:sctweak}
Let\footnote{See the definition in Section \ref{sec:lebesgue}.} $n, k \in \mathbb{N}$ and \hbox{$F \colon \mathcal{K}_k^n \to [n]$}. Then, there exist a number $i \in [n]$ and a subfamily $\left\{P_j\right\}_{j=1}^r \subset \mathcal{K}_k^n$ for some $r \in \mathbb{N}$ such that $F(P_j) = i$ for any $j \in [r]$, and $P_1 \cap I^n_{i, -} \neq \emptyset \neq P_r \cap I^n_{i, +}$, and \hbox{$P_j \cap P_{j+1} \neq \emptyset$} for any $j \in [r-1]$.
\end{tw}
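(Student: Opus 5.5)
The plan is to derive the statement from the Lebesgue Theorem stated above, applied to the closed cover given by the colour classes. Write $\mathcal{K}_k^n$ for the family of $k^n$ closed cubes of side $1/k$ subdividing $I^n$. For $i\in[n]$ put
\[
A_i=\bigcup\left\{P\in\mathcal{K}_k^n\colon\, F(P)=i\right\}.
\]
Each $A_i$ is a finite union of closed cubes, hence closed, and $\bigcup_{i=1}^n A_i=I^n$. By the Lebesgue Theorem there is an $i\in[n]$ and a connected component $C$ of $A_i$ with $C\cap I^n_{i,-}\neq\emptyset\neq C\cap I^n_{i,+}$. It remains to convert $C$ into a chain of $i$-coloured cubes.

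Consider the finite graph $G_i$ whose vertices are the cubes $P$ with $F(P)=i$, where $P$ and $Q$ are adjacent iff $P\cap Q\neq\emptyset$. For a connected component $\mathcal{H}$ of $G_i$, let $U_{\mathcal{H}}=\bigcup\mathcal{H}$. I will check two facts.
\begin{enumerate}[label=(\roman*)]
\item Each $U_{\mathcal{H}}$ is connected. Any two cubes of $\mathcal{H}$ are joined by a path in $G_i$, which is a chain of connected sets with consecutive members intersecting, so its union is connected.
\item Distinct $U_{\mathcal{H}}, U_{\mathcal{H}'}$ are disjoint. A common point would lie in a cube of $\mathcal{H}$ and a cube of $\mathcal{H}'$, making these cubes adjacent.
\end{enumerate}
Thus $A_i$ is a finite disjoint union of the closed sets $U_{\mathcal{H}}$. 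Each $U_{\mathcal{H}}$ is therefore also open in $A_i$, being the complement of a finite union of closed sets. Together with connectedness, this shows the connected components of $A_i$ are exactly the sets $U_{\mathcal{H}}$. This identification is the only step needing care, and it is elementary.

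Hence $C=U_{\mathcal{H}}$ for some component $\mathcal{H}$ of $G_i$. Pick $x\in C\cap I^n_{i,-}$ and $y\in C\cap I^n_{i,+}$, and cubes $P,Q\in\mathcal{H}$ with $x\in P$ and $y\in Q$. Since $\mathcal{H}$ is connected in $G_i$, there is a path $P=P_1,P_2,\ldots,P_r=Q$ in $G_i$. By construction $F(P_j)=i$ for every $j$, $P_j\cap P_{j+1}\neq\emptyset$ for $j\in[r-1]$, and $P_1\cap I^n_{i,-}\ni x$, $P_r\cap I^n_{i,+}\ni y$. This is exactly the required subfamily; the case $P=Q$ gives $r=1$.
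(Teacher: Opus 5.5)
Your proof is correct and is essentially the paper's argument: the paper also sets $A_i=\bigcup F^{-1}[\{i\}]$ and applies the Lebesgue Theorem, in a single line. Your identification of the connected components of $A_i$ with the unions $U_{\mathcal{H}}$ over components of the cube-intersection graph just spells out the elementary passage from a connected component to a chain of $i$-coloured cubes, which the paper leaves to the reader.
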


\begin{tw}[Generalized $n$-dimensional Steinhaus Chessboard Theorem]\label{tw:sct}
Let $n, k \in \mathbb{N}$ and $F \colon \mathcal{K}_k^n \to [n]$. Then, there exist a number $i \in [n]$ and a subfamily $\left\{P_j\right\}_{j=1}^r \subset \mathcal{K}_k^n$ for some $r \in \mathbb{N}$ such that $F(P_j) = i$ for any $j \in [r]$, and $P_1 \cap I^n_{i, -} \neq \emptyset \neq P_r \cap I^n_{i, +}$, and \hbox{$\dim(P_j \cap P_{j+1}) \ge i-1$} for any $j \in [r-1]$.
\end{tw}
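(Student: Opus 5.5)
The plan is to deduce the Generalized $n$-dimensional Steinhaus Chessboard Theorem directly from Theorem~\ref{tw:intro1}, applied to the closed cover formed by the colour classes. For $i\in[n]$ put
\[
A_i=\bigcup\{P\in\mathcal{K}_k^n \colon F(P)=i\}.
\]
Each $A_i$ is a finite union of cubes, hence closed, and $\{A_i\}_{i=1}^n$ covers $I^n$. Theorem~\ref{tw:intro1} then gives an index $i\in[n]$ with the following property: for every closed $Z\subset I^n$ with $\dim Z\le i-2$, some connected component of $A_i\setminus Z$ meets both $I^n_{i,-}$ and $I^n_{i,+}$. We fix this $i$ and choose $Z$ so that it removes exactly the ``thin'' contacts between $i$-coloured cubes.

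Let $\mathcal{P}_i=F^{-1}[\{i\}]$. Define a graph on $\mathcal{P}_i$ by declaring $P$ and $Q$ adjacent when $\dim(P\cap Q)\ge i-1$. Set
\[
Z=\bigcup\{P\cap Q\colon P,Q\in\mathcal{P}_i,\ \dim(P\cap Q)\le i-2\}.
\]
Each $P\cap Q$ is empty or a common face of the two cubes, so it is closed. Hence $Z$ is a finite union of closed sets of dimension $\le i-2$, and the countable sum theorem gives $\dim Z\le i-2$. When $i=1$, this reads $Z=\emptyset$ (dimension $-1$), and adjacency just means $P\cap Q\neq\emptyset$, which is consistent.

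Now let $K_1,\dots,K_m$ be the connected components of the graph and put $U_s=\bigcup K_s$. Each $U_s$ is closed and $A_i=\bigcup_s U_s$. For $s\neq t$, the set $U_s\cap U_t$ is a union of sets $P\cap Q$ with $P\in K_s$ and $Q\in K_t$. Such $P,Q$ are non-adjacent, so $\dim(P\cap Q)\le i-2$, and therefore $U_s\cap U_t\subset Z$. Consequently $A_i\setminus Z$ is the disjoint union of the finitely many relatively closed sets $U_s\setminus Z$. Let $C$ be the component supplied by Theorem~\ref{tw:intro1}. Being connected, $C$ lies in a single $U_s\setminus Z$.

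To finish, pick $x\in C\cap I^n_{i,-}$ and $y\in C\cap I^n_{i,+}$. Both lie in $U_s$, so $x\in P$ and $y\in Q$ for some cubes $P,Q\in K_s$. Since $K_s$ is connected in the graph, there is a path $P=P_1,\dots,P_r=Q$ in $K_s$. This path consists of cubes of colour $i$ with $\dim(P_j\cap P_{j+1})\ge i-1$, and it satisfies $P_1\cap I^n_{i,-}\neq\emptyset\neq P_r\cap I^n_{i,+}$, as required. The only delicate step is the disjointness claim $U_s\cap U_t\subset Z$, which makes the decomposition of $A_i\setminus Z$ into relatively clopen pieces valid. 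The rest is bookkeeping, together with the standard fact that the dimension of a finite union of closed sets is the maximum of their dimensions.
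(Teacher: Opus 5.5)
Your proof is correct. It is essentially the paper's own derivation of this theorem as a special case of Theorem~\ref{tw:genlebesgue} with $A_i=\bigcup F^{-1}[\{i\}]$: the paper removes the larger set $Z=\text{Skel}_{i-2}^{n,k}$ and gets $\dim(P_j\cap P_{j+1})\ge i-1$ from Observation~\ref{obs:K1K2Skel}, whereas you remove only the thin contacts between $i$-coloured cubes and use the clopen decomposition of $A_i\setminus Z$ by graph components to make explicit the chain-extraction step that the paper only asserts.

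The paper's primary proof, in Theorem~\ref{tw:genndimchessequiv}, avoids dimension theory altogether: it builds neighbourhoods of the skeleta and contradicts the Lebesgue Theorem directly, and this is what yields the equivalence with the Brouwer Fixed Point Theorem.
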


The original result formulated by Steinhaus \cite{kalejdoskop}, known as the Steinhaus Chessboard Theorem, appears as the particular case $n=2$ of Theorem \ref{tw:sct} and can be stated as follows.

\textit{Let some segments of the chessboard be mined. Assume that the king cannot go across the chessboard from 
the left edge to the right one without meeting a mined square. Then the rook can go from upper edge to the 
lower one moving exclusively on mined segments.}

The possibility of a natural generalization of the Steinhaus Chessboard Theorem to higher dimensions was observed by Tkacz and Turzański, who provided a proof for $n = 3$ \cite[Theorem~2]{tkacz}. More recently, the conjecture (see \cite{tkacz2023multilabeled}) asserting that Theorem \ref{tw:sct} holds for any dimension $n$ was settled in the affirmative by Turzański and Ziajor \cite{turzanskigenchess} using an algorithmic approach. We note that a~weaker counterpart of this version is due to Tkacz and Turzański \cite[Theorem 1]{tkacz}. As we show in Section~\ref{sec:lebesgue}, our result constitutes a dimension-theoretic extension of the generalized \hbox{$n$-dimensional} version of the Steinhaus Chessboard Theorem (Theorem \ref{tw:sct}), and the latter appears as a~special case of this extension. Independently, we also provide a purely topological proof of Theorem~\ref{tw:sct} that does not require tools involving the notion of topological dimension and show that it is equivalent to the Brouwer Fixed Point Theorem (Theorem \ref{tw:genndimchessequiv}). This provides an affirmative answer to the question stated in \hbox{\cite[Question~1]{trojanowski2025quarter}},~\cite{turzanskigenchess}.

\vspace{5pt}

As indicated in the discussion above, there are several results that address, in particular, the issue of the existence of a fiber of a continuous function defined on the unit cube or the sphere with a specific property. However, the author is not aware of any results concerning the opposite faces of the unit cube in the existing literature that describe the structure of the set of all points $p$ for which the fiber $f^{-1}\bracl{\left\{p\right\}}$ possesses the desired property. 

In that context, the second aim is to address this issue by studying two types of sets associated with a continuous function $g \colon I^n \to \mathbb{R}$ and to examine the relations between them. Specifically, these are the sets $\textsf{Conn}_i(g)$ and $\textsf{Sep}_i(g)$, as defined below.
\begin{defi}
    Let $n \in \mathbb{N}$ and $i \in [n]$. We say that a subset $A \subset I^n$ 
\begin{itemize}
    \item \texttt{connects $i$th opposite faces of $I^n$} if there exists a connected subset $S \subset A$ such that 
    \begin{align*}
        S \cap I_{i, -}^n  \neq \emptyset \neq S \cap I_{i, +}^n;
    \end{align*}
    \item \texttt{separates $i$th opposite faces of $I^n$} if the set $I^n \setminus A$ does not connect $i$th opposite faces of~$I^n$.
\end{itemize}
Moreover, let $\Conn$ denote the family of all subsets of $I^n$ that connect $i$th opposite faces of $I^n$, and let $\Sep$ denote the family of all subsets of $I^n$ that separate $i$th opposite faces of $I^n$. Let $m \in \mathbb{N}$ and $g \colon I^n \to \mathbb{R}^m$ be a continuous function. Then, we define
\begin{align*}
        \textsf{Conn}_{i}(g) = \left\{p \in \mathbb{R}^m\colon\, g^{-1}\bracl{\left\{p\right\}} \in \Conn\right\},\; \textsf{Sep}_{i}(g) = \left\{p \in \mathbb{R}^m\colon\, g^{-1}\bracl{\left\{p\right\}} \in \Sep\right\}.
    \end{align*}
\end{defi}

Our main purpose is to provide the conditions that characterize the existence of a continuous function $g \colon I^n \to \mathbb{R}$ such that $\textsf{Conn}_i(g) = A_i$ and $\textsf{Sep}_i(g) = B_i$ for all $i \in [n]$, for given sets $A_i$ and $B_i$. The characterization result is stated as follows.

\begin{twmainB}\label{tw:intro3}
Let $n \ge 2$ and let us fix subsets $A_i, B_i \subset I^n$ for $i \in [n]$. Then, there exists a~continuous function $g \colon I^n \to \mathbb{R}$ such that $\textsf{Conn}_i(g) = A_i$ and $\textsf{Sep}_i(g) = B_i$ for each $i \in [n]$ if and only if the following conditions are satisfied.
\begin{enumerate}[label=(\roman*)]
\item Every $A_i$ and $B_i$ is either a compact interval, a singleton, or the empty set.
\item For each $i \in [n]$, if $A_i = \emptyset$, then $B_i$ is a compact interval, if $A_i$ is a singleton, then $A_i = B_i$, and if $A_i$ is a compact interval, then $B_i = \emptyset$.
\item For each $i \in [n]$, $B_i \subset \bigcap_{j\neq i}^n A_j$.
\item If all $A_i$ are nonempty, then $\bigcap_{i=1}^n A_i$ is nonempty as well.
\item If $n=2$, then $A_1 = B_2$ and $A_2 = B_1$.
\end{enumerate}
\end{twmainB}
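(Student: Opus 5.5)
The plan is to identify $\textsf{Conn}_i(g)$ and $\textsf{Sep}_i(g)$ through two minimax numbers attached to $g$. Everything is read in $\mathbb{R}$, since the $A_i,B_i$ play the role of value sets. For $i\in[n]$, let $\mathcal{C}_i$ be the family of continua in $I^n$ meeting both $I^n_{i,-}$ and $I^n_{i,+}$, and put
\begin{align*}
a_i=\min_{C\in\mathcal{C}_i}\max_C g,\qquad b_i=\max_{C\in\mathcal{C}_i}\min_C g .
\end{align*}
Both extrema are attained, because the hyperspace of continua meeting both faces is compact and $C\mapsto\max_C g$, $C\mapsto\min_C g$ are continuous on it.

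\emph{The set $\textsf{Sep}_i(g)$.} A point $p$ fails to be in $\textsf{Sep}_i(g)$ exactly when a connected subset of $\{g<p\}$ or of $\{g>p\}$ joins the $i$th faces. Using compactness and the fact that components of open sets in $I^n$ are arcwise connected, this happens iff $p>a_i$ or $p<b_i$. Hence $\textsf{Sep}_i(g)=[b_i,a_i]$, which is empty when $a_i<b_i$.

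\emph{The set $\textsf{Conn}_i(g)$.} If $p\in\textsf{Conn}_i(g)$, a continuum in $g^{-1}[\{p\}]$ gives $a_i\le p\le b_i$. Conversely, suppose $a_i\le p\le b_i$.
\begin{itemize}
\item If $a_i<p<b_i$, take $C\in\mathcal{C}_i$ with $g|_C\le a_i$ and $D\in\mathcal{C}_i$ with $g|_D\ge b_i$. Then $g^{-1}[\{p\}]$ separates $C$ from $D$. Apply the Lebesgue Theorem to a suitable closed cover; this is the same mechanism as in the proof of Theorem~\ref{tw:intro1} in Section~\ref{sec:lebesgue}. The conclusion should be that some component of the separator meets both $I^n_{i,\pm}$.
\item The endpoint cases $p\in\{a_i,b_i\}$ follow from closedness of $\textsf{Conn}_i(g)$, proved via upper limits of continua.
\end{itemize}
Thus $\textsf{Conn}_i(g)=[a_i,b_i]$. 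Then (i) and (ii) follow at once: the case $a_i<b_i$ gives a compact interval and $\textsf{Sep}_i(g)=\emptyset$, the case $a_i=b_i$ gives equal singletons, and the case $a_i>b_i$ gives $\textsf{Conn}_i(g)=\emptyset$ and a compact interval $\textsf{Sep}_i(g)$.

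\emph{Conditions (iii)--(v).} For (iii), I will use the classical fact that a closed set separating the $i$th faces contains a continuum meeting the $j$th faces for every $j\ne i$. This comes from the Lebesgue Theorem applied to the cover of $I^n$ consisting of the separator together with the closures of the two sides, all restricted suitably.

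For (iv), suppose every $A_i$ is nonempty but $\bigcap_i[a_i,b_i]=\emptyset$. Then for some $j,k$ we have $b_j<a_k$; choose $p$ strictly between them. Any connected subset of $\{g\le p\}$ joining the $k$th faces would contain a member of $\mathcal{C}_k$ with maximum at most $p<a_k$, so none exists. Likewise any connected subset of $\{g\ge p\}$ joining the $j$th faces would give a member of $\mathcal{C}_j$ with minimum at least $p>b_j$, so none exists. I want to derive a contradiction from this via the Lebesgue Theorem. The plan is to build a closed $n$-cover with $F_k=\{g\le p\}$ and $F_j=\{g\ge p\}$, and take the remaining $F_l$ (for $l\ne j,k$) from continua in $g^{-1}[\{q\}]$ for common values $q$. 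This step needs the most care, and I would first try reducing to the case $n=2$ on a suitable face or slice.

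For (v), in $I^2$ a closed set separates the $1$st faces iff it connects the $2$nd faces. This is the planar duality, i.e.\ the Lebesgue Theorem for $n=2$ together with its converse, and it gives $A_1=B_2$ and $A_2=B_1$.

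\emph{Sufficiency.} Here I give explicit constructions. The basic building blocks are functions of one coordinate, such as $g(x)=x_i$, which gives $\textsf{Sep}_i=[0,1]$ and $\textsf{Conn}_j=[0,1]$ for $j\neq i$, and $g(x)=\phi(x_i)$ with $\phi$ non-monotone. I then paste such blocks, after affine rescaling of values, along coordinate-wise piecewise-linear partitions of $I^n$, so that the minimax numbers $a_i,b_i$ take the prescribed values. By (i)--(ii), each index $i$ is of one of three types: $a_i<b_i$, $a_i=b_i$, or $a_i>b_i$. Condition (iii) forces every $\textsf{Sep}$-interval to lie inside all the other $\textsf{Conn}$-intervals, and (iv) provides a common value, which I use as the level of a central plateau $g\equiv c$ on a subcube. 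I expect the main obstacle to be this case analysis, together with the $n=2$ duality constraint (v), which rules out independent prescriptions in dimension two.
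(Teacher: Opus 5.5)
Your minimax reformulation is a genuinely different route from the paper's, and the part you actually verify is correct. With $a_i=\min_{C\in\mathcal{C}_i}\max_C g$ and $b_i=\max_{C\in\mathcal{C}_i}\min_C g$ you do get $\textsf{Sep}_i(g)=[b_i,a_i]$ and $\textsf{Conn}_i(g)\subset[a_i,b_i]$ almost for free. Once repaired, this route makes the paper's discrete argument for Theorem~\ref{tw:Sepnonempty} and the Tietze construction in Theorem~\ref{tw:S1S2SepConn}~(ii) unnecessary.

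\textbf{The reverse inclusion is not proved.} Your derivation of (i), (ii) and (iv) rests on $[a_i,b_i]\subset\textsf{Conn}_i(g)$, which is not established.
\begin{itemize}
\item Your endpoint step (``follows from closedness of $\textsf{Conn}_i(g)$'') fails exactly when $a_i=b_i$: then $(a_i,b_i)=\emptyset$ and there is nothing to take limits of. That is precisely the case needed to rule out $\textsf{Conn}_i(g)=\emptyset$ with $\textsf{Sep}_i(g)$ a singleton, which would violate (ii). In the paper this is Corollary~\ref{cor:cardinalitycharacterization}~(iii), resting on Proposition~\ref{prop:preimageofconn}.
\item The interior case is only asserted (``apply the Lebesgue Theorem to a suitable closed cover \ldots\ the conclusion should be''). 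The right tool is unicoherence, not the Lebesgue Theorem. Corollary~\ref{cor:compA} gives a component of $g^{-1}[\{p\}]$ separating $C$ from $D$. Such a component must meet both $i$th faces, because each face is connected and meets both $C$ and $D$.
\end{itemize}
A single argument closes both holes for every $p\in[a_i,b_i]$. Choose $C,D\in\mathcal{C}_i$ with $g|_C\le a_i\le p\le b_i\le g|_D$, and suppose $g^{-1}[\{p\}]\notin\textsf{Conn}^n_i$. Corollary~\ref{cor:openneigh} gives an open $U\supset g^{-1}[\{p\}]$ with $U\notin\textsf{Conn}^n_i$, so $I^n\setminus U\in\textsf{Sep}^n_i$. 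Proposition~\ref{prop:severalresultsonsep}~(iii) then gives a connected $S\subset I^n\setminus U$ with $S\in\textsf{Sep}^n_i$. Being connected and disjoint from $g^{-1}[\{p\}]$, $S$ lies in $\{g<p\}$ or in $\{g>p\}$. Yet $S$ meets $D$, where $g\ge p$, and $C$, where $g\le p$, a contradiction either way. For comparison, the paper proves the strict case via the homotopy $h(s,t)=(1-t)\gamma_1(s)+t\gamma_2(s)$ and planar duality.

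\textbf{Condition (iv) and sufficiency.} Your (iv) is left unfinished, although you already have everything. Since $\{g\le p\}\cup\{g\ge p\}=I^n$, put $F_k=\{g\le p\}$, $F_j=\{g\ge p\}$ (here $j\ne k$, since $a_k\le b_k$), and $F_l=\emptyset$ for all other $l$. Theorem~\ref{tw:lebesguecovering} then forces $F_k\in\textsf{Conn}^n_k$ or $F_j\in\textsf{Conn}^n_j$, both of which you have excluded; no extra sets or reduction to $n=2$ is needed.

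The more serious omission is sufficiency, for which you give no construction. Functions of one coordinate cannot do it by themselves: for $g=\phi(x_i)$, every $\textsf{Conn}_j(g)$ with $j\ne i$ equals $\phi[I]$. ``Pasting along piecewise-linear partitions'' gives no control ensuring that no unintended fiber connects or separates faces.

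The paper's key idea is localisation:
\begin{itemize}
\item For each $j$ with $A_j$ a nondegenerate interval, take a thin tube $T_j$ around a segment parallel to the $x_j$-axis. On $T_j$, $g$ depends only on the distance to the axis, running from $a^r_j$ on the axis down to $a^l_j$ and back to the plateau value.
\item Elsewhere $g$ equals a constant plateau value: a point of $\bigcap_j A_j$, or $\min B_1$.
\item If some $A_i$ is empty (say $A_1$), add a slab $\{x_1\ge 1/2\}$ on which $g$ increases linearly in $x_1$ across $B_1$.
\end{itemize}
Every off-target fiber then lies in a disjoint union of tubes, which by Proposition~\ref{prop:sumofnonsep} neither connects nor separates the relevant faces. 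For $n=2$, condition (v) reduces everything to $g(x,y)=(b-a)x+a$ or a constant.
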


In particular, we show that for any $i \in [n]$, the sets $\textsf{Conn}_i(g)$ and $\textsf{Sep}_i(g)$ are compact and connected, and if one of these sets is empty, then the other one is a nondegenerate compact interval. Moreover, if the sets $\textsf{Conn}_i(g)$ and $\textsf{Conn}_j(g)$ are nonempty for some $i, j \in [n]$, then the intersection $\textsf{Conn}_i(g) \cap\textsf{Conn}_j(g)$ is also nonempty (see Corollary \ref{cor:mutualintersectionconn}).

Although the case of functions $g \colon I^n \to \mathbb{R}^m$, with $m > 1$, is beyond the scope of this paper, we show that whenever $n > 1$, for every $i \in [n]$ and compact subsets $A_1, A_2 \subset I^n$, one can choose functions $g_1 \colon I^n \to \mathbb{R}^m$ and $g_2 \colon I^n \to \mathbb{R}^m$ so that $\textsf{Conn}_i(g_1) = A_1$ and $\textsf{Sep}_i(g_2) = A_2$.

In addition, we take advantage of the fact that the subject matter is closely related to the parametric extension of the Poincaré-Miranda Theorem to provide a short, purely topological proof (see \hbox{\cite[Theorem 5]{tkacz}} and \cite{kulpa2000parametric} for algorithmic proofs).

This part of the paper extends and continues the research line introduced by the author and Górka~\cite{dybowskigorka}.

\vspace{5pt}

The remainder of the paper is structured as follows. In Section \ref{prelim}, we provide several definitions and recall known facts that will be useful in next sections. Section \ref{sec:subconnsep} is devoted to studying the properties of subsets that connect or separate $i$th opposite faces of $I^n$ which will be crucial for Sections \ref{sec:lebesgue} and~\ref{sec:functions}. In Section \ref{sec:lebesgue}, we prove a dimension-theoretic generalization of the Lebesgue Theorem (Theorem \ref{tw:intro1}) and show that the generalized $n$-dimensional version of the Steinhaus Chessboard Theorem (Theorem \ref{tw:sct}) appears as a special case of our result. Finally, in Section \ref{sec:functions}, we study the structure of the sets $\textsf{Conn}_i(g)$ and $\textsf{Sep}_i(g)$ for continuous functions $g \colon I^n \to \mathbb{R}$, and examine the relations between them, as indicated above. In particular, we prove Theorem \ref{tw:intro3}.

\section{Preliminaries}\label{prelim}

\subsection{Notations and conventions} 
Let $\mathbb{Z}^0 = \mathbb{R}^0 = \left\{0\right\}$, $\mathbb{Z}_+ = \mathbb{N} \cup \left\{0\right\}, [n] := \left\{1, \ldots, n\right\}$ for $n \in \mathbb{N}$, and $[0] := \emptyset$. For a function $f \colon X \to Y$ and subsets $X_0 \subset X$ and $Y_0 \subset Y$, symbols $f\bracl{X_0}$ and $f^{-1}\bracl{Y_0}$ mean the image of $X_0$ and the preimage of $Y_0$ under $f$, respectively. For a topological space $X$ and subsets $Y, A \subset X$, symbol $\partial A$ means the boundary of $A$~and symbol $\partial_Y A$ means the boundary of $A$~in the topology of $Y$. Symbols $\interior{A}$ and $\overline{A}$ mean the interior of $A$ and the closure of $A$, respectively. For $a \in \mathbb{R}$, the interval notation $[a, a]$ denotes the singleton set $\left\{a\right\}$.

Let $(X, d)$ be a metric space, $x \in X$ and $r>0$. By $B_X(x, r)$ we denote the open ball in $X$ with the center at point~$x$ and radius $r>0$. The corresponding closed ball is denoted by $\overline{B}_X(x, r)$. Moreover, if $A \subset X$, then we denote the open $r$-neighborhood of $A$ in $X$ as $N_X(A, r) := \bigcup_{x \in A} B_X(x, r)$. The closed $r$-neighborhood of $A$ in $X$ is denoted as $\overline{N}_X(A, r) := \bigcup_{x \in A} \overline{B}_X(x, r)$. Note, that if $A$ is compact, then $\overline{N}_X(A, r)$ is closed. However, in general, $\overline{N}_X(A, r)$ is not necessarily a closed set. By $\diam A$ we denote the diameter of the set $A$, i.e. $\diam A = \sup \left\{d_X(x, y)\colon\, x, y \in A\right\}$. If $x \in \mathbb{R}^n$, then $\norm{x}$ denotes the standard Euclidean norm.  %Whenever the context allows, we shall write $B_X(x, r) = B(x, r)$ and $N_X(A, r) = N(A, r)$, and analogously for closed balls and closed $r$-neighborhoods.

Since the space $I^n = [0, 1]^n$, equipped with the subspace topology inherited from $\mathbb{R}^n$, will be the space most often considered, by open subsets of $I^n$ we mean subsets that are open in this topology.

\subsection{Basic facts}
Let $X$ be a topological space and $n \in \mathbb{N}$. It is advisable for the reader to keep the following basic well-known facts in mind, as they are used frequently throughout the paper without explicit mention.
\begin{itemize}
\item The space $X$ is locally connected if and only if for every open subset $U$ of $X$, each connected component of $U$ is open.
\item If $X$ is locally path-connected, then every open and connected subset of $X$ is path-connected.
\item If $X$ is locally connected (resp. locally path-connected), then every open subset of $X$ is locally connected (resp. locally path-connected).
\item Since $I^n$ is locally path-connected, then each of the above properties holds for $I^n$. Thus, we can conclude that every connected component of an open subset of $I^n$ is open and path-connected.
\item Let $\varepsilon>0$ and $S \subset I^n$ be a connected subset. The set $N_{I^n}(S, \varepsilon)$ is open and connected, so it is path-connected. Moreover the path-connectedness of every closed ball in~$I^n$ implies that the set $\overline{N}_{I^n}(S, \varepsilon)$ is path-connected as well.
\end{itemize}

\subsection{Partitions and covering dimension}

\begin{defi}[{\cite[Definition 1.1.3]{engelkingdimension}}]\label{defi:partition}
Let $X$ be a topological space and $A, B \subset X$ be disjoint subsets. We say that a subset $L \subset X$ is a \texttt{partition between $A$ and $B$} if there exist open subsets $U, V \subset X$ such that
\begin{align*}
A \subset U, \quad B \subset V, \quad U \cap V = \emptyset, \quad X \setminus L = U \cup V.
\end{align*}
Clearly, the partition $L$ is a closed set. Moreover, if $A$ and $B$ are nonempty, then it easily follows that there is no connected component of $X \setminus L$ containing both $A$ and $B$. In particular, if $A$ and $B$ are connected, then they are contained in distinct connected components of $X \setminus L$.
\end{defi}

Partitions serve as a foundational concept in the theory of covering dimension as well as small and large inductive dimensions. Furthermore, partitions yield the following characterization \cite[Theorem~1.7.9]{engelkingdimension}. A separable metric space satisfies the inequality $\dim X \le n$, where $n \ge 0$, if and only if for every set $\left\{(A_i, B_i)\right\}_{i=1}^{n+1}$ of pairs of disjoint closed subsets of $X$, there exists a family $\left\{L_i\right\}_{i=1}^{n+1}$ of closed sets such that $L_i$ is a partition between $A_i$ and $B_i$ for $i \in [n+1]$, and $\bigcap_{i=1}^{n+1} L_i = \emptyset$.

\begin{lem}\label{lem:disconnandcomponents}
Let $X$ be a locally connected space and $L \subset X$ be a closed subset. Then, nonempty connected subsets $A_1, A_2 \subset X \setminus L$ are contained in distinct connected components of $X \setminus L$ if and only if the set $L$ is a partition between $A_1$ and $A_2$.
\end{lem}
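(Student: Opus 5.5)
We prove the two implications separately. Throughout we use that $X \setminus L$ is open in the locally connected space $X$. Hence $X \setminus L$ is locally connected, and each of its connected components is open in $X \setminus L$, and therefore open in $X$.

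\emph{($\Leftarrow$)} Suppose $L$ is a partition between $A_1$ and $A_2$. Then there are disjoint open sets $U \supset A_1$ and $V \supset A_2$ with $X \setminus L = U \cup V$. Let $C$ be any connected component of $X \setminus L$. Then $C = (C \cap U) \cup (C \cap V)$ is a decomposition into disjoint relatively open sets. By connectedness, $C \subset U$ or $C \subset V$. Since $A_1$ and $A_2$ are nonempty and connected, each lies in a single component. The component $C_1$ containing $A_1$ meets $U$, so $C_1 \subset U$. Likewise, the component $C_2$ containing $A_2$ satisfies $C_2 \subset V$. As $U \cap V = \emptyset$, we get $C_1 \neq C_2$. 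This is exactly the remark made after Definition~\ref{defi:partition}.

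\emph{($\Rightarrow$)} Suppose $A_1 \subset C_1$ and $A_2 \subset C_2$, where $C_1 \neq C_2$ are components of $X \setminus L$. Set $U := C_1$ and let $V$ be the union of all components of $X \setminus L$ other than $C_1$. We check the four conditions of Definition~\ref{defi:partition}:
\begin{itemize}
\item $U$ and $V$ are open in $X$, since every component is open by the first paragraph.
\item $U \cap V = \emptyset$, because distinct components are disjoint.
\item $U \cup V = X \setminus L$, because the components cover $X \setminus L$.
\item $A_1 \subset U$, and $A_2 \subset C_2 \subset V$.
\end{itemize}
Hence $L$ is a partition between $A_1$ and $A_2$. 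The disjointness of $A_1$ and $A_2$ required by the definition holds automatically, since they lie in disjoint components.

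The only nontrivial ingredient is the openness of components, which is where local connectedness of $X$ and closedness of $L$ enter. Both are immediate from the basic facts listed in the preliminaries, so no real obstacle is expected.
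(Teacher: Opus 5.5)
Your proof is correct and follows essentially the same route as the paper. For ($\Rightarrow$) the paper takes $U_1$ to be the component of $X \setminus L$ containing $A_1$ and $U_2 = (X \setminus L) \setminus U_1$, which is exactly your $V$; you justify its openness as a union of open components, the paper as the complement of a component that is clopen in $X \setminus L$. For ($\Leftarrow$) the paper simply calls it immediate from the remark after Definition~\ref{defi:partition}, and your argument is precisely that remark written out.
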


\begin{proof}
It suffices to prove only the implication \enquote{$\implies$}. Let $Y = X \setminus L$, and $U_1$ be the connected component of $Y$ that contains $A_1$. Then, $A_2 \cap U_1 = \emptyset$. Since $Y$ is open, then $U_1$ is clopen in $Y$. Let $U_2 = Y \setminus U_1$, which is an open in $Y$ neighborhood of $A_2$. The sets $U_1$ and $U_2$ are open in $Y$, and hence in $X$, so $L$ is a partition between $A_1$ and $A_2$.
\end{proof}

\begin{rem}
We emphasize that local connectedness of the space $X$ is crucial for Lemma \ref{lem:disconnandcomponents}. Indeed, let 
\begin{align*}
X = \left\{(0, 0), (0, 1)\right\} \cup \textstyle\bigcup_{n=1}^\infty \brac{\left\{1/n\right\} \times \mathbb{R}},
\end{align*}
which is not a locally connected space, and $L = \emptyset$, $A_1 = \left\{(0, 0)\right\}$, $A_2 = \left\{(0, 1)\right\}$. It is easily seen that the sets $A_1$ and $A_2$ are connected components of $X$, while $L$ is not a partition between $A_1$ and $A_2$.
\end{rem}

We now recall some well-known facts concerning the covering dimension of separable metric spaces, which will be used in Section \ref{sec:lebesgue}. We refer to \cite[Theorems 1.2.11, 1.5.7, 1.7.7 and Corollary 1.5.5]{engelkingdimension} for a detailed reference on this topic.

\begin{prop}\label{prop:dimfacts}
\begin{enumerate}[label=(\roman*)]
\item[]
\item If $X$ is an arbitrary metric space and $Z$ is a separable subspace of $X$ such that $\text{dim}\, Z \le 0$, then for every pair $A, B$ of disjoint closed subsets of $X$ there exists a~partition~$L$ between $A$ and $B$ such that $L \cap Z = \emptyset$. \label{prop::dimfacts:1}
\item Let $n \ge 0$. A separable metric space $X$ satisfies the inequality $\text{dim}\, X \le n$ if and only if $X$ can be represented as the union of two subspaces $Y$ and $Z$ such that $\text{dim}\, Y \le n-1$ and $\text{dim}\, Z \le 0$. \label{prop::dimfacts:2}
\vspace{-15pt}
\item If a separable metric space $X$ can be represented as the union of two subspaces $A_1$ and $A_2$, one of them being both an $F_\sigma$-set and a $G_\delta$-set, such that $\text{dim}\, A_1 \le n$ and $\text{dim}\, A_2 \le n$, then $\text{dim}\, X \le n$. \label{prop::dimfacts:3}
\end{enumerate}
\end{prop}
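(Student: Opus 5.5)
All three statements are classical facts from the dimension theory of separable metric spaces. In the paper I would simply cite \cite[Theorems 1.2.11, 1.5.7, 1.7.7 and Corollary 1.5.5]{engelkingdimension}. Still, I would check each one by reducing it to three standard tools from \cite{engelkingdimension}:
\begin{itemize}
\item the subspace theorem, $\dim M \le \dim X$ for $M \subset X$;
\item the countable closed sum theorem;
\item the partition characterization of $\dim \le n$ recalled above, together with the lemma that partitions in a closed subspace extend to partitions in the whole space.
\end{itemize}

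For \ref{prop::dimfacts:1}, I first choose open sets $U \supset A$ and $V \supset B$ with $\overline{U} \cap \overline{V} = \emptyset$; this uses normality of metric spaces. Since $\dim Z \le 0$, there is a partition in $Z$ between $Z \cap \overline{U}$ and $Z \cap \overline{V}$ that is empty. Equivalently, $Z = Z_1 \cup Z_2$ with $Z_1, Z_2$ disjoint and clopen in $Z$, $Z \cap \overline{U} \subset Z_1$ and $Z \cap \overline{V} \subset Z_2$. In a metric space the separated sets $A \cup Z_1$ and $B \cup Z_2$ have disjoint open neighbourhoods $U'$ and $V'$; I would take them as $\{x \colon d(x, A\cup Z_1) < d(x, B \cup Z_2)\}$ and its counterpart. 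Then $L = X \setminus (U' \cup V')$ is a partition between $A$ and $B$, and $L$ misses $Z$. The delicate point is that $A \cup Z_1$ and $B \cup Z_2$ really are separated, i.e.\ neither meets the closure of the other. This is where the enlargement to $\overline{U}$ and $\overline{V}$ is needed, and where I expect the main obstacle to lie. It is essentially the content of \cite[Theorem 1.5.11--1.5.12]{engelkingdimension}.

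For \ref{prop::dimfacts:3}, suppose $A_1$ is both $F_\sigma$ and $G_\delta$. Then $A_1 = \bigcup_k F_k$ with each $F_k$ closed in $X$, and $\dim F_k \le n$ by the subspace theorem. Since $A_1$ is $G_\delta$, its complement $X \setminus A_1$ is $F_\sigma$, say $\bigcup_k G_k$ with each $G_k$ closed. Moreover $X \setminus A_1 \subset A_2$, so $\dim G_k \le n$ again by the subspace theorem. The countable closed sum theorem then gives $\dim X \le n$.

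For \ref{prop::dimfacts:2}, I treat the two directions separately.
\begin{itemize}
\item For \enquote{$\Rightarrow$}, I would take a countable base $\{W_j\}$ with $\dim \partial W_j \le n-1$, which exists by the small inductive characterization. Put $Y = \bigcup_j \partial W_j$, so $\dim Y \le n-1$ by the countable sum theorem for closed subsets. Put $Z = X \setminus Y$; it has a base of clopen sets, so $\dim Z \le 0$.
\item For \enquote{$\Leftarrow$}, let $n+1$ pairs $(A_i, B_i)$ be given. By \ref{prop::dimfacts:1}, choose a partition $L_1$ between $A_1$ and $B_1$ with $L_1 \cap Z = \emptyset$. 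Then $L_1 \subset Y$, hence $\dim L_1 \le n-1$. Apply the partition characterization inside $L_1$ to the traces of the remaining $n$ pairs, and extend the resulting partitions to $X$. The extended partitions $L_2, \dots, L_{n+1}$ satisfy $\bigcap_{i=1}^{n+1} L_i = \emptyset$, so $\dim X \le n$.
\end{itemize}
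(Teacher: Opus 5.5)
Your proposal is correct and agrees with the paper, which proves this proposition only by citing Engelking; your sketches for (i)--(iii) are the standard textbook arguments behind those citations. The step you flagged as delicate in (i) does go through: $Z \cap U \subset Z_1$ gives $Z_2 \cap U = \emptyset$, hence $\overline{Z_2} \cap A = \emptyset$; $Z_2$ is closed in $Z$, so $\overline{Z_2} \cap Z_1 = \emptyset$; and $Z \cap V \subset Z_2$ gives $B \cap Z_1 = \emptyset$, with the symmetric checks for the other set. In (ii), extending partitions from $L_1$ to $X$ is legitimate precisely because $L_1$ is closed in $X$.
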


\begin{cor}\label{cor:Fsigma}
Let $X$ be a separable metric space, $k \ge 0$, and $A, B \subset X$ be subsets such that $\dim A \le k$, $\dim B \le k$, and $A$ is closed. Then, $\dim \brac{A \cup B} \le k$.
\end{cor}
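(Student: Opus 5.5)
We will derive Corollary \ref{cor:Fsigma} directly from Proposition \ref{prop:dimfacts}\ref{prop::dimfacts:3}, applied to the separable metric space $X' := A \cup B$ with the decomposition $X' = A_1 \cup A_2$, where $A_1 := A$ and $A_2 := B$. Subspaces of separable metric spaces are separable metric, so $X'$ is an admissible space. By hypothesis $\dim A_1 \le k$ and $\dim A_2 \le k$. These dimensions are intrinsic to the subspaces $A$ and $B$, so it does not matter whether they are computed inside $X$ or inside $X'$.

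It remains to check that one of the two pieces, namely $A_1 = A$, is both an $F_\sigma$-set and a $G_\delta$-set in $X'$. Since $A$ is closed in $X$, the set $A = A \cap X'$ is closed in $X'$, hence trivially $F_\sigma$ in $X'$. For the $G_\delta$ property we use that $X'$ is metrizable. Fix a metric $d$ on $X'$; then
\begin{align*}
A = \bigcap_{m \in \mathbb{N}} N_{X'}(A, 1/m),
\end{align*}
since a point at distance $0$ from the closed set $A$ lies in $A$. This exhibits $A$ as a countable intersection of open subsets of $X'$. The degenerate case $A = \emptyset$ is trivial, as then $A \cup B = B$.

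Proposition \ref{prop:dimfacts}\ref{prop::dimfacts:3} then yields $\dim (A \cup B) \le k$.

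The only point requiring any care is this $G_\delta$ verification: closed sets in metric spaces are $G_\delta$, and one should make sure the hypotheses of \ref{prop::dimfacts:3} are checked in the ambient space $A \cup B$ rather than in $X$. Passing from $X$ to $A \cup B$ is harmless, because closedness relativizes to subspaces. No other obstacle is expected.
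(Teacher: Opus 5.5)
Your proof is correct and is essentially the paper's: both pass to the subspace $Y = A \cup B$, observe that $A$ is closed in $Y$ and hence both $F_\sigma$ and $G_\delta$ there, and invoke Proposition~\ref{prop:dimfacts}~\ref{prop::dimfacts:3}. The only difference is that you write out the $G_\delta$ verification $A = \bigcap_{m} N_{Y}(A, 1/m)$, which the paper takes for granted.
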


\begin{proof}
Let $Y = A \cup B$. The set $A$ is closed in $Y$, and hence it is both an $F_\sigma$-set and a $G_\delta$-set in $Y$. Since $\dim A \le k$ and $\dim B \le k$, then $\dim Y \le k$ from Proposition \ref{prop:dimfacts} \ref{prop::dimfacts:3}.
\end{proof}

\subsection{Unicoherent spaces}

In this subsection, we recall essential facts regarding unicoherent spaces and subsets that separate two points, leading to Corollary \ref{cor:compA}, which will be used throughout the paper.

\begin{defi}[{\cite[Section §46 X]{kuratowski}}]
Let $X$ be a topological space. We say that $X$ is \texttt{unicoherent} if $X$ is connected and for every pair of closed and connected subsets $A, B \subset X$ such that $A \cup B = X$, it follows that the set $A \cap B$ is connected.
\end{defi}

From \cite[Theorem 3]{stone}, it follows that if $X$ is connected and locally connected space, then $X$ is unicoherent if and only if for every pair of open and connected subsets $A, B \subset X$ such that $A \cup B = X$, it follows that the set $A \cap B$ is connected. Thus, by virtue of Lemma \ref{lem:conUcapV} below, every simply connected and locally path-connected space is unicoherent. In particular, $I^n$ is unicoherent for every $n \in \mathbb{N}$ (see also \cite[Theorems §57~II~2 and §57~I~9]{kuratowski}).

\begin{lem}[{\cite[Theorem 2.7]{dybowskigorka}}\footnote{The statement in \cite{dybowskigorka} concerns $\mathbb{R}^n$, but it follows easily from the proof that it also holds for any simply connected space that is locally path-connected.}]\label{lem:conUcapV}
Let $X$ be a simply connected space that is locally path-connected and $U, V \subset X$ be open and connected sets such that $U \cup V = X$. Then, the set $U \cap V$ is connected.
\end{lem}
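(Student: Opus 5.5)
We need to prove Lemma \ref{lem:conUcapV}: if $X$ is simply connected and locally path-connected, and $U,V$ are open connected sets with $U\cup V=X$, then $U\cap V$ is connected. The plan is the standard covering-space argument. Suppose $U\cap V$ is not connected, say $U\cap V=W_0\sqcup W_1$ with $W_0,W_1$ disjoint, nonempty, and open in $U\cap V$, hence open in $X$. From this decomposition we will build a nontrivial connected double cover of $X$. A simply connected, locally path-connected space admits no such cover, which gives the contradiction.

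\emph{Construction.} Take two copies $U\times\{0,1\}$ and $V\times\{0,1\}$. Glue $(x,j)\in U\times\{j\}$ to $(x,j)\in V\times\{j\}$ when $x\in W_0$, and to $(x,1-j)\in V\times\{1-j\}$ when $x\in W_1$. Call the quotient $\widetilde X$, with projection $p\colon\widetilde X\to X$.

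\emph{Covering property.} Since $W_0$ and $W_1$ are open and disjoint, the gluing is along open sets, so $p$ is a $2$-sheeted covering map. Over $U$ the cover is trivial, and likewise over $V$. Next we check that $\widetilde X$ is connected. Each sheet $U\times\{j\}$ is connected because $U$ is connected. Through $W_0$, $U\times\{0\}$ is glued to $V\times\{0\}$, and through $W_1$, $V\times\{0\}$ is glued to $U\times\{1\}$. Both $W_0$ and $W_1$ are nonempty, so all pieces lie in one connected set.

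\emph{Contradiction via path lifting.} Alternatively, and avoiding covering-space theory, one can argue directly with path lifting. Pick $a\in W_0$ and $b\in W_1$. Choose a path $\alpha$ from $a$ to $b$ inside $U$ and a path $\beta$ from $b$ to $a$ inside $V$; these exist because open connected subsets of a locally path-connected space are path-connected. The loop $\alpha\ast\beta$ is null-homotopic, since $X$ is simply connected. Now lift the loop starting at $(a,0)$ in the $U$-sheet:
\begin{itemize}
\item $\alpha$ stays in $U\times\{0\}$ and ends at $(b,0)$ viewed in $U$;
\item since $b\in W_1$, this point is identified with $(b,1)$ in the $V$-sheet, so $\beta$ lifts into $V\times\{1\}$;
\item $\beta$ then ends at $(a,1)$ in $V$, which, because $a\in W_0$, is $(a,1)$ in the $U$-sheet.
\end{itemize}
So the lift of $\alpha\ast\beta$ does not close up. By the homotopy lifting property, a null-homotopic loop must lift to a closed loop, which is a contradiction.

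\emph{Main obstacle.} The delicate step is verifying that $\widetilde X$ is Hausdorff-irrelevant yet genuinely a covering space, i.e. that the identifications give a well-defined local homeomorphism. Hausdorffness is not needed for path and homotopy lifting. Each point of $X$ lies in $U$ or in $V$, over which the cover is literally $U\times\{0,1\}$ or $V\times\{0,1\}$, so local triviality is immediate. The gluing is consistent because the transition map between the $U$-chart and the $V$-chart over $U\cap V$ is locally constant on $U\cap V$: it is the identity on $W_0$ and the swap on $W_1$, both open sets. With local triviality established, the standard unique lifting argument applies verbatim, which completes the proof.
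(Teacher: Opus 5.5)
Your proof is correct. The double cover you build, gluing $U\times\{0,1\}$ to $V\times\{0,1\}$ by the identity over $W_0$ and the swap over $W_1$, is locally trivial over $U$ and over $V$ exactly as you verify. The lift of $\alpha\ast\beta$ starts at $(a,0)$ in the $U$-sheet and ends at $(a,1)$, which contradicts homotopy lifting rel endpoints. You are also right that Hausdorffness plays no role in path lifting, homotopy lifting, or uniqueness of lifts.

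There is no step-by-step comparison to make. The paper does not prove this lemma: it quotes it from \cite{dybowskigorka}, where it is stated for $\mathbb{R}^n$, and a footnote asserts that the proof carries over. Your argument therefore gives a self-contained proof of exactly the generalization the paper uses. It also shows where each hypothesis enters: local path-connectedness only produces $\alpha\subset U$ and $\beta\subset V$, and simple connectedness is used only through homotopy lifting.

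A shorter alternative is Mayer--Vietoris. Since $\pi_1(X)=0$ gives $H_1(X)=0$, the reduced sequence $H_1(X)\to\tilde H_0(U\cap V)\to\tilde H_0(U)\oplus\tilde H_0(V)$, with $U,V$ path-connected, forces $\tilde H_0(U\cap V)=0$. Your route replaces this homology machinery with the elementary lifting lemma.

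Minor points:
\begin{itemize}
\item Dispose of the case $U\cap V=\emptyset$ in one line: since $X$ is connected, one of $U,V$ is then empty, and the claim is trivial.
\item The connectedness of $\widetilde X$ is never used, so that paragraph can be dropped.
\item The phrase ``avoiding covering-space theory'' is inaccurate. Homotopy lifting is precisely the covering-space input, and your check of local triviality is what makes it available.
\end{itemize}
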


\begin{defi}\label{defi:seppoints}
Let $X$ be a topological space and $p, q \in X$. We say that a~closed subset $A \subset X$ \texttt{separates points $p$ and $q$} if there exists a pair of subsets $B_1, B_2 \subset X$ such that 
\begin{align}\label{defi:seppoints:prop1}
    p \in B_1,\; q \in B_2,\; \overline{B_1} \cap B_2 = \emptyset = B_1 \cap \overline{B_2},\; X \setminus A = B_1 \cup B_2.
\end{align}
\end{defi}

\begin{obs}\label{obs:unicoherent}
Let $X$ be a locally connected space and $p, q \in X$. Then, a closed subset $A \subset X$ separates points $p$ and $q$ if and only if the points $p$ and $q$ belong to distinct connected components of~$X \setminus A$.
\end{obs}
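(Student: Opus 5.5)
We prove both implications directly from Definition \ref{defi:seppoints}, using local connectedness only in the direction \enquote{$\implies$}. Throughout, put $Y = X \setminus A$, which is open because $A$ is closed. The whole argument rests on one observation: in a locally connected space, the connected components of an open set are open.

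For \enquote{$\impliedby$}, suppose $p$ and $q$ lie in distinct components of $Y$. Let $B_1$ be the component of $Y$ containing $p$, and let $B_2 = Y \setminus B_1$, so that $q \in B_2$ and $X \setminus A = B_1 \cup B_2$.
\begin{itemize}
\item Since $X$ is locally connected and $Y$ is open, every component of $Y$ is open in $X$. Hence $B_1$ is open, and $B_2$, being a union of components, is open as well.
\item $B_1$ is open and disjoint from $B_2$, so $B_1 \cap \overline{B_2} = \emptyset$.
\item Symmetrically, $B_2$ is open and disjoint from $B_1$, so $\overline{B_1} \cap B_2 = \emptyset$.
\end{itemize}
Thus \eqref{defi:seppoints:prop1} holds, and $A$ separates $p$ and $q$. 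This is essentially the argument of Lemma \ref{lem:disconnandcomponents}.

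For \enquote{$\implies$}, which needs no local connectedness, take $B_1, B_2$ as in \eqref{defi:seppoints:prop1} and let $C$ be the component of $Y$ containing $p$. Suppose, for contradiction, that $q \in C$. Then $C = (C \cap B_1) \cup (C \cap B_2)$, and both pieces are nonempty, since $p$ lies in the first and $q$ in the second. The condition $\overline{B_1} \cap B_2 = \emptyset = B_1 \cap \overline{B_2}$ says exactly that $B_1$ and $B_2$ are separated sets. Subsets of separated sets are again separated, so $C$ is a union of two nonempty separated sets. This contradicts the connectedness of $C$, so $q \notin C$.

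The only step that needs care is the openness of the components of $Y$. It is exactly where local connectedness enters, and it gives the mutual closure conditions in \enquote{$\impliedby$}. Everything else is routine.
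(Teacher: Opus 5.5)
Your proof is correct and follows essentially the paper's argument. For \enquote{distinct components $\Rightarrow$ separation} the paper invokes Lemma \ref{lem:disconnandcomponents}, which takes the component of $X\setminus A$ containing $p$ and its complement in $X\setminus A$, both open by local connectedness; for the converse it notes that $B_1, B_2$ are closed, hence clopen, in $X\setminus A$, which amounts to your separated-sets argument. One small slip: your opening sentence says local connectedness is used only in the direction \enquote{$\implies$}, but, as you correctly state later, it is needed only in \enquote{$\impliedby$}.
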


\begin{proof}
Let us assume that $p$ and $q$ belong to distinct connected components of $X \setminus A$. By virtue of Lemma \ref{lem:disconnandcomponents}, the set $A$ is a partition between $\left\{p\right\}$ and $\left\{q\right\}$, so there exists a pair of disjoint open subsets $U, V \subset X$ such that $p \in U, q \in V$ and $X \setminus A = U \cup V$. Obviously, $\overline{U} \cap V = \emptyset = U \cap \overline{V}$.

Now, let us assume that $A \subset X$ separates points $p$ and $q$. Then, there exists a pair of subsets $B_1, B_2 \subset X$ that satisfies (\ref{defi:seppoints:prop1}). Let us observe that $\overline{B_i} \cap \brac{X \setminus A} = \overline{B_i} \cap (B_1 \cup B_2) = B_i$ for $i \in [2]$, so $B_1$ and $B_2$ are closed in $X \setminus A$. Since $p \in B_1, q \in B_2$ and $B_1 \cap B_2 = \emptyset$, then $p$ and $q$ belong to distinct connected components of $X \setminus A$.
\end{proof}

\begin{tw}[{\cite[Theorem 1 (vi)]{stone}}]\label{tw:unicoherent}
Let $X$ be a locally connected and unicoherent space. Let $A \subset X$ be a closed set, and $p, q \in X$. If $A$ separates points $p$ and $q$, then so does some connected component of $A$.
\end{tw}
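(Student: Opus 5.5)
The plan is to build two closed connected sets that cover $X$ and whose intersection lies in $A$ and already separates $p$ from $q$. Unicoherence will then make that intersection connected, so it sits inside a single component of $A$, and that component will still separate $p$ and $q$.

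\emph{Setting up the covering.} Let $P$ be the connected component of $X\setminus A$ containing $p$. Since $X$ is locally connected and $X\setminus A$ is open, $P$ is open and connected, and $\overline{P}\subset P\cup A$. By Observation~\ref{obs:unicoherent}, $q$ lies in a different component of $X\setminus A$, so $q\notin \overline{P}$. Let $Q$ be the component of the open set $X\setminus\overline{P}$ containing $q$; again $Q$ is open and connected. Put
\begin{align*}
F_1=\overline{Q},\qquad F_2=X\setminus Q .
\end{align*}
Both sets are closed, $F_1\cup F_2=X$, and $F_1$ is connected.

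\emph{Main step: $F_2$ is connected.} We have $F_2=\overline{P}\cup\bigcup R$, where $R$ runs over the components of $X\setminus\overline{P}$ other than $Q$. Fix such an $R$. It is open and proper, and $X$ is connected (part of unicoherence), so $\partial R\neq\emptyset$. Moreover $\partial R\subset\overline{P}$: a boundary point of $R$ lying in $X\setminus\overline{P}$ would lie in some component of that open set, and that component is open, so it would have to be $R$ itself, which is impossible since $R$ is open. Also $\overline{R}\subset F_2$, because $Q$ is open and disjoint from $R$. Hence
\begin{align*}
F_2=\overline{P}\cup\bigcup_R\overline{R}
\end{align*}
is a union of connected sets each meeting the connected set $\overline{P}$, so $F_2$ is connected.

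\emph{Applying unicoherence.} The intersection is $K:=F_1\cap F_2=\overline{Q}\setminus Q=\partial Q$. By the same argument as above, $\partial Q\subset\overline{P}\setminus P$, and since $P$ is open this gives $\partial Q\subset\partial P\subset A$. Unicoherence now says $K$ is connected. It is nonempty, since $Q$ is a nonempty proper open subset of the connected space $X$. So $K$ is contained in a single connected component $C$ of $A$, and $C$ is closed because components of the closed set $A$ are closed.

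\emph{$C$ separates $p$ and $q$.} We have $p\in P$, $q\in Q$, and neither point lies in $A\supset C$. Consider the open sets $Q\setminus C$ and $(X\setminus\overline{Q})\setminus C$. They are disjoint, and they cover $X\setminus C$, because $\overline{Q}\setminus Q=K\subset C$. The first contains $q$. The second contains $p$, since $P$ is open and disjoint from $Q$, so $p\notin\overline{Q}$. Disjoint open sets satisfy the closure conditions in \eqref{defi:seppoints:prop1}, so $C$ separates $p$ and $q$.

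The main obstacle is the connectedness of $F_2$, because that is the point where connectedness of $X$ and local connectedness are genuinely used, through the boundary inclusions $\partial R\subset\overline{P}$ and $\partial Q\subset\partial P$. Everything else is bookkeeping.
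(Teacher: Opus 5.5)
Your proof is correct, and it takes a different route from the paper in the sense that the paper gives no argument here at all: the statement is quoted from \cite[Theorem 1 (vi)]{stone} and used only through Corollary~\ref{cor:compA}.

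You instead give a direct, self-contained proof, in the classical style.
\begin{itemize}
\item Take $P$ to be the component of $X\setminus A$ containing $p$, and $Q$ the component of $X\setminus\overline{P}$ containing $q$.
\item Unicoherence, applied to the closed connected cover $X=\overline{Q}\cup(X\setminus Q)$, makes $\partial Q$ connected.
\item Then $\partial Q\subset\partial P\subset A$, and it separates $p$ from $q$ via the disjoint open sets $Q$ and $X\setminus\overline{Q}$.
\end{itemize}

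All the steps check out:
\begin{itemize}
\item Local connectedness makes $P$, $Q$ and the other components $R$ open. This gives $\overline{P}\subset P\cup A$ and $\partial R,\partial Q\subset\overline{P}$.
\item Connectedness of $X$ gives $\partial R\neq\emptyset$ and $\partial Q\neq\emptyset$.
\item You only use the elementary direction of Observation~\ref{obs:unicoherent}, so there is no circularity.
\end{itemize}

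Your ``main step'' (connectedness of $X\setminus Q$) is exactly the fact the paper quotes from Kuratowski as Proposition~\ref{prop:connprelimfacts}~\ref{prop::connprelimfacts:2}, applied with $S_0=\overline{P}$. So with the paper's own tools your argument shortens to this: $\partial Q$ is connected by Corollary~\ref{cor:connprelimfacts}~\ref{cor::connprelimfacts:1}, and the rest is your last paragraph. Neither of those results depends on the theorem being proved, so there is no circularity in that shortcut either.

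Compared with the citation, your route gives two things. It is self-contained, using only the closed-set definition of unicoherence adopted in the paper. It also gives a slightly sharper conclusion: a closed connected subset of $\partial P$, namely $\partial Q$, already separates $p$ and $q$, and enlarging it to the component of $A$ containing it is immediate. The citation buys brevity.
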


\begin{cor}\label{cor:compA}
Let $X$ be a locally connected and unicoherent space. Let $A \subset X$ be a~closed subset, and $C_1$ and $C_2$ be two distinct connected components of $X \setminus A$. Then, there exists a connected component $S$ of $A$ such that the sets $C_1$ and $C_2$ are contained in distinct connected components of $X \setminus S$.
\end{cor}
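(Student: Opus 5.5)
Pick points $p \in C_1$ and $q \in C_2$; these exist because connected components are nonempty. The plan is to reduce the statement to Theorem \ref{tw:unicoherent} applied to the single pair $p,q$, and then to pass from the points back to the whole components.

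First, $p$ and $q$ lie in distinct connected components of $X \setminus A$, namely $C_1$ and $C_2$. Since $X$ is locally connected and $A$ is closed, Observation \ref{obs:unicoherent} shows that $A$ separates the points $p$ and $q$. Because $X$ is also unicoherent, Theorem \ref{tw:unicoherent} yields a connected component $S$ of $A$ that separates $p$ and $q$. Note that $S$ is closed, being a component of the closed set $A$, so the notion of separation applies to it. Applying Observation \ref{obs:unicoherent} again, this time to the closed set $S$, we find that $p$ and $q$ belong to distinct connected components $D_1 \ni p$ and $D_2 \ni q$ of $X \setminus S$.

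It remains to show that $C_1 \subset D_1$ and $C_2 \subset D_2$. Since $S \subset A$, we have
\begin{align*}
C_j \subset X \setminus A \subset X \setminus S \quad \text{for } j \in [2].
\end{align*}
Each $C_j$ is connected and contained in $X \setminus S$, so it lies in a single connected component of $X \setminus S$. As $C_1$ contains $p$, that component is $D_1$; similarly $C_2 \subset D_2$. Because $D_1 \neq D_2$, the sets $C_1$ and $C_2$ lie in distinct connected components of $X \setminus S$, which proves the claim.

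There is no real obstacle here. The only points needing care are checking that $S$ is closed, so that Observation \ref{obs:unicoherent} can be applied to it, and noting that the components $C_j$ are nonempty, so that $p$ and $q$ can be chosen.
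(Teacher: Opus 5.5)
Your proposal is correct and follows essentially the same route as the paper: choose $p \in C_1$, $q \in C_2$, use Observation \ref{obs:unicoherent} and Theorem \ref{tw:unicoherent} to get a component $S$ of $A$ placing $p$ and $q$ in distinct components of $X \setminus S$, and then use connectedness of $C_1, C_2 \subset X \setminus S$ to conclude. Your added checks (that $S$ is closed and that $C_j \subset X \setminus S$) are details the paper leaves implicit.
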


\begin{proof}
Let $x \in C_1$ and $y \in C_2$. It follows immediately from Observation \ref{obs:unicoherent} and Theorem \ref{tw:unicoherent}, that there exists a connected component $S$ of $A$ such that the points $x$ and $y$ belong to distinct connected components $C_1'$ and $C_2'$ of $X \setminus S$, respectively. Since $C_1$ and $C_2$ are connected, then $C_1 \subset C_1'$ and $C_2 \subset C_2'$.
\end{proof}

\subsection{Connected spaces}

In this subsection, for the reader's convenience, we present some facts concerning connected spaces and connected components for later use in the paper.

\begin{prop}\label{prop:unicohboundary}
Let\footnote{cf. \cite{czarnecki}.} $X$ be a unicoherent space and $A \subset X$ be a connected subset such that the set $X \setminus A$ is connected. Then, the set $\partial A$ is connected.
\end{prop}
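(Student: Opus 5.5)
The plan is to write $\partial A$ as the intersection of two closed connected sets whose union is $X$, and then apply unicoherence directly. The natural candidates are
\begin{align*}
P = \overline{A}, \qquad Q = \overline{X \setminus A}.
\end{align*}
Both are closed. Both are connected, because the closure of a connected set is connected and we are assuming that $A$ and $X \setminus A$ are connected. Their union contains $A \cup (X \setminus A) = X$, so $P \cup Q = X$.

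Next we identify the intersection. By the standard identity $\partial A = \overline{A} \cap \overline{X \setminus A}$, we get $P \cap Q = \partial A$. Unicoherence of $X$ then gives that $P \cap Q$ is connected, and hence $\partial A$ is connected.

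Two degenerate cases should be checked.
\begin{itemize}
\item If $A = \emptyset$ or $A = X$, then one of $P, Q$ is empty and $\partial A = \emptyset$. The empty set is connected under the usual convention, and unicoherence as defined also covers it, since the definition applies to any pair of closed connected sets.
\item We must make sure the definition of unicoherence is applied only to closed connected sets, which is exactly why we pass to the closures $P$ and $Q$ rather than using $A$ and $X \setminus A$ themselves.
\end{itemize}

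There is no real obstacle here. The only thing to verify is the set identity $\partial A = \overline{A} \cap \overline{X\setminus A}$, which holds in any topological space, so the proof is short.
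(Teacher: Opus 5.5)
Your proof is correct and matches the paper's argument exactly: $\overline{A}$ and $\overline{X \setminus A}$ are closed and connected, their union is $X$, and their intersection is $\partial A$, so unicoherence gives the claim. The degenerate cases $A = \emptyset$ or $A = X$ are harmless, as you note, since then $\partial A = \emptyset$.
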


\begin{proof}
Since the sets $\overline{A}$ and $\overline{X \setminus A}$ are closed and connected, and $X = \overline{A} \cup \overline{X \setminus A}$, then the set $\partial A = \overline{A} \cap \overline{X \setminus A}$ is connected.
\end{proof}

\begin{prop}\label{prop:connprelimfacts}
\begin{enumerate}[label=(\roman*)]
\item[]
\item Let $X$ be a topological space, $S_0 \subset X$ be a connected set and $\mathcal{S}$ be an~arbitrary family of connected subsets of $X$ such that $S_0 \subset \bigcup \mathcal{S}$ and $K \cap S_0 \neq \emptyset$ for all $K \in \mathcal{S}$. Then, the set $\bigcup \mathcal{S}$ is connected \cite[Lemma 2.9]{dybowskigorka}. \label{prop::connprelimfacts:1}
\item Let $X$ be a connected space. If $S_0$ is a connected set and $S$ is a connected component of $X \setminus S_0$, then $X \setminus S$ is connected \cite[Theorem §46 III 5]{kuratowski}. \label{prop::connprelimfacts:2}
\item Let\footnote{The statement pertains specifically to $\mathbb{R}^n$, yet it can be readily inferred from the proof that the result is valid for any locally connected space.} $X$ be a locally connected space, $S_0 \subset X$ be a~closed and connected set, and $S \subset X$ be a~connected component of $X \setminus S_0$. Then, $\partial S \subset S_0$ \cite[Proposition 2.5 (ii)]{dybowskigorka}. \label{prop::connprelimfacts:3}
\item Let $X$ be a compact and connected metric space and $E$ be a nonempty proper subset of $X$. If $K$ is a connected component of~$E$, then $\overline{K} \cap \partial E \neq \emptyset$ (or equivalently $\overline{K} \cap \overline{X \setminus E} \neq \emptyset$ since $\overline{K} \subset \overline{E}$) \cite[Theorem 5.6]{nadler}. \label{prop::connprelimfacts:5}
\item Let $C$ be a connected component of a compact metric space $X$. Then, for every open neighborhood $U$ of $C$, there exists a clopen subset $E \subset X$ such that $C \subset E \subset U$ \cite[Lemma A.10.1]{van2001infinite}. \label{prop::connprelimfacts:6}
\vspace{-15pt}
\item Let $x_{i, \varepsilon} \in I^2_{i, \varepsilon}$ for $i \in [2]$ and $\varepsilon \in \left\{-, +\right\}$. Let $\gamma_1, \gamma_2 \colon I \to I^2$ be paths such that $\gamma_1$ joins $x_{1, -}$ to $x_{1, +}$, and $\gamma_2$ joins $x_{2, -}$ to $x_{2, +}$. Then, these paths must intersect \cite[Lemma 2]{maehara1984jordan}. \label{prop::connprelimfacts:7}
\item Let\footnote{The proof is originally given for the case $m=n-1$. However, by composing with the following embedding \hbox{$\mathbb{R}^{m} \ni x \mapsto (x_1, \ldots, x_{n-1}, 0, \ldots, 0) \in \mathbb{R}^{n-1}$}, we immediately see that the statement remains valid for $1 \le m < n$.} $n \ge 2, n > m \ge 1$ and $f \colon I^n \to \mathbb{R}^m$ be a continuous function. Then, there exist a point $p \in \mathbb{R}^{m}$ and a compact and connected subset $S \subset f^{-1}\bracl{\left\{p\right\}}$ such that $S \cap I_{i, -}^{n} \neq \emptyset \neq S \cap I_{i, +}^{n}$ for some $i \in [n]$ \cite[Theorem B]{dybowskigorka}. \label{prop::connprelimfacts:8}
\end{enumerate}
\end{prop}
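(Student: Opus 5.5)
Each item of Proposition~\ref{prop:connprelimfacts} is a known fact with a reference attached. My plan is to give short self-contained arguments for the elementary items \ref{prop::connprelimfacts:1}--\ref{prop::connprelimfacts:3}, and to rely on the cited sources for the deeper items \ref{prop::connprelimfacts:5}--\ref{prop::connprelimfacts:8}. In each elementary case the argument assumes a separation and contradicts either the connectedness of the ambient space or the maximality of a component.

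For \ref{prop::connprelimfacts:1}, I will first note that $S_0 \cup K$ is connected for each $K \in \mathcal{S}$, since both pieces are connected and they meet. All these sets contain the connected set $S_0$, so their union $S_0 \cup \bigcup \mathcal{S}$ is connected. Since $S_0 \subset \bigcup\mathcal{S}$, this union equals $\bigcup\mathcal{S}$.

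For \ref{prop::connprelimfacts:2}, I will argue by contradiction. Suppose $X \setminus S = P \cup Q$ with $P, Q$ nonempty and separated. Since $S_0 \subset X\setminus S$ is connected, we may assume $S_0 \subset P$, so that $Q \subset X \setminus S_0$.
\begin{itemize}
\item I claim $S \cup Q$ is connected. If $S \cup Q = M \cup N$ is a separation with $S \subset M$, then $N \subset Q$. Hence $N$ is separated from $P$ (because $N\subset Q$) and from $M$, and so $X = (P \cup M) \cup N$ is a separation of $X$. This contradicts the connectedness of $X$.
\item Consequently $S \cup Q$ is a connected subset of $X \setminus S_0$ that strictly contains the component $S$, contradicting the maximality of $S$.
\end{itemize}

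For \ref{prop::connprelimfacts:3}, local connectedness together with the closedness of $S_0$ shows that components of the open set $X\setminus S_0$ are open. Hence $S$ is open, and every $x \in \partial S$ lies outside $S$. If $x \notin S_0$, then $x$ lies in some component $C$ of $X \setminus S_0$. This $C$ is an open neighbourhood of $x$, so it meets $S$, which forces $C = S$ and contradicts $x\notin S$. Therefore $\partial S \subset S_0$.

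The remaining items I will take from the literature, since they are standard and their proofs are longer.
\begin{itemize}
\item Item \ref{prop::connprelimfacts:5} is the boundary bumping theorem for continua \cite[Theorem 5.6]{nadler}.
\item Item \ref{prop::connprelimfacts:6} follows from the coincidence of components and quasi-components in compact Hausdorff spaces, combined with a compactness argument over the clopen sets containing $C$ \cite[Lemma A.10.1]{van2001infinite}.
\item Item \ref{prop::connprelimfacts:7} is the two-path lemma \cite[Lemma 2]{maehara1984jordan}, which can alternatively be derived from the planar Hex/Jordan-type theorem.
\item Item \ref{prop::connprelimfacts:8} is \cite[Theorem B]{dybowskigorka}, extended to $m<n-1$ by composing with the embedding described in the footnote.
\end{itemize}

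The main obstacle among the elementary items is \ref{prop::connprelimfacts:2}. There one must check carefully that the pieces $P\cup M$ and $N$ really are separated, i.e.\ that neither meets the closure of the other, rather than merely disjoint.
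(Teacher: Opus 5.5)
Your proposal is correct. The paper itself proves none of these items and simply cites the listed sources, so for items (iv)--(vii) you do exactly what the paper does.

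Your self-contained arguments for items (i)--(iii) are sound additions. In (ii), the pieces $P\cup M$ and $N$ are indeed separated, because $N\subset Q$. In (iii), your argument uses only local connectedness and the closedness of $S_0$, which confirms the paper's footnote that the fact holds in any locally connected space.
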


\begin{cor}\label{cor:connprelimfacts}
\begin{enumerate}[label=(\roman*)]
\item[]
\item Let $X$ be a unicoherent space, $S_0 \subset X$ be a connected subset and $S \subset X$ be a connected component of $X \setminus S_0$. Then, the set $\partial S$ is connected. \label{cor::connprelimfacts:1}
\item Let $X$ be a metric space and $A, B \subset X$ be closed subsets. Let $S \subset A \cup B$ be a~compact and connected subset such that $S \cap A \neq \emptyset \neq S \cap B$, and let $S_A$ be a connected component of $S \cap A$. Then, $S_A \cap B \neq \emptyset$. \label{cor::connprelimfacts:2}
\item Let $X$ be a compact metric space, and $C_1$ and $C_2$ be two distinct connected components of $X$. Then, there exist disjoint compact subsets $B_1, B_2 \subset X$ such that $B_1 \cup B_2 = X$ and $C_1 \subset B_1$ and $C_2 \subset B_2$. \label{cor::connprelimfacts:3}
\end{enumerate}
\end{cor}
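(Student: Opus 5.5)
We prove the three parts of Corollary~\ref{cor:connprelimfacts} separately, each from the facts in Proposition~\ref{prop:connprelimfacts}.

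\emph{Part \ref{cor::connprelimfacts:1}.} Let $X$ be unicoherent, $S_0 \subset X$ connected, and $S$ a connected component of $X \setminus S_0$. Since $X$ is connected (unicoherence includes connectedness), Proposition~\ref{prop:connprelimfacts}~\ref{prop::connprelimfacts:2} shows that $X \setminus S$ is connected. The set $S$ is connected by definition. Proposition~\ref{prop:unicohboundary} then gives that $\partial S$ is connected.

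\emph{Part \ref{cor::connprelimfacts:2}.} We apply Proposition~\ref{prop:connprelimfacts}~\ref{prop::connprelimfacts:5} inside the compact connected metric space $S$, taking $E = S \cap A$.
\begin{itemize}
\item $E$ is nonempty because $S \cap A \neq \emptyset$.
\item If $E = S$, then $S \subset A$, so $S_A = S$, which meets $B$ by hypothesis.
\item Otherwise $E$ is a proper subset of $S$. Then $\overline{S_A} \cap \overline{S \setminus E} \neq \emptyset$, with closures taken in $S$.
\end{itemize}
In the last case, $S \setminus E \subset S \cap B$ because $S \subset A \cup B$. Since $S \cap B$ is closed, $\overline{S \setminus E} \subset B$. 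Also $S_A$ is a component of the closed set $S \cap A$, hence closed, so $\overline{S_A} = S_A$. Therefore $S_A \cap B \neq \emptyset$.

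\emph{Part \ref{cor::connprelimfacts:3}.} Let $X$ be a compact metric space with distinct components $C_1$ and $C_2$.
\begin{itemize}
\item Components are disjoint and closed, so $U = X \setminus C_2$ is an open neighborhood of $C_1$.
\item By Proposition~\ref{prop:connprelimfacts}~\ref{prop::connprelimfacts:6}, there is a clopen set $E$ with $C_1 \subset E \subset U$.
\item Set $B_1 = E$ and $B_2 = X \setminus E$. Both are closed in the compact space $X$, hence compact. They are disjoint and cover $X$.
\item $C_1 \subset B_1$ by construction, and $C_2 \subset B_2$ because $E \cap C_2 = \emptyset$.
\end{itemize}

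The only step needing care is Part \ref{cor::connprelimfacts:2}. There one must handle the case $E = S$ separately and check that the closures are taken relative to $S$.
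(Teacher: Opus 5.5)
Your proposal is correct and follows the paper's proof essentially step by step. Part (i) combines Proposition~\ref{prop:connprelimfacts}~\ref{prop::connprelimfacts:2} with Proposition~\ref{prop:unicohboundary}; part (ii) splits off the case $S \subset A$ and otherwise applies Proposition~\ref{prop:connprelimfacts}~\ref{prop::connprelimfacts:5} inside $S$ with $E = S \cap A$; part (iii) takes the clopen set from Proposition~\ref{prop:connprelimfacts}~\ref{prop::connprelimfacts:6} with $U = X \setminus C_2$, exactly as the paper does.
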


\begin{proof}
\ref{cor::connprelimfacts:1}: It is an immediate consequence of Proposition \ref{prop:unicohboundary} and Proposition \ref{prop:connprelimfacts} \ref{prop::connprelimfacts:2}.

\ref{cor::connprelimfacts:2}: If $S \subset A$, then $S_A = S$, and the result follows trivially. Thus, we may assume that \hbox{$S \not\subset A$}. The set $S \cap A$ is a nonempty proper subset of $S$ so, by virtue of Proposition \ref{prop:connprelimfacts} \ref{prop::connprelimfacts:5}, it follows that
\begin{align*}
    \emptyset \neq S \cap \overline{S_A} \cap \overline{S \setminus \brac{S \cap A}} = S \cap S_A \cap \overline{S \setminus A} \subset S_A \cap B.
\end{align*}

\ref{cor::connprelimfacts:3}: Let $U = X \setminus C_2$. Then, the set $U$ is open and $C_1 \subset U$. Thus, by virtue of Proposition \ref{prop:connprelimfacts}~\ref{prop::connprelimfacts:6}, there exists a clopen subset $E \subset X$ such that $C_1 \subset E \subset U$. It suffices to take $B_1 = E$ and \hbox{$B_2 = X \setminus E$}.
\end{proof}

\subsection{Hausdorff convergence}

For the convenience of the reader we recall some facts regarding the Hausdorff convergence. Let $(X, d)$ be a metric space. By $\mathfrak{C}(X)$ we denote the family of all compact and nonempty subsets of $X$. For $A, B \in \mathfrak{C}(X)$ we define the Hausdorff distance between $A$ and $B$ as follows. Let 
\begin{align*}
d_H(A,B)= \inf \left\{\varepsilon>0 \colon\, A \subset N_X(B, \varepsilon), \, B \subset N_X(A, \varepsilon) \right\}.
\end{align*}
It is well known that $(\mathfrak{C}(X), d_H)$ is a metric space that is compact provided $X$ is compact. By default, the convergence of nonempty compact sets is always understood in the Hausdorff sense. Furthermore, the following observation can be easily deduced from the definition of the Hausdorff distance.

\begin{obs}\label{obs:hausdorff}
Let $(X, d)$ be a metric space and let there be given a sequence of sets $A_n \in \mathfrak{C}(X)$ convergent to some set $A \in \mathfrak{C}(X)$. Then,
\begin{enumerate}[label=(\roman*)]
\item for every sequence of points $a_n \in A_n$ that converges to a point $a$, it follows that $a \in A$; \label{obs::hausdorff:1}
\item for every $a \in A$, there exists a sequence of points $a_n \in A_n$ that converges to $a$; \label{obs::hausdorff:2}
\item every sequence of points $a_n \in A_n$ has a subsequence convergent to some point $a \in A$. \label{obs::hausdorff:3}
\end{enumerate}
\end{obs}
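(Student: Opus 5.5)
The statement is Observation~\ref{obs:hausdorff}, and each of its three items will be checked directly from the definition of $d_H$ together with compactness of the sets involved. Throughout, let $\varepsilon_n := d_H(A_n, A) \to 0$. By the definition of the Hausdorff distance, for every $\delta > \varepsilon_n$ we have $A_n \subset N_X(A, \delta)$ and $A \subset N_X(A_n, \delta)$. Since $A$ and $A_n$ are compact, the infima $\inf_{y \in A} d(x, y)$ are attained, so each point of $A_n$ lies within distance $\varepsilon_n$ of some point of $A$, and vice versa. (Even without attainment, we can work with $2\varepsilon_n + 1/n$, which suffices.)

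For \ref{obs::hausdorff:1}, take $a_n \in A_n$ with $a_n \to a$. For each $n$ choose $b_n \in A$ with $d(a_n, b_n) < \varepsilon_n + 1/n$. Then $d(b_n, a) \le d(b_n, a_n) + d(a_n, a) \to 0$, so $b_n \to a$. Since $A$ is compact, it is closed, and therefore $a \in A$.

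For \ref{obs::hausdorff:2}, fix $a \in A$. For each $n$ choose $a_n \in A_n$ with $d(a_n, a) < \varepsilon_n + 1/n$, which is possible because $A \subset N_X(A_n, \varepsilon_n + 1/n)$. Then $a_n \to a$.

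For \ref{obs::hausdorff:3}, take any $a_n \in A_n$ and choose $b_n \in A$ with $d(a_n, b_n) < \varepsilon_n + 1/n$. By compactness of $A$, some subsequence $b_{n_k}$ converges to a point $a \in A$. Then $d(a_{n_k}, a) \le d(a_{n_k}, b_{n_k}) + d(b_{n_k}, a) \to 0$, so $a_{n_k} \to a$.

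None of these steps is a genuine obstacle. The only subtle point is in \ref{obs::hausdorff:3}: the space $X$ itself need not be compact, so we must not try to extract a convergent subsequence of $(a_n)$ directly in $X$. The fix is to transfer the sequence to the compact set $A$ via the nearby points $b_n$ and extract the convergent subsequence there.
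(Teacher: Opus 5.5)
Your proof is correct and follows the route the paper indicates. The paper writes no proof and only says the three facts follow easily from the definition of $d_H$; you do exactly that, using closedness of $A$ for (i) and compactness of $A$ (not of $X$) for (iii).
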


\begin{tw}[{\cite[Section XI, Exercise 1]{whyburn}}]\label{golab}
    Let $(X, d)$ be a metric space and $A \in \mathfrak{C}(X)$. If there exists a sequence of connected sets $A_n \in \mathfrak{C}(X)$ convergent to $A$, then $A$ is connected.
\end{tw}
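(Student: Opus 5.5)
The plan is to argue by contradiction, using only the definition of the Hausdorff distance and the elementary approximation property of Observation~\ref{obs:hausdorff}~\ref{obs::hausdorff:2}.

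Suppose $A$ is not connected. Then $A = B_1 \cup B_2$, where $B_1, B_2$ are nonempty, disjoint, and closed in $A$. Since $A$ is compact, both $B_1$ and $B_2$ are compact subsets of $X$. Hence the distance
\begin{align*}
\delta := \inf\left\{d(x, y)\colon\, x \in B_1,\ y \in B_2\right\}
\end{align*}
is attained and strictly positive. Consider the open sets $U_1 = N_X(B_1, \delta/3)$ and $U_2 = N_X(B_2, \delta/3)$. They are disjoint by the triangle inequality: a point $z$ in both would give $x\in B_1$, $y \in B_2$ with $d(x,y) < 2\delta/3 < \delta$.

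Next, choose $n_0$ such that $d_H(A_n, A) < \delta/3$ for all $n \ge n_0$. By the definition of $d_H$, for such $n$ we get $A_n \subset N_X(A, \delta/3) = U_1 \cup U_2$. Fix points $b_1 \in B_1$ and $b_2 \in B_2$. By Observation~\ref{obs:hausdorff}~\ref{obs::hausdorff:2}, there are points $a_n \in A_n$ with $a_n \to b_1$ and $a'_n \in A_n$ with $a'_n \to b_2$. Hence, after enlarging $n_0$ if necessary, $A_n \cap U_1 \neq \emptyset$ and $A_n \cap U_2 \neq \emptyset$ for every $n \ge n_0$. Alternatively, the inclusion $B_j \subset A \subset N_X(A_n, \delta/3)$ directly yields a point of $A_n$ within $\delta/3$ of $b_j$.

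Consequently, for $n \ge n_0$ the sets $A_n \cap U_1$ and $A_n \cap U_2$ are nonempty, disjoint, relatively open in $A_n$, and cover $A_n$. This contradicts the connectedness of $A_n$, so $A$ must be connected. No step is a genuine obstacle; the only point requiring care is that $\delta > 0$, which relies on the compactness of $B_1$ and $B_2$, inherited from the compactness of $A$.
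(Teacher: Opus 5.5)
The paper does not prove this statement; it quotes it from Whyburn and uses it as a black box. There is therefore no internal argument to compare yours against.

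Your proof is correct, and it is the standard direct argument. Each step is justified:
\begin{itemize}
\item the gap $\delta>0$ between the two pieces $B_1, B_2$ of a separation of $A$ holds because they are compact, and this is exactly where compactness of $A$ is needed;
\item the inclusion $A_n \subset N_X(A,\delta/3) = U_1 \cup U_2$ follows directly from the paper's definition of $d_H$ via the infimum;
\item $A_n$ meets both $U_1$ and $U_2$, which follows from $A \subset N_X(A_n,\delta/3)$ (or, as you note, from Observation~2.x(ii)).
\end{itemize}

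You never use compactness of the $A_n$, only their connectedness and their Hausdorff-closeness to $A$. So your argument actually proves slightly more than stated.
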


\begin{lem}[{\cite[Exercise 7.3.5]{burago2001course}}]\label{lem:intersectionhausdorff}
Let $(X, d)$ be a compact metric space and $\brac{A_k}_{k=1}^\infty$ be a sequence of nonempty compact subsets of~$X$ such that $A_{k+1} \subset A_k$ for all $k \in \mathbb{N}$. Then, $A_k \xrightarrow{k \to \infty} \bigcap_{k=1}^\infty A_k$.
\end{lem}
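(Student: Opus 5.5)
The statement is Lemma~\ref{lem:intersectionhausdorff}: for a decreasing sequence of nonempty compact sets $A_k$ in a compact metric space, $A_k \to A := \bigcap_{k=1}^\infty A_k$ in the Hausdorff metric. The plan is to verify the two inclusions in the definition of $d_H$ directly.

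First, $A$ is nonempty and compact. It is closed in the compact space $X$, hence compact. It is nonempty because the $A_k$ are nonempty, closed and nested, so they have the finite intersection property in a compact space. Thus $A \in \mathfrak{C}(X)$ and $d_H(A_k, A)$ makes sense.

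Fix $\varepsilon > 0$. One inclusion is free: $A \subset A_k \subset N_X(A_k, \varepsilon)$ for every $k$.

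For the other inclusion we need $A_k \subset N_X(A, \varepsilon)$ for all large $k$. Put $U = N_X(A, \varepsilon)$, which is open. The sets $A_k \setminus U$ are compact, nested and decreasing, and their intersection is $A \setminus U = \emptyset$. By compactness (finite intersection property), some $A_{k_0} \setminus U$ is empty. Since the sequence is decreasing, $A_k \subset U$ for all $k \ge k_0$.

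Combining the two inclusions gives $d_H(A_k, A) \le \varepsilon$ for $k \ge k_0$, so $A_k \to A$. There is no real obstacle. The only step needing care is this second inclusion, where compactness is used through the nested-intersection argument. Equivalently, one could argue by contradiction: pick points $a_k \in A_k$ with $\operatorname{dist}(a_k, A) \ge \varepsilon$, extract a convergent subsequence, and observe that its limit lies in every $A_k$, hence in $A$, which contradicts the distance bound.
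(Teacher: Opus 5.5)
Your proof is correct and complete. The inclusion $A \subset N_X(A_k,\varepsilon)$ is trivial. For the other inclusion, the nested compact sets $A_k \setminus N_X(A,\varepsilon)$ have empty intersection, so by the finite intersection property $A_k \subset N_X(A,\varepsilon)$ for all large $k$. You also correctly check that $A$ is nonempty and compact, so that $d_H(A_k, A)$ is defined.

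The paper does not prove this lemma; it only cites the exercise from Burago--Burago--Ivanov. Your argument is the standard solution to that exercise. In passing, compactness of $X$ is not actually needed: every set involved lies in the compact set $A_1$.
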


\section{Subsets that connect or separate some opposite faces of the unit cube}\label{sec:subconnsep}

In this section, we recall a notion of subsets that connect or separate some opposite faces of $I^n$, and derive their properties, which we apply in Sections \ref{sec:lebesgue} and \ref{sec:functions}. 

\begin{defi}\label{def:subsetsconnsep}
    For $n \in \mathbb{N}$ and $i \in [n]$, we denote a pair of $i$th opposite faces of $I^n$ as 
    \begin{align*}
        I_{i, -}^n = \left\{z \in I^n \colon z_i = 0 \right\},\; I_{i, +}^n = \left\{z \in I^n \colon z_i = 1\right\}.
    \end{align*}
    For $\varepsilon \in \left\{-, +\right\}$, we define $\varepsilon^*$ as the opposite sign to $\varepsilon$. We say that a subset $A \subset I^n$ 
\begin{itemize}
    \item \texttt{connects $i$th opposite faces of $I^n$} if there exists a connected subset $S \subset A$ such that 
    \begin{align*}
        S \cap I_{i, -}^n  \neq \emptyset \neq S \cap I_{i, +}^n;
    \end{align*}
    \item \texttt{separates $i$th opposite faces of $I^n$} if the set $I^n \setminus A$ does not connect $i$th opposite faces of~$I^n$.
\end{itemize}
Moreover, let $\Conn$ denote the family of all subsets of $I^n$ that connect $i$th opposite faces of $I^n$, and let $\Sep$ denote the family of all subsets of $I^n$ that separate $i$th opposite faces of $I^n$. Obviously, if $A_1 \in \Conn$ and $A_2 \in \Sep$, then $A_1 \cap A_2 \neq \emptyset$.
\end{defi}

\begin{obs}\label{obs:Sepandpartitionrelation}
Let $n \in \mathbb{N}, i \in [n]$ and $A \subset I^n$.
\begin{enumerate}[label=(\roman*)]
\item If $A$ is a partition between $I^n_{i, -}$ and $I^n_{i, +}$, then $A \in \Sep$. \label{obs::Sepandpartitionrelation:1}
\item If $A \in \Sep$ is closed and $A \cap I^n_{i, -} = \emptyset = A \cap I^n_{i, +}$, then $A$ is a partition between $I^n_{i, -}$ and $I^n_{i, +}$. \label{obs::Sepandpartitionrelation:2}
\end{enumerate}
\end{obs}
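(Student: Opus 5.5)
For part (i), the plan is to unpack the definition of a partition directly. If $A$ is a partition between $I^n_{i,-}$ and $I^n_{i,+}$, there are disjoint open sets $U, V \subset I^n$ with $I^n_{i,-} \subset U$, $I^n_{i,+} \subset V$ and $I^n \setminus A = U \cup V$. Suppose, for contradiction, that $I^n \setminus A$ connects $i$th opposite faces. Then there is a connected set $S \subset I^n \setminus A$ meeting both faces. The sets $S \cap U$ and $S \cap V$ are disjoint and relatively open in $S$, and they cover $S$. Both are nonempty, because $S$ meets $I^n_{i,-} \subset U$ and $I^n_{i,+} \subset V$. This contradicts the connectedness of $S$, so $A \in \Sep$.

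For part (ii), let $A \in \Sep$ be closed and disjoint from both faces. Then the faces $I^n_{i,-}$ and $I^n_{i,+}$ are nonempty connected subsets of the open set $X \setminus A$, where $X = I^n$ is locally connected. The key step is to show that these two faces lie in distinct connected components of $I^n \setminus A$. If instead some component $C$ of $I^n \setminus A$ contained points of both faces, then $C$ would be a connected subset of $I^n \setminus A$ meeting both faces, so $A \notin \Sep$, a contradiction. Since each face is connected, it lies in a single component. We therefore obtain two distinct components, one containing $I^n_{i,-}$ and the other containing $I^n_{i,+}$.

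We then apply Lemma~\ref{lem:disconnandcomponents} with $X = I^n$, $L = A$ closed, $A_1 = I^n_{i,-}$ and $A_2 = I^n_{i,+}$. It yields directly that $A$ is a partition between $I^n_{i,-}$ and $I^n_{i,+}$.

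There is no real obstacle here. The only point needing care is in (ii): the hypothesis $A \cap I^n_{i,\pm} = \emptyset$ is exactly what ensures the faces are contained in $I^n \setminus A$, and hence that Lemma~\ref{lem:disconnandcomponents} applies. We also use that each face is connected, being a convex $(n-1)$-dimensional cube (a point when $n=1$).
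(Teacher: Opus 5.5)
Your proof is correct and takes the same approach as the paper, which simply says both parts follow immediately from Lemma~\ref{lem:disconnandcomponents}. In part (i) you unpack the trivial direction of that lemma directly from the definition of a partition. In part (ii) you apply its nontrivial direction as intended, after correctly noting that each face, being connected and disjoint from $A$, lies in a single component of $I^n \setminus A$.
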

\begin{proof}
It follows immediately from Lemma \ref{lem:disconnandcomponents}.
\end{proof}

\begin{obs}\label{obs:conncompactconnected}
Let $n \in \mathbb{N}, i \in [n]$, and $A \subset I^n$ be a compact subset such that $A \in \Conn$, and $U \subset I^n$ be an open subset such that $U \in \Conn$. Then,
\begin{enumerate}[label=(\roman*)]
\item there exists a~compact and connected subset $S \subset A$ such that $S \in \Conn$; \label{obs::conncompactconnected:1}
\item there exists a compact and path-connected subset $S \subset U$ such that \hbox{$S \in \textsf{Conn}_i^n$}. \label{obs::conncompactconnected:2}
\end{enumerate}
\end{obs}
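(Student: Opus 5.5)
For \ref{obs::conncompactconnected:1}, I would start from the definition of $A \in \Conn$: it gives a connected set $S_0 \subset A$ with points $a \in S_0 \cap I^n_{i,-}$ and $b \in S_0 \cap I^n_{i,+}$. I would then take $S$ to be the closure $\overline{S_0}$. It is connected as the closure of a connected set. It lies in $A$ because $A$ is compact, hence closed in $I^n$. It is compact as a closed subset of $I^n$. It still contains $a$ and $b$, so $S \in \Conn$. (Equivalently, one can take $S$ to be the connected component of $A$ containing $S_0$; components of a closed set are closed, hence compact.) This part is routine.

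For \ref{obs::conncompactconnected:2}, I would again fix a connected $S_0 \subset U$ and points $a \in S_0 \cap I^n_{i,-}$, $b \in S_0 \cap I^n_{i,+}$. Let $C$ be the connected component of $U$ containing $S_0$. By the basic facts recalled in the preliminaries, every component of an open subset of $I^n$ is open and path-connected. Hence there is a path $\gamma \colon I \to C \subset U$ with $\gamma(0) = a$ and $\gamma(1) = b$. I would take $S = \gamma\bracl{I}$. It is compact and path-connected as a continuous image of $I$. It is contained in $U$ and meets both $I^n_{i,-}$ and $I^n_{i,+}$, so $S \in \Conn$.

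The only point needing care is in \ref{obs::conncompactconnected:2}: the set $S_0$ itself need not be path-connected, and its closure need not lie in $U$. This is why I pass to the open component $C$ of $U$ and use local path-connectedness of $I^n$, rather than working with $S_0$ directly. Beyond that, I expect no real obstacle.
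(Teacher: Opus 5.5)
Your proposal is correct and matches the paper's proof. For (i), the paper takes the connected component of $A$ containing a connecting connected subset, which is your parenthetical alternative; the closure argument works equally well. For (ii), it takes the image of a path joining the two faces inside a component of $U$ that connects them, exactly as you do.
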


\begin{proof}
\ref{obs::conncompactconnected:1}: It is enough to take the connected component of $A$ that contains a connected subset of $A$ that connects $i$th opposite faces of~$I^n$.

\ref{obs::conncompactconnected:2}: Let $U'$ be a connected component of $U$ such that $U' \in \textsf{Conn}_i^n$. The set $U'$ is open, so path-connected. It is sufficient to take the image of any path in $U'$ that joins some points on opposite faces of $I^n$.
\end{proof}

\begin{lem}\label{lem:withoutface}
\vspace{-1pt}
Let $n \in \mathbb{N}, i \in [n], \varepsilon \in \left\{-, +\right\}$.
\begin{enumerate}[label=(\roman*)]
\item If $U$ is an open subset of $I^n$ such that $U \in \Conn$ and $j \neq i$, then $U \setminus I_{j, \varepsilon}^n \in \Conn$. \label{lem:withoutface::1}
\item If $A$ is a closed subset of $I^n$ such that $A \notin \Sep$ and $j \neq i$, then $A \cup I_{j, \varepsilon}^n \notin \Sep$. \label{lem:withoutface::2}
\item If $A$ is a closed subset of $I^n$ such that $A \notin \Conn$, then $A \cup I_{i, \varepsilon}^n \notin \Conn$. \label{lem:withoutface::3}
\item If $U$ is an open subset of $I^n$ such that $U \in \Sep$, then $U \setminus I_{i, \varepsilon}^n \in \Sep$. \label{lem:withoutface::4}
\end{enumerate}
\end{lem}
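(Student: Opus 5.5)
We prove the four items separately. Items \ref{lem:withoutface::1} and \ref{lem:withoutface::3} are the substantive ones. Items \ref{lem:withoutface::2} and \ref{lem:withoutface::4} then follow by passing to complements, since $A\in\Sep$ iff $I^n\setminus A\notin\Conn$.

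For \ref{lem:withoutface::1}, Observation \ref{obs:conncompactconnected} \ref{obs::conncompactconnected:2} gives a path-connected compact $S\subset U$ with $S\in\Conn$; concretely, $S$ is the image of a path $\gamma$ in a component $U'$ of $U$, joining $x\in I^n_{i,-}$ to $y\in I^n_{i,+}$. The task is to push $\gamma$ off the face $I^n_{j,\varepsilon}$ while staying in $U$ and keeping endpoints on the $i$th faces.
\begin{itemize}
\item Since $\gamma[I]$ is compact and $U$ is open in $I^n$, there is $\delta>0$ with $\overline N_{I^n}(\gamma[I],\delta)\subset U$.
\item Say $\varepsilon=+$, and let $h\colon I^n\to I^n$ replace the $j$th coordinate $z_j$ by $\min(z_j,1-\delta/2)$. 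This map moves points by at most $\delta/2$.
\item $h$ preserves the $i$th coordinate because $j\neq i$. Hence $h\circ\gamma$ still joins $I^n_{i,-}$ to $I^n_{i,+}$.
\item $h\circ\gamma$ lies in $U$ by the choice of $\delta$, and it avoids $I^n_{j,+}$.
\end{itemize}
The case $\varepsilon=-$ is symmetric, using $\max(z_j,\delta/2)$.

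For \ref{lem:withoutface::3}, suppose $A\cup I^n_{i,\varepsilon}\in\Conn$. Since this set is compact, Observation \ref{obs:conncompactconnected} \ref{obs::conncompactconnected:1} gives a compact connected $S\subset A\cup I^n_{i,\varepsilon}$ meeting both $i$th faces; take $\varepsilon=-$. We must find a connected subset of $A$ meeting both faces.
\begin{itemize}
\item $S$ meets $I^n_{i,+}$ and $S\cap I^n_{i,-}\neq\emptyset$. If $S\subset I^n_{i,-}$ it could not reach $I^n_{i,+}$, so $S\not\subset I^n_{i,-}$. Hence $S\cap A$ contains points of $I^n_{i,+}$.
\item Let $K$ be the component of $S\cap A$ containing a point of $S\cap I^n_{i,+}$; this point lies in $A$ because it is off $I^n_{i,-}$.
\item Apply Corollary \ref{cor:connprelimfacts} \ref{cor::connprelimfacts:2} with the closed sets $A$ and $B=I^n_{i,-}$. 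It gives $K\cap I^n_{i,-}\neq\emptyset$, hypotheses being that $S\cap A\neq\emptyset$, which holds, and $S\cap B\neq\emptyset$, which holds.
\item So $K\subset A$ is connected and meets both faces, i.e. $A\in\Conn$, a contradiction.
\end{itemize}

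For \ref{lem:withoutface::2}, we have $A\notin\Sep$, so the open set $U=I^n\setminus A$ lies in $\Conn$. By \ref{lem:withoutface::1}, $U\setminus I^n_{j,\varepsilon}=I^n\setminus(A\cup I^n_{j,\varepsilon})\in\Conn$, i.e. $A\cup I^n_{j,\varepsilon}\notin\Sep$.

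For \ref{lem:withoutface::4}, the set $C=I^n\setminus U$ is closed and $C\notin\Conn$. By \ref{lem:withoutface::3}, $C\cup I^n_{i,\varepsilon}\notin\Conn$. This set is exactly $I^n\setminus(U\setminus I^n_{i,\varepsilon})$, so $U\setminus I^n_{i,\varepsilon}\in\Sep$.

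The only delicate point is the retraction in \ref{lem:withoutface::1}, which needs openness of $U$ and compactness of the path; everything else is bookkeeping.
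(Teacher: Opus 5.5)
Your proof is correct. Items (ii)--(iv) coincide with the paper's argument. In (iii) the paper also takes the component of $S\cap A$ through a point of $A$ on the face opposite to $I^n_{i,\varepsilon}$, and applies Corollary \ref{cor:connprelimfacts} \ref{cor::connprelimfacts:2} with $B=I^n_{i,\varepsilon}$. Items (ii) and (iv) are obtained by complementation exactly as you do.

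The genuine difference is in (i). The paper fixes a component $U'$ of $U$ lying in $\Conn$ and proves that $U'\setminus I^n_{j,\varepsilon}$ is itself connected. It does this by applying Lemma \ref{lem:conUcapV} to the open connected sets $I^n\setminus I^n_{j,\varepsilon}$ and $U'$, whose union $X=(I^n\setminus I^n_{j,\varepsilon})\cup U'$ it asserts to be simply connected and locally path-connected. You instead produce a witness directly: you push a path off the face with the truncation $z_j\mapsto\min(z_j,1-\delta/2)$, which fixes the $i$th coordinate and moves points by at most $\delta/2$.

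Your argument is more elementary. It uses only openness of $U$, compactness of the witness and the product structure of $I^n$, and needs neither simple connectivity nor Lemma \ref{lem:conUcapV}. It would also work verbatim for any compact connected witness, not just a path. The paper's argument gives the stronger conclusion that the whole component $U'$ stays connected after deleting the face, and it stays within the paper's unicoherence toolkit. However, it treats the simple connectivity of $X$ as evident; the natural proof of that claim is a truncation homotopy of exactly your kind. One cosmetic point: take $\delta\le 2$ so that $1-\delta/2\in I$.
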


\begin{proof}
\ref{lem:withoutface::1}: Let $U'$ be a connected component of $U$ that connects $i$th opposite faces of $I^n$. Since $U'$ is open, it follows that $(U' \setminus I_{j, \varepsilon}^n) \cap I^n_{i, \varepsilon'} \neq \emptyset$ for both $\varepsilon' \in \left\{-, +\right\}$. It suffices to argue that the set $U' \setminus I_{j, \varepsilon}^n$ is connected.

Let us consider a subset $X = (I^n \setminus I^n_{j, \varepsilon}) \cup U' \subset I^n$, which is open. Since $X$ is an $n$-dimensional unit cube from which some boundary points may have been removed, then $X$ is simply connected. Moreover~$X$ is locally path-connected as an open subset of $I^n$. Therefore, $U' \setminus I_{j, \varepsilon}^n = (I^n \setminus I^n_{j, \varepsilon}) \cap U'$ is connected by virtue of Lemma \ref{lem:conUcapV}, and thus the claim follows.

\ref{lem:withoutface::2}: It suffices to consider $U = I^n \setminus A$, and apply \ref{lem:withoutface::1}.

\ref{lem:withoutface::3}: Let us suppose that $A \cup I^n_{i, \varepsilon} \in \Conn$, and, from Observation \ref{obs:conncompactconnected} \ref{obs::conncompactconnected:1}, let $S \subset A \cup I^n_{i, \varepsilon}$ be a~compact and connected subset such that $S \in \Conn$. Thus,
\begin{align*}
    S \cap I^n_{i, \varepsilon} \neq \emptyset,\; S \cap A \cap I^n_{i, \varepsilon^*} \neq \emptyset.
\end{align*}
Let $x \in S \cap A \cap I^n_{i, \varepsilon^*}$, and $S_A$ be the connected component of $S \cap A$ such that $x \in S_A$. By virtue of Corollary \ref{cor:connprelimfacts} \ref{cor::connprelimfacts:2} for $B = I^n_{i, \varepsilon}$, it follows that $S_A \cap I^n_{i, \varepsilon} \neq \emptyset$. Therefore $S_A \in \Conn$. Since $S_A \subset A$, then $A \in \Conn$, which completes the proof.

\ref{lem:withoutface::4}: It suffices to consider $A = I^n \setminus U$, and apply \ref{lem:withoutface::3}.
\end{proof}

\begin{prop}\label{prop:convergenceSk}
Let $n \in \mathbb{N}, i \in [n]$ and $\brac{A_k}_{k=1}^\infty$ be a sequence of compact subsets of $I^n$ convergent to a~compact subset $A \subset I^n$.
%\vspace{-13pt}
\begin{enumerate}[label=(\roman*)]
\item If $A_k \in \Conn$ for all $k \in \mathbb{N}$, then $A \in \Conn$. \label{prop::convergenceSk:1}
\item If $A_k \in \Sep$ for all $k \in \mathbb{N}$, then $A \in \Sep$. \label{prop::convergenceSk:2}
\end{enumerate}
\end{prop}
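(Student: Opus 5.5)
For \ref{prop::convergenceSk:1}, I will use Observation \ref{obs:conncompactconnected} \ref{obs::conncompactconnected:1} to pick, for each $k$, a compact connected $S_k \subset A_k$ with $S_k \in \Conn$. By compactness of $(\mathfrak{C}(I^n), d_H)$, a subsequence $S_{k_l}$ converges to some compact $S$. This $S$ is connected by Theorem \ref{golab}. It is contained in $A$: every point of $S$ is a limit of points $s_l \in S_{k_l} \subset A_{k_l}$ by Observation \ref{obs:hausdorff} \ref{obs::hausdorff:2}, and such limits lie in $A$ by Observation \ref{obs:hausdorff} \ref{obs::hausdorff:1}. To see that $S$ meets both faces, choose $x_l \in S_{k_l} \cap I^n_{i,\varepsilon}$; by Observation \ref{obs:hausdorff} \ref{obs::hausdorff:3} a further subsequence converges to some $x \in S$, and $x \in I^n_{i,\varepsilon}$ because the face is closed. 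Hence $S \in \Conn$, and so $A \in \Conn$.

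For \ref{prop::convergenceSk:2}, I argue by contradiction. Suppose $A \notin \Sep$, so the open set $U = I^n \setminus A$ lies in $\Conn$. By Observation \ref{obs:conncompactconnected} \ref{obs::conncompactconnected:2} there is a compact path-connected $S \subset U$ with $S \in \Conn$. Since $S$ and $A$ are disjoint compact sets, $\delta := \operatorname{dist}(S, A) > 0$. For $k$ large we have $d_H(A_k, A) < \delta/2$, so $A_k \subset N_{I^n}(A, \delta/2)$, which is disjoint from $S$. Then $S \subset I^n \setminus A_k$, so $I^n \setminus A_k \in \Conn$, contradicting $A_k \in \Sep$.

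Both parts are routine. The only real care is in \ref{prop::convergenceSk:1}: one must pass to a connected subset $S_k$ before taking the limit. The Hausdorff limit of the full sets $A_k$ need not be connected, so the connected witnesses are what let Theorem \ref{golab} apply. Part \ref{prop::convergenceSk:2} is easier, since the positive distance between the compact witness $S$ and $A$ does the work. The degenerate case where some $A_k$ is empty cannot occur in \ref{prop::convergenceSk:1}, because each $A_k \in \Conn$ is nonempty. In \ref{prop::convergenceSk:2}, $A$ is assumed to be a compact set in $\mathfrak{C}(I^n)$ as a Hausdorff limit, so it is nonempty.
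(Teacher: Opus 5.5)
Your proposal is correct and follows the paper's proof; part (i) is essentially verbatim. Part (ii) is the same contradiction argument, with one minor streamlining: you read off $A_k \subset N_{I^n}(A,\delta/2)$ for large $k$ directly from the definition of $d_H$, whereas the paper shows that some $A_{k_0}$ misses a neighborhood of the path by a subsequence argument.
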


\begin{proof}
\ref{prop::convergenceSk:1}: By virtue of Observation \ref{obs:conncompactconnected} \ref{obs::conncompactconnected:1}, let $\brac{S_k}_{k=1}^\infty$ be a sequence of compact and connected subsets of $I^n$ such that $S_k \subset A_k$ and $S_k \in \Conn$. This sequence has a~subsequence, denoted in the same way $\brac{S_{k}}_{k=1}^\infty$, convergent to a compact subset $S \subset I^n$. From Theorem \ref{golab}, the set $S$ is connected. Moreover, $S \subset A$ from Observation \ref{obs:hausdorff} \ref{obs::hausdorff:1} and \ref{obs::hausdorff:2}. It suffices to show that $S \in \Conn$.

 Let $\varepsilon \in \left\{-, +\right\}$. We can take $x_k \in S_k \cap I^n_{i, \varepsilon}$ for every $k \in \mathbb{N}$. From Observation \ref{obs:hausdorff} \ref{obs::hausdorff:3}, the sequence $\brac{x_k}_{k=1}^\infty$ has a~subsequence $\brac{x_{k_l}}_{l=1}^\infty$ convergent to some point $x \in S$. Obviously, $x \in I^n_{i, \varepsilon}$ since the set $I^n_{i, \varepsilon}$ is closed. Hence $S \cap I^n_{i, \varepsilon} \neq \emptyset$.

\ref{prop::convergenceSk:2}: Let us suppose that $A \notin \Sep$, and let $C$ be a connected component of $I^n \setminus A$ such that $C \in \Conn$. Obviously, the set $C$ is path-connected. Let $\gamma \colon I \to C$ be a path that joins certain two points of $i$th opposite faces of $I^n$. The set $\gamma\bracl{I}$ is compact and $\gamma\bracl{I} \cap A = \emptyset$, so there exists $\varepsilon > 0$ such that 
\begin{align*}
    N_{I^n}\brac{\gamma\bracl{I}, \varepsilon} \cap A = \emptyset.
\end{align*}
Since $\gamma\bracl{I}$ is connected and connects $i$th opposite faces of $I^n$, then $N_{I^n}\brac{\gamma\bracl{I}, \varepsilon/2}$ does so as well. Hence, to complete the proof, it suffices to show that 
\begin{align}\label{prop:convergenceSk:prop1}
    N_{I^n}\brac{\gamma\bracl{I}, \varepsilon/2} \cap A_{k_0} = \emptyset
\end{align}
for some $k_0 \in \mathbb{N}$, which contradicts the fact that $A_{k_0}$ separates $i$th opposite faces of $I^n$.

To show (\ref{prop:convergenceSk:prop1}), let us suppose that 
\begin{align*}
N_{I^n}\brac{\gamma\bracl{I}, \varepsilon/2} \cap A_{k} \neq \emptyset
\end{align*}
for every $k \in \mathbb{N}$. For each $k \in \mathbb{N}$, let $x_k \in A_k$ and $y_k \in \gamma\bracl{I}$ be such that $\norm{x_k - y_k} < \varepsilon/2$. The sequences $\brac{x_k}_{k=1}^\infty$ and $\brac{y_k}_{k=1}^\infty$ have subsequences $\brac{x_{k_l}}_{l=1}^\infty$ and $\brac{y_{k_l}}_{l=1}^\infty$ convergent to some $x \in A$ and $y \in \gamma\bracl{I}$, respectively. Obviously, $\norm{x - y} \le \varepsilon/2 < \varepsilon$, so $x \in N_{I^n}\brac{\gamma\bracl{I}, \varepsilon} \cap A$, which results in a~contradiction.
\end{proof}

\begin{cor}\label{cor:openneigh}
Let $n \in \mathbb{N}, i \in [n]$ and $A \subset I^n$ be a closed subset that does not connect (resp. does not separate) $i$th opposite faces of $I^n$. Then, there exists an open neighborhood of $A$ that does not connect (resp. does not separate) $i$th opposite faces of $I^n$ as well.
\end{cor}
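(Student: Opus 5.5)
The corollary follows from Proposition \ref{prop:convergenceSk} by a contradiction argument with closed neighborhoods. Suppose $A$ is closed, hence compact, and $A \notin \Conn$ (resp. $A \notin \Sep$), but every open neighborhood of $A$ connects (resp. separates) $i$th opposite faces of $I^n$. If $A = \emptyset$, the neighborhood $\emptyset$ works trivially, so assume $A \neq \emptyset$.

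For $k \in \mathbb{N}$ set $U_k = N_{I^n}(A, 1/k)$ and $A_k = \overline{N}_{I^n}(A, 1/(2k))$. Each $A_k$ is compact, since $A$ is compact, and $A \subset A_k \subset U_k$. The sequence $(A_k)$ is decreasing and $\bigcap_k A_k = A$, because $A$ is closed. Lemma \ref{lem:intersectionhausdorff} then gives $A_k \to A$ in the Hausdorff sense. If one prefers to avoid the lemma, one can check directly that $d_H(A_k, A) \le 1/(2k)$.

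Next we transfer the assumed property from $U_k$ to $A_k$. In the Conn case, $U_k \in \Conn$ by assumption, but we need $A_k \in \Conn$. Since $U_{2k} = N_{I^n}(A,1/(2k)) \subset A_k$ and $U_{2k} \in \Conn$, monotonicity of $\Conn$ under enlargement gives $A_k \in \Conn$. In the Sep case, $\Sep$ is monotone under enlargement as well: if $B \subset B'$ and $B \in \Sep$, then $I^n\setminus B' \subset I^n \setminus B$, so $I^n \setminus B'$ cannot connect the faces either. Hence $U_{2k} \in \Sep$ gives $A_k \in \Sep$.

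Proposition \ref{prop:convergenceSk} \ref{prop::convergenceSk:1} (resp. \ref{prop::convergenceSk:2}) now yields $A \in \Conn$ (resp. $A \in \Sep$), contradicting the hypothesis. So some $U_k$ does not connect (resp. does not separate) $i$th opposite faces, and it is the required open neighborhood.

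The only points needing care are that the closed neighborhoods $A_k$ are compact (true because $A$ is compact, as noted in the preliminaries), that they sandwich the open neighborhoods $U_{2k}$, and that both $\Conn$ and $\Sep$ are upward closed. Given these, the Hausdorff-limit proposition does all the work.
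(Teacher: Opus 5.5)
Your proof is correct and is essentially the paper's argument. Assume every open neighborhood of $A$ connects (resp. separates). Then the compact closed neighborhoods $\overline{N}_{I^n}(A,r)$ inherit this property by upward closure, they converge to $A$ by Lemma \ref{lem:intersectionhausdorff}, and Proposition \ref{prop:convergenceSk} gives the contradiction. The differences are cosmetic. Your $1/(2k)$ versus $1/k$ indexing is unnecessary, since $\overline{N}_{I^n}(A,1/k)$ already contains $N_{I^n}(A,1/k)$. Your explicit treatment of $A=\emptyset$ is a small point the paper leaves implicit.
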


\begin{proof}
Let us assume, for the sake of contradiction, that every open neighborhood of $A$ connects (resp. separates) $i$th opposite faces of $I^n$. Then, obviously, for every $k \in \mathbb{N}$, the compact set \hbox{$A_k :=\overline{N}_{I^n}\brac{A, 1/k}$} connects (resp. separates) $i$th opposite faces of~$I^n$. By virtue of Lemma \ref{lem:intersectionhausdorff}, it follows that \hbox{$A_k \xrightarrow{k \to \infty} A$}, so the set $A$ connects (resp. separates) $i$th opposite faces of $I^n$ from Proposition \ref{prop:convergenceSk} \ref{prop::convergenceSk:1} (resp. Proposition~\ref{prop:convergenceSk}~\ref{prop::convergenceSk:2}). This leads to a~contradiction.
\end{proof}

\begin{prop}\label{prop:severalresultsonsep}
Let $n \in \mathbb{N}$ and $i \in [n]$.
\begin{enumerate}[label=(\roman*)]
\item If $A \subset I^n$ is a compact subset such that $A \in \Sep$, then for every open neighborhood $V$ of $A$, there exists a compact subset $B \subset V$ such that $B \in \Sep$ and $B \cap I^n_{i, -} = \emptyset = B \cap I^n_{i, +}$. \label{prop::severalresultsonsep:2}
\item If $A \subset I^n$ is a compact subset such that $A \in \Sep$, then there exists a sequence $\brac{A_k}_{k=1}^\infty$ of compact subsets of $I^n$ such that $A_k \in \Sep$ and $A_k \cap I^n_{i, -} = \emptyset = A_k \cap I^n_{i, +}$ for all $k \in \mathbb{N}$, and convergent to a compact subset $B \subset A$ such that $B \in \Sep$. \label{prop::severalresultsonsep:3}
\item If $A \subset I^n$ is a compact subset such that $A \in \Sep$, then there exists a~compact and connected subset $S \subset A$ such that $S \in \Sep$. \label{prop::severalresultsonsep:5}
\item If $U \subset I^n$ is an open subset such that $U \in \Sep$, then there exists a compact and path-connected subset $S \subset U$ such that $S \in \textsf{Sep}_i^n$. \label{prop::severalresultsonsep:6}
\end{enumerate}
\end{prop}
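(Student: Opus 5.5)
I will prove the four items in order, each building on the previous one.

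\emph{Item (i).} Let $A$ be compact with $A \in \Sep$ and let $V$ be an open neighbourhood of $A$. Choose $\delta \in (0, 1/2)$ so small that $\overline{N}_{I^n}(A, \delta) \subset V$, and put $U := I^n \setminus A$. Consider the slabs
\begin{align*}
P_- = \left\{z \in I^n \colon z_i \le \delta/2\right\}, \quad P_+ = \left\{z \in I^n \colon z_i \ge 1-\delta/2\right\}.
\end{align*}
Let $W_\pm$ be the union of all components of $U \cup P_\pm$ that meet $I^n_{i,\pm}$. Since $P_\pm$ is connected and $U \cup P_\pm$ is open in $I^n$, each $W_\pm$ is open and connected. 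I expect $W_- \cap W_+ = \emptyset$. If they met, there would be a connected set in $U \cup P_- \cup P_+$ joining the two faces. By an argument like Lemma~\ref{lem:withoutface}~\ref{lem:withoutface::3}, applied through Corollary~\ref{cor:connprelimfacts}~\ref{cor::connprelimfacts:2} with a compact connected piece, a component of its trace on $U$ together with $P_\pm$ should then give a set in $U$ joining the faces, since the points of $P_\pm \cap A$ lie within distance $\delta$ of $A$. This bookkeeping is exactly where the care is needed.

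A cleaner route is to take $W_- :=$ the component of $I^n \setminus \overline{N}(A,\delta/2)$-avoiding set $\ldots$, but I would instead define $B$ directly. Let $B := \partial W_- \setminus$ (faces), or more concretely
\begin{align*}
B := I^n \setminus (W_- \cup W_+') ,
\end{align*}
intersected appropriately with the complement of the slabs. The requirements are that $B$ be a partition between the two faces lying inside $V$, so that Observation~\ref{obs:Sepandpartitionrelation}~\ref{obs::Sepandpartitionrelation:1} gives $B \in \Sep$. I expect this step, getting $B$ disjoint from both faces while keeping it in $V$, to be the main obstacle.

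The concrete fallback is a squeeze map. Let $h_t \colon I^n \to I^n$ compress the $i$th coordinate linearly from $[0,1]$ onto $[t, 1-t]$. Take
\begin{align*}
B = h_t^{-1}[A] \cap \left\{t \le z_i \le 1-t\right\},
\end{align*}
or rather $B = h_t[A]$. For small $t$ this set is within $2t < \delta$ of $A$, so it lies in $V$, and it avoids both faces. It separates because $I^n \setminus h_t[A]$ connecting the faces would, after restricting to the middle slab and pulling back by $h_t$, give a set in $I^n \setminus A$ connecting the faces. The two end slabs are connected and attached to the faces, and the restriction is justified via Corollary~\ref{cor:connprelimfacts}~\ref{cor::connprelimfacts:2}. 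This squeeze argument is the version I would actually write.

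\emph{Item (ii).} Apply (i) with $V = N_{I^n}(A, 1/k)$ to get compact sets $A_k \in \Sep$, each avoiding both faces. Pass to a Hausdorff-convergent subsequence with limit $B$. Then $B \subset A$ by Observation~\ref{obs:hausdorff}, and $B \in \Sep$ by Proposition~\ref{prop:convergenceSk}~\ref{prop::convergenceSk:2}.

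\emph{Item (iii).} Using (ii), reduce to sets $A_k \in \Sep$ avoiding the faces, converging to some $B \subset A$. By Observation~\ref{obs:Sepandpartitionrelation}~\ref{obs::Sepandpartitionrelation:2}, each $A_k$ is a partition between the faces, so the faces lie in distinct components of $I^n \setminus A_k$. Corollary~\ref{cor:compA}, using that $I^n$ is unicoherent, gives a component $S_k$ of $A_k$ that still separates them; $S_k$ is compact and connected, and $S_k \in \Sep$. Pass to a convergent subsequence $S_k \to S$. Then $S$ is connected by Theorem~\ref{golab}, $S \in \Sep$ by Proposition~\ref{prop:convergenceSk}~\ref{prop::convergenceSk:2}, and $S \subset B \subset A$.

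\emph{Item (iv).} The set $I^n\setminus U$ is compact and, by definition of $U \in \Sep$, does not connect the $i$th faces. Apply Corollary~\ref{cor:openneigh} to get an open neighbourhood $O$ of $I^n\setminus U$ that still does not connect them. Then $K := I^n \setminus O$ is a compact subset of $U$, and $K \in \Sep$ because its complement $O$ does not connect the faces. By (i) and (iii), after possibly shrinking into a smaller neighbourhood, we may take $K$ compact, connected and in $\Sep$ inside $U$.

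To obtain path-connectedness, cover $K$ by finitely many closed balls $\overline{B}(x, r) \subset U$ centred at points of $K$. Their union $S$ is compact. It is path-connected because $K$ is connected: balls meeting a common connected set chain together, and each ball is path-connected. Finally $S \supset K$, so $I^n \setminus S$ does not connect the faces, giving $S \in \Sep$.
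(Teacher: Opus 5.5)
Your proof is correct. Items (ii)--(iv) follow the paper's proof essentially verbatim; in (iv) you take a finite union of closed balls centred on the connected separator where the paper takes $\overline{N}_{I^n}(S_0,\varepsilon/2)$, which is the same idea. Item (i), however, is done by a genuinely different route.

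The paper derives (i) from two general lemmas, applied to $V\setminus(I^n_{i,-}\cup I^n_{i,+})$:
\begin{itemize}
\item removing the $i$th faces from an open set in $\Sep$ keeps it in $\Sep$ (Lemma \ref{lem:withoutface} \ref{lem:withoutface::4}, proved via Corollary \ref{cor:connprelimfacts} \ref{cor::connprelimfacts:2});
\item every open set in $\Sep$ contains a compact set in $\Sep$ (Lemma \ref{lem:topropseveralresultsonsep} \ref{lem::topropseveralresultsonsep:1}, proved via the Hausdorff-limit argument of Corollary \ref{cor:openneigh}).
\end{itemize}

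Your squeeze $B=h_t[A]$ instead uses the product structure of $I^n$. The map $h_t$ is a homeomorphism onto the slab $\{t\le z_i\le 1-t\}$ that moves points by at most $t$. A connected crossing of $I^n\setminus h_t[A]$ contains a connected crossing of that slab, and $h_t^{-1}$ carries it to a crossing of $I^n\setminus A$. This is more elementary and yields more: $B$ is homeomorphic to $A$ and $d_H(h_t[A],A)\le t$. So in (ii) you could simply take $A_k=h_{1/k}[A]$ and get $B=A$, with no subsequence. The paper's route, by contrast, produces lemmas reused later (in (iv) and in Proposition \ref{prop:intofsepisconn}), and you need Corollary \ref{cor:openneigh} in (iv) anyway.

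When you write up (i), keep only the squeeze argument. Delete:
\begin{itemize}
\item the two abandoned sketches (the slabs $W_\pm$ and the garbled ``cleaner route'');
\item the incorrect first formula involving $h_t^{-1}[A]$.
\end{itemize}
Also make one step explicit. Corollary \ref{cor:connprelimfacts} \ref{cor::connprelimfacts:2} needs a compact connected set, while the definition of connecting only provides an arbitrary connected one. Since $h_t[A]$ is compact, $I^n\setminus h_t[A]$ is open. So by Observation \ref{obs:conncompactconnected} \ref{obs::conncompactconnected:2} you may take the crossing set compact (even a path image). Then apply the corollary twice, once with the closed sets $\{z_i\ge t\}$, $\{z_i\le t\}$ and once with $\{z_i\le 1-t\}$, $\{z_i\ge 1-t\}$. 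This extracts a connected piece inside the middle slab meeting both $\{z_i=t\}$ and $\{z_i=1-t\}$.
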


To prove Proposition~\ref{prop:severalresultsonsep}, we first establish the following auxiliary lemma.

\begin{lem}\label{lem:topropseveralresultsonsep}
Let $n \in \mathbb{N}$ and $i \in [n]$.
\begin{enumerate}[label=(\roman*)]
\item If $U \subset I^n$ is an open subset such that $U \in \Sep$, then there exists a compact subset $B \subset U$ such that $B \in \textsf{Sep}_i^n$. \label{lem::topropseveralresultsonsep:1}
\item If $A \subset I^n$ is a compact subset such that $A \in \Sep$ and $A \cap I^n_{i, -} = \emptyset = A \cap I^n_{i, +}$, then there exists a~compact and connected subset $S \subset A$ such that $S \in \Sep$. \label{lem::topropseveralresultsonsep:2}
\end{enumerate}
\end{lem}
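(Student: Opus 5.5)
For (i), I will use a path-covering argument. Since $U \in \Sep$, the closed set $F := I^n \setminus U$ does not connect $i$th opposite faces. By Corollary~\ref{cor:openneigh}, there is an open neighborhood $W$ of $F$ with $W \notin \Conn$. Replacing $W$ by a smaller one if needed, I may take $\overline{W}$ compact and contained in a slightly larger open neighborhood $W'$ of $F$ with $W' \notin \Conn$. For instance, apply Corollary~\ref{cor:openneigh} once to get $W'$. By compactness of $F$, pick $r>0$ with $\overline{N}_{I^n}(F,2r)\subset W'$, and set $W=N_{I^n}(F,r)$.

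Put $B := I^n \setminus W$. It is compact, and $B\subset I^n\setminus F = U$. Then $I^n\setminus B = W \subset W'$, and $W'$ does not connect $i$th opposite faces, so neither does any subset of it. Hence $B\in\Sep$, which proves (i). In fact only a single application of Corollary~\ref{cor:openneigh} is needed: take $W$ the neighborhood it provides and $B=I^n\setminus W$.

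For (ii), let $A$ be compact, $A\in\Sep$, and disjoint from both $i$th faces. By Observation~\ref{obs:Sepandpartitionrelation}~\ref{obs::Sepandpartitionrelation:2}, $A$ is a partition between $I^n_{i,-}$ and $I^n_{i,+}$. These faces are connected and nonempty, so they lie in distinct components $C_1\supset I^n_{i,-}$ and $C_2\supset I^n_{i,+}$ of $I^n\setminus A$. The space $I^n$ is locally connected and unicoherent, so Corollary~\ref{cor:compA} yields a connected component $S$ of $A$ such that $C_1$ and $C_2$ lie in distinct components of $I^n\setminus S$. Then $S$ is compact as a component of a compact set, and it is connected. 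Suppose some connected $T\subset I^n\setminus S$ met both faces. Then $T\cup C_1\cup C_2$ would be connected and contained in $I^n\setminus S$, which contradicts the choice of $S$. Hence $S\in\Sep$.

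The only delicate point is in (ii): the components $C_1$, $C_2$ must genuinely contain the whole faces. This is exactly why the hypothesis that $A$ avoids the faces is needed, and it is guaranteed by the partition property together with Definition~\ref{defi:partition}. Part (i) is essentially immediate once Corollary~\ref{cor:openneigh} is applied to the closed complement.
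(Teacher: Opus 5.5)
Your proof is correct and follows essentially the same route as the paper. For (i), the paper applies Corollary~\ref{cor:openneigh} once to $I^n\setminus U$ and takes the complement of the resulting neighborhood, which is exactly your final one-step version; the $W'$, $r$ detour is unnecessary, as you note yourself. For (ii), the paper applies Corollary~\ref{cor:compA} to the components of $I^n\setminus A$ containing the two faces, as you do; you spell out the compactness of $S$ and the final check that $S$ separates, which the paper leaves implicit.
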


\begin{proof}
\ref{lem::topropseveralresultsonsep:1}: Let $F = I^n \setminus U$. Then, the set $F$ is compact and $F \notin \textsf{Conn}_i^n$. By virtue of Corollary \ref{cor:openneigh}, there exists an open neighborhood $V$ of $F$ such that $V \notin \textsf{Conn}_i^n$. It suffices to take $B = I^n \setminus V$.

\ref{lem::topropseveralresultsonsep:2}: Since $A \in \Sep$ and $A \cap I^n_{i, -} = \emptyset = A \cap I^n_{i, +}$, then the sets $I^n_{i, -}$ and $I^n_{i, +}$ are contained in distinct connected components of $I^n \setminus A$. Hence, by virtue of Corollary \ref{cor:compA}, there exists a connected component $S$ of $A$ such that the sets $I^n_{i, -}$ and $I^n_{i, +}$ are contained in distinct connected components of $I^n \setminus S$. Thus, $S \in \Sep$.
\end{proof}

\begin{proof}[Proof of Proposition \ref{prop:severalresultsonsep}]
\ref{prop::severalresultsonsep:2}: Let $V$ be an open neighborhood of the set $A$. By virtue of Lemma~\ref{lem:withoutface}~\ref{lem:withoutface::4}, it follows that 
\begin{align*}
    W := V \setminus (I^n_{i, -} \cup I^n_{i, +}) \in \Sep.
\end{align*}
 From Lemma \ref{lem:topropseveralresultsonsep} \ref{lem::topropseveralresultsonsep:1}, there exists a compact subset $B \subset W \subset V$ such that $B \in \Sep$. Since we have \hbox{$W \cap I^n_{i, -} = \emptyset = W \cap I^n_{i, +}$}, then $B \cap I^n_{i, -} = \emptyset = B \cap I^n_{i, +}$.

\ref{prop::severalresultsonsep:3}: For $k \in \mathbb{N}$, let $V_k = N_{I^n}\brac{A, 1/k}$. From \ref{prop::severalresultsonsep:2}, for $k \in \mathbb{N}$, let $A_k \subset V_k$ be a compact subset such that $A_k \in \Sep$ and $A_k \cap I^n_{i, -} = \emptyset = A_k \cap I^n_{i, +}$. There exist a compact subset $B \subset I^n$ and a~subsequence $\brac{A_{k_l}}_{l=1}^\infty$ of a sequence $\brac{A_k}_{k=1}^\infty$ such that $A_{k_l} \xrightarrow{l \to\infty} B$. From Proposition \ref{prop:convergenceSk} \ref{prop::convergenceSk:2}, it follows that $B \in \Sep$. Moreover, since $A_{k_l} \subset V_{k_l}$ for all $l \in \mathbb{N}$, then $B \subset A$ from Observation \ref{obs:hausdorff} \ref{obs::hausdorff:2}.

\ref{prop::severalresultsonsep:5}: From \ref{prop::severalresultsonsep:3}, let $B \subset I^n$ be a compact subset such that $B \in \Sep$, and let $\brac{A_k}_{k=1}^\infty$ be a sequence of compact subsets of $I^n$ convergent to $B$ such that $A_k \in \Sep$ and $A_k \cap I^n_{i, -} = \emptyset = A_k \cap I^n_{i, +}$ for all $k \in \mathbb{N}$, and $B \subset A$. From Lemma \ref{lem:topropseveralresultsonsep} \ref{lem::topropseveralresultsonsep:2}, there exists a sequence $\brac{S_k}_{k=1}^\infty$ of compact and connected subsets of $I^n$ such that $S_k \subset A_k$ and $S_k \in \Sep$ for all $k \in \mathbb{N}$. There exist a compact subset $S \subset I^n$ and a~subsequence $\brac{S_{k_l}}_{l=1}^\infty$ of a sequence $\brac{S_k}_{k=1}^\infty$ such that $S_{k_l} \xrightarrow{l \to\infty} S$. From Proposition \ref{prop:convergenceSk} \ref{prop::convergenceSk:2} and Theorem \ref{golab}, it follows that $S \in \Sep$ and $S$ is connected. Since $S_{k_l} \subset A_{k_l}$ for all $l \in \mathbb{N}$, then $S \subset B \subset A$ from Observation \ref{obs:hausdorff} \ref{obs::hausdorff:1} and \ref{obs::hausdorff:2}.

\ref{prop::severalresultsonsep:6}: From Lemma \ref{lem:topropseveralresultsonsep} \ref{lem::topropseveralresultsonsep:1}, let $B \subset U$ be a compact subset such that $B \in \Sep$. From \ref{prop::severalresultsonsep:5}, there exists a compact and connected subset $S_0 \subset B$ such that $S_0 \in \textsf{Sep}_i^n$. Let $\varepsilon>0$ be such that $N_{I^n}(S_0, \varepsilon) \subset U$. Then, the compact set 
\begin{align*}
    S := \overline{N}_{I^n}(S_0, \varepsilon/2) \subset N_{I^n}(S_0, \varepsilon) \subset U
\end{align*}
is path-connected.
\end{proof}

\begin{rem}
Let us observe that if $L$ is a partition between $I^n_{i, -}$ and $I^n_{i, +}$, then there exists a connected partition $S \subset L$ between $I^n_{i, -}$ and $I^n_{i, +}$. Indeed, it follows that $L \in \Sep$ from Observation \ref{obs:Sepandpartitionrelation} \ref{obs::Sepandpartitionrelation:1}. Hence, by Proposition \ref{prop:severalresultsonsep} \ref{prop::severalresultsonsep:5}, there exists a compact and connected subset $S \subset L$ such that $S \in \Sep$. Consequently, from Observation \ref{obs:Sepandpartitionrelation} \ref{obs::Sepandpartitionrelation:2}, the set $S$ is a partition between $I^n_{i, -}$ and $I^n_{i, +}$.

We can draw an even more general conclusion. Namely, if $X$ is a locally connected and unicoherent space, $A$ and $B$ are nonempty connected subsets of $X$, and $L$ is a partition between $A$ and $B$, then there exists a connected partition $S \subset L$ between $A$ and $B$. Indeed, the sets $A$ and $B$ are contained in distinct connected components of $X \setminus L$. By virtue of Corollary \ref{cor:compA}, there exists a~connected component $S$ of $L$ such that the sets $A$ and $B$ are contained in distinct connected components of $X \setminus S$ as well. Therefore, $S$ is a partition between $A$ and $B$ by virtue of Lemma \ref{lem:disconnandcomponents}.

However, in general, if $L$ is a partition between two compact and connected sets $A$ and $B$, it does not necessarily follow that there exists a connected partition $S \subset L$ between these sets. Indeed, it suffices to consider
\begin{align*}
    X = S^1,\; A = \left\{(0, 1)\right\},\; B = \left\{(0, -1)\right\},\; L = \left\{(-1, 0), (1, 0)\right\}.
\end{align*}
\end{rem}

\begin{prop}\label{prop:sumofnonsep}
\vspace{-1pt}
Let $n \in \mathbb{N}, i \in [n]$, and $A_1, A_2 \subset I^n$ be mutually disjoint closed subsets, and $U_1, U_2 \subset I^n$ be open subsets such that $U_1 \cup U_2 = I^n$. 
\begin{enumerate}[label=(\roman*)]
\item If $A_1, A_2 \notin \textsf{Sep}^n_i$, then $A_1 \cup A_2 \notin \textsf{Sep}^n_i$. \label{prop:sumofnonsep::1}
\item If $A_1, A_2 \notin \textsf{Conn}^n_i$, then $A_1 \cup A_2 \notin \textsf{Conn}^n_i$. \label{prop:sumofnonsep::3}
\item If $U_1, U_2 \in \textsf{Conn}^n_i$, then $U_1 \cap U_2 \in \textsf{Conn}^n_i$. \label{prop:sumofnonsep::2}
\item If $U_1, U_2 \in \textsf{Sep}^n_i$, then $U_1 \cap U_2 \in \textsf{Sep}^n_i$. \label{prop:sumofnonsep::4}
\end{enumerate}
\end{prop}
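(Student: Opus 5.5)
Items \ref{prop:sumofnonsep::2} and \ref{prop:sumofnonsep::4} are the complementary forms of \ref{prop:sumofnonsep::1} and \ref{prop:sumofnonsep::3}. Setting $A_j = I^n \setminus U_j$ gives closed sets, and they are disjoint because $U_1 \cup U_2 = I^n$. Moreover, $U \in \Conn$ if and only if $I^n \setminus U \notin \Sep$, and $U \in \Sep$ if and only if $I^n\setminus U \notin \Conn$, by the definition of separation. Hence \ref{prop:sumofnonsep::2} follows from \ref{prop:sumofnonsep::1}, since $I^n\setminus(U_1\cap U_2) = A_1\cup A_2$, and \ref{prop:sumofnonsep::4} follows from \ref{prop:sumofnonsep::3} in the same way. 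So it remains to prove \ref{prop:sumofnonsep::1} and \ref{prop:sumofnonsep::3}.

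For \ref{prop:sumofnonsep::3}, suppose $A_1 \cup A_2 \in \Conn$. By Observation \ref{obs:conncompactconnected} \ref{obs::conncompactconnected:1}, there is a connected $S \subset A_1 \cup A_2$ with $S \in \Conn$. The sets $S \cap A_1$ and $S\cap A_2$ are disjoint and relatively closed in $S$, and they cover $S$. By connectedness one of them is empty, so $S$ lies in a single $A_j$. Then $A_j \in \Conn$, a contradiction.

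For \ref{prop:sumofnonsep::1}, suppose $A_1\cup A_2 \in \Sep$; we aim to show that $A_1$ or $A_2$ lies in $\Sep$. First, by Proposition \ref{prop:severalresultsonsep} \ref{prop::severalresultsonsep:5}, there is a compact connected $S \subset A_1\cup A_2$ with $S\in\Sep$. By the same clopen argument as above, $S$ lies entirely in one $A_j$, so $A_j \in \Sep$ (any superset of a separating set is separating). This is again a contradiction. The only nontrivial ingredient is therefore the existence of a connected separating subset, which is exactly Proposition \ref{prop:severalresultsonsep} \ref{prop::severalresultsonsep:5}; it relies on unicoherence of $I^n$ via Corollary \ref{cor:compA}. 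I expect no further obstacle, though I would double-check that compactness of $A_1 \cup A_2$ (closed in $I^n$) meets the hypotheses of that proposition, which it does.
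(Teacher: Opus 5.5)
Your proof is correct and matches the paper's argument essentially step for step. Part (i) uses the compact connected separating subset from Proposition~\ref{prop:severalresultsonsep}, part (ii) uses a connected connecting subset, and parts (iii) and (iv) pass to the closed, disjoint complements $I^n \setminus U_j$. The only difference is that you spell out the clopen argument and the fact that supersets of separating sets separate, both of which the paper leaves implicit.
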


\begin{proof}
\ref{prop:sumofnonsep::1}: Let us suppose that $A_1 \cup A_2 \in \textsf{Sep}^n_i$. By virtue of Proposition \ref{prop:severalresultsonsep} \ref{prop::severalresultsonsep:5}, let $S \subset A_1 \cup A_2$ be a compact and connected subset such that $S \in \textsf{Sep}^n_i$. Given that $A_1 \cap A_2 = \emptyset$, it follows that either $S \subset A_1$ or $S \subset A_2$. However, this leads to a~contradiction with the fact that neither $A_1$ nor $A_2$ separates $i$th opposite faces of $I^n$.

\ref{prop:sumofnonsep::3}: An argument analogous to that in \ref{prop:sumofnonsep::1} applies here, using Observation \ref{obs:conncompactconnected} \ref{obs::conncompactconnected:1}.

\ref{prop:sumofnonsep::2}: It suffices to consider $A_1 = I^n \setminus U_1$ and $A_2 = I^n \setminus U_2$, and apply \ref{prop:sumofnonsep::1}.

\ref{prop:sumofnonsep::4}: As in \ref{prop:sumofnonsep::2}, but with \ref{prop:sumofnonsep::3} applied instead.
\end{proof}

\begin{prop}\label{prop:sumofnonsep2}
Let $n \in \mathbb{N}, i \in [n]$, and $U_1, U_2 \subset I^n$ be mutually disjoint open subsets, and \hbox{$A_1, A_2 \subset I^n$} be closed subsets such that $A_1 \cup A_2 = I^n$. 
\begin{enumerate}[label=(\roman*)]
\item If $U_1, U_2 \notin \textsf{Sep}^n_i$, then $U_1 \cup U_2 \notin \textsf{Sep}^n_i$. \label{prop:sumofnonsep2::1}
\item If $U_1, U_2 \notin \textsf{Conn}^n_i$, then $U_1 \cup U_2 \notin \textsf{Conn}^n_i$. \label{prop:sumofnonsep2::2}
\item If $A_1, A_2 \in \textsf{Conn}^n_i$, then $A_1 \cap A_2 \in \textsf{Conn}^n_i$. \label{prop:sumofnonsep2::3}
\item If $A_1, A_2 \in \textsf{Sep}^n_i$, then $A_1 \cap A_2 \in \textsf{Sep}^n_i$. \label{prop:sumofnonsep2::4}
\end{enumerate}
\end{prop}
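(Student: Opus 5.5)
The plan is to prove (i) and (ii) directly, and then obtain (iii) and (iv) by complementation, mirroring the proof of Proposition \ref{prop:sumofnonsep}.

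For (i), suppose $U_1 \cup U_2 \in \Sep$. By Proposition \ref{prop:severalresultsonsep} \ref{prop::severalresultsonsep:6}, there is a compact, path-connected set $S \subset U_1 \cup U_2$ with $S \in \Sep$. Since $U_1$ and $U_2$ are disjoint open sets covering $S$, the sets $S \cap U_1$ and $S \cap U_2$ are disjoint and relatively open in $S$. Connectedness of $S$ therefore forces $S \subset U_1$ or $S \subset U_2$. A superset of a separating set separates, since its complement is smaller; hence $U_1 \in \Sep$ or $U_2 \in \Sep$, a contradiction.

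For (ii), suppose $U_1 \cup U_2 \in \Conn$. By Observation \ref{obs:conncompactconnected} \ref{obs::conncompactconnected:2}, there is a compact, path-connected $S \subset U_1 \cup U_2$ with $S \in \Conn$. The same connectedness argument gives $S \subset U_1$ or $S \subset U_2$, so one of the $U_j$ connects $i$th opposite faces, again a contradiction. (Here the definition of $\Conn$ would suffice directly, since any connected subset of $U_1 \cup U_2$ lies in one of them.)

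For (iii), set $U_j = I^n \setminus A_j$. These are open, and they are disjoint because $A_1 \cup A_2 = I^n$. If $A_1 \cap A_2 \notin \Conn$, then by definition $U_1 \cup U_2 = I^n \setminus (A_1 \cap A_2) \in \Sep$. By (i), one of the $U_j$ is in $\Sep$, i.e.\ $I^n \setminus U_j = A_j \notin \Conn$, contradicting the hypothesis. Part (iv) follows in the same way from (ii): if $A_1 \cap A_2 \notin \Sep$, then $U_1 \cup U_2 \in \Conn$, so some $U_j \in \Conn$, meaning $A_j \notin \Sep$.

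The only nontrivial input is step (i), which relies on the existence of a connected separating subset inside an open separating set, namely Proposition \ref{prop:severalresultsonsep} \ref{prop::severalresultsonsep:6}. With that result already established, no real obstacle remains. The remaining care is the bookkeeping of complements: $A \in \Sep$ if and only if $I^n\setminus A \notin \Conn$, and both $\Sep$ and $\Conn$ are closed under supersets.
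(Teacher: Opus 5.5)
Your proof is correct and matches the paper's argument. The paper proves this by mimicking Proposition~\ref{prop:sumofnonsep}, using Proposition~\ref{prop:severalresultsonsep}~\ref{prop::severalresultsonsep:6} and Observation~\ref{obs:conncompactconnected}~\ref{obs::conncompactconnected:2} to obtain a compact connected witness inside $U_1 \cup U_2$ that must lie in one of the disjoint open sets, and then passing to complements for (iii) and (iv), exactly as you do (your remark that (ii) already follows from the definition is also sound).
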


\begin{proof}
The result follows from Proposition \ref{prop:severalresultsonsep} \ref{prop::severalresultsonsep:6} and Observation \ref{obs:conncompactconnected} \ref{obs::conncompactconnected:2}, with a proof analogous to that of Proposition \ref{prop:sumofnonsep}.
\end{proof}

\begin{lem}[{\cite[Lemma 1.8.15]{engelkingdimension}}]\label{lem:intofpartitionisconn}
Let $n \ge 2$ and $i \in [n]$. For $j \in [n]$ such that $j \neq i$, let $L_j$ be a~partition in $I^n$ between $I^n_{j, -}$ and $I^n_{j, +}$. Then, the set $\bigcap_{j \neq i} L_j$ connects $i$th opposite faces of $I^n$.
\end{lem}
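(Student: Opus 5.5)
Since this is Engelking's Lemma 1.8.15, the plan is to argue via the Poincaré–Miranda theorem (equivalently Brouwer), rather than through dimension theory.

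\emph{Setup.} Assume, for contradiction, that $L := \bigcap_{j\neq i} L_j \notin \Conn$. For each $j \neq i$ write $I^n \setminus L_j = U_j \cup V_j$ with $U_j, V_j$ disjoint and open, $I^n_{j,-}\subset U_j$ and $I^n_{j,+}\subset V_j$. Define continuous functions
\begin{align*}
f_j(x) = \begin{cases} -d(x, L_j), & x \in U_j \cup L_j,\\ d(x, L_j), & x \in V_j.\end{cases}
\end{align*}
Then $f_j \le 0$ on $I^n_{j,-}$, $f_j \ge 0$ on $I^n_{j,+}$, and $f_j^{-1}[\{0\}] = L_j$. Continuity holds because $U_j$ and $V_j$ are separated and both branches vanish on $L_j$.

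\emph{Splitting $L$.} Since $L$ is compact and no component of $L$ meets both $I^n_{i,-}$ and $I^n_{i,+}$, I will apply Corollary \ref{cor:connprelimfacts} \ref{cor::connprelimfacts:3} (via the clopen-set argument of Proposition \ref{prop:connprelimfacts} \ref{prop::connprelimfacts:6} together with compactness) to $L \cup I^n_{i,-} \cup I^n_{i,+}$. This should split $L = K_- \cup K_+$ into disjoint compact sets with $K_- \cap I^n_{i,+} = \emptyset = K_+ \cap I^n_{i,-}$. The standard route is to note that no component of $L' = L\cup I^n_{i,-}\cup I^n_{i,+}$ contains both faces: the faces are connected, and a component containing both would, by Corollary \ref{cor:connprelimfacts} \ref{cor::connprelimfacts:2}-type arguments (as in Lemma \ref{lem:withoutface} \ref{lem:withoutface::3}), yield a component of $L$ meeting both faces. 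So there is a clopen subset $E\subset L'$ containing $I^n_{i,-}$ and missing $I^n_{i,+}$. Take $K_- = E\cap L$ and $K_+ = L\setminus E$, so that $K_-\cup I^n_{i,-}$ and $K_+\cup I^n_{i,+}$ are disjoint closed sets.

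\emph{Constructing $f_i$ and deriving the contradiction.} By Urysohn, choose a continuous $f_i\colon I^n\to[-1,1]$ with $f_i = -1$ on $K_-\cup I^n_{i,-}$ and $f_i = 1$ on $K_+\cup I^n_{i,+}$. Then $f = (f_1,\dots,f_n)$ satisfies the Poincaré–Miranda hypotheses, so there is a point $x$ with $f(x)=0$. Then $x \in \bigcap_{j\ne i}L_j = L$, yet $f_i(x) = \pm 1$ on $L$, which is a contradiction.

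\emph{Main obstacle.} The delicate step is the splitting: it requires the component-to-clopen passage in the compact space $L'$ and the verification that no component of $L'$ joins the two faces. I expect this to follow from Lemma \ref{lem:withoutface} \ref{lem:withoutface::3} applied twice (adding each face in turn), together with Observation \ref{obs:conncompactconnected} \ref{obs::conncompactconnected:1}.
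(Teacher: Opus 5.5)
Your argument is correct and complete. The paper gives no proof of its own (it cites Engelking's Lemma 1.8.15), and your route is essentially the standard one behind that result. You split $L=\bigcap_{j\neq i}L_j$ into disjoint compact pieces $K_-$, $K_+$; this step coincides with the first paragraph of the proof of Proposition~\ref{prop:closedpluszerodim}. You then build a partition between $I^n_{i,-}$ and $I^n_{i,+}$ missing $L$, and contradict the theorem on partitions. The one difference is that you inline the theorem on partitions, going straight to Poincar\'e--Miranda via the signed distance functions $f_j$ with $f_j^{-1}[\{0\}]=L_j$ and the Urysohn function $f_i$, whose zero set is exactly such a partition disjoint from $L$.
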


\begin{prop}\label{prop:intofsepisconn}
Let $n \in \mathbb{N}, i \in [n]$, and for each $j \in [n]$, $j \neq i$, let $A_j \subset I^n$ be a~closed or an open subset such that $A_j \in \textsf{Sep}_j^n$. Then,
\begin{enumerate}[label=(\roman*)]
\item $\bigcap_{j \neq i} A_j \in \Conn$ if $n \ge 2$; \label{prop:intofsepisconn::2}
\item for every $A_i \in \Sep$, the set $\bigcap_{j=1}^n A_j$ is nonempty. \label{prop:intofsepisconn::1}
\end{enumerate}
\end{prop}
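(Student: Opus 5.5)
The plan is to reduce everything to Lemma \ref{lem:intofpartitionisconn} by replacing each $A_j$ with a compact partition $L_j \subset A_j$ between $I^n_{j,-}$ and $I^n_{j,+}$, but this can only be done up to taking limits, so the argument will pass through small neighbourhoods and Proposition \ref{prop:convergenceSk}.

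For \ref{prop:intofsepisconn::2}, I would treat the open and closed cases separately.

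Suppose first that every $A_j$ is open. By Lemma \ref{lem:topropseveralresultsonsep} \ref{lem::topropseveralresultsonsep:1} there is a compact $B_j \subset A_j$ with $B_j \in \textsf{Sep}_j^n$. By Proposition \ref{prop:severalresultsonsep} \ref{prop::severalresultsonsep:2}, applied with the open neighbourhood $V = A_j$, we may in addition assume $B_j \cap I^n_{j,-} = \emptyset = B_j \cap I^n_{j,+}$. Then Observation \ref{obs:Sepandpartitionrelation} \ref{obs::Sepandpartitionrelation:2} shows that $B_j$ is a partition between $I^n_{j,-}$ and $I^n_{j,+}$. Lemma \ref{lem:intofpartitionisconn} now gives $\bigcap_{j\ne i} B_j \in \Conn$, hence also $\bigcap_{j \neq i} A_j \in \Conn$.

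Suppose next that every $A_j$ is closed. For each $k$, Proposition \ref{prop:severalresultsonsep} \ref{prop::severalresultsonsep:2} with $V = N_{I^n}(A_j, 1/k)$ produces a compact partition $L_j^k \subset N_{I^n}(A_j,1/k)$ between the $j$th faces. The compact set $C_k := \bigcap_{j\ne i} L_j^k$ is then in $\Conn$ by Lemma \ref{lem:intofpartitionisconn}. Choose compact connected $S_k \subset C_k$ with $S_k \in \Conn$ (Observation \ref{obs:conncompactconnected} \ref{obs::conncompactconnected:1}), and pass to a Hausdorff-convergent subsequence $S_k \to S$. Theorem \ref{golab} and Proposition \ref{prop:convergenceSk} \ref{prop::convergenceSk:1} give $S \in \Conn$. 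By Observation \ref{obs:hausdorff} \ref{obs::hausdorff:1}, every point of $S$ is a limit of points lying within $1/k$ of each closed $A_j$, so $S \subset \bigcap_{j\ne i} A_j$.

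A mixed family, with some $A_j$ open and some closed, is handled by the same limit argument: for open $A_j$ fix $L_j = B_j \subset A_j$ once and for all, and for closed $A_j$ use $L_j^k$. The limit then lies in every closed $A_j$ and in every fixed $B_j \subset A_j$.

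For \ref{prop:intofsepisconn::1}, take $n \ge 2$; the case $n=1$ is trivial, since then $A_1 \in \textsf{Sep}^1_1$ is nonempty. By \ref{prop:intofsepisconn::2} the set $\bigcap_{j\ne i} A_j$ lies in $\Conn$. Any set in $\Conn$ meets any set in $\Sep$ (Definition \ref{def:subsetsconnsep}), so it meets $A_i$ and $\bigcap_{j=1}^n A_j \neq \emptyset$.

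The main obstacle is the closed case of \ref{prop:intofsepisconn::2}. A closed separating set need not contain a partition, since it may touch the faces, and this is why the approximation by partitions in $1/k$-neighbourhoods and the Hausdorff-limit step are needed. In that step the delicate point is checking that the limit set lands inside each $A_j$, which relies on closedness and compactness.
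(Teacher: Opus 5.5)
Your proof is correct and follows essentially the paper's route. In both arguments each $A_j$ is approximated by compact partitions between $I^n_{j,-}$ and $I^n_{j,+}$ (via Proposition \ref{prop:severalresultsonsep} and Observation \ref{obs:Sepandpartitionrelation}), then Lemma \ref{lem:intofpartitionisconn} is applied, and one passes to a Hausdorff limit using Proposition \ref{prop:convergenceSk} \ref{prop::convergenceSk:1}. The paper treats all $A_j$ uniformly, first choosing a compact $F_j \subset A_j$ and then invoking Proposition \ref{prop:severalresultsonsep} \ref{prop::severalresultsonsep:3}. Your small refinement is to notice that an open $A_j$ already contains such a partition, so no limit is needed for it.

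One minor slip: the inclusion $S \subset A_j$ for closed $A_j$ rests on Observation \ref{obs:hausdorff} \ref{obs::hausdorff:2}, since every point of $S$ is approximated by points of $S_k$, not on \ref{obs::hausdorff:1}.
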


\begin{proof}
It suffices to prove \ref{prop:intofsepisconn::2} since an intersection of a set that connects $i$th opposite faces of $I^n$ with a set that separates $i$th opposite faces of $I^n$ is nonempty, which lead us to \ref{prop:intofsepisconn::1}.

Let us fix $j \neq i$. By virtue of Proposition \ref{prop:severalresultsonsep} \ref{prop::severalresultsonsep:6} or Lemma \ref{lem:topropseveralresultsonsep} \ref{lem::topropseveralresultsonsep:1}, there exists a compact subset $F_j \subset A_j$ such that $F_j \in \textsf{Sep}_j^n$. Thus, from Proposition \ref{prop:severalresultsonsep} \ref{prop::severalresultsonsep:3}, there exists a compact subset $B_j \subset I^n$ such that $B_j \in \textsf{Sep}_j^n$ and a sequence $\bigl(F_j^k\bigr)_{k=1}^\infty$ of compact subsets of $I^n$ convergent to $B_j$ such that $F_j^k \in \textsf{Sep}_j^n$ and $F_j^k \cap I^n_{j, -} = \emptyset = F_j^k \cap I^n_{j, +}$ for all $k \in \mathbb{N}$, and $B_j \subset F_j$. From Observation \ref{obs:Sepandpartitionrelation} \ref{obs::Sepandpartitionrelation:2}, it follows that $F_j^k$ is a partition between $I^n_{j, -}$ and $I^n_{j, +}$. Since $j \neq i$ was arbitrary, then, by virtue of Lemma \ref{lem:intofpartitionisconn}, it follows that 
\begin{align*}
    F^k := \textstyle\bigcap_{j \neq i} F_j^k \in \Conn
\end{align*}
for all $k \in \mathbb{N}$. Since all $F^k$ are nonempty and compact, then there exist a compact subset $B \subset I^n$ and a subsequence $\brac{F^{k_l}}_{l=1}^\infty$ of a sequence $\brac{F^k}_{k=1}^\infty$ such that $F^{k_l} \xrightarrow{l \to \infty} B$. From Proposition \ref{prop:convergenceSk} \ref{prop::convergenceSk:1}, it follows that $B \in \Conn$. Moreover, 
\begin{align*}
    B \subset \textstyle\bigcap_{j \neq i} B_j \subset \textstyle\bigcap_{j \neq i} F_j \subset \bigcap_{j \neq i} A_j
\end{align*}
from Observation \ref{obs:hausdorff} \ref{obs::hausdorff:1} and \ref{obs::hausdorff:2}. This completes the proof.
\end{proof}

\begin{cor}\label{cor:Sepigivesconnj}
Let $n \ge 2$ and $i, k \in [n], i \neq k$. If $A \in \textsf{Sep}_k^n$ is a closed or an open subset, then $A \in \Conn$.
\end{cor}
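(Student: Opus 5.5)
The plan is to obtain the statement as a direct specialization of Proposition~\ref{prop:intofsepisconn}~\ref{prop:intofsepisconn::2}, choosing the family $\left\{A_j\right\}_{j \neq i}$ so that its intersection is exactly the given set $A$.

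Fix $n \ge 2$ and $i \neq k$, and let $A \in \textsf{Sep}_k^n$ be closed or open. For each $j \in [n]$ with $j \neq i$, define
\begin{align*}
A_j := \begin{cases} A, & j = k,\\ I^n, & j \neq k. \end{cases}
\end{align*}
Since $k \neq i$, the index $k$ is among the indices $j \neq i$, so the definition makes sense.

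We must check the hypotheses of Proposition~\ref{prop:intofsepisconn}.
\begin{itemize}
\item For $j = k$, the set $A_k = A$ is closed or open and belongs to $\textsf{Sep}_k^n$ by assumption.
\item For $j \notin \left\{i, k\right\}$, the set $A_j = I^n$ is both closed and open in $I^n$. It belongs to $\textsf{Sep}_j^n$ because $I^n \setminus I^n = \emptyset$. The empty set contains no connected subset meeting $I^n_{j,-}$, so it does not connect $j$th opposite faces.
\end{itemize}
When $n = 2$ there are no indices of the second kind, and the family consists of $A$ alone.

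Since $n \ge 2$, Proposition~\ref{prop:intofsepisconn}~\ref{prop:intofsepisconn::2} yields
\begin{align*}
\textstyle\bigcap_{j \neq i} A_j \in \Conn .
\end{align*}
This intersection equals $A \cap I^n = A$, so $A \in \Conn$.

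There is no genuine obstacle: all the substance lies in Proposition~\ref{prop:intofsepisconn}, and ultimately in Lemma~\ref{lem:intofpartitionisconn}. The only points to watch are that $I^n$ qualifies as a closed set lying in $\textsf{Sep}_j^n$ (a trivially vacuous case), and that $k \neq i$ is what guarantees $A$ actually appears in the intersection over $j \neq i$.
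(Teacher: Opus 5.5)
Your proposal is correct and is exactly the paper's argument: specialize Proposition~\ref{prop:intofsepisconn}~\ref{prop:intofsepisconn::2} with $A_k = A$ and $A_j = I^n$ for $j \in [n] \setminus \left\{i, k\right\}$, so that the intersection over $j \neq i$ is $A$. Your extra checks, that $I^n$ is closed and lies in $\textsf{Sep}_j^n$ and that $k \neq i$ puts $A$ in the intersection, are the routine details the paper leaves implicit.
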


\begin{proof}
It follows immediately from Proposition \ref{prop:intofsepisconn} \ref{prop:intofsepisconn::2} by taking $A_j = I^n$ for $j \in [n] \setminus \left\{i, k\right\}$.
\end{proof}

\begin{prop}\label{prop:sepforn=2}
    Let $[2] = \left\{i, j\right\}$ and $A$ be a closed or an open subset of $I^2$. Then, $A \in \textsf{Conn}_i^2$ if and only if $A \in \textsf{Sep}_j^2$.
\end{prop}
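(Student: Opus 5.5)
The direction \enquote{$A \in \textsf{Sep}_j^2 \implies A \in \textsf{Conn}_i^2$} is already available. Since $A$ is closed or open and $i \neq j$, Corollary~\ref{cor:Sepigivesconnj} with $n = 2$ and $k = j$ gives $A \in \textsf{Conn}_i^2$ directly. So only the converse needs work: if $A \in \textsf{Conn}_i^2$, then $A \in \textsf{Sep}_j^2$.

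For the converse we argue by contradiction. Suppose $A \in \textsf{Conn}_i^2$ but $A \notin \textsf{Sep}_j^2$, so that $I^2 \setminus A \in \textsf{Conn}_j^2$. The plan is to produce two paths, one inside $A$ joining the $i$th opposite faces and one inside $I^2 \setminus A$ joining the $j$th opposite faces. Proposition~\ref{prop:connprelimfacts}~\ref{prop::connprelimfacts:7} then forces these paths to intersect, which is impossible because the sets are disjoint. The only issue is obtaining genuine paths, and not merely connected sets, in both $A$ and its complement. This splits into two cases.

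\emph{Case $A$ open.} Then $I^2 \setminus A$ is closed, and we cannot take a path inside it directly. Instead we enlarge it slightly. By Corollary~\ref{cor:openneigh} applied to the closed set $I^2\setminus A$, which does not connect... — this goes the wrong way, so we pass through the complement of $A$. Since $A$ is open and $A \in \textsf{Conn}_i^2$, Observation~\ref{obs:conncompactconnected}~\ref{obs::conncompactconnected:2} gives a compact path-connected $S \subset A$ with $S \in \textsf{Conn}_i^2$, and hence a path $\gamma_1$ in $A$ joining $I^2_{i,-}$ to $I^2_{i,+}$. Consequently the closed set $F = I^2\setminus A$ does not separate the $i$th faces, that is, $F \notin \textsf{Sep}_i^2$. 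By Corollary~\ref{cor:openneigh} (the \enquote{does not separate} version), there is an open neighborhood $V \supset F$ with $V \notin \textsf{Sep}_i^2$. However, $F \in \textsf{Conn}_j^2$, so $V \in \textsf{Conn}_j^2$ as well. By Observation~\ref{obs:conncompactconnected}~\ref{obs::conncompactconnected:2}, $V$ then contains a path $\gamma_2$ joining the $j$th faces. Since $V \notin \textsf{Sep}_i^2$, the complement $I^2\setminus V$ is a closed set, contained in $A$, that connects the $i$th faces. This is not yet a contradiction, so we repeat the trick once more and then conclude. Concretely, $I^2 \setminus V$ is a closed set not meeting $\gamma_2$'s neighbourhood, and we replace $A$ by its interior-approximation. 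It is cleaner to reduce everything to the compact situation, as follows.

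\emph{Reduction to the compact case.} Let $K$ denote whichever of $A$ and $I^2 \setminus A$ is closed, and $U$ the other, which is open. The open set $U$ lies in $\textsf{Conn}$ for its direction, so Observation~\ref{obs:conncompactconnected}~\ref{obs::conncompactconnected:2} supplies a path $\gamma$ in $U$ joining the corresponding faces. The image $\gamma[I]$ is compact and disjoint from $K$, so there is $\delta > 0$ with $N_{I^2}(K,\delta)\cap\gamma[I]=\emptyset$. Since $K$ lies in $\textsf{Conn}$ for the other direction, $N_{I^2}(K,\delta)$ is open and connects those faces too. Applying Observation~\ref{obs:conncompactconnected}~\ref{obs::conncompactconnected:2} again gives a path $\gamma'$ in $N_{I^2}(K,\delta)$ joining those faces. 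Now $\gamma$ and $\gamma'$ are disjoint paths joining, respectively, the $1$st and the $2$nd opposite faces of $I^2$, contradicting Proposition~\ref{prop:connprelimfacts}~\ref{prop::connprelimfacts:7}. This argument handles both cases uniformly and supersedes the open-case attempt above.

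The only delicate point is exactly this passage from a closed connected set to a path, done by thickening slightly away from a compact path in the complement. Once that is in place, everything else is a direct citation of results stated earlier in the paper.
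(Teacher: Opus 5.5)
Your final argument, the ``reduction'' paragraph, is correct and is essentially the paper's proof. It takes a path in the open side and thickens the compact side so that it misses that path. A second path in the thickening then contradicts Proposition~\ref{prop:connprelimfacts}~\ref{prop::connprelimfacts:7}, and the easy direction comes from Corollary~\ref{cor:Sepigivesconnj}.

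The only cosmetic difference is in how the two cases are unified. The paper first passes to a compact connected $S\subset A$ with $S\in\textsf{Conn}_i^2$ and plays it against a component of $I^2\setminus S$. You instead split according to which of $A$ and $I^2\setminus A$ is closed. The abandoned ``Case $A$ open'' paragraph is superseded and should simply be deleted.
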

\begin{proof}
 It is enough to show that if $A \in \textsf{Conn}_i^2$, then $A \in \textsf{Sep}_j^2$ since the converse implication follows from Corollary \ref{cor:Sepigivesconnj}. By virtue of Observation \ref{obs:conncompactconnected}, let $S \subset A$ be a~connected and compact subset such that $S \in \textsf{Conn}_i^2$. We claim that $S \in \textsf{Sep}_j^2$. Indeed, let us suppose that $S \notin \textsf{Sep}_j^2$ i.e. there exists a connected component $S_0$ of $I^2 \setminus S$ such that $S_0 \in \textsf{Conn}_j^2$. The set $S_0$ is path-connected. Let $x_{j,-} \in S_0 \cap I^2_{j,-}$, $x_{j, +} \in S_0 \cap I^2_{j,+}$ and $\gamma_j \colon I \to S_0$ be a path that joins $x_{j,-}$ to $x_{j,+}$. Since the sets $S$ and $\gamma_j\bracl{I}$ are compact in $I^2$ and mutually disjoint, then there exists $\varepsilon>0$ such that 
 \begin{align}\label{prop:sepforn=2:prop1}
     \gamma_j\bracl{I} \cap N_{I^2}(S, \varepsilon) = \emptyset.
 \end{align}
 Clearly, the set $N_{I^2}\brac{S, \varepsilon}$ is path-connected. Let $x_{i,-} \in S \cap I^2_{i,-}$, $x_{i, +} \in S \cap I^2_{i,+}$ and $\gamma_i \colon I \to N_{I^2}\brac{S, \varepsilon}$ be a path that joins $x_{i,-}$ to $x_{i,+}$. 
 
 On one hand, the paths $\gamma_i$ and $\gamma_j$ must intersect by virtue of Proposition \ref{prop:connprelimfacts} \ref{prop::connprelimfacts:7}. On the other hand, we have (\ref{prop:sepforn=2:prop1}), so these paths cannot intersect. This leads to a~contradiction.
\end{proof}

\begin{rem}
It is natural to ask whether Proposition \ref{prop:sepforn=2} remains valid without the assumption that the set $A$ is closed or open. It turns out that there exists a set $A \in \textsf{Conn}_1^2$ such that $A \notin \textsf{Sep}_2^2$. Indeed, \cite{dawson} contains an illustration of two disjoint and connected subsets $A, B \subset I^2$ such that
\begin{align*}
    (0, 0), (1, 1) \in A,\; (0, 1), (1, 0) \in B.
\end{align*}
Hence, $A \in \textsf{Conn}_1^2$ while $I^2 \setminus A \supset B \in \textsf{Conn}_2^2$, and consequently $A \notin \textsf{Sep}_2^2$. Moreover, Remark \ref{rem:lebesgue} provides an example of disjoint subsets $F_1, F_2 \subset I^2$ such that $F_1 \notin \textsf{Conn}_1^2$, $F_2 \notin \textsf{Conn}_2^2$ and $F_1 \cup F_2 = I^2$. Let $A = F_1 = I^2 \setminus F_2$. Then, $A \notin \textsf{Conn}_1^2$ and $A\in \textsf{Sep}^2_2$.
\end{rem}

\begin{lem}\label{lem:twosep}
Let $n \in \mathbb{N}, i \in [n]$ and $S_1, S_2 \subset I^n$ be given as mutually disjoint, compact, and connected subsets, such that $S_1, S_2 \in \Sep$ and 
\begin{align*}
    S_1 \cap I^n_{i, -} = S_2 \cap I^n_{i, -} = \emptyset = S_1 \cap I^n_{i, +} = S_2 \cap I^n_{i, +}.
\end{align*}
Then, for some $\varepsilon \in \left\{-, +\right\}$, the sets $S_1$ and $I^n_{i, \varepsilon}$ do not belong to the same connected component of $I^n \setminus S_2$, and the sets $S_2$ and $I^n_{i, \varepsilon^*}$ do not belong to the same connected component of $I^n \setminus S_1$.
\end{lem}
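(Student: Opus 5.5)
Throughout, write $C^k_\varepsilon$ for the connected component of $I^n\setminus S_k$ containing the connected set $I^n_{i,\varepsilon}$, for $k\in[2]$ and $\varepsilon\in\{-,+\}$. This is well defined because $S_k$ misses both $i$th faces. Since $S_k \in \Sep$, Observation \ref{obs:Sepandpartitionrelation} \ref{obs::Sepandpartitionrelation:2} gives $C^k_- \neq C^k_+$. The sets $S_1,S_2$ are connected and disjoint, so $S_1$ lies entirely in a single component of $I^n\setminus S_2$, and likewise $S_2$ in a single component of $I^n\setminus S_1$. The statement then asks for $\varepsilon$ with $S_1\not\subset C^2_\varepsilon$ and $S_2\not\subset C^1_{\varepsilon^*}$.

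The key step is the following claim: if $S_1\subset C^2_{\delta}$ for some $\delta$, then $S_2\subset C^1_{\delta^*}$.

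To prove it, note that $S_1\cap C^2_{\delta^*}=\emptyset$, since $S_1\subset C^2_\delta$ and distinct components are disjoint. Hence $C^2_{\delta^*}$ is a connected subset of $I^n\setminus S_1$ containing $I^n_{i,\delta^*}$, so $C^2_{\delta^*}\subset C^1_{\delta^*}$. It remains to show $\overline{C^2_{\delta^*}}\cap S_2\neq\emptyset$. Then $\overline{C^2_{\delta^*}}$, which is connected and disjoint from $S_1$ (because $S_1\subset C^2_\delta$ is open and disjoint from $C^2_{\delta^*}$), lies in $C^1_{\delta^*}$, and so the connected set $S_2$ lies in $C^1_{\delta^*}$ as well.

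The nonemptiness of $\overline{C^2_{\delta^*}}\cap S_2$ follows from Proposition \ref{prop:connprelimfacts} \ref{prop::connprelimfacts:3}: we have $\partial C^2_{\delta^*}\subset S_2$, and $\partial C^2_{\delta^*}\neq\emptyset$ because $C^2_{\delta^*}$ is a nonempty proper open subset of the connected space $I^n$.

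Now split into cases. If $S_1$ lies in neither $C^2_-$ nor $C^2_+$, then the first requirement holds for both signs. We then only need $\varepsilon$ with $S_2\not\subset C^1_{\varepsilon^*}$, and since $S_2$ lies in at most one of the two disjoint sets $C^1_\pm$, some sign works.

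Otherwise $S_1\subset C^2_\delta$ for exactly one $\delta$. Take $\varepsilon=\delta^*$, so that $S_1\not\subset C^2_{\varepsilon}$. By the claim, $S_2\subset C^1_{\delta^*}=C^1_{\varepsilon}$, hence $S_2\not\subset C^1_{\varepsilon^*}$ by disjointness.

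The main obstacle is the claim, and specifically the verification that $\overline{C^2_{\delta^*}}$ avoids $S_1$ and touches $S_2$. This rests on the boundary inclusion from Proposition \ref{prop:connprelimfacts} \ref{prop::connprelimfacts:3} together with the openness of components of open subsets of $I^n$.
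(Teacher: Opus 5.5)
Your proof is correct, but it reaches the conclusion by a somewhat different route than the paper. In your notation $C^k_\varepsilon$, the paper proceeds in two steps:
\begin{enumerate}
\item It shows that $S_1$ really does lie in one of $C^2_\pm$. Since $\partial C^2_\pm\subset S_2$ is nonempty, the set $C^2_-\cup S_2\cup C^2_+$ is connected and meets both $i$th faces. So the separating set $S_1$ must meet it, and being connected and disjoint from $S_2$, it lies in some $C^2_\delta$. Symmetrically, $S_2$ lies in some $C^1_\eta$.
\item It fixes the signs. The inclusion $\partial C^1_{\delta^*}\subset S_1\subset C^2_\delta$ makes $C^1_{\delta^*}\cup C^2_\delta$ connected and face-joining. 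Hence $S_2$ must meet it, and so $S_2$ meets $C^1_{\delta^*}$.
\end{enumerate}
Both steps thus rest on the principle that a set in $\Conn$ meets a set in $\Sep$.

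You do without the first step by a case split. You prove the sign-compatibility claim by a closure argument instead: $\overline{C^2_{\delta^*}}$ avoids $S_1$ because $S_1$ sits in the open set $C^2_\delta$, and it touches $S_2$ because its nonempty boundary lies in $S_2$. As a result, your argument uses $S_1,S_2\in\Sep$ only to get $C^k_-\cap C^k_+=\emptyset$ and $S_k\neq\emptyset$. The nonemptiness is used tacitly, for instance in ``exactly one $\delta$'' and ``$S_2\not\subset C^1_{\varepsilon^*}$''. It follows from $S_k\in\Sep$, since $I^n$ itself connects the faces, and is worth one sentence.

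Your route is therefore a bit leaner. The paper's route, in exchange, yields the extra structural fact that each $S_k$ lies in a face-component of the complement of the other, so your first case is in fact vacuous. That extra information is not needed for the lemma or for its application in Theorem~\ref{tw:S1S2SepConn}.
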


\begin{proof}
For $\varepsilon \in \left\{-, +\right\}$, let $C_\varepsilon^1$ be the connected component of $I^n \setminus S_2$ that contains $I^n_{i, \varepsilon}$, and $C_\varepsilon^2$ be the connected component of $I^n \setminus S_1$ that contains $I^n_{i, \varepsilon}$. 

We claim that
\begin{align*}
    S_1 \subset C^1_{\delta^*},\; S_2 \subset C^2_{\eta^*},\; S_1 \cap C^1_{\delta} = \emptyset = S_2 \cap C^2_{\eta}
\end{align*}
for some $\delta, \eta \in \left\{-, +\right\}$. Indeed, since $S_2 \in \Sep$, then $C_-^1 \cap C_+^1 = \emptyset$. Let us observe that both sets $C_-^1 \cup S_2$ and $C_+^1 \cup S_2$ are connected. Indeed, by virtue of Proposition \ref{prop:connprelimfacts} \ref{prop::connprelimfacts:3}, it follows that $\partial_{I^n} C_-^1 \subset S_2$. Moreover, $C_-^1$ is open and $\emptyset \neq C_-^1 \subsetneq I^n$, so $\partial_{I^n} C_-^1 \neq \emptyset$. Hence $\overline{C_-^1} \cap S_2 \neq \emptyset$, and consequently the set $C_-^1 \cup S_2 = \overline{C_-^1} \cup S_2$ is connected. Analogously, the set $C_+^1 \cup S_2$ is connected. Thus, $C_-^1 \cup S_2 \cup C_+^1 \in \Conn$, while $S_1 \in \Sep$. Therefore, $S_1 \cap \brac{C_-^1 \cup S_2 \cup C_+^1} \neq \emptyset$. Since $S_1 \cap S_2 = \emptyset$, then $S_1 \cap C_{\delta^*}^1 \neq \emptyset$ for some $\delta \in \left\{-, +\right\}$. Moreover, the set $S_1$ is connected, so $S_1 \subset C_{\delta^*}^1$. Since $C_-^1 \cap C_+^1 = \emptyset$, then $S_1 \cap C_{\delta}^1 = \emptyset$. Analogously, we show that $S_2 \subset C^2_{\eta^*}$ and $S_2 \cap C^2_{\eta} = \emptyset$ for some $\eta \in \left\{-, +\right\}$.

 This proves that the sets $S_1$ and $I^n_{i, \delta}$ do not belong to the same connected component of $I^n \setminus S_2$, and the sets $S_2$ and $I^n_{i, \eta}$ do not belong to the same connected component of $I^n \setminus S_1$. It remains to show that $\eta = \delta^*$.

For this purpose, let us note that by virtue of Proposition \ref{prop:connprelimfacts} \ref{prop::connprelimfacts:3} it follows that $\partial_{I^n} C_\delta^2 \subset S_1 \subset C_{\delta^*}^1$, so we may proceed analogously to the earlier argument to conclude that the set $C_\delta^2 \cup C_{\delta^*}^1$ is connected. Thus, $C_\delta^2 \cup C_{\delta^*}^1 \in \Conn$, so $S_2 \cap \brac{C_\delta^2 \cup C_{\delta^*}^1} \neq \emptyset$. Since $S_2 \cap C_{\delta^*}^1 = \emptyset$, then $S_2 \cap C_\delta^2 \neq \emptyset$. Hence $\eta = \delta^*$.
\end{proof}

\begin{prop}\label{prop:closedpluszerodim}
Let $n \in \mathbb{N}, i \in [n]$, and $A \subset I^n$ be a compact subset such that $A \notin \Conn$, and $Z \subset I^n$ be an arbitrary subset such that $\dim Z \le 0$. Then, there exists an open neighborhood $U$ of $A \cup Z$ such that $U \notin \Conn$.
\end{prop}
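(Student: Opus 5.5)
The plan is to thicken $A$ so that it contains both $i$th faces, split the result into two disjoint compact pieces, and then use Proposition \ref{prop:dimfacts} \ref{prop::dimfacts:1} to find a partition between the pieces that misses $Z$. The complement of that partition will be the required neighbourhood $U$.

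First, set $A' := A \cup I^n_{i,-} \cup I^n_{i,+}$, which is compact. Applying Lemma \ref{lem:withoutface} \ref{lem:withoutface::3} twice, once with $\varepsilon = -$ and once with $\varepsilon = +$, gives $A' \notin \Conn$. The faces $I^n_{i,-}$ and $I^n_{i,+}$ are connected, so each lies in a single connected component of $A'$; call these $C_-$ and $C_+$. If $C_- = C_+$, this component would be a connected subset of $A'$ meeting both faces, contradicting $A' \notin \Conn$. Hence $C_- \neq C_+$.

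Next, Corollary \ref{cor:connprelimfacts} \ref{cor::connprelimfacts:3} applied to the compact metric space $A'$ gives disjoint compact sets $B_-, B_+$ with
\begin{align*}
A' = B_- \cup B_+,\quad I^n_{i,-} \subset C_- \subset B_-,\quad I^n_{i,+} \subset C_+ \subset B_+.
\end{align*}
These are disjoint closed subsets of $I^n$. The set $Z$ is a separable subspace of the metric space $I^n$ with $\dim Z \le 0$. So Proposition \ref{prop:dimfacts} \ref{prop::dimfacts:1} yields a partition $L$ between $B_-$ and $B_+$ with $L \cap Z = \emptyset$. That is, there are disjoint open sets $U_-, U_+ \subset I^n$ with $B_\pm \subset U_\pm$ and $I^n \setminus L = U_- \cup U_+$.

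Finally, put $U := I^n \setminus L = U_- \cup U_+$. This set is open, contains $A' \supset A$, and contains $Z$ because $L \cap Z = \emptyset$. Suppose some connected $S \subset U$ met both faces. Since $U_-$ and $U_+$ are disjoint and open, $S$ would lie entirely in one of them, say $U_-$. But $I^n_{i,+} \subset B_+ \subset U_+$ is disjoint from $U_-$, so $S$ could not meet $I^n_{i,+}$, a contradiction. The case $S \subset U_+$ is symmetric. Therefore $U \notin \Conn$.

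I expect the only delicate point to be the first step: obtaining the two-piece splitting with each face entirely inside one piece. Adding the faces to $A$ before splitting is what makes this work, and Lemma \ref{lem:withoutface} \ref{lem:withoutface::3} guarantees that this enlargement does not create a connection between the faces. Once the splitting is in hand, the rest is a direct application of the zero-dimensional partition result.
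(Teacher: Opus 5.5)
Your proof is correct and follows the paper's argument step by step. You enlarge $A$ to $A' = A \cup I^n_{i,-} \cup I^n_{i,+}$, use Lemma \ref{lem:withoutface} \ref{lem:withoutface::3} and Corollary \ref{cor:connprelimfacts} \ref{cor::connprelimfacts:3} to split $A'$ into two disjoint compact pieces, and take $U = I^n \setminus L$ for a partition $L$ avoiding $Z$ from Proposition \ref{prop:dimfacts} \ref{prop::dimfacts:1}; the only difference is that you check $U \notin \Conn$ directly instead of citing Observation \ref{obs:Sepandpartitionrelation} \ref{obs::Sepandpartitionrelation:1}, which is the same argument.
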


\begin{proof}
Let $A' = A \cup I^n_{i, -} \cup I^n_{i, +}$. Then, $A' \notin \Conn$ from Lemma \ref{lem:withoutface} \ref{lem:withoutface::3}. Since the sets $I^n_{i, -}$ and $I^n_{i, +}$ are contained in distinct connected components of $A'$, then, by virtue of Corollary \ref{cor:connprelimfacts} \ref{cor::connprelimfacts:3}, there exist disjoint compact subsets $B_1, B_2 \subset I^n$ such that $I^n_{i, -} \subset B_1$, $I^n_{i, +} \subset B_2$, and $B_1 \cup B_2 = A'$.

Since the sets $B_1$ and $B_2$ are compact and $B_1 \cap B_2 = \emptyset$, then, by virtue of Proposition \ref{prop:dimfacts} \ref{prop::dimfacts:1}, there exists a~partition $L \subset I^n$ between $B_1$ and $B_2$ such that $L \cap Z = \emptyset$. Since $I^n_{i, -} \subset B_1$ and $I^n_{i, +} \subset B_2$, then $L$ is a partition between $I^n_{i, -}$ and $I^n_{i, +}$, so $L \in \Sep$ from Observation \ref{obs:Sepandpartitionrelation} \ref{obs::Sepandpartitionrelation:1}. Furthermore, 
\begin{align*}
    L \subset I^n \setminus \brac{B_1 \cup B_2} = I^n \setminus A' \subset I^n \setminus A.
\end{align*}
Thus, $L \cap \brac{A \cup Z} = \emptyset$. Let $U = I^n \setminus L$. Then, the set $U$ is open, $A \cup Z \subset U$, and $U \notin \Conn$.
\end{proof}

\section{The Lebesgue Theorem and generalized Steinhaus chessboard theorem}\label{sec:lebesgue}

In this section, we first present short proofs of the Lebesgue Theorem (see Theorem \ref{tw:lebesguecovering}) and the \hbox{$n$-dimensional} Steinhaus Chessboard Theorem (see Corollary \ref{cor:ndimsteinhaus}). Next, we provide a purely topological proof of the generalized $n$-dimensional Steinhaus Chessboard Theorem and show that it is equivalent to the Brouwer Fixed Point Theorem (see Theorem \ref{tw:genndimchessequiv}). This provides an affirmative answer to the question stated in \cite[Question 1]{trojanowski2025quarter}, \cite{turzanskigenchess}. We recall that an algorithmic proof of this result is given in \cite[Theorem 1]{turzanskigenchess}. Finally, we provide a generalization of the Lebesgue Theorem (see Theorem \ref{tw:genlebesgue}), which constitutes a~dimension-theoretic extension of the generalized $n$-dimensional Steinhaus Chessboard Theorem. The latter appears as a~special case of this extension. 

We define for $n, k\in \mathbb{N}$ a family of cubes
\begin{align*}
\mathcal{K}_k^n := \left\{\prod_{s=1}^n \left[\frac{i_s-1}{k}, \frac{i_s}{k}\right] \colon i_s \in [k]\right\}
\end{align*}
which represents the division of $I^n = [0, 1]^n$ into $k^n$ cubes.

\begin{tw}[Lebesgue Theorem]\label{tw:lebesguecovering}
Let $n \in \mathbb{N}$ and $\left\{F_i\right\}_{i=1}^n$ be a family of closed sets that covers~$I^n$. Then, for some $i \in [n]$, there exists a connected component of $F_i$ that intersects both $I^n_{i, -}$ and $I^n_{i, +}$.
\end{tw}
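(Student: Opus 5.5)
We argue by contradiction, combining Corollary~\ref{cor:openneigh}, Proposition~\ref{prop:intofsepisconn}~\ref{prop:intofsepisconn::1} and Lemma~\ref{lem:withoutface}. Suppose that for every $i \in [n]$ no connected component of $F_i$ meets both $I^n_{i,-}$ and $I^n_{i,+}$, i.e.\ $F_i \notin \Conn$. Since each $F_i$ is compact, Corollary~\ref{cor:openneigh} gives an open neighborhood $V_i$ of $F_i$ with $V_i \notin \Conn$.

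The next step converts each non-connecting set into a separating one. For each $i$ set $A_i := I^n \setminus V_i$. This set is closed. Its complement $V_i$ does not connect $i$th opposite faces, so by definition $A_i \in \Sep$.

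Now apply Proposition~\ref{prop:intofsepisconn}~\ref{prop:intofsepisconn::1} with the closed sets $A_j \in \textsf{Sep}_j^n$. It yields $\bigcap_{j=1}^n A_j \neq \emptyset$. Take a point $x$ in this intersection. Then $x \notin V_j \supset F_j$ for every $j$, which contradicts $\bigcup_j F_j = I^n$. Hence some $F_i \in \Conn$. By Observation~\ref{obs:conncompactconnected}~\ref{obs::conncompactconnected:1}, the connected component of $F_i$ containing a connecting connected subset meets both $I^n_{i,-}$ and $I^n_{i,+}$, which is the claim.

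One subtlety is the case $n=1$. Part~\ref{prop:intofsepisconn::1} of Proposition~\ref{prop:intofsepisconn} is stated for all $n$, but it is derived from part~\ref{prop:intofsepisconn::2}, which requires $n \ge 2$. For $n=1$ we therefore argue directly. Here $F_1 = I^1$ is connected and meets both endpoints, so the statement is immediate.

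Given the earlier results, the main weight of the argument rests on Proposition~\ref{prop:intofsepisconn}, that is, on the Engelking lemma (Lemma~\ref{lem:intofpartitionisconn}). The only step requiring care is the passage from ``$F_i$ does not connect'' to a \emph{closed} separating set, which Corollary~\ref{cor:openneigh} provides. An alternative that avoids the neighborhoods is to take $A_i = I^n \setminus F_i$ directly. This set is open and separating, and Proposition~\ref{prop:intofsepisconn} also allows open sets. This gives $\bigcap_i (I^n\setminus F_i)\neq\emptyset$ at once, contradicting the covering. I would use this shorter version.
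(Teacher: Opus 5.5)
Your proof is correct. The shorter version you settle on at the end, taking the open sets $I^n\setminus F_i\in\textsf{Sep}_i^n$ and applying Proposition~\ref{prop:intofsepisconn}~\ref{prop:intofsepisconn::1} directly, is exactly the paper's argument; the detour through Corollary~\ref{cor:openneigh} is valid but unnecessary since that proposition already admits open sets, and your separate handling of $n=1$ is a harmless precaution (there the empty intersection over $j\neq i$ is $I^1$, so part~\ref{prop:intofsepisconn::1} holds trivially anyway).
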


\begin{proof}
We aim to show that there exists $i \in [n]$ such that $F_i \in \Conn$. Let us suppose that $F_i \notin \Conn$ for any $i \in [n]$. Then, $I^n \setminus F_i \in \Sep$ for each $i \in [n]$. For every $i \in [n]$, the set $A_i := I^n \setminus F_i$ is open, so, by virtue of Proposition \ref{prop:intofsepisconn} \ref{prop:intofsepisconn::1}, the set $\bigcap_{i=1}^n A_i$ is nonempty. On the other hand, 
\begin{align*}
    \textstyle\bigcap_{i=1}^n A_i = I^n \setminus \bigcup_{i=1}^n F_i = \emptyset.
\end{align*}
This leads to a contradiction.
\end{proof}

\begin{cor}[$n$-dimensional Steinhaus Chessboard Theorem]\label{cor:ndimsteinhaus}
Let $n, k \in \mathbb{N}$ and \hbox{$F \colon \mathcal{K}_k^n \to [n]$}. Then, there exist a number $i \in [n]$ and a subfamily $\left\{P_j\right\}_{j=1}^r \subset \mathcal{K}_k^n$ for some $r \in \mathbb{N}$ such that $F(P_j) = i$ for any $j \in [r]$, and $P_1 \cap I^n_{i, -} \neq \emptyset \neq P_r \cap I^n_{i, +}$, and $P_j \cap P_{j+1} \neq \emptyset$ for any $j \in [r-1]$.
\end{cor}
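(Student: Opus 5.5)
The corollary follows from Theorem \ref{tw:lebesguecovering} applied to the colour classes. For each $i \in [n]$ set
\begin{align*}
F_i := \bigcup \left\{P \in \mathcal{K}_k^n \colon\, F(P) = i\right\},
\end{align*}
with the convention that an empty union is $\emptyset$. Each $F_i$ is a finite union of closed cubes, so it is closed. Since every cube of $\mathcal{K}_k^n$ receives some colour, $\bigcup_{i=1}^n F_i = \bigcup \mathcal{K}_k^n = I^n$. Hence Theorem \ref{tw:lebesguecovering} gives an $i \in [n]$ and a connected component $C$ of $F_i$ with $C \cap I^n_{i,-} \neq \emptyset \neq C \cap I^n_{i,+}$.

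Next I turn the connected set $C$ into a chain of cubes. Let $\mathcal{P}_i = \left\{P \in \mathcal{K}_k^n \colon F(P) = i\right\}$, and define a graph $G$ on $\mathcal{P}_i$ in which $P$ and $Q$ are adjacent when $P \cap Q \neq \emptyset$. For a connected component $\mathcal{Q}$ of $G$, write $U_{\mathcal{Q}} = \bigcup \mathcal{Q}$.

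\textbf{Key step.} The sets $U_{\mathcal{Q}}$ are pairwise disjoint, each is closed and connected, and they cover $F_i$.
\begin{itemize}
\item \emph{Disjoint:} if two of them met, some cube of one would meet a cube of the other, so the two cubes would be adjacent in $G$ and the components would coincide.
\item \emph{Closed:} each is a finite union of closed cubes.
\item \emph{Connected:} this follows from Proposition \ref{prop:connprelimfacts} \ref{prop::connprelimfacts:1}, or directly by induction along paths in $G$.
\end{itemize}
So $F_i$ is a finite disjoint union of closed sets. A connected subset of a finite disjoint union of closed sets must lie in one of them, since each piece is clopen in $F_i$. Therefore $C \subset U_{\mathcal{Q}}$ for a single component $\mathcal{Q}$. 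This bookkeeping is the only step needing care.

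Finally, choose $P' \in \mathcal{Q}$ meeting $I^n_{i,-}$ and $P'' \in \mathcal{Q}$ meeting $I^n_{i,+}$. Such cubes exist because $C$ meets both faces and $C \subset U_{\mathcal{Q}}$. Connectivity of $\mathcal{Q}$ in $G$ gives a path $P' = P_1, P_2, \ldots, P_r = P''$ with $P_j \cap P_{j+1} \neq \emptyset$ for each $j \in [r-1]$. Every $P_j$ has colour $F(P_j) = i$, which is the required conclusion.
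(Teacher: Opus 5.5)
Your proof is correct and takes the same route as the paper, which simply sets $F_i = \bigcup F^{-1}[\{i\}]$ and applies Theorem~\ref{tw:lebesguecovering}; the paper leaves implicit the passage from a connected component of $F_i$ to a chain of cubes, and your graph-component argument makes that step explicit. Incidentally, you do not need the connectedness of each $U_{\mathcal{Q}}$ for that step: it suffices that these sets are finitely many, pairwise disjoint, closed, and cover $F_i$.
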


\begin{proof}
It suffices to define $F_i = \bigcup F^{-1}\bracl{\left\{i\right\}}$ for $i \in [n]$, and apply Theorem \ref{tw:lebesguecovering}.
\end{proof}

\begin{rem}\label{rem:lebesgue}
It is worth noting that, in general, the Lebesgue Theorem does not hold without the assumption that the subsets $F_i$ are closed. Indeed, let $n=2$. For $r \in \mathbb{R}$, we define
\begin{align*}
&B_r = \left\{(x, x + r) \in \mathbb{R}^2\colon\, x \in \mathbb{R} \right\} \cap I^2 \subset I^2,\\
&B_0^- = B_0 \cap \brac{[0, 1/2] \times I},\; B_0^+ = B_0 \cap \brac{(1/2, 1] \times I}.
\end{align*}
Let 
\begin{align*}
    F_1 = \textstyle B_0^- \cup \bigcup_{r \in \mathbb{Q} \setminus \left\{0\right\}} B_r,\; F_2 = B_0^+ \cup\bigcup_{r \in \mathbb{R} \setminus \mathbb{Q}} B_r.
\end{align*}
Then, $F_1 \cap F_2 = \emptyset, F_1 \cup F_2 = I^2$ and $F_1, F_2 \notin \textsf{Conn}_i^2$ for both $i \in [2]$.
\end{rem}

Below, we provide a purely topological proof of the generalized $n$-dimensional Steinhaus Chessboard Theorem\footnote{See Theorem \ref{tw:sct}.} and show that it is equivalent to the Brouwer Fixed Point Theorem. This provides an affirmative answer to the question stated in \hbox{\cite[Question 1]{trojanowski2025quarter}}, \cite{turzanskigenchess}.

\begin{tw}[Brouwer Fixed Point Theorem]
Let $n \in \mathbb{N}$. Every continuous function $f \colon I^n \to I^n$ has at least one fixed point, i.e. there exists $x_0 \in I^n$ such that $f(x_0) = x_0$.
\end{tw}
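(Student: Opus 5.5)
The plan is to derive the statement from the Lebesgue Theorem (Theorem~\ref{tw:lebesguecovering}), which the paper has already proved from Proposition~\ref{prop:intofsepisconn}. That proposition in turn rests only on Lemma~\ref{lem:intofpartitionisconn}, which is cited from Engelking and proved there combinatorially, so this derivation should not be circular. The argument is by contradiction: assume $f \colon I^n \to I^n$ is continuous and has no fixed point, build a closed cover of $I^n$ by $n$ sets that violates the Lebesgue Theorem, and conclude.

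\textbf{Step 1: a uniform gap.} Put $d = f - \mathrm{id} \colon I^n \to \mathbb{R}^n$, so $d_i(x) = f_i(x) - x_i$. Since $d(x) \neq 0$ for every $x$, the continuous function $x \mapsto \max_{i \in [n]} |d_i(x)|$ is positive on the compact set $I^n$. Hence it has a positive minimum; call it $\delta > 0$.

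\textbf{Step 2: the cover.} Define
\begin{align*}
F_i = \left\{x \in I^n \colon\, |d_i(x)| \ge \delta\right\}, \quad i \in [n].
\end{align*}
Each $F_i$ is closed by continuity of $d_i$. The family $\{F_i\}_{i=1}^n$ covers $I^n$ by the choice of $\delta$: at each $x$ some coordinate attains the maximum, and that maximum is at least $\delta$. By Theorem~\ref{tw:lebesguecovering}, there exist $i \in [n]$ and a connected component $C$ of $F_i$ with $C \cap I^n_{i,-} \neq \emptyset \neq C \cap I^n_{i,+}$.

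\textbf{Step 3: the sign contradiction.} This is the only place where the hypothesis $f(I^n) \subset I^n$ enters, and it is the point to get right.
\begin{itemize}
\item If $x \in I^n_{i,-}$, then $x_i = 0$, so $d_i(x) = f_i(x) \ge 0$. Combined with $x \in F_i$, this forces $d_i(x) \ge \delta$ on $C \cap I^n_{i,-}$.
\item If $x \in I^n_{i,+}$, then $x_i = 1$, so $d_i(x) = f_i(x) - 1 \le 0$. This forces $d_i(x) \le -\delta$ on $C \cap I^n_{i,+}$.
\end{itemize}
Now restrict $d_i$ to $C$. Its image is connected, because $C$ is connected and $d_i$ is continuous. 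That image lies in $(-\infty,-\delta] \cup [\delta,\infty)$, so it sits in one of these two rays. But by the two bullets it contains both a value $\ge \delta$ and a value $\le -\delta$, which is impossible. This contradiction shows that $f$ has a fixed point.

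I expect no analytic obstacle. The only genuine care needed is the logical bookkeeping: the Lebesgue Theorem must be used in the form proved in this paper, whose ultimate input is Lemma~\ref{lem:intofpartitionisconn}, and not in any form that itself presupposes Brouwer. This matters because the paper later uses this circle of results to establish the equivalence in Theorem~\ref{tw:genndimchessequiv}.
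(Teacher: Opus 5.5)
Your argument is correct as a derivation from the Lebesgue Theorem. The uniform gap $\delta$, the closed cover $F_i=\{|f_i(x)-x_i|\ge\delta\}$, and the sign bookkeeping on $C\cap I^n_{i,\pm}$ all check out.

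It is a genuinely different route from the paper's, because the paper never proves this statement. Brouwer is quoted as a classical theorem and appears only in Theorem~\ref{tw:genndimchessequiv}. There, getting from the Lebesgue Theorem to Brouwer goes (iv)$\Rightarrow$(i)$\Rightarrow$(ii)$\Rightarrow$(iii):
\begin{enumerate}[label=(\alph*)]
\item an inductive thickening over the skeleta $\text{Skel}^{n,k}_j$ gives the generalized chessboard theorem;
\item this specializes trivially to the ordinary chessboard theorem;
\item the chessboard-to-Brouwer step is imported from \cite[Theorem 2.4]{tkacz2011bolzano}.
\end{enumerate}
You short-circuit this with the classical Poincar\'e--Miranda-type observation that $d_i=f_i-x_i$ is $\ge 0$ on $I^n_{i,-}$ and $\le 0$ on $I^n_{i,+}$. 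That gives a few-line, self-contained proof of (iv)$\Rightarrow$(iii), with no discretization and no external citation. The paper's longer route is what brings the combinatorial statements (i) and (ii) into the equivalence. Your arrow alone does not do that, since closing the cycle still needs an implication out of (ii).

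One correction to your bookkeeping remark. Nothing in the paper says Lemma~\ref{lem:intofpartitionisconn} is proved combinatorially. Its standard proof (as in Engelking) goes through the theorem on partitions, and that theorem is classically deduced from the Brouwer fixed-point theorem itself. So your argument establishes the implication ``Lebesgue Theorem $\Rightarrow$ Brouwer'', not an independent proof of Brouwer. That implication is all the equivalence requires. Still, drop the non-circularity claim unless you supply a Brouwer-free proof of the theorem on partitions, e.g.\ via Sperner's lemma.
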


\begin{tw}\label{tw:genndimchessequiv}
The following theorems are equivalent:
\begin{enumerate}[label=(\roman*)]
\item generalized $n$-dimensional Steinhaus Chessboard Theorem; \label{tw:genndimchessequiv::1}
\item $n$-dimensional Steinhaus Chessboard Theorem; \label{tw:genndimchessequiv::2}
\item Brouwer Fixed Point Theorem; \label{tw:genndimchessequiv::3}
\item Lebesgue Theorem. \label{tw:genndimchessequiv::4}
\end{enumerate}
\end{tw}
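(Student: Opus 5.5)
We will establish the cycle of implications (i) $\Rightarrow$ (ii) $\Rightarrow$ (iii) $\Rightarrow$ (iv) $\Rightarrow$ (i), each for all $n$ simultaneously. The implication (i) $\Rightarrow$ (ii) is immediate: the conclusion $\dim(P_j \cap P_{j+1}) \ge i-1 \ge 0$ in Theorem \ref{tw:sct} forces $P_j \cap P_{j+1} \neq \emptyset$. The implication (iv) $\Rightarrow$ (ii) is already Corollary \ref{cor:ndimsteinhaus}. For (ii) $\Rightarrow$ (iii), we will use the standard route through the Poincar\'e--Miranda Theorem, which is equivalent to Brouwer. Suppose $f \colon I^n \to I^n$ has no fixed point, so $\norm{f(x)-x} \ge \delta > 0$ for all $x$ by compactness. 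Choose $k$ so large that $f$ varies by less than $\delta/(2\sqrt n)$ on each cube of $\mathcal{K}_k^n$. Colour a cube $P$ by some $i$ with $|f_i(c_P) - (c_P)_i| \ge \delta/\sqrt n$, where $c_P$ is its centre; fix the sign of $f_i - x_i$ on the whole cube. An $i$-coloured chain from $I^n_{i,-}$ to $I^n_{i,+}$ has constant sign of $f_i - x_i$ along it, since adjacent cubes overlap and the sign cannot flip where $|f_i - x_i|$ stays bounded below. This contradicts $f_i - x_i \ge 0$ on $I^n_{i,-}$ and $\le 0$ on $I^n_{i,+}$. The bookkeeping of the constants $\delta/\sqrt n$ versus the mesh is routine.

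For (iii) $\Rightarrow$ (iv), the paper's own proof of Theorem \ref{tw:lebesguecovering} goes through Proposition \ref{prop:intofsepisconn}, which ultimately rests on Lemma \ref{lem:intofpartitionisconn}, i.e.\ \cite[Lemma 1.8.15]{engelkingdimension}. That lemma is classically derived from the Brouwer Fixed Point Theorem, so Brouwer implies Lemma \ref{lem:intofpartitionisconn}, and the chain of Section \ref{sec:subconnsep} then yields the Lebesgue Theorem. I will state explicitly that no other step of Sections \ref{prelim}--\ref{sec:subconnsep} uses dimension-theoretic input beyond elementary topology.

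The main obstacle is (iv) $\Rightarrow$ (i): extracting the face-adjacency condition $\dim(P_j\cap P_{j+1})\ge i-1$ from the purely topological Lebesgue Theorem, without invoking Theorem \ref{tw:genlebesgue}. The plan is to modify the closed sets before applying (iv). For each $i$, let $F_i$ be the union of the $i$-coloured cubes with the $(i-2)$-skeleton of the grid removed, then thicken slightly. More concretely, define $F_i'$ as the union of $i$-coloured cubes shrunk by a small amount $\eta$ in the coordinate directions $j<i$ only, and assign the freed slabs to colours of smaller index by some explicit rule, so that $\{F_i'\}$ still covers $I^n$ and remains closed. If two $i$-coloured cubes meet only in a face of dimension $<i-1$, they differ in at least $n-i+2$ coordinates. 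Hence they differ in some coordinate where shrinking has separated them, which makes their shrunken copies disjoint; shrinking in directions $j<i$ does not detach cubes from $I^n_{i,\pm}$. A connected component of $F_i'$ meeting both $i$th faces then yields the desired chain with $(i-1)$-dimensional contacts.

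I expect the delicate part to be designing the reassignment of the removed slabs so that closedness and covering hold simultaneously. If that fails, I would instead induct on $i$: apply (iv) to a perturbed colouring and use Lemma \ref{lem:withoutface} to pass to lower faces.
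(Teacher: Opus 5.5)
\textbf{Review of the blind proposal.} Your implications (i)$\Rightarrow$(ii), (ii)$\Rightarrow$(iii) and (iii)$\Rightarrow$(iv) are acceptable replacements for the references the paper cites. For (ii)$\Rightarrow$(iii) you use the sign argument for $f_i(x)-x_i$ along an $i$-coloured chain; the mesh must also absorb the variation of $x_i$ over a cube, which is routine. For (iii)$\Rightarrow$(iv) you go Brouwer $\Rightarrow$ Engelking's theorem on partitions $\Rightarrow$ Lemma~\ref{lem:intofpartitionisconn} $\Rightarrow$ Proposition~\ref{prop:intofsepisconn}.

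\textbf{The gap: (iv)$\Rightarrow$(i).} This is the only implication the paper actually proves, and your concrete construction fails in both directions.

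First, shrinking $i$-coloured cubes by $\eta$ in \emph{all} directions $j<i$ disconnects two $i$-coloured cubes that differ only in the $j$th index. Yet such cubes share a facet, which is a contact of dimension $n-1\ge i-1$. So $F_i'$ only sees chains whose first $i-1$ indices are constant.

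Second, the freed slabs span whole faces of cubes. Adding them to a smaller colour creates contacts between cubes of that colour that do not even touch. Then (iv) can return an index for which no admissible chain exists, and your final step breaks, because the component of $F_i'$ runs through slabs of other colours.

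Concretely, let $n=2$, $k=3$, and index cubes by $(i_1,i_2)\in[3]^2$. Give colour $2$ to $(1,1),(1,2),(2,2),(2,3)$ and colour $1$ to the other five. Here every slab must go to colour $1$, so no choice of ``explicit rule'' helps.
\begin{itemize}
\item $F_2'$ is the union of $[\eta,1/3-\eta]\times[0,2/3]$ and $[1/3+\eta,2/3-\eta]\times[1/3,1]$, so $F_2'\notin\textsf{Conn}_2^2$.
\item $F_1'\in\textsf{Conn}_1^2$, via $(1,3)$, the slabs $[1/3,1/3+\eta]\times[1/3,1]$ and $[1/3-\eta,1/3]\times[0,2/3]$, and then $(2,1),(3,1)$.
\item However, the only $1$-coloured cube meeting $I^2_{1,-}$ is $(1,3)$, and all its neighbours have colour $2$. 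So there is no $1$-chain.
\end{itemize}
The actual witness is the $2$-chain $(1,1),(1,2),(2,2),(2,3)$, which $F_2'$ no longer detects.

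\textbf{The missing idea.} Remove from colour $i$ only a thin neighbourhood of $\text{Skel}_{i-2}^{n,k}$, and absorb it into the lower colours in a way that provably creates no new connections: colour $j$ takes a neighbourhood of the $(j-1)$-skeleton. The paper builds open sets $U_i\supset A_i\cup\text{Skel}_{i-1}^{n,k}$ inductively with $\overline{U_i}\setminus U_{i-1}\notin\textsf{Conn}_i^n$. The tools are:
\begin{itemize}
\item Observation~\ref{obs:Skel}: once a neighbourhood of $\text{Skel}_{j-1}^{n,k}$ is removed, the open $j$-cells become disjoint compact pieces. None of them connects any pair of opposite faces, and each lies either inside a given union of cubes or disjoint from it.
\item Proposition~\ref{prop:sumofnonsep}~\ref{prop:sumofnonsep::3}, applied to the finite disjoint union with $A_{j+1}\setminus U_j$.
\item Lemma~\ref{lem:technicalneigh}, to pass to a closed neighbourhood.
\end{itemize}
The closed cover $\{\overline{U_i}\setminus U_{i-1}\}_{i<n}\cup\{A_n\setminus U_{n-1}\}$ then contradicts (iv). Hence $A_i\setminus\text{Skel}_{i-2}^{n,k}\in\textsf{Conn}_i^n$ for some $i$, and Observation~\ref{obs:K1K2Skel} turns a connected witness into a chain with $\dim(P_j\cap P_{j+1})\ge i-1$.

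Your fallback (induction on $i$ via Lemma~\ref{lem:withoutface}) does not supply any of this.
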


To prove this theorem, we will need the following concepts and facts. Let $n, k \in \mathbb{N}$, $j_1, \ldots, j_n \in [k]$ and \hbox{$K = \prod_{s=1}^n \bracl{(j_s - 1)/k, j_s/k} \in \mathcal{K}_k^n$}. For $i \in \left\{0, 1, \ldots, n\right\}$, we define the \texttt{$i$-skeleton} of $K$, denoted by $\text{Skel}_i(K)$, as follows:
\begin{align*}
\text{Skel}_i(K) = \left\{x \in K \colon\, \text{there exists } S \subset [n] \text{ s.t. } \abs{S} \ge n-i \text{ and } x_s \in \left\{(j_s - 1)/k, j_s/k\right\} \text{ for } s \in S\right\}.
\end{align*}
For instance, $\text{Skel}_i(I^n)$ is the set of all points in $I^n$ for which at least $n-i$ coordinates are equal to $0$ or $1$. Moreover, $\text{Skel}_0(K)$ is the set of vertices of $K$, $\text{Skel}_1(K)$ is the union of edges of $K$, $\text{Skel}_{n-1}(K) = \partial_{\mathbb{R}^n} K$, and $\text{Skel}_{n}(K) = K$. Let us define $\text{Skel}_i^{n, k} := \bigcup_{K \in \mathcal{K}^n_k} \text{Skel}_i(K)$. Then,
\begin{align*}
\text{Skel}_i^{n, k} = \left\{x \in I^n \colon\, \text{there exists } S \subset [n] \text{ s.t. } \abs{S} \ge n-i \text{ and } x_s \in \left\{(j-1)/k\right\}_{j \in [k+1]} \text{ for } s \in S\right\}.
\end{align*}
For technical convenience, we set $\text{Skel}_{-1}^{n, k} = \emptyset$.

\begin{obs}\label{obs:K1K2Skel}
Let $n, k \in \mathbb{N}$ and $0 \le i \le n$. If $K_1, K_2 \in \mathcal{K}_k^n$ and $\dim(K_1 \cap K_2) \le i$, then $K_1 \cap K_2 \subset \text{Skel}_i^{n, k}$. \qed
\end{obs}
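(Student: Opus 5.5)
We argue directly from the product structure of the cubes in $\mathcal{K}_k^n$; no topology beyond the dimension of a cube is needed. Write
\begin{align*}
K_1 = \textstyle\prod_{s=1}^n \bracl{(a_s-1)/k,\, a_s/k}, \quad K_2 = \prod_{s=1}^n \bracl{(b_s-1)/k,\, b_s/k}
\end{align*}
with $a_s, b_s \in [k]$.

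If $K_1 \cap K_2 = \emptyset$, the inclusion is trivial, so we assume the intersection is nonempty. The intersection of two products is the product of the coordinatewise intersections:
\begin{align*}
K_1 \cap K_2 = \textstyle\prod_{s=1}^n \brac{\bracl{(a_s-1)/k,\, a_s/k} \cap \bracl{(b_s-1)/k,\, b_s/k}}.
\end{align*}
Since this set is nonempty, $\abs{a_s - b_s} \le 1$ for every $s$. Let $T = \left\{s \in [n] \colon a_s = b_s\right\}$ and $d = \abs{T}$. Each factor falls into one of two cases.
\begin{itemize}
\item If $s \in T$, the $s$th factor is the full interval $\bracl{(a_s-1)/k,\, a_s/k}$.
\item If $s \notin T$, the $s$th factor is the single grid point $\max(a_s,b_s)/k - 1/k = \min(a_s,b_s)/k$, which lies in $\left\{(j-1)/k\right\}_{j \in [k+1]}$.
\end{itemize}

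It follows that $K_1 \cap K_2$ is homeomorphic to the cube $I^d$ (a point when $d=0$). By the fundamental theorem of dimension theory \cite[Theorem 1.8.3]{engelkingdimension}, $\dim (K_1 \cap K_2) = d$. The hypothesis $\dim(K_1 \cap K_2) \le i$ therefore gives $d \le i$.

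Now take any $x \in K_1 \cap K_2$ and set $S = [n] \setminus T$. Then $\abs{S} = n - d \ge n - i$, and for each $s \in S$ the coordinate $x_s$ equals a grid value in $\left\{(j-1)/k\right\}_{j \in [k+1]}$. By the explicit description of $\text{Skel}_i^{n,k}$ given above, $x \in \text{Skel}_i^{n,k}$.

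None of these steps is a serious obstacle. The only nontrivial input is the identification $\dim(K_1\cap K_2) = d$, which is precisely the fact that a $d$-dimensional cube has covering dimension $d$.
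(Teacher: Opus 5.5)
Your proof is correct, and it is the routine coordinatewise check that the paper omits (the observation is stated without proof). The intersection is a product of $d$ full edges and $n-d$ grid points, hence homeomorphic to $I^d$. The fundamental theorem of dimension theory then forces $d \le i$, and the $n-d$ degenerate coordinates show that every point lies in $\text{Skel}_i^{n,k}$.
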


\begin{obs}\label{obs:Skel}
Let $n, k \in \mathbb{N}, i \in [n], \mathcal{K} \subset \mathcal{K}^n_k$, and $U \subset I^n$ be an open neighborhood of $\text{Skel}_{i-1}^{n, k}$. Then, the set $\text{Skel}_i^{n, k} \setminus \brac{U \cup \bigcup \mathcal{K}}$ is compact and does not connect $l$th opposite faces of $I^n$ for any $l \in [n]$.
\end{obs}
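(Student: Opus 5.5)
The plan is to show that $\text{Skel}_i^{n,k} \setminus \brac{U \cup \bigcup \mathcal{K}}$ is a finite disjoint union of compact sets, each lying in the relative interior of a single $i$-dimensional face of some small cube. Compactness and the failure to connect opposite faces then follow at once.

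First I describe the local structure of $\text{Skel}_i^{n,k}$ away from $\text{Skel}_{i-1}^{n,k}$. Let $\mathcal{F}_i$ be the finite family of $i$-dimensional faces of cubes in $\mathcal{K}^n_k$. Each such face is a product in which $n-i$ coordinates are fixed grid values and $i$ coordinates range over intervals $[(j-1)/k, j/k]$. For $F \in \mathcal{F}_i$ let $F^\circ = F \setminus \text{Skel}_{i-1}^{n,k}$ be its relative interior.

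I claim the following. The sets $F^\circ$ for $F \in \mathcal{F}_i$ are pairwise disjoint. Their union is $\text{Skel}_i^{n,k} \setminus \text{Skel}_{i-1}^{n,k}$. Two distinct faces in $\mathcal{F}_i$ meet only inside $\text{Skel}_{i-1}^{n,k}$. The last point holds because a point of $F_1 \cap F_2$ with $F_1 \neq F_2$ has at least $n-i+1$ grid coordinates. The other two points follow from the same coordinate count.

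Next I sort faces according to $\mathcal{K}$. If $F^\circ$ meets some $K \in \mathcal{K}$, then $F \subset K$. Indeed, a relative interior point of $F$ lying in a grid cube $K$ forces $F$ to be a face of $K$, since the free coordinates of that point lie strictly inside grid intervals. So each $F^\circ$ is either entirely inside $\bigcup \mathcal{K}$ or disjoint from it. Let $\mathcal{F}' \subset \mathcal{F}_i$ be the faces of the second kind. Since $\text{Skel}_{i-1}^{n,k} \subset U$, we get
\begin{align*}
\text{Skel}_i^{n,k} \setminus \brac{U \cup \textstyle\bigcup \mathcal{K}} = \textstyle\bigcup_{F \in \mathcal{F}'} \brac{F \setminus U}.
\end{align*}
Here I use that for $F \in \mathcal{F}'$ we have $F \setminus U \subset F^\circ$, which is disjoint from $\bigcup \mathcal{K}$. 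Each $F \setminus U$ is compact, as a closed subset of $F$. The union is finite, so the whole set is compact. The sets $F \setminus U$ are also pairwise disjoint, since distinct faces meet only in $\text{Skel}_{i-1}^{n,k} \subset U$.

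Finally, fix $l \in [n]$ and suppose a connected subset $S$ of this set meets both $I^n_{l,-}$ and $I^n_{l,+}$. The set is a finite disjoint union of closed sets, so $S$ lies in a single $F \setminus U \subset F^\circ$.

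There are two cases for the coordinate $l$ on $F$.
\begin{itemize}
\item If $x_l$ is fixed on $F$, then $x_l$ is constant on $F^\circ$. So $F^\circ$ cannot meet both $\left\{x_l=0\right\}$ and $\left\{x_l=1\right\}$.
\item If $x_l$ is free on $F$, then on $F^\circ$ it ranges in an open interval $\brac{(j-1)/k, j/k}$, and $F^\circ$ meets neither face. Here I need that relative interior points have free coordinates strictly between grid values, which is exactly the condition of not lying in $\text{Skel}_{i-1}^{n,k}$.
\end{itemize}
Either way this is a contradiction.

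The only delicate step is the local-structure claim that $F^\circ$ consists of points whose free coordinates are off-grid. The rest is bookkeeping, and I expect that step to be the main point to verify carefully.
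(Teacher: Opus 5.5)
Your proposal is correct and follows essentially the same route as the paper. Both arguments split $\text{Skel}_i^{n,k}\setminus U$ into pairwise disjoint compact pieces (relative interiors of $i$-dimensional faces minus $U$), note that a cube meeting such a relative interior contains it, and conclude that the set is a finite disjoint union of compact pieces, none of which connects $l$th opposite faces. The only difference is that you verify directly that a connected subset lies in a single piece and that a piece cannot meet both $I^n_{l,-}$ and $I^n_{l,+}$, whereas the paper cites Proposition~\ref{prop:sumofnonsep}~\ref{prop:sumofnonsep::3} and leaves the latter fact unjustified.
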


\begin{proof}
For $j_1, \ldots, j_{n} \in [k+1]$ and $S \subset [n]$ such that $\abs{S} = n-i$, let 
\begin{align*}
&L_S^{j_1, \ldots, j_n} = \left\{x \in I^n\colon\, x_s = (j_s - 1)/k \text{ for } s \in S, \text{ and } x_s \in [(j_s - 1)/k, j_s/k] \text{ for } s \in [n] \setminus S\right\},\\& W_S^{j_1, \ldots, j_n} = \left\{x \in I^n\colon\, x_s = (j_s - 1)/k \text{ for } s \in S, \text{ and } x_s \in\brac{(j_s - 1)/k, j_s/k} \text{ for } s \in [n] \setminus S\right\}.
\end{align*}
Obviously, 
\begin{align*}
    \text{Skel}_i^{n, k} = \textstyle\bigcup_{S \subset [n], \abs{S}= n-i} \bigcup_{j_1, \ldots, j_n \in [k+1]} L_S^{j_1, \ldots, j_n},
\end{align*}
and $W_S^{j_1, \ldots, j_n} \cap W_{S'}^{j_1', \ldots, j_n'} = \emptyset$ if $S \neq S'$ or $(j_1, \ldots, j_n) \neq (j_1', \ldots, j_n')$. Moreover, let us observe that $L_S^{j_1, \ldots, j_n} \setminus \text{Skel}_{i-1}^{n, k} = W_S^{j_1, \ldots, j_n}$. Thus $L_S^{j_1, \ldots, j_n} \setminus U = W_S^{j_1, \ldots, j_n} \setminus U$. In particular, the sets $W_S^{j_1, \ldots, j_n} \setminus U$ are compact. Consequently, 
\begin{align*}
    \text{Skel}_i^{n, k} \setminus U= \textstyle\bigcup_{S \subset [n], \abs{S}= n-i} \bigcup_{j_1, \ldots, j_n \in [k+1]} W_S^{j_1, \ldots, j_n} \setminus U.
\end{align*}
Since $W_S^{j_1, \ldots, j_n} \setminus U\notin \textsf{Conn}_l^n$, then $\text{Skel}_i^{n, k} \setminus U$ is a union of finitely many mutually disjoint compact sets that do not connect $l$th opposite faces of $I^n$, so $\text{Skel}_i^{n, k} \setminus U \notin \textsf{Conn}_l^n$ from Proposition \ref{prop:sumofnonsep} \ref{prop:sumofnonsep::3}.

It is straightforward to verify that for every $K \in \mathcal{K}^n_k$, if $K \cap W_S^{j_1, \ldots, j_n} \neq \emptyset$, then $W_S^{j_1, \ldots, j_n} \subset K$. Therefore, the set $\text{Skel}_i^{n, k} \setminus \brac{U \cup \bigcup \mathcal{K}}$ can be expressed as a union of certain sets $W_S^{j_1, \ldots, j_n} \setminus U$. Since these sets are compact, then the set $\text{Skel}_i^{n, k} \setminus \brac{U \cup \bigcup \mathcal{K}}$ is compact.
\end{proof}

\begin{lem}\label{lem:technicalneigh}
Let $n \in \mathbb{N}, i \in [n]$ and $A, B \subset I^n$ be closed subsets such that $A \cap B \notin \Conn$.  Then, there exists an open neighborhood $U$ of $A$ such that $U \cap B \notin \Conn$.
\end{lem}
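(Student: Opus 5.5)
The plan is to argue by contradiction with closed neighborhoods and Hausdorff convergence, exactly as in the proof of Corollary \ref{cor:openneigh}. Suppose that for every open neighborhood $U$ of $A$ we have $U \cap B \in \Conn$. For $k \in \mathbb{N}$, put $U_k = N_{I^n}\brac{A, 1/k}$. Then the set $C_k := \overline{N}_{I^n}\brac{A, 1/k} \cap B$ contains $U_k \cap B$, so $C_k \in \Conn$. Since $A$ is compact, $\overline{N}_{I^n}(A,1/k)$ is closed. Hence each $C_k$ is a nonempty compact subset of $I^n$ (nonempty because it connects $i$th opposite faces).

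Next I check that $\brac{C_k}_{k=1}^\infty$ is a decreasing sequence and compute its intersection:
\begin{align*}
\textstyle\bigcap_{k=1}^\infty C_k = \brac{\bigcap_{k=1}^\infty \overline{N}_{I^n}\brac{A, 1/k}} \cap B = A \cap B.
\end{align*}
The middle equality uses that $A$ is closed: a point at distance at most $1/k$ from $A$ for all $k$ has distance $0$ from $A$, so it lies in $A$. Since every $C_k$ is nonempty, the intersection $A \cap B$ is nonempty and compact. Lemma \ref{lem:intersectionhausdorff} then gives $C_k \xrightarrow{k\to\infty} A \cap B$ in the Hausdorff metric.

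By Proposition \ref{prop:convergenceSk} \ref{prop::convergenceSk:1}, the limit satisfies $A \cap B \in \Conn$, which contradicts the hypothesis. Therefore some $U_k$ works. No step seems to be a real obstacle. The only points needing care are that $C_k$ is compact (closedness of closed neighborhoods of compact sets, as noted in the preliminaries) and that the intersection equals $A \cap B$; both follow from $A$ being closed in the compact space $I^n$.
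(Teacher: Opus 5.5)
Your proof is correct and is essentially the paper's own argument. The paper likewise takes $\overline{N}_{I^n}(A,1/k)\cap B$, and as in Corollary \ref{cor:openneigh} uses Lemma \ref{lem:intersectionhausdorff} to get Hausdorff convergence to $A\cap B$, then Proposition \ref{prop:convergenceSk} \ref{prop::convergenceSk:1} to reach a contradiction; your verification of compactness and of the intersection identity just fills in details the paper leaves implicit.
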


\begin{proof}
It suffices to consider the sets $A_k :=\overline{N}_{I^n}\brac{A, 1/k} \cap B$ for $k \in \mathbb{N}$ and the proof proceeds as in Corollary \ref{cor:openneigh}.
\end{proof}

\begin{proof}[Proof of Theorem \ref{tw:genndimchessequiv}]
The implication \ref{tw:genndimchessequiv::1} $\implies$ \ref{tw:genndimchessequiv::2} is immediate. The implications \ref{tw:genndimchessequiv::2} $\implies$ \ref{tw:genndimchessequiv::3} and \ref{tw:genndimchessequiv::3} $\implies$ \ref{tw:genndimchessequiv::4} are established in \cite[Theorem 2.4]{tkacz2011bolzano} and \cite[Theorem 4.1 (4), (6)]{idzik2021equivalent}, respectively. Therefore, it remains only to prove that \ref{tw:genndimchessequiv::4} $\implies$ \ref{tw:genndimchessequiv::1}.

For this purpose, let $n, k \in \mathbb{N}$, $F \colon \mathcal{K}^n_k \to [n]$ and $A_i = \bigcup F^{-1}\bracl{\left\{i\right\}}$ for $i \in [n]$. We shall show that \hbox{$A_i \setminus \text{Skel}_{i-2}^{n, k} \in \textsf{Conn}_i^n$} for some $i \in [n]$. Suppose, for the sake of contradiction, that \hbox{$A_i \setminus \text{Skel}_{i-2}^{n, k} \notin \textsf{Conn}_i^n$} for all $i \in [n]$. Additionally, we define $A_0 := \emptyset$. We can assume that $n \ge 2$. Since the set $\text{Skel}_{0}^{n, k} \subset I^n$ is finite and $A_1 = A_1 \setminus \text{Skel}_{-1}^{n, k}$, then the sets $A_1$ and $\text{Skel}_{0}^{n, k} \setminus A_1$ are compact, mutually disjoint, and $A_1, \text{Skel}_{0}^{n, k} \setminus A_1 \notin \textsf{Conn}_1^n$, so $A_1 \cup \text{Skel}_{0}^{n, k} \notin \textsf{Conn}_1^n$ from Proposition~\ref{prop:sumofnonsep}~\ref{prop:sumofnonsep::3}. By virtue of Corollary~\ref{cor:openneigh}, there exists an open neighborhood $U_1$ of $A_1 \cup \text{Skel}_{0}^{n, k}$ such that $\overline{U_1} \notin \textsf{Conn}_1^n$. Let $U_0 = \emptyset$. Then, $A_1' := \overline{U_1} \setminus U_0 \notin \textsf{Conn}_1^n$.

Let us fix $j \in [n-2]$, and let us assume that open neighborhoods $U_i$ of $A_i \cup \text{Skel}_{i-1}^{n, k}$ have been constructed for all $0 \le i \le j$ so that $\overline{U_i} \setminus U_{i-1} \notin \textsf{Conn}_i^n$ for $i \in [j]$. We shall find an open neighborhood $U_{j+1}$ of $A_{j+1} \cup \text{Skel}_j^{n, k}$ such that $\overline{U_{j+1}} \setminus U_{j} \notin \textsf{Conn}_{j+1}^n$. From Observation \ref{obs:Skel}, it follows that the set 
\begin{align*}
    \text{Skel}_j^{n, k} \setminus \brac{U_j \cup A_{j+1}} \notin \textsf{Conn}_{j+1}^n
\end{align*}
is compact. Since $A_{j+1} \setminus \text{Skel}_{j-1}^{n, k} \notin \textsf{Conn}_{j+1}^n$, then $A_{j+1} \setminus U_j \notin \textsf{Conn}_{j+1}^n$. Moreover, the sets $A_{j+1} \setminus U_j$ and \hbox{$\text{Skel}_j^{n, k} \setminus \brac{U_j \cup A_{j+1}}$} are mutually disjoint, so
\begin{align*}
\brac{A_{j+1} \cup \text{Skel}_j^{n, k}} \setminus U_j = \brac{\text{Skel}_j^{n, k} \setminus \brac{U_j \cup A_{j+1}}} \cup \brac{A_{j+1} \setminus U_j} \notin \textsf{Conn}_{j+1}^n
\end{align*}
from Proposition \ref{prop:sumofnonsep} \ref{prop:sumofnonsep::3}. By virtue of Lemma \ref{lem:technicalneigh}, there exists an open neighborhood $U_{j+1}$ of \hbox{$A_{j+1} \cup \text{Skel}_j^{n, k}$} such that $\overline{U_{j+1}} \setminus U_j \notin \textsf{Conn}_{j+1}^n$.

By induction, we have constructed open neighborhoods $U_i$ of \hbox{$A_i \cup \text{Skel}_{i-1}^{n, k}$} for $0 \le i \le n-1$ such that $A_i' := \overline{U_i} \setminus U_{i-1} \notin \textsf{Conn}_i^n$ for $i \in [n-1]$. Let $A_n' := A_n \setminus U_{n-1}$. Since $A_n \setminus \text{Skel}_{n-2}^{n, k} \notin \textsf{Conn}_n^n$, then $A_n' \notin \textsf{Conn}_n^n$. Moreover $A_i \subset \bigcup_{j=1}^i A_j'$ for any $i \in [n]$. Thus
\begin{align*}
    \textstyle\bigcup_{i=1}^n A_i' = \bigcup_{i=1}^n A_i = I^n.
\end{align*}
Since $A_i' \notin \textsf{Conn}_i^n$ for $i \in [n]$, then we obtain a contradiction with the Lebesgue Theorem (Theorem~\ref{tw:lebesguecovering}).

Hence, there exists $i \in [n]$ such that $\bigcup F^{-1}\bracl{\left\{i\right\}} \setminus \text{Skel}_{i-2}^{n, k} \in \textsf{Conn}_i^n$. Let $S \subset \bigcup F^{-1}\bracl{\left\{i\right\}} \setminus \text{Skel}_{i-2}^{n, k}$ be a~connected subset such that $S \in \textsf{Conn}_i^n$. Then, there exists a family $\left\{P_j\right\}_{j=1}^r \subset \mathcal{K}_k^n$ for some $r \in \mathbb{N}$ such that $F(P_j) = i$ for any $j \in [r]$, and $P_1 \cap I^n_{i, -} \neq \emptyset \neq P_r \cap I^n_{i, +}$, and $P_j \cap P_{j+1} \cap S \neq \emptyset$ for any $j \in [r-1]$. Let us fix $j \in [r-1]$. Since $S \cap \text{Skel}_{i-2}^{n, k} = \emptyset$, then $P_j \cap P_{j+1} \not\subset \text{Skel}_{i-2}^{n, k}$. Therefore, $\dim(P_j \cap P_{j+1}) \ge i-1$ from Observation \ref{obs:K1K2Skel}. 
\end{proof}

We now present a generalization of the Lebesgue Theorem.

\begin{twmainA}\label{tw:genlebesgue}
Let $n \in \mathbb{N}$ and $\left\{A_i\right\}_{i=1}^n$ be a family of closed sets that covers $I^n$. Then, there exists $i \in [n]$ such that for every closed subset $Z \subset I^n$ satisfying $\dim Z \le i-2$ it follows that $A_i \setminus Z \in \Conn$.
\end{twmainA}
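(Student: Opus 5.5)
We argue by contradiction, following the scheme used in the proof of \ref{tw:genndimchessequiv::4} $\implies$ \ref{tw:genndimchessequiv::1} of Theorem \ref{tw:genndimchessequiv}, with the skeleta $\text{Skel}_{i-2}^{n,k}$ replaced by arbitrary closed sets of small dimension. Suppose that for every $i \in [n]$ there is a closed set $Z_i \subset I^n$ with $\dim Z_i \le i-2$ and $A_i \setminus Z_i \notin \textsf{Conn}_i^n$. Then $Z_1 = \emptyset$, since $\dim Z_1 \le -1$, so $A_1 \notin \textsf{Conn}_1^n$. The aim is to build closed sets $A_1', \ldots, A_n'$ covering $I^n$ with $A_i' \notin \textsf{Conn}_i^n$; this contradicts the Lebesgue Theorem (Theorem \ref{tw:lebesguecovering}).

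The plan is an induction that absorbs the low-dimensional sets. Put $W_1 := Z_2 \cup \cdots \cup Z_n$. This is a closed set, and we want to spread it over the indices by dimension. We build open sets $U_0 = \emptyset \subset U_1 \subset \cdots \subset U_{n-1}$ with two properties: $U_i \supset A_i$, and $U_i$ contains the part of $Z_{i+1}\cup\dots\cup Z_n$ of dimension at most $i-1$ that still needs absorbing, with $\overline{U_i}\setminus U_{i-1} \notin \textsf{Conn}_i^n$. Concretely, at stage $i$ we need to cover $(A_i \cup Z_{i+1}) \setminus U_{i-1}$. The set $Z_{i+1}\setminus U_{i-1}$ should have dimension $\le 0$, because we arrange, by Proposition \ref{prop:dimfacts} \ref{prop::dimfacts:2} applied inductively, that each $Z_j$ ($j>i$) splits as $Z_j = Y_j^{(i)} \cup D_j^{(i)}$. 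Here $Y_j^{(i)}$ is already contained in $U_{i-1}$, and $D_j^{(i)}$ has $\dim D_j^{(i)} \le j-1-i$. At stage $i$ we swallow a zero-dimensional piece of each remaining $Z_j$, which lowers the remaining dimension of every $Z_j$ by one.

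The tool for one step is Proposition \ref{prop:closedpluszerodim}, adapted to the relative setting of Lemma \ref{lem:technicalneigh}. Take the compact set $C := A_i \setminus U_{i-1}$. It does not lie in $\textsf{Conn}_i^n$, because $C \subset A_i \setminus Z_i$ provided $Z_i \subset U_{i-1}$, and the induction should ensure exactly this. Take also the set $Z := \bigcup_{j>i}(\text{zero-dimensional pieces of } Z_j\setminus U_{i-1})$, which has $\dim \le 0$ by Corollary \ref{cor:Fsigma}, or by countable sum arguments. Proposition \ref{prop:closedpluszerodim} then gives an open neighbourhood $V$ of $C\cup Z$ with $V \notin \textsf{Conn}_i^n$. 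Shrinking via Corollary \ref{cor:openneigh} keeps the closure non-connecting, and we set $U_i := U_{i-1} \cup V$ and $A_i' := \overline{V}$ (or $\overline{U_i}\setminus U_{i-1}$). Finally, $A_n' := A_n \setminus U_{n-1}$, which avoids $Z_n$ and so is not in $\textsf{Conn}_n^n$, because $A_n \setminus Z_n \notin \textsf{Conn}_n^n$ and $A_n' \subset A_n\setminus Z_n$. The union of the $A_i'$ contains every $A_i$, hence equals $I^n$.

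The main obstacle is the dimension bookkeeping that guarantees $Z_i \subset U_{i-1}$ at stage $i$. Since $\dim Z_i \le i-2$, we can write $Z_i$ as a union of $i-1$ zero-dimensional subsets, one for each stage $1, \ldots, i-1$, by iterating Proposition \ref{prop:dimfacts} \ref{prop::dimfacts:2}. At stage $l<i$ we put the $l$th piece into $U_l$. The subtle points come from the fact that these pieces are not closed. First, Proposition \ref{prop:closedpluszerodim} allows an arbitrary zero-dimensional $Z$, so this causes no trouble there. Second, we must check that the union over $j$ of the $l$th pieces is still zero-dimensional; we would try to handle this by decomposing $W=\bigcup_{j>l} Z_j$ directly, using Corollary \ref{cor:Fsigma}, rather than decomposing each $Z_j$ separately. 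Third, we need openness so that the compactness of $C$ is preserved; this holds because $U_{i-1}$ is open, so $A_i\setminus U_{i-1}$ is compact.
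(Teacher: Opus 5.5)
Your overall scheme matches the paper's: argue by contradiction, absorb at each stage a compact non-connecting part of $A_l$ together with a zero-dimensional set via Proposition \ref{prop:closedpluszerodim}, and end with a cover by non-connecting sets. But the step you call the main obstacle is not resolved, and as written it fails. You split each $Z_j$ \emph{separately} into zero-dimensional pieces, and at stage $i$ you absorb $Z=\bigcup_{j>i}(\text{piece of }Z_j\setminus U_{i-1})$. The pieces produced by Proposition \ref{prop:dimfacts} \ref{prop::dimfacts:2} are in general not closed, and a union of two non-closed zero-dimensional sets can have positive dimension. Corollary \ref{cor:Fsigma} needs one of the two sets to be closed, and the countable sum theorem needs closed summands. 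For instance, take $n\ge 4$ and suppose $Z_3$ and $Z_4$ both contain a segment $J$ joining $I^n_{1,-}$ and $I^n_{1,+}$. Independent splittings may put the points of $J$ with rational parameter into the stage-$1$ piece of $Z_3$ and those with irrational parameter into the stage-$1$ piece of $Z_4$. Then $Z\supset J$, so $Z$ is not zero-dimensional, and no open neighbourhood of $C\cup Z$ avoids $\textsf{Conn}_1^n$. Your fallback of splitting $W=\bigcup_{j>l}Z_j$ directly does not help either. A single splitting of $W$ only lowers its top dimension, and its zero-dimensional part may miss a lower-dimensional $Z_j$ entirely. So, for example, $Z_3\setminus U_1$ can still be one-dimensional at stage $2$.

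The missing idea is to split once, from the top down, merging one \emph{closed} set at a time so that Corollary \ref{cor:Fsigma} applies. Put $E_{n+1}=\emptyset$, and for $k=n,n-1,\dots,2$ let $Y_k=Z_k\cup E_{k+1}$. Since $Z_k$ is closed and $\dim Z_k,\dim E_{k+1}\le k-2$, Corollary \ref{cor:Fsigma} gives $\dim Y_k\le k-2$. Proposition \ref{prop:dimfacts} \ref{prop::dimfacts:2} then gives $Y_k=E_k\cup D_k$ with $\dim E_k\le k-3$ and $\dim D_k\le 0$; in particular $E_2=\emptyset$. It follows that $Z_{l+1}\subset D_2\cup\dots\cup D_{l+1}$. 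At stage $l$ you absorb the single zero-dimensional set $D_{l+1}\setminus(V_1\cup\dots\cup V_{l-1})$ together with the compact set $A_l\setminus(V_1\cup\dots\cup V_{l-1})\subset A_l\setminus Z_l$. This is exactly the cascade $Z_{k-1}\cup Z_k^1=Z_{k-1}^1\cup Z_{k-1}^0$ in the proof of Theorem \ref{tw:strongergenlebesgue}.

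There is a second, smaller problem: you cannot pass to closures. Corollary \ref{cor:openneigh} applies only to closed sets, and $\overline{V_1}\supset\overline{D_2}$. The set $\overline{D_2}$ can contain a whole arc of $Z_3$ joining the first faces, because $E_3$ may be dense in it, so $\overline{V_1}\in\textsf{Conn}_1^n$ is possible. Keep the $V_l$ open and finish as the paper does. The sets $I^n\setminus V_l$ for $l<n$ (closed, in $\textsf{Sep}_l^n$) and $I^n\setminus A_n'$ (open, in $\textsf{Sep}_n^n$) have empty intersection, which contradicts Proposition \ref{prop:intofsepisconn} \ref{prop:intofsepisconn::1}. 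Alternatively, take a closed shrinking of the final finite open cover before invoking Theorem \ref{tw:lebesguecovering}.
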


Let us note that the generalized $n$-dimensional Steinhaus Chessboard Theorem appears as a particular case of this result, obtained by setting \hbox{$A_i = \bigcup F^{-1}\bracl{\left\{i\right\}}$} in the proof of Theorem \ref{tw:genndimchessequiv} since the sets $\text{Skel}^{n, k}_{i-2}$ are closed and $\dim \text{Skel}^{n, k}_{i-2} = i-2$. In that context, Theorem \ref{tw:genlebesgue} also constitutes a~di\-mension-theoretic extension of the generalized $n$-dimensional Steinhaus Chessboard Theorem. With an eye toward Corollary \ref{cor:omitn2} and Question \ref{q:A}, which conclude this section, we prove a~slightly stronger result. Namely, we drop the closedness assumption on the set $Z$ associated with $A_n$.

\begin{tw}\label{tw:strongergenlebesgue}
Let $n \in \mathbb{N}$ and $\left\{A_i\right\}_{i=1}^n$ be a family of closed sets that covers $I^n$. Then, there exists $i \in [n-1]$ such that for every closed subset $Z \subset I^n$ satisfying $\dim Z \le i-2$ it follows that $A_i \setminus Z \in \Conn$, or for every subset $Z \subset I^n$ satisfying $\dim Z \le n-2$ it follows that $A_n \setminus Z \in \textsf{Conn}^n_n$.
\end{tw}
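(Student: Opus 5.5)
The plan is to argue by contradiction, following the pattern of the proof of Theorem~\ref{tw:genndimchessequiv}. There is one change. Instead of absorbing skeleta into closed neighbourhoods, we absorb the sets $Z_i$, broken into zero-dimensional pieces, into \emph{open} neighbourhoods. The contradiction then comes from Proposition~\ref{prop:intofsepisconn}~\ref{prop:intofsepisconn::1} applied to closed complements.

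\emph{Setup.} Suppose the statement fails. Then for each $i\in[n-1]$ there is a closed $Z_i$ with $\dim Z_i\le i-2$ and $A_i\setminus Z_i\notin\textsf{Conn}^n_i$. There is also an arbitrary $Z_n$ with $\dim Z_n\le n-2$ and $A_n\setminus Z_n\notin\textsf{Conn}^n_n$. Note that $Z_1=\emptyset$.

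\emph{Step 1: zero-dimensional pieces.} We find sets $D_1,\dots,D_{n-1}$ with $\dim D_j\le 0$ and
\begin{align*}
Z_k\subset D_1\cup\dots\cup D_{k-1}\quad\text{for every } k\in[n].
\end{align*}
This is a downward recursion. Put $X_{n-1}:=Z_n$, so $\dim X_{n-1}\le n-2$. Given $X_m$ with $\dim X_m\le m-1$, Proposition~\ref{prop:dimfacts}~\ref{prop::dimfacts:2} writes $X_m=Y_{m-1}\cup D_m$ with $\dim Y_{m-1}\le m-2$ and $\dim D_m\le 0$. Then set $X_{m-1}:=Y_{m-1}\cup Z_m$. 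Since $Z_m$ is closed with $\dim Z_m\le m-2$, Corollary~\ref{cor:Fsigma} gives $\dim X_{m-1}\le m-2$. This is exactly where closedness of $Z_m$ is needed for $m<n$, and why $Z_n$ may be arbitrary. From $X_m\subset X_{m-1}\cup D_m$ and $X_0=\emptyset$ (its dimension is at most $-1$), we get $Z_k\subset X_{k-1}\subset D_1\cup\dots\cup D_{k-1}$.

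\emph{Step 2: open neighbourhoods by induction.} Put $U_0=\emptyset$. For $i\in[n-1]$, suppose $U_{i-1}$ is open and contains $D_1\cup\dots\cup D_{i-1}$.
\begin{itemize}
\item The compact set $K_i:=A_i\setminus U_{i-1}$ lies in $A_i\setminus Z_i$, because $Z_i\subset U_{i-1}$. Hence $K_i\notin\textsf{Conn}^n_i$, since subsets of non-connecting sets do not connect.
\item Proposition~\ref{prop:closedpluszerodim}, applied to $K_i$ and the zero-dimensional set $D_i$, gives an open $V_i\supset K_i\cup D_i$ with $V_i\notin\textsf{Conn}^n_i$.
\item Set $U_i:=V_i\cup U_{i-1}$. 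This is open and contains $D_1\cup\dots\cup D_i$.
\end{itemize}
For $i=n$, the set $K_n:=A_n\setminus U_{n-1}$ is compact and lies in $A_n\setminus Z_n$. Hence $K_n\notin\textsf{Conn}^n_n$, and Corollary~\ref{cor:openneigh} gives an open $V_n\supset K_n$ with $V_n\notin\textsf{Conn}^n_n$.

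\emph{Step 3: contradiction.} Since $V_i\supset A_i\setminus(V_1\cup\dots\cup V_{i-1})$ for each $i$, we have $\bigcup_i V_i\supset\bigcup_i A_i=I^n$. Each $F_i:=I^n\setminus V_i$ is closed and lies in $\textsf{Sep}^n_i$. Proposition~\ref{prop:intofsepisconn}~\ref{prop:intofsepisconn::1} then gives $\bigcap_i F_i\neq\emptyset$, contradicting the covering.

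The main obstacle I expect is Step 1. The point is to organise the dimension decomposition so that each $Z_k$ is already swallowed by the earlier pieces $D_1,\dots,D_{k-1}$. A naive decomposition of each $Z_k$ separately, followed by taking unions of the resulting zero-dimensional sets across $k$, can fail, since such unions need not stay zero-dimensional. Interleaving the closed sets $Z_m$ through Corollary~\ref{cor:Fsigma} is what avoids this.

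A second subtlety is that $D_i$ is not compact. So one should not try to pass to closures $\overline{U_i}$ as in the chessboard proof. Working throughout with the open sets $V_i$ and their closed complements in $\textsf{Sep}^n_i$ sidesteps this issue.
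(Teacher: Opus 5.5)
Your proposal is correct and is essentially the paper's own proof. Your downward recursion $X_m=Y_{m-1}\cup D_m$, $X_{m-1}=Y_{m-1}\cup Z_m$ is exactly the paper's interleaved decomposition $Z_{k-1}\cup Z_k^1=Z_{k-1}^1\cup Z_{k-1}^0$ up to an index shift, built with Proposition~\ref{prop:dimfacts}~\ref{prop::dimfacts:2} and Corollary~\ref{cor:Fsigma}; it is followed by the same inductive absorption of the zero-dimensional pieces via Proposition~\ref{prop:closedpluszerodim} and the same contradiction from Proposition~\ref{prop:intofsepisconn}~\ref{prop:intofsepisconn::1}. The remaining differences are cosmetic: you take a neighbourhood of all of $D_i$ rather than only its part outside the earlier neighbourhoods, you use Corollary~\ref{cor:openneigh} at the last step, and the degenerate case $m=1$ of Corollary~\ref{cor:Fsigma} is trivial because both sets there are empty.
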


\begin{proof}
We can assume that $n\ge 2$. Let us suppose to the contrary that the statement does not hold. Then, for each $i \in [n-1]$, there exists a closed subset $Z_{i-2} \subset I^n$ such that $\dim Z_{i-2} \le i-2$ and \hbox{$A_i \setminus Z_{i-2} \notin \Conn$}, and there exists a subset $Z_{n-2} \subset I^n$ such that $\dim Z_{n-2} \le n-2$ and \hbox{$A_n \setminus Z_{n-2} \notin~\textsf{Conn}^n_n$}. For technical reasons, let us denote 
\begin{align*}
    Z^0_{n-1} = Z^1_{n-1} = Z_{n-1} = Z_n^1 = \emptyset.
\end{align*}
By virtue of Proposition \ref{prop:dimfacts} \ref{prop::dimfacts:2}, there exist subsets $Z_{n-2}^0, Z_{n-2}^1 \subset I^n$ such that 
\begin{align*}
    \dim Z_{n-2}^0 \le 0,\; \dim Z_{n-2}^1 \le n-3,\; Z_{n-2} = Z_{n-2} \cup Z_{n-1}^1 = Z_{n-2}^1 \cup Z_{n-2}^0.
\end{align*}

Let $n \ge 3$ and $1 \le k \le n-2$, and let us assume that the subsets $Z_i^0, Z_i^1 \subset I^n$ satisfying $\dim Z_i^0 \le 0$ and $\dim Z_i^1 \le i-1$ have been constructed for $k \le i \le n-2$ so that 
\begin{align*}
    Z_{i-1} \cup Z_i^1 = Z_{i-1}^1 \cup Z_{i-1}^0
\end{align*}
for $k+1 \le i \le n-1$. We shall find subsets $Z_{k-1}^0, Z_{k-1}^1 \subset I^n$ satisfying
\begin{align*}
    \dim Z_{k-1}^0 \le 0,\; \dim Z_{k-1}^1 \le k-2,\; Z_{k-1} \cup Z_k^1 = Z_{k-1}^1 \cup Z_{k-1}^0.
\end{align*}
Since $Z_{k-1}$ is closed, $\dim Z_{k-1} \le k-1$, and $\dim Z_k^1 \le k-1$, then $\dim \brac{Z_{k-1} \cup Z_k^1} \le k-1$ from Corollary~\ref{cor:Fsigma}. Therefore, Proposition \ref{prop:dimfacts} \ref{prop::dimfacts:2} provides the required subsets $Z_{k-1}^0, Z_{k-1}^1 \subset I^n$. By induction, we have constructed, for $0 \le i \le n-2$, subsets $Z_i^0, Z_i^1 \subset I^n$ satisfying
\begin{align*}
    \dim Z_i^0 \le 0,\; \dim Z_i^1 \le i-1,\; Z_{i-1} \cup Z_i^1 = Z_{i-1}^1 \cup Z_{i-1}^0
\end{align*}
for $i \in [n-1]$. As the same conclusion remains valid for $n = 2$, we shall again assume that $n \ge 2$.

Since $A_1 = A_1 \setminus Z_{-1} \notin \textsf{Conn}_1^n$, and $A_1$ is compact, and $\dim Z_0^0 \le 0$, then, by virtue of Proposition~\ref{prop:closedpluszerodim}, there exists an open neighborhood $U_1$ of $A_1 \cup Z_0^0$ such that $U_1 \notin \textsf{Conn}_1^n$. Moreover 
\begin{align*}
    Z_0 \cup Z_1^1 = Z_0^1 \cup Z_0^0 = Z_0^0 \subset U_1.
\end{align*}

Let us fix $k \in [n-1]$ and let us assume that open neighborhoods $U_i$ of $\brac{A_i \cup Z_{i-1}^0} \setminus \bigcup_{j=1}^{i-1} U_j$ have been constructed for $i \in [k]$ so that 
\begin{align*}
    U_i \notin \Conn,\; Z_{i-1} \cup Z_i^1 \subset \textstyle\bigcup_{j=1}^i U_j
\end{align*}
for $i \in [k]$. We shall find an open neighborhood $U_{k+1}$ of $\brac{A_{k+1} \cup Z_{k}^0} \setminus \bigcup_{j=1}^k U_j$ such that 
\begin{align*}
    Z_k \cup Z_{k+1}^1 \subset \textstyle\bigcup_{j=1}^{k+1} U_j.
\end{align*}
Let us notice that $Z_{k-1} \subset \bigcup_{j=1}^k U_j$. Consequently, 
\begin{align*}
    A_{k+1} \setminus \textstyle\bigcup_{j=1}^k U_j \notin \textsf{Conn}_{k+1}^n
\end{align*}
since \hbox{$A_{k+1} \setminus Z_{k-1} \notin \textsf{Conn}_{k+1}^n$}. Additionally, $\dim Z_k^0 \le 0$, so 
\begin{align*}
    \dim \textstyle\brac{Z_k^0 \setminus \bigcup_{j=1}^k U_j} \le 0.
\end{align*}
By virtue of Proposition \ref{prop:closedpluszerodim}, there exists an open neighborhood $U_{k+1}$ of $\brac{A_{k+1} \cup Z_{k}^0} \setminus \bigcup_{j=1}^k U_j$ such that $U_{k+1} \notin \textsf{Conn}_{k+1}^n$. Furthermore, since 
\begin{align*}
    \textstyle Z_k^1 \subset \bigcup_{j=1}^k U_j,\; Z_k^0 \setminus \bigcup_{j=1}^k U_j \subset U_{k+1},
\end{align*}
then
\begin{align*}
\textstyle Z_k \cup Z_{k+1}^1 = Z_k^1 \cup Z_k^0 \subset Z_k^1 \cup \brac{Z_k^0 \setminus \bigcup_{j=1}^k U_j} \cup \bigcup_{j=1}^k U_j \subset \bigcup_{j=1}^k U_j \cup U_{k+1} = \bigcup_{j=1}^{k+1} U_j.
\end{align*}
By induction, we have constructed open neighborhoods $U_i$ of $\brac{A_i \cup Z_{i-1}^0} \setminus \bigcup_{j=1}^{i-1} U_j$ for $i \in [n]$ such that 
\begin{align*}
   \textstyle U_i \notin \Conn,\; Z_{i-1} \cup Z_i^1 \subset \bigcup_{j=1}^i U_j
\end{align*}
for $i \in [n]$. Let us observe that $\left\{U_i\right\}_{i=1}^n$ is a family of open sets that covers $I^n$. Indeed,
\begin{align*}
\textstyle I^n = \bigcup_{i=1}^n A_i \subset \bigcup_{i=1}^n \brac{\brac{A_i \setminus \bigcup_{j=1}^{i-1} U_j} \cup \bigcup_{j=1}^{i-1} U_j} \subset \bigcup_{i=1}^n \brac{U_i \cup \bigcup_{j=1}^{i-1} U_j} = \bigcup_{i=1}^n U_i.
\end{align*}
Let $F_i = I^n \setminus U_i$ for $i \in [n]$. Then, $F_i$ are compact, $F_i \in \Sep$, and $\bigcap_{i=1}^n F_i = \emptyset$. This leads to an~immediate contradiction with Proposition \ref{prop:intofsepisconn} \ref{prop:intofsepisconn::1}.
\end{proof}

As an immediate consequence of Theorem \ref{tw:strongergenlebesgue}, we obtain the following corollary in the case $n=2$.

\begin{cor}\label{cor:omitn2}
Let $\left\{A_i\right\}_{i=1}^2$ be a family of closed sets that covers $I^2$. Then, we have $A_1 \in \textsf{Conn}_1^2$ or \hbox{$A_2 \setminus Z \in \textsf{Conn}_2^2$} for every subset $Z \subset I^2$ such that $\dim Z \le 0$. \qed
\end{cor}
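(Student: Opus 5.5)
This is Theorem \ref{tw:strongergenlebesgue} specialized to $n=2$. I will simply read off that specialization. With $n=2$, the first alternative of Theorem \ref{tw:strongergenlebesgue} concerns only $i \in [n-1] = \{1\}$. It reads: for every closed $Z \subset I^2$ with $\dim Z \le i-2 = -1$, we have $A_1 \setminus Z \in \textsf{Conn}_1^2$. The only subset of covering dimension at most $-1$ is the empty set, so this alternative says exactly that $A_1 \in \textsf{Conn}_1^2$.

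The second alternative, with $n=2$, reads: for every subset $Z \subset I^2$ (no closedness required) with $\dim Z \le n-2 = 0$, we have $A_2 \setminus Z \in \textsf{Conn}_2^2$. This is verbatim the second disjunct of the corollary.

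So I apply Theorem \ref{tw:strongergenlebesgue} to the closed cover $\{A_1, A_2\}$ of $I^2$. At least one of the two alternatives holds, and under the translations above that is precisely the claimed dichotomy.

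The only point that needs checking is that nothing in the proof of Theorem \ref{tw:strongergenlebesgue} silently excludes $n=2$. That proof explicitly remarks that the construction of the sets $Z_i^0, Z_i^1$ remains valid for $n = 2$, and the induction producing the $U_i$ runs for $k \in [n-1] = \{1\}$ without change. Hence no additional argument is needed beyond the substitution $n=2$ and the convention $\dim Z \le -1 \iff Z = \emptyset$.
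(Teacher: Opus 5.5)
Your proposal is correct and is exactly the paper's argument: the paper states the corollary as the immediate $n=2$ case of Theorem~\ref{tw:strongergenlebesgue}, and the first alternative reduces to $A_1 \in \textsf{Conn}_1^2$ because the only set with $\dim Z \le -1$ is $Z=\emptyset$. Your check that the proof of that theorem does not exclude $n=2$ also agrees with the paper, which explicitly notes that its construction remains valid in that case.
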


In that context, it is natural to pose the following question.

\begin{questA}\label{q:A}
    Is it true that for every $n \in \mathbb{N}$ and family $\left\{A_i\right\}_{i=1}^n$ of closed sets that covers $I^n$, there exists $i \in [n]$ such that for every subset $Z \subset I^n$ satisfying $\dim Z \le i-2$, it follows that $A_i \setminus Z \in \Conn$? 
\end{questA}

\section{Fibers of continuous functions that connect or separate some opposite faces of the unit cube}\label{sec:functions}

The main aim of this section is to study the sets\footnote{See Definition \ref{def:connandsep}.} $\textsf{Conn}_i(g)$ and $\textsf{Sep}_i(g)$, for $i \in [n]$, associated with continuous functions $g \colon I^n \to \mathbb{R}$, and to examine the relations between them. We begin with basic definitions and properties in Subsection \ref{sub:basic}, and then provide a purely topological proof of the parametric extension of the Poincaré-Miranda Theorem in Subsection \ref{sub:poincare}. Next, in Subsection \ref{sub:issomefiber}, we prove that $\textsf{Conn}_i(g) \cup \textsf{Sep}_i(g) \neq \emptyset$ (see Theorem \ref{tw:Sepnonempty}). In Subsection \ref{sub:compandconn}, we show that the sets $\textsf{Conn}_i(g)$ and $\textsf{Sep}_i(g)$ are compact and connected (see Corollary \ref{cor:Sepconnected}). Finally, in Subsection \ref{sub:charact}, we provide a~characterization of the existence of a~continuous function $g \colon I^n \to \mathbb{R}$ such that $\textsf{Conn}_i(g) = A_i$ and $\textsf{Sep}_i(g) = B_i$ for all $i \in [n]$ and given sets $A_i$ and $B_i$ (see Theorem \ref{tw:existenceoffunctionconn}). In Subsection \ref{sub:remark}, we show that whenever $n, m > 1$, for every $i \in [n]$ and compact subsets $A_1, A_2 \subset I^n$, one can choose functions $g_1 \colon I^n \to \mathbb{R}^m$ and $g_2 \colon I^n \to \mathbb{R}^m$ so that $\textsf{Conn}_i(g_1) = A_1$ and $\textsf{Sep}_i(g_2) = A_2$ (see Theorem \ref{tw:AinRm}).

\subsection{Basic definitions and observations}\label{sub:basic}

\begin{defi}\label{def:connandsep}
    For $n, m \in \mathbb{N}$, $i \in [n]$ and a continuous function $g \colon I^n \to \mathbb{R}^m$ we define sets
    \begin{align*}
        \textsf{Conn}_{i}(g) = \left\{p \in \mathbb{R}^m\colon\, g^{-1}\bracl{\left\{p\right\}} \in \Conn\right\},\; \textsf{Sep}_{i}(g) = \left\{p \in \mathbb{R}^m\colon\, g^{-1}\bracl{\left\{p\right\}} \in \Sep\right\}.
    \end{align*}
By virtue of Observation \ref{obs:conncompactconnected} \ref{obs::conncompactconnected:1} and Proposition \ref{prop:severalresultsonsep} \ref{prop::severalresultsonsep:5}, equivalently
\begin{itemize}
        \item $\textsf{Conn}_{i}(g) \subset \mathbb{R}^m$ is the set of all points $p \in \mathbb{R}^m$ such that there exists a compact and connected subset $S \subset g^{-1}\bracl{\left\{p\right\}}$ such that $S \in \Conn$;
        \item $\textsf{Sep}_{i}(g) \subset \mathbb{R}^m$ is the set of all points $p \in \mathbb{R}^m$ such that there exists a compact and connected subset \hbox{$S \subset g^{-1}\bracl{\left\{p\right\}}$} such that $S \in \Sep$.
    \end{itemize}
Moreover, let us define 
\begin{align*}
    \textstyle\textsf{Conn}(g) := \bigcup_{i=1}^n \textsf{Conn}_{i}(g),\; \textsf{Sep}(g) := \bigcup_{i=1}^n \textsf{Sep}_{i}(g).
\end{align*}
Let us note that, by virtue of Proposition \ref{prop:connprelimfacts} \ref{prop::connprelimfacts:8}, if $g \colon I^n \to \mathbb{R}^m$ is a continuous function and $1 \le m < n$, then \hbox{$\textsf{Conn}(g) \neq \emptyset$}. In this section, we are mainly concerned with the case $m=1$.
\end{defi}

\begin{obs}\label{obs:emptySep}
Let $n \in \mathbb{N}, i \in [n]$ and $g \colon I^n \to \mathbb{R}$ be a continuous function.
\begin{enumerate}[label=(\roman*)]
\item For every $j \in [n], j \neq i$, we have
\begin{align*}
&p \in \textsf{Sep}_{i}(g) \implies \textsf{Sep}_{j}(g) \subset \left\{p\right\},\\
&\abs{\textsf{Sep}_{i}(g)} > 1 \implies \textsf{Sep}_{j}(g) = \emptyset.
\end{align*} \label{obs::emptySep:1}

\vspace{-10pt}
\item It follows that $\textsf{Sep}(g) = \textsf{Sep}_{j}(g)$ for some $j \in [n]$. Moreover, if $\textsf{Sep}_{j}(g) \neq \emptyset$ for some $j \in [n]$, then $\textsf{Sep}(g) = \textsf{Sep}_{j}(g)$. \label{obs::emptySep:2}
\item If $\abs{\textsf{Conn}_i(g)} > 1$, then $\textsf{Sep}_i(g) = \emptyset$. \label{obs::emptySep:3}
\item If $\abs{\textsf{Sep}_i(g)} > 1$, then $\textsf{Conn}_i(g) = \emptyset$. \label{obs::emptySep:4}
\item If $j \in [n], j \neq i$, then $\textsf{Sep}_i(g) \subset \textsf{Conn}_j(g)$. \label{obs::emptySep:5}
\end{enumerate}
\end{obs}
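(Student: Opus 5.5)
The plan is to derive all five items from two facts. First, every fiber $g^{-1}\bracl{\left\{p\right\}}$ is compact, and distinct fibers are disjoint. Second, a set in $\Conn$ must meet every set in $\Sep$ (Definition \ref{def:subsetsconnsep}). We will also use Corollary \ref{cor:Sepigivesconnj}, which says that a closed set in $\textsf{Sep}_k^n$ lies in $\Conn$ for $i\neq k$ (here $n\ge 2$; when $n=1$ the statements involving $j\neq i$ are vacuous).

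We prove \ref{obs::emptySep:5} first. If $p\in\textsf{Sep}_i(g)$, then the closed fiber $g^{-1}\bracl{\left\{p\right\}}$ belongs to $\Sep$. By Corollary \ref{cor:Sepigivesconnj} it belongs to $\textsf{Conn}_j^n$, so $p\in\textsf{Conn}_j(g)$.

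For \ref{obs::emptySep:3} and \ref{obs::emptySep:4}, take distinct $p,q\in\textsf{Conn}_i(g)$ and suppose some $r\in\textsf{Sep}_i(g)$. Then $g^{-1}\bracl{\left\{r\right\}}$ meets both $g^{-1}\bracl{\left\{p\right\}}$ and $g^{-1}\bracl{\left\{q\right\}}$. This forces $r=p$ and $r=q$, which is a contradiction. Item \ref{obs::emptySep:4} is symmetric: use two distinct separating fibers and one connecting fiber.

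For \ref{obs::emptySep:1}, let $p\in\textsf{Sep}_i(g)$ and $q\in\textsf{Sep}_j(g)$ with $j\neq i$. By \ref{obs::emptySep:5}, $p\in\textsf{Conn}_j(g)$. So $g^{-1}\bracl{\left\{p\right\}}\in\textsf{Conn}_j^n$ and $g^{-1}\bracl{\left\{q\right\}}\in\textsf{Sep}_j^n$, hence these fibers intersect and $q=p$. This gives $\textsf{Sep}_j(g)\subset\left\{p\right\}$. If $\abs{\textsf{Sep}_i(g)}>1$, applying this to two distinct points gives $\textsf{Sep}_j(g)\subset\left\{p\right\}\cap\left\{p'\right\}=\emptyset$.

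For \ref{obs::emptySep:2}, if all $\textsf{Sep}_j(g)$ are empty, the claim is trivial. Otherwise pick $j$ with $\textsf{Sep}_j(g)\neq\emptyset$; we show every $\textsf{Sep}_k(g)\subset\textsf{Sep}_j(g)$. Suppose $k\neq j$ and $\textsf{Sep}_k(g)\neq\emptyset$. By \ref{obs::emptySep:1} applied in both directions, $\textsf{Sep}_k(g)$ and $\textsf{Sep}_j(g)$ are each contained in a singleton of the other. Both are nonempty, so both equal the same singleton. Hence $\textsf{Sep}(g)=\textsf{Sep}_j(g)$.

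No step is a real obstacle. The only point needing care is that the fibers are closed, which is required to invoke Corollary \ref{cor:Sepigivesconnj}.
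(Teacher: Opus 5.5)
Your proof is correct and is essentially the paper's argument. Items (iii)--(v) are handled the same way, and for (i) you go through (v) plus the fact that a set in $\textsf{Conn}_j^n$ meets every set in $\textsf{Sep}_j^n$, whereas the paper cites Proposition~\ref{prop:intofsepisconn}~\ref{prop:intofsepisconn::1} directly, which rests on the same fact; your derivation of (ii) from (i) just spells out what the paper leaves as an easy consequence.
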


\begin{proof}
\ref{obs::emptySep:1}: To prove the first implication, let $p \in \textsf{Sep}_{i}(g)$ and suppose that $q \in \textsf{Sep}_{j}(g)$ for some $q \neq p$. By virtue of Proposition~\ref{prop:intofsepisconn}~\ref{prop:intofsepisconn::1}, it follows that $g^{-1}\bracl{\left\{q\right\}} \cap g^{-1}\bracl{\left\{p\right\}} \neq \emptyset$, which yields a~contradiction. The second implication is an~immediate consequence of the former.

\ref{obs::emptySep:2}: It follows easily from \ref{obs::emptySep:1}.

\ref{obs::emptySep:3}: Let us suppose that $\textsf{Sep}_i(g) \neq \emptyset$, and let us take $p \in \textsf{Sep}_i(g)$. Let $q_1, q_2 \in \textsf{Conn}_i(g)$ be such that $q_1 \neq q_2$. Then, 
\begin{align*}
    g^{-1}\bracl{\left\{q_1\right\}} \cap g^{-1}\bracl{\left\{p\right\}} \neq \emptyset \neq g^{-1}\bracl{\left\{q_2\right\}} \cap g^{-1}\bracl{\left\{p\right\}},
\end{align*}
leading to a contradiction.

\ref{obs::emptySep:4}: The result follows analogously to \ref{obs::emptySep:3}.

\ref{obs::emptySep:5}: This is an immediate consequence of Corollary \ref{cor:Sepigivesconnj}.
\end{proof}

The following fact is an immediate consequence of Proposition \ref{prop:sepforn=2} and Proposition \ref{prop:connprelimfacts} \ref{prop::connprelimfacts:8}.

\begin{prop}\label{cor:conn=sep}
For every continuous function $g \colon I^2 \to \mathbb{R}$, it follows that $\textsf{Conn}_{1}(g) = \textsf{Sep}_{2}(g)$ and $\textsf{Conn}_{2}(g) = \textsf{Sep}_{1}(g)$. In particular, $\textsf{Sep}(g) \neq \emptyset$. \qed
\end{prop}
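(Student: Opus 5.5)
Fix a continuous $g \colon I^2 \to \mathbb{R}$ and $p \in \mathbb{R}$. The fiber $g^{-1}\bracl{\left\{p\right\}}$ is closed in $I^2$, so Proposition~\ref{prop:sepforn=2} applies to it. Taking $\left\{i, j\right\} = \left\{1, 2\right\}$, it gives $g^{-1}\bracl{\left\{p\right\}} \in \textsf{Conn}_1^2$ if and only if $g^{-1}\bracl{\left\{p\right\}} \in \textsf{Sep}_2^2$. Hence $p \in \textsf{Conn}_1(g)$ if and only if $p \in \textsf{Sep}_2(g)$. Since $p$ was arbitrary, this yields $\textsf{Conn}_1(g) = \textsf{Sep}_2(g)$. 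Exchanging the roles of the indices gives $\textsf{Conn}_2(g) = \textsf{Sep}_1(g)$ in the same way.

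For the last claim, apply Proposition~\ref{prop:connprelimfacts}~\ref{prop::connprelimfacts:8} with $n = 2$, $m = 1$ to $g$. It produces a point $p \in \mathbb{R}$ and a compact connected $S \subset g^{-1}\bracl{\left\{p\right\}}$ with $S \in \textsf{Conn}_i^2$ for some $i \in [2]$. Thus $p \in \textsf{Conn}_i(g)$, so $\textsf{Conn}(g) \neq \emptyset$. By the equalities above, $p \in \textsf{Sep}_j(g)$ for $j \neq i$, and therefore $\textsf{Sep}(g) \supset \textsf{Sep}_j(g) \neq \emptyset$.

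The argument is a direct combination of earlier results and has no real obstacle. The one thing to check is that the fibers are closed, which is exactly the hypothesis needed to invoke Proposition~\ref{prop:sepforn=2}.
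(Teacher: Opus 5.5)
Your proof is correct and follows the route the paper indicates. You apply Proposition~\ref{prop:sepforn=2} to the closed fibers $g^{-1}\bracl{\{p\}}$ to get both equalities, then use Proposition~\ref{prop:connprelimfacts}~\ref{prop::connprelimfacts:8} with $n=2$, $m=1$ to get $\textsf{Conn}(g)\neq\emptyset$, and hence $\textsf{Sep}(g)\neq\emptyset$.
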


\begin{ex}
Let $n \ge 3$. Then, there exists a continuous function $g \colon I^n \to \mathbb{R}$ such that $\textsf{Sep}(g) = \emptyset$.
\end{ex}

\begin{proof}
Although it is not difficult to construct a function $g$ directly, the existence of such a function~$g$ follows immediately from Theorem \ref{tw:existenceoffunctionconn} by taking $B_i = \emptyset$ and, for instance, $A_i = [0, 1]$ for $i \in [n]$.
\end{proof}

\subsection{Parametric extension of the Poincaré-Miranda Theorem}\label{sub:poincare} In this short section, we take advantage of the fact that the subject matter is closely related to the parametric extension of the Poincaré-Miranda Theorem to provide a short, purely topological proof using Proposition \ref{prop:separationg} and Proposition \ref{prop:intofsepisconn}.
\vspace{5pt}

The following fact is known \cite{boronski2012approximation}. However, for the sake of completeness, we provide a brief proof.

\begin{prop}\label{prop:separationg}
Let $n \in \mathbb{N}, i \in [n], j \in [n], p \in \mathbb{R}, \varepsilon \in \left\{-, +\right\}$ and $g \colon I^n \to \mathbb{R}$ be a continuous function such that $\restr{g}{I^n_{i, \varepsilon^*}} \le p$ and $\restr{g}{I^n_{i, \varepsilon}} \ge p$. Then, $p \in \textsf{Sep}_{i}(g)$.
\end{prop}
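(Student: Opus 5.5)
We must show that $I^n \setminus g^{-1}\bracl{\left\{p\right\}}$ does not connect $i$th opposite faces of $I^n$. The plan is a direct argument by contradiction using a connected set and the intermediate value property. Suppose there is a connected subset $S \subset I^n \setminus g^{-1}\bracl{\left\{p\right\}}$ with $S \cap I^n_{i,-} \neq \emptyset \neq S \cap I^n_{i,+}$. Pick $x \in S \cap I^n_{i,\varepsilon^*}$ and $y \in S \cap I^n_{i,\varepsilon}$.

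Since $g$ avoids $p$ on $S$, the hypotheses on the faces become strict at these points: $g(x) < p$ and $g(y) > p$. The set $g\bracl{S}$ is a connected subset of $\mathbb{R}$, hence an interval. It contains $g(x) < p < g(y)$, so it contains $p$. This contradicts $S \cap g^{-1}\bracl{\left\{p\right\}} = \emptyset$. Hence $g^{-1}\bracl{\left\{p\right\}} \in \Sep$, i.e. $p \in \textsf{Sep}_i(g)$.

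There is no real obstacle. The only point to check is that the non-strict inequalities on the faces are upgraded to strict ones at $x$ and $y$, which follows precisely because $S$ misses the fiber. The index $j$ in the statement plays no role in the argument.
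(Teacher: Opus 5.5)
Your proof is correct and is essentially the paper's argument. The paper takes a connected component $S$ of $I^n \setminus g^{-1}\bracl{\left\{p\right\}}$ that connects the faces and splits it into the nonempty, disjoint, relatively open sets $S \cap g^{-1}\bracl{(-\infty,p)}$ and $S \cap g^{-1}\bracl{(p,\infty)}$. Your observation that $g\bracl{S}$ is an interval containing $p$ is the same connectedness argument, and working with an arbitrary connected subset rather than a component is fine under the definition of $\Sep$.
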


\begin{proof}
Let us suppose that $g^{-1}\bracl{\left\{p\right\}} \notin \Sep$. Then, there exists a connected component $S$ of the set \hbox{$I^n \setminus g^{-1}\bracl{\left\{p\right\}}$} such that $S \in \Conn$. Let us consider the sets 
\begin{align*}
    S_{\varepsilon^*} = S \cap g^{-1}\bracl{(-\infty, p)},\; S_\varepsilon = S \cap g^{-1}\bracl{(p, \infty)}.
\end{align*}
These sets are open in $S$, mutually disjoint, and $S_\varepsilon \cup S_{\varepsilon^*} = S$. Moreover, we have 
\begin{align*}
    S \cap I^n_{i, \varepsilon^*} \neq \emptyset \neq S \cap I^n_{i, \varepsilon},\; \restr{g}{I^n_{i, \varepsilon^*} \cap S} < p,\; \restr{g}{I^n_{i, \varepsilon} \cap S} > p,
\end{align*}
so $S_{\varepsilon^*} \cap I^n_{i, \varepsilon^*} \neq \emptyset \neq S_{\varepsilon} \cap I^n_{i, \varepsilon}$. Thus both sets $S_{\varepsilon^*}$ and $S_\varepsilon$ are nonempty, which leads to a contradiction with the connectedness of $S$.
\end{proof}

\begin{tw}[Parametric extension of the Poincaré-Miranda Theorem]
Let $n \in \mathbb{N}$ and $f \colon I^n \times I \to \mathbb{R}^n, f = \brac{f_1, \ldots, f_n}$ be a continuous function such that for each $i \in [n]$, it follows that $\restr{f_i}{I^n_{i, -} \times I} \ge 0$ and $\restr{f_i}{I^n_{i, +} \times I} \le 0$. Then, there exists a connected subset $W \subset f^{-1}\bracl{\left\{0\right\}}$ such that 
\begin{align*}
    W \cap \brac{I^n \times \left\{0\right\}} \neq \emptyset \neq W \cap \brac{I^n \times \left\{1\right\}}.
\end{align*}
\end{tw}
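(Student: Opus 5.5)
The plan is to identify $I^n \times I$ with the cube $I^{n+1}$, treating the parameter as the $(n+1)$st coordinate. Then reduce the statement to the fact that closed sets separating the $j$th opposite faces for every $j \neq n+1$ have an intersection that connects the $(n+1)$st opposite faces (Proposition \ref{prop:intofsepisconn} \ref{prop:intofsepisconn::2}).

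Under this identification, for $i \in [n]$ the faces $I^n_{i,\pm} \times I$ are exactly $I^{n+1}_{i,\pm}$, while $I^n \times \{0\} = I^{n+1}_{n+1,-}$ and $I^n \times \{1\} = I^{n+1}_{n+1,+}$.

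First, fix $i \in [n]$ and regard $f_i$ as a continuous function $I^{n+1} \to \mathbb{R}$. By hypothesis $f_i \ge 0$ on $I^{n+1}_{i,-}$ and $f_i \le 0$ on $I^{n+1}_{i,+}$. Apply Proposition \ref{prop:separationg} in dimension $n+1$ with $p = 0$ and $\varepsilon = -$, so that $\varepsilon^* = +$. This matches the hypotheses $\restr{f_i}{I^{n+1}_{i,+}} \le 0$ and $\restr{f_i}{I^{n+1}_{i,-}} \ge 0$, and gives $0 \in \textsf{Sep}_i(f_i)$. Hence the closed set $A_i := f_i^{-1}\bracl{\{0\}}$ belongs to $\textsf{Sep}_i^{n+1}$.

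Second, apply Proposition \ref{prop:intofsepisconn} \ref{prop:intofsepisconn::2} in $I^{n+1}$ (note $n+1 \ge 2$), with distinguished index $n+1$ and the closed sets $A_j \in \textsf{Sep}_j^{n+1}$ for $j \in [n]$. This yields $\bigcap_{j=1}^n A_j = f^{-1}\bracl{\{0\}} \in \textsf{Conn}_{n+1}^{n+1}$. By definition, there is a connected $W \subset f^{-1}\bracl{\{0\}}$ meeting both $I^{n+1}_{n+1,-} = I^n \times \{0\}$ and $I^{n+1}_{n+1,+} = I^n \times \{1\}$, as required. If desired, $W$ can be taken compact by Observation \ref{obs:conncompactconnected} \ref{obs::conncompactconnected:1}, since $f^{-1}\bracl{\{0\}}$ is compact.

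There is no real obstacle beyond bookkeeping. The only points to check carefully are the sign convention in Proposition \ref{prop:separationg} (which face carries $\le p$), and that the identification of faces of $I^n \times I$ with faces of $I^{n+1}$ is exact. All the topological content is already in Proposition \ref{prop:intofsepisconn}, which rests on Lemma \ref{lem:intofpartitionisconn} and the approximation in Proposition \ref{prop:severalresultsonsep}.
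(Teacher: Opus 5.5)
Your proposal is correct and is essentially the paper's own proof. It identifies $I^n \times I$ with $I^{n+1}$, obtains $0 \in \textsf{Sep}_i(f_i)$ from Proposition \ref{prop:separationg}, and then applies Proposition \ref{prop:intofsepisconn} \ref{prop:intofsepisconn::2} with distinguished index $n+1$ to get $f^{-1}\bracl{\left\{0\right\}} \in \textsf{Conn}^{n+1}_{n+1}$; your sign bookkeeping ($\varepsilon = -$) is also correct.
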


\begin{proof}
Since $f_i \colon I^{n+1} \to \mathbb{R}$ and $I^n_{i, \varepsilon} \times I = I^{n+1}_{i, \varepsilon}$ for $\varepsilon \in \left\{-, +\right\}$ and $i \in [n]$, then $0 \in \textsf{Sep}_i(f_i)$ from Proposition \ref{prop:separationg}, for all $i \in [n]$. Since $f_i^{-1}\bracl{\left\{0\right\}} \in \textsf{Sep}^{n+1}_i$ for all $i \in [n]$, then 
\begin{align*}
    \textstyle f^{-1}\bracl{\left\{0\right\}} = \bigcap_{i=1}^n f_i^{-1}\bracl{\left\{0\right\}} \in \textsf{Conn}_{n+1}^{n+1}
\end{align*}
from Proposition \ref{prop:intofsepisconn} \ref{prop:intofsepisconn::2}. Hence, there exists a connected subset $W \subset f^{-1}\bracl{\left\{0\right\}}$ such that
\begin{align*}
    W \cap \brac{I^n \times \left\{0\right\}} = W \cap I^{n+1}_{n+1, -} \neq \emptyset \neq W \cap I^{n+1}_{n+1, +} = W \cap \brac{I^n \times \left\{1\right\}}.
\end{align*}
\end{proof}

\subsection{The existence of a fiber that connects or separates $i$th opposite faces}\label{sub:issomefiber} The aim of this part is to prove Theorem \ref{tw:Sepnonempty}, namely that for every continuous function $g \colon I^n \to \mathbb{R}$ and $i \in [n]$, at least one of the sets $\textsf{Conn}_i(g)$ and $\textsf{Sep}_i(g)$ is nonempty. In fact, using this result, we shall prove in Subsection~\ref{sub:charact} that if one of these sets is empty, then the other one is a nondegenerate compact interval.
\vspace{5pt}

For $n, k \in \mathbb{N}$ and $i \in [n]$, we say that a subset $\mathcal{S} \subset \mathcal{K}_k^n$ connects $i$th opposite faces of $I^n$, or separates $i$th opposite faces of $I^n$, or is connected if the set $\bigcup \mathcal{S}$ has the corresponding property.

\begin{lem}\label{lem:Sepnonempty}
Let $n, k \in \mathbb{N}, i \in [n]$ and $G \colon \mathcal{K}^n_k \to \mathbb{Z}$ be a function such that 
\begin{align*}
    K_1 \cap K_2 \neq \emptyset \implies \abs{G(K_1) - G(K_2)} \le 1.
\end{align*}
Let us assume that for any $p \in \mathbb{Z}$, the set $G^{-1}\bracl{\left\{p\right\}}$ does not connect $i$th opposite faces of $I^n$. Then, there exists $p \in \mathbb{Z}$ such that the set $G^{-1}\bracl{\left\{p\right\}}$ separates $i$th opposite faces of $I^n$.
\end{lem}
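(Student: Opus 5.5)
The plan is to find $p$ by a monotone threshold argument on sublevel and superlevel unions. The key tool is Proposition \ref{prop:sumofnonsep2} \ref{prop:sumofnonsep2::3} combined with the Lipschitz-type condition on $G$. For $q \in \mathbb{Z}$ put
\begin{align*}
D_q = \textstyle\bigcup G^{-1}\bracl{(-\infty, q]},\quad E_q = \bigcup G^{-1}\bracl{[q, \infty)},\quad L_q = \bigcup G^{-1}\bracl{\left\{q\right\}}.
\end{align*}
All these sets are finite unions of closed cubes, hence compact. Since $\mathcal{K}^n_k$ is finite, $G$ attains a maximum $M$, and then $D_M = I^n \in \Conn$. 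So there is a smallest $p$ with $D_p \in \Conn$, and for this $p$ we have $D_{p-1} \notin \Conn$ (if $p = \min G$, then $D_{p-1} = \emptyset$).

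The first step is to show that $D_q \cap E_q = L_q$ for every $q$. A point of the intersection lies in some cube $K_1$ with $G(K_1) \le q$ and some cube $K_2$ with $G(K_2) \ge q$. If either value equals $q$, the point is in $L_q$. Otherwise $G(K_1) \le q-1$ and $G(K_2) \ge q+1$, and $K_1 \cap K_2 \neq \emptyset$ contradicts $\abs{G(K_1) - G(K_2)} \le 1$. Since $D_q \cup E_q = I^n$ and $L_q \notin \Conn$ by hypothesis, Proposition \ref{prop:sumofnonsep2} \ref{prop:sumofnonsep2::3} shows that $D_q$ and $E_q$ cannot both connect $i$th opposite faces. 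Applied to our $p$, this gives $E_p \notin \Conn$.

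The second step is to show that $L_p \in \Sep$. Suppose instead that some connected $T \subset I^n \setminus L_p$ meets both $I^n_{i,-}$ and $I^n_{i,+}$. Every point of $T$ lies in some cube, and that cube does not have value $p$, so
\begin{align*}
T \subset \textstyle\bigcup G^{-1}\bracl{(-\infty, p-1]} \cup \bigcup G^{-1}\bracl{[p+1, \infty)} = D_{p-1} \cup E_{p+1}.
\end{align*}
These two closed sets are disjoint, again by the condition $\abs{G(K_1) - G(K_2)} \le 1$ for intersecting cubes. Since $T$ is connected, it lies entirely in one of them. If $T \subset D_{p-1}$, then $D_{p-1} \in \Conn$, contradicting the minimality of $p$. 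If $T \subset E_{p+1} \subset E_p$, then $E_p \in \Conn$, contradicting the first step. Hence $L_p = G^{-1}$-union at level $p$ separates $i$th opposite faces, as required.

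I expect the only delicate point to be the choice of threshold. Minimality of $p$ alone controls only the lower side, $D_{p-1}$. The upper side requires the observation that $D_p$ and $E_p$ cannot both connect, which comes from Proposition \ref{prop:sumofnonsep2} \ref{prop:sumofnonsep2::3} together with the identity $D_p \cap E_p = L_p$; both that identity and the disjointness of $D_{p-1}$ and $E_{p+1}$ rely on the Lipschitz condition on $G$.
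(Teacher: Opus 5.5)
Your proof is correct, and it takes a genuinely different route from the paper's. The paper argues extremally over cube families. Among all maximal connected monochromatic subfamilies it picks one, $\mathcal{S}_{\min}$, minimizing the number $H$ of extra cubes needed to obtain a separating family. If $\mathcal{S}_{\min}$ did not separate, it takes a component $W$ of $I^n \setminus \bigcup \mathcal{S}_{\min}$ joining the faces. It then uses connectedness of $\partial_{I^n} W$ (unicoherence, Corollary \ref{cor:connprelimfacts} \ref{cor::connprelimfacts:1}), maximality and the Lipschitz condition to show that the cubes meeting $\partial_{I^n} W$ all carry a single value $p \pm 1$. Finally it verifies that the maximal family $\widehat{\mathcal{S}}$ they generate has strictly smaller $H$, which also needs Proposition \ref{prop:severalresultsonsep} \ref{prop::severalresultsonsep:5} to make $\mathcal{S}_{\min} \cup \mathcal{K}^*$ connected.

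Your threshold argument replaces all of this with three short observations: minimality of $p$ controls $D_{p-1}$; the identity $D_p \cap E_p = L_p$ together with Proposition \ref{prop:sumofnonsep2} \ref{prop:sumofnonsep2::3} controls $E_p$; and the closed sets $D_{p-1}$ and $E_{p+1}$ are disjoint. The unicoherence input is thus used only once, prepackaged in Proposition \ref{prop:sumofnonsep2}.

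The paper's route additionally yields that a single connected monochromatic family (a component of a level union) separates. That refinement follows from your conclusion via Proposition \ref{prop:severalresultsonsep} \ref{prop::severalresultsonsep:5}, and the lemma does not require it.

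Your argument also has the advantage of transferring verbatim to a continuous $g$. Take $p = \inf\{q \colon g^{-1}[(-\infty, q]] \in \Conn\}$, which is attained by Lemma \ref{lem:intersectionhausdorff} and Proposition \ref{prop:convergenceSk} \ref{prop::convergenceSk:1}. The same reasoning, with a compact path from Observation \ref{obs:conncompactconnected} \ref{obs::conncompactconnected:2} in the last step, then proves Theorem \ref{tw:Sepnonempty} directly, bypassing Lemma \ref{lem:Sepnonemptyapprox} and the discretization altogether.
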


\begin{proof}
For $p \in \mathbb{Z}$, let $\mathcal{A}_p$ be the family of all connected subsets $\mathcal{S} \subset G^{-1}\left[\left\{p\right\}\right]$ that are maximal with respect to inclusion, and let $\mathcal{A} = \bigcup_{p \in \mathbb{Z}} \mathcal{A}_p$. It suffices to prove that there exists $\mathcal{S} \in \mathcal{A}$ that separates $i$th opposite faces of $I^n$. 

Conversely, let us suppose that every $\mathcal{S} \in \mathcal{A}$ does not separate $i$th opposite faces of $I^n$. Let $H \colon \mathcal{P}\brac{\mathcal{K}^n_k} \to \mathbb{Z}_+$ be defined as follows: $H\brac{\mathcal{K}}$ is the least cardinality of a~family $\mathcal{K}^* \subset \mathcal{K}^n_k$ such that the set $\mathcal{K} \cup \mathcal{K}^*$ separates $i$th opposite faces of $I^n$. Let $\mathcal{S}_{\min}$ be an arbitrary $\mathcal{S} \in \mathcal{A}$ that minimizes the value $H\brac{\mathcal{S}}$ among all $\mathcal{S} \in \mathcal{A}$. Then, $\mathcal{S}_{\min} \in \mathcal{A}_p$ for some $p \in \mathbb{Z}$. Since every $\mathcal{S} \in \mathcal{A}$ does not separate $i$th opposite faces of $I^n$, then $H\brac{\mathcal{S}_{\min}} > 0$. Let $W$ be a connected component of $I^n \setminus \bigcup \mathcal{S}_{\min}$ that connects $i$th opposite faces of $I^n$. Moreover, let 
\begin{align*}
    \mathcal{S}_0=\left\{K \in \mathcal{K}_k^n\colon\, K \cap W \neq \emptyset\right\},\; \mathcal{S}_1 = \left\{K \in \mathcal{S}_0\colon\, K \cap \partial_{I^n} W \neq \emptyset \right\}.
\end{align*}
Let us note that the set $\partial_{I^n} W$ is connected from Corollary \ref{cor:connprelimfacts} \ref{cor::connprelimfacts:1}, and $\partial_{I^n} W \subset \bigcup \mathcal{S}_{\min}$ from Proposition \ref{prop:connprelimfacts} \ref{prop::connprelimfacts:3}. Furthermore, one can easily deduce that $\partial_{I^n} W \subset \bigcup \mathcal{S}_1$. Hence, by virtue of Proposition~\ref{prop:connprelimfacts}~\ref{prop::connprelimfacts:1}, the set $\bigcup \mathcal{S}_1$ is connected. 

Let us observe that for every $K \in \mathcal{S}_1$, there exists $K' \in \mathcal{S}_{\min}$ such that $K \cap K' \neq \emptyset$, which clearly results from the fact that $\partial_{I^n} W \subset \bigcup \mathcal{S}_{\min}$. Thus, from the property of the function $G$ and the fact that $\mathcal{S}_{\min} \in \mathcal{A}_p$, we obtain $\mathcal{S}_1 \subset G^{-1}\bracl{\left\{p-1,p, p+1\right\}}$. But from the maximality of $\mathcal{S}_{\min}$ with respect to inclusion, we get $\mathcal{S}_1 \subset G^{-1}\bracl{\left\{p-1, p+1\right\}}$, and finally, from the connectedness of $\mathcal{S}_1$ and the property of $G$, it has to be $\mathcal{S}_1 \subset G^{-1}\bracl{\left\{p'\right\}}$ where $p' \in \left\{p-1, p+1\right\}$. Let $\widehat{\mathcal{S}} \subset G^{-1}\bracl{\left\{p'\right\}}$ be the extension of $\mathcal{S}_1$ to the maximal connected subset. Then, $\widehat{\mathcal{S}} \in \mathcal{A}_{p'} \subset \mathcal{A}$. We shall show that $ H\bigl(\,\widehat{\mathcal{S}}\,\bigr)< H\brac{\mathcal{S}_{\min}}$, which will lead to a~contradiction with the definition of $\mathcal{S}_{\min}$.

Let $\mathcal{K}^* \subset \mathcal{K}^n_k$ be such that $\mathcal{S}_{\min} \cup \mathcal{K}^*$ separates $i$th opposite faces of $I^n$ and $\abs{\mathcal{K}^*} = H\brac{\mathcal{S}_{\min}}$. To~prove that $H\bigl(\,\widehat{\mathcal{S}}\,\bigr) < \abs{\mathcal{K}^*}$, it suffices to show that the set $\widehat{\mathcal{S}} \cup \mathcal{K}^*$ separates $i$th opposite faces of $I^n$ and $\widehat{\mathcal{S}} \cap \mathcal{K}^* \neq \emptyset$.

For this purpose, we shall first demonstrate that the set $\widehat{\mathcal{S}} \cup \mathcal{K}^*$ separates $i$th opposite faces of $I^n$. Let $C \subset I^n$ be an arbitrary connected subset that connects $i$th opposite faces of $I^n$. It suffices to show that 
\begin{align}\label{lem:Sepnonempty:prop1}
    C \cap \bigcup \bigl(\widehat{\mathcal{S}} \cup \mathcal{K}^*\bigr) \neq \emptyset.
\end{align}
Since $\mathcal{S}_{\min} \cup \mathcal{K}^*$ separates $i$th opposite faces of $I^n$, then 
\begin{align*}
    C \cap \bigcup \brac{\mathcal{S}_{\min} \cup \mathcal{K}^*} \neq \emptyset,
\end{align*}
so $C \cap \bigcup \mathcal{K}^* \neq \emptyset$ or $C \cap \bigcup \mathcal{S}_{\min} \neq \emptyset$. If $C \cap \bigcup \mathcal{K}^* \neq \emptyset$, then $C \cap \bigcup \bigl(\widehat{\mathcal{S}} \cup \mathcal{K}^*\bigr) \neq \emptyset$, so we consider the case when $C \cap \bigcup \mathcal{S}_{\min} \neq \emptyset$. Then, $C \setminus W \neq \emptyset$. By the assumption stated in this lemma, the set $\mathcal{S}_{\min}$ does not connect $i$th opposite faces of $I^n$, so we can assume, without loss of generality, that $I^n_{i, +} \cap \bigcup \mathcal{S}_{\min} = \emptyset$. Since $W$ is a connected component of $I^n \setminus \bigcup \mathcal{S}_{\min}$ and $W \cap I^n_{i, +} \neq \emptyset$, then $I^n_{i, +} \subset W$. Thus, $C \cap W \neq \emptyset$. Since $C \cap W \neq \emptyset \neq C \setminus W$ and $C$ is connected, and $W$ is open in $I^n$, then $C \cap \partial_{I^n} W \neq \emptyset$. We have
\begin{align*}
    \partial_{I^n} W \subset \bigcup \mathcal{S}_1 \subset \bigcup \widehat{\mathcal{S}},
\end{align*}
which implies that $C \cap \bigcup \widehat{\mathcal{S}} \neq \emptyset$. This leads us to (\ref{lem:Sepnonempty:prop1}).

To prove that $\widehat{\mathcal{S}} \cap \mathcal{K}^* \neq \emptyset$, it is sufficient to show that there exists $K \in \mathcal{K}^*$ such that 
\begin{align}\label{lem:Sepnonempty:prop2}
    K \cap W \neq \emptyset \neq K \cap \bigcup \mathcal{S}_{\min}
\end{align}
since this implies $K \cap W \neq \emptyset \neq K \setminus W$, which in turn leads to $K \cap \partial_{I^n} W \neq \emptyset$, and consequently $K \in \mathcal{S}_1 \cap \mathcal{K}^* \subset \widehat{\mathcal{S}} \cap \mathcal{K}^*$, thereby completing the proof.

 To show (\ref{lem:Sepnonempty:prop2}), let us first observe that $W \cap \bigcup \mathcal{K}^* \neq \emptyset$ since $W$ connects $i$th opposite faces of $I^n$, the set $\mathcal{S}_{\min} \cup \mathcal{K}^*$ separates $i$th opposite faces of $I^n$, and $W \cap \bigcup \mathcal{S}_{\min} = \emptyset$. The set $\mathcal{S}_{\min}$ is connected and the set $\mathcal{K}^*$ is minimal, with respect to cardinality, such that the set $\mathcal{S}_{\min} \cup \mathcal{K}^*$ separates $i$th opposite faces of $I^n$, so the set $\mathcal{S}_{\min} \cup \mathcal{K}^*$ is connected from Proposition \ref{prop:severalresultsonsep} \ref{prop::severalresultsonsep:5}. Let $K_0 \in \mathcal{K}^*$ be such that $K_0 \cap W \neq \emptyset$. Since $\mathcal{K}^* \cup \mathcal{S}_{\min}$ is connected, then there exist $N \in \mathbb{N}$ and a sequence $s \colon [N] \to \mathcal{K}^*$ such that
 \begin{align*}
     s_1 = K_0,\; s_i \cap s_{i+1} \neq \emptyset,\; s_i \cap \bigcup \mathcal{S}_{\min} = \emptyset,\; s_N \cap \bigcup \mathcal{S}_{\min} \neq \emptyset
 \end{align*}
for all $i < N$. We shall prove by induction that $s_i \cap W \neq \emptyset$ for all $i \le N$. Indeed, it is satisfied for $i=1$. If $i < N$ and $s_i \cap W \neq \emptyset$, then $s_i \subset W$ since $s_i \subset I^n \setminus \bigcup \mathcal{S}_{\min}$ and $W$ is a connected component of $I^n \setminus \bigcup \mathcal{S}_{\min}$. Hence $s_{i+1} \cap W \neq \emptyset$ follows from $s_i \cap s_{i+1} \neq \emptyset$, which completes the induction proof. Therefore, we obtain (\ref{lem:Sepnonempty:prop2}) since $s_N \in \mathcal{K}^*$ and $s_N \cap W \neq \emptyset \neq s_N \cap \bigcup \mathcal{S}_{\min}$.
\end{proof}

\begin{lem}\label{lem:Sepnonemptyapprox}
Let $n \in \mathbb{N}, i \in [n], \varepsilon > 0$ and $g \colon I^n \to I$ be a continuous function. Let us assume that for every subset $A \subset I$ such that $\diam A < \varepsilon$, it follows that $g^{-1}\bracl{A} \notin \Conn$. Then, there exists a subset $B \subset I$ with $\diam B < \varepsilon$ such that $g^{-1}\bracl{B} \in \Sep$.
\end{lem}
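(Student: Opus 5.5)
Discretize $g$ and apply Lemma \ref{lem:Sepnonempty}. First pick a mesh $\delta>0$ with $\delta<\varepsilon/3$ (say), and by uniform continuity choose $k\in\mathbb{N}$ so large that $\abs{g(x)-g(y)}<\delta$ whenever $\norm{x-y}\le 2\sqrt{n}/k$. Define $G\colon \mathcal{K}^n_k\to\mathbb{Z}$ by $G(K)=\lfloor g(c_K)/\delta\rfloor$, where $c_K$ is the center of $K$. If $K_1\cap K_2\neq\emptyset$, then $\norm{c_{K_1}-c_{K_2}}\le\sqrt{n}/k$, so $\abs{g(c_{K_1})-g(c_{K_2})}<\delta$ and hence $\abs{G(K_1)-G(K_2)}\le 1$. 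Thus $G$ satisfies the hypothesis of Lemma \ref{lem:Sepnonempty}.

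Next, relate level sets of $G$ to preimages under $g$. For $p\in\mathbb{Z}$ and $K\in G^{-1}\bracl{\left\{p\right\}}$, every $x\in K$ satisfies $\norm{x-c_K}\le\sqrt{n}/(2k)$. Therefore
\begin{align*}
g(x)\in A_p:=\brac{(p-1)\delta,\,(p+2)\delta}\cap I,
\end{align*}
so $\bigcup G^{-1}\bracl{\left\{p\right\}}\subset g^{-1}\bracl{A_p}$, and $\diam A_p\le 3\delta<\varepsilon$. By hypothesis $g^{-1}\bracl{A_p}\notin\Conn$. Since subsets of non-connecting sets are non-connecting, $G^{-1}\bracl{\left\{p\right\}}$ does not connect $i$th opposite faces. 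Lemma \ref{lem:Sepnonempty} then yields $p\in\mathbb{Z}$ such that $\bigcup G^{-1}\bracl{\left\{p\right\}}$ separates $i$th opposite faces of $I^n$.

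Finally, take $B=A_p$, or $B=[(p-1)\delta,(p+2)\delta]\cap I$ if a closed set is preferred, with $\delta$ shrunk slightly so that $\diam B<\varepsilon$. Since $\bigcup G^{-1}\bracl{\left\{p\right\}}\subset g^{-1}\bracl{B}$ and supersets of separating sets separate (the complement shrinks), we get $g^{-1}\bracl{B}\in\Sep$.

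The only delicate point is the bookkeeping: the window around $p\delta$ must be wide enough to contain $g$ on the whole cube, and the $\delta$ and $k$ must be compatible so that adjacency of cubes forces $\abs{G(K_1)-G(K_2)}\le1$. This is secured by the choice of $k$ relative to $\delta$ above. One should also check that $\mathbb{Z}$-valued $G$ with finitely many values fits Lemma \ref{lem:Sepnonempty}, which it does as stated.
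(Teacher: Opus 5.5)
Your proof is correct and follows essentially the same route as the paper. Both discretize $I^n$ by $\mathcal{K}^n_k$, assign integer labels that differ by at most $1$ on intersecting cubes and whose level classes lie in $g^{-1}$ of a set of diameter less than $\varepsilon$, invoke Lemma \ref{lem:Sepnonempty}, and pass to the superset $g^{-1}\bracl{B}$. The only difference is how the labels are produced: the paper uses a Lebesgue number of the pulled-back cover by overlapping intervals $U_j$ (where only consecutive intervals meet), while you quantize the value of $g$ at each cube's center via uniform continuity. Your closed-interval variant also needs no shrinking of $\delta$, since $3\delta<\varepsilon$ already holds.
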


\begin{proof}
Let $m \in \mathbb{N}$ be such that $2/m < \varepsilon$, and we define an open cover $\mathcal{U} = \left\{U_j\right\}_{j \in [m]}$ of $I$ where 
\begin{align*}
    U_j = \brac{\frac{j-1}{m} - \frac{1}{2m}, \frac{j}{m} + \frac{1}{2m}}
\end{align*}
for $j \in [m]$. Then, for each $j \in [m]$, it follows that $\diam U_j = 2/m < \varepsilon$, so $g^{-1}\bracl{U_j} \notin \Conn$. Let 
\begin{align*}
    \mathcal{U}_g = \left\{g^{-1}\bracl{U_j}\right\}_{j \in [m]},
\end{align*}
which is an open cover of $I^n$, and $\delta > 0$ be a Lebesgue number of this cover, i.e. if $A \subset I^n$ and $\diam A < \delta$, then $A \subset g^{-1}\bracl{U_j}$ for some $j \in [m]$. In that context, we can define a~function $G \colon \mathcal{K}^n_k \to [m]$, where $k \in \mathbb{N}$ is some number such that $1/k < \delta$, with the following property: $K \subset g^{-1}\bracl{U_{G(K)}}$ for $K \in \mathcal{K}^n_k$. 

Since 
\begin{align*}
    U_{j_1} \cap U_{j_2} \neq \emptyset \iff \abs{j_1 - j_2} \le 1,
\end{align*}
then
\begin{align*}
    K_1 \cap K_2 \neq \emptyset \implies \abs{G(K_1) - G(K_2)} \le 1.
\end{align*}
Moreover, we have $\bigcup G^{-1}\bracl{\left\{p\right\}} \subset g^{-1}\bracl{U_p}$ for every $p \in [m]$, so $G^{-1}\bracl{\left\{p\right\}}$ does not connect $i$th opposite faces of $I^n$ for any $p \in [m]$. Therefore, the function $G$ satisfies the assumptions of Lemma \ref{lem:Sepnonempty}. Hence, there exists $p \in [m]$ for which $G^{-1}\bracl{\left\{p\right\}}$ separates $i$th opposite faces of $I^n$, so $g^{-1}\bracl{U_p}$ does so as well.
\end{proof}

\begin{tw}\label{tw:Sepnonempty}
Let $n \in \mathbb{N}, i\in [n]$ and $g \colon I^n \to \mathbb{R}$ be a continuous function. Then, it follows that 
\begin{align*}
    \textsf{Conn}_{i}(g) \cup \textsf{Sep}_{i}(g) \neq \emptyset.
\end{align*}
\end{tw}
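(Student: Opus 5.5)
Assume $\textsf{Conn}_i(g) = \emptyset$; we show $\textsf{Sep}_i(g) \neq \emptyset$. Since $I^n$ is compact, $g[I^n]$ is a compact interval. After composing with an affine homeomorphism of $\mathbb{R}$ we may assume $g \colon I^n \to I$; this changes neither the connecting nor the separating fibers. The plan is a two-step approximation: first get small preimages that separate, using Lemma \ref{lem:Sepnonemptyapprox}, and then pass to a single fiber by Hausdorff limits, using Proposition \ref{prop:convergenceSk}.

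\textbf{Step 1 (closed sets with small diameter).} We first show there is $\varepsilon_0 > 0$ such that $g^{-1}[A] \notin \Conn$ for every $A \subset I$ with $\diam A < \varepsilon_0$. Suppose not. Then for each $k$ there is $A_k$ with $\diam A_k < 1/k$ and $g^{-1}[A_k] \in \Conn$. We can replace $A_k$ by its closure, a compact interval $[a_k, b_k]$ with $b_k - a_k \le 1/k$, because enlarging the set preserves connecting. Pass to a subsequence with $a_k \to p$. The compact sets $g^{-1}[[a_k, b_k]]$ then lie in $g^{-1}[\overline{B}(p, \eta_k)]$ with $\eta_k \to 0$. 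Now pick compact connected $S_k \subset g^{-1}[[a_k, b_k]]$ with $S_k \in \Conn$, by Observation \ref{obs:conncompactconnected}. A Hausdorff-convergent subsequence has a limit $S$. By Theorem \ref{golab} and the argument of Proposition \ref{prop:convergenceSk} \ref{prop::convergenceSk:1}, $S$ is connected and lies in $\Conn$. By continuity of $g$, $S \subset g^{-1}[\{p\}]$. So $p \in \textsf{Conn}_i(g)$, a contradiction.

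\textbf{Step 2 (apply the lemma and take limits).} For every $\varepsilon < \varepsilon_0$, Lemma \ref{lem:Sepnonemptyapprox} gives $B_\varepsilon \subset I$ with $\diam B_\varepsilon < \varepsilon$ and $g^{-1}[B_\varepsilon] \in \Sep$. Again we may replace $B_\varepsilon$ by its closed interval hull $[c, d]$, since enlarging the set preserves separating. Take $\varepsilon = 1/k$ and extract a subsequence with $c_k \to p$, so $d_k \to p$ as well. The compact sets $F_k := g^{-1}[[c_k, d_k]]$ lie in $\Sep$. Pass to a further subsequence converging in the Hausdorff metric to a compact $F$; if some $F_k$ is empty, it does not separate, so all are nonempty. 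By Proposition \ref{prop:convergenceSk} \ref{prop::convergenceSk:2}, $F \in \Sep$. By Observation \ref{obs:hausdorff} \ref{obs::hausdorff:1} and continuity, every point of $F$ is a limit of points $x_k$ with $g(x_k) \in [c_k, d_k]$, so $F \subset g^{-1}[\{p\}]$. Enlarging $F$ to the full fiber preserves separating, hence $p \in \textsf{Sep}_i(g)$.

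\textbf{Main obstacle.} The delicate point is Step 1: the hypothesis of Lemma \ref{lem:Sepnonemptyapprox} must hold uniformly for all small sets, not merely for singletons. That is why the compactness argument is needed, converting "no fiber connects" into "no thin slab connects." Care is needed to ensure the limit of the connecting continua lands inside a single fiber; taking closed intervals and using continuity of $g$ on the compact cube handles this. The remaining steps are routine Hausdorff-limit bookkeeping, exactly as in the proof of Corollary \ref{cor:openneigh}.
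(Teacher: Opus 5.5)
Your proof is correct and follows the paper's argument essentially step for step. You use a compactness and Hausdorff-limit argument to obtain a uniform $\varepsilon_0$, so that Lemma \ref{lem:Sepnonemptyapprox} applies, and then take a second Hausdorff limit of the separating preimages of sets of shrinking diameter, which collapses to a single separating fiber via Proposition \ref{prop:convergenceSk} \ref{prop::convergenceSk:2}. (Two small slips: the inclusion $F \subset g^{-1}[\{p\}]$ needs Observation \ref{obs:hausdorff} \ref{obs::hausdorff:2} rather than \ref{obs::hausdorff:1}, and in Step 1 you mean the closed convex hull of $A_k$, not its closure.)
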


\begin{proof}
By a scaling and translation argument, we can assume that $g\bracl{I^n} \subset I$. It suffices to show that $\textsf{Sep}_i(g) \neq \emptyset$ if $\textsf{Conn}_i(g) = \emptyset$. For this purpose, let us observe that there exists $\varepsilon_0 > 0$ such that for every subset $A \subset I$ with $\diam A < \varepsilon_0$, it follows that $g^{-1}\bracl{A} \notin \Conn$.

Indeed, otherwise there exists a sequence $\brac{A_m}_{m=1}^\infty$ of subsets of $I$ with $\diam A_m < 1/m$, and such that $g^{-1}\bracl{A_m} \in \Conn$ for each $m \in \mathbb{N}$. Obviously, we can assume that the sets $A_m$ are compact, so there exists a subsequence $\brac{A_{m_l}}_{l=1}^\infty$ such that 
\begin{align*}
    A_{m_l} \xrightarrow{l \to\infty} A,\; g^{-1}\bracl{A_{m_l}} \xrightarrow{l \to\infty} A_g
\end{align*}
for some compact subsets $A \subset I$ and $A_g \subset I^n$. Since $\diam A_{m_l} \xrightarrow{l \to \infty} 0$, then $A = \left\{p\right\}$ for some $p \in I$. Moreover, $A_g \in \Conn$ from Proposition \ref{prop:convergenceSk} \ref{prop::convergenceSk:1}. Hence, from Observation \ref{obs:hausdorff} \ref{obs::hausdorff:1} and \ref{obs::hausdorff:2}, it follows that 
\begin{align*}
    A_g \subset g^{-1}\bracl{A} = g^{-1}\bracl{\left\{p\right\}},
\end{align*}
so $p \in \textsf{Conn}_i(g)$, which leads to a contradiction.

Let $m_0 \in \mathbb{N}$ be such that $1/m_0 < \varepsilon_0$. For each $m \ge m_0$, we can conclude that for every subset $A \subset I$ with $\diam A < 1/m$, it follows that $g^{-1}\bracl{A} \notin \Conn$. Hence, by virtue of Lemma \ref{lem:Sepnonemptyapprox}, for each $m \ge m_0$, there exists a subset $B_m \subset I$ with $\diam B_m < 1/m$ such that $g^{-1}\bracl{B_m} \in \Sep$. We can assume that all $B_m$ for $m \ge m_0$ are compact. Therefore, by applying Proposition \ref{prop:convergenceSk} \ref{prop::convergenceSk:2} and the same technique as in the previous paragraph, we demonstrate that the sequence $\brac{B_m}_{m=m_0}^\infty$ has a subsequence $\brac{B_{m_l}}_{l=1}^\infty$ that converges to $\left\{p\right\}$ for some $p \in I$, and $g^{-1}\bracl{\left\{p\right\}} \in \Sep$.
\end{proof}

\begin{cor}\label{cor:connjneqiisnonempty}
Let $n \ge 2$ and $g \colon I^n \to \mathbb{R}$ be a continuous function. If $\textsf{Conn}_i(g) = \emptyset$ for some $i \in [n]$, then $\textsf{Sep}(g) \neq \emptyset$ and $\bigcap_{j \neq i}^n \textsf{Conn}_j(g) \neq \emptyset$.
\end{cor}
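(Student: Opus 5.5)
Assume $\textsf{Conn}_i(g) = \emptyset$ for some $i \in [n]$. The plan is to invoke Theorem \ref{tw:Sepnonempty} for this index $i$, and then use Observation \ref{obs:emptySep} \ref{obs::emptySep:5} to pass from separating fibers to connecting fibers in the remaining directions.

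First, Theorem \ref{tw:Sepnonempty} gives $\textsf{Conn}_i(g) \cup \textsf{Sep}_i(g) \neq \emptyset$. Since $\textsf{Conn}_i(g) = \emptyset$ by assumption, it follows that $\textsf{Sep}_i(g) \neq \emptyset$. Because $\textsf{Sep}_i(g) \subset \textsf{Sep}(g)$ by definition, we obtain $\textsf{Sep}(g) \neq \emptyset$. This settles the first claim.

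For the second claim, fix some $p \in \textsf{Sep}_i(g)$. The fiber $g^{-1}\bracl{\left\{p\right\}}$ is closed, since $g$ is continuous, and it separates $i$th opposite faces. Take any $j \neq i$; this is possible because $n \ge 2$. Then Observation \ref{obs:emptySep} \ref{obs::emptySep:5}, which rests on Corollary \ref{cor:Sepigivesconnj}, gives $\textsf{Sep}_i(g) \subset \textsf{Conn}_j(g)$. Hence $p \in \textsf{Conn}_j(g)$ for every $j \neq i$, and so $p \in \bigcap_{j \neq i} \textsf{Conn}_j(g)$, which is therefore nonempty.

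There is no real obstacle here. All the substantive work is in Theorem \ref{tw:Sepnonempty} and Corollary \ref{cor:Sepigivesconnj}. The only point needing care is that Corollary \ref{cor:Sepigivesconnj} requires the set to be closed or open, and a fiber of a continuous function is closed.
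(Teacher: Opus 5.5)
Your proof is correct and matches the paper's argument: Theorem \ref{tw:Sepnonempty} together with $\textsf{Conn}_i(g)=\emptyset$ gives $\emptyset \neq \textsf{Sep}_i(g) \subset \textsf{Sep}(g)$, and Observation \ref{obs:emptySep} \ref{obs::emptySep:5} puts $\textsf{Sep}_i(g)$ inside every $\textsf{Conn}_j(g)$ with $j \neq i$. Your remark that the fiber is closed, as Corollary \ref{cor:Sepigivesconnj} requires, is correct; that check is already built into the observation.
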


\begin{proof}
Obviously, $\textsf{Sep}(g) \supset \textsf{Sep}_i(g) \neq \emptyset$ from Theorem \ref{tw:Sepnonempty} (in fact, $\textsf{Sep}(g) = \textsf{Sep}_i(g)$ from Observation~\ref{obs:emptySep}~\ref{obs::emptySep:2}). Thus, $\bigcap_{j \neq i}^n \textsf{Conn}_j(g) \neq \emptyset$ from Observation \ref{obs:emptySep} \ref{obs::emptySep:5}.
\end{proof}

\subsection{Compactness and connectedness of the sets $\textsf{Conn}_{i}(g)$ and $\textsf{Sep}_{i}(g)$}\label{sub:compandconn} In this part, we prove that the sets $\textsf{Conn}_{i}(g)$ and $\textsf{Sep}_{i}(g)$ are compact and connected for every continuous function $g \colon I^n \to \mathbb{R}$. We refer to a Subsection \ref{sub:remark} for a comparison of this result with the corresponding one for functions $g \colon I^n \to \mathbb{R}^m$ with $n, m > 1$. 

\begin{tw}\label{tw:S1S2SepConn}
Let $n \in \mathbb{N}, i \in [n], p \in \mathbb{R}$ and $g \colon I^n \to \mathbb{R}$ be a continuous function. Let $S_1, S_2 \subset I^n$ be compact and connected subsets such that $\restr{g}{S_1} < p$ and $\restr{g}{S_2} > p$.
\begin{enumerate}[label=(\roman*)]
\item If $S_1, S_2 \in \Conn$, then $p \in \textsf{Conn}_i(g)$. \label{tw::S1S2SepConn:1}
\item If $S_1, S_2 \in \Sep$, then $p \in \textsf{Sep}_i(g)$. \label{tw::S1S2SepConn:2}
\end{enumerate}
\end{tw}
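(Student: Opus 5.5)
Part (i). I will argue by contradiction. Suppose $F := g^{-1}\bracl{\left\{p\right\}} \notin \Conn$. The fiber $F$ is compact, so Corollary~\ref{cor:openneigh} gives an open neighborhood $U$ of $F$ with $U \notin \Conn$. Put $V_- = g^{-1}\bracl{(-\infty, p)}$ and $V_+ = g^{-1}\bracl{(p,\infty)}$. These are disjoint open sets, and $I^n = U \cup V_- \cup V_+$. Then $W := V_- \cup V_+$ is open and $U \cup W = I^n$. Moreover $W \in \Conn$, since $S_1 \subset V_-$. Proposition~\ref{prop:sumofnonsep} \ref{prop:sumofnonsep::2} cannot be used directly, because it needs both sets to connect. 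So I will work with separation instead. Since $U \notin \Conn$, its complement $I^n \setminus U$ is a compact set in $\Sep$, and it is contained in $W$. By Proposition~\ref{prop:severalresultsonsep} \ref{prop::severalresultsonsep:5} there is a compact connected $T \subset I^n \setminus U \subset V_- \cup V_+$ with $T \in \Sep$. Because $T$ is connected and $V_\pm$ are disjoint and open, $T$ lies entirely in one of them, say $T \subset V_-$. But $S_2 \in \Conn$ and $T \in \Sep$, so $S_2 \cap T \neq \emptyset$. This is impossible, since $g > p$ on $S_2$ and $g < p$ on $T$. The case $T \subset V_+$ is symmetric, using $S_1$. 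Hence $F \in \Conn$.

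Part (ii). This is the dual argument. Suppose $F \notin \Sep$. Corollary~\ref{cor:openneigh} gives an open neighborhood $U$ of $F$ with $U \notin \Sep$, so $I^n \setminus U \in \Conn$. This complement is compact and contained in $V_- \cup V_+$. By Observation~\ref{obs:conncompactconnected} \ref{obs::conncompactconnected:1} there is a compact connected $T \subset I^n \setminus U$ with $T \in \Conn$, and by connectedness $T \subset V_-$ or $T \subset V_+$. If $T \subset V_-$, then $T \in \Conn$ and $S_2 \in \Sep$ force $T \cap S_2 \neq \emptyset$, which contradicts $g > p$ on $S_2$. If $T \subset V_+$, the same argument with $S_1$ gives the contradiction. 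Hence $F \in \Sep$.

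The main obstacle is the passage from a statement about a neighborhood to one about a single connected piece. The neighborhood supplied by Corollary~\ref{cor:openneigh} is what makes its complement a compact set inside $V_- \cup V_+$. The reduction of that compact complement to a connected set with the same property, via Proposition~\ref{prop:severalresultsonsep} \ref{prop::severalresultsonsep:5} and Observation~\ref{obs:conncompactconnected}, is the delicate input. Once $T$ is connected, the disjointness of $V_-$ and $V_+$ together with the basic fact $\Conn \cap \Sep$-intersection (Definition~\ref{def:subsetsconnsep}) finishes both parts.
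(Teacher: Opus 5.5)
Your proof is correct, and it takes a different route from the paper's.

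\emph{The paper's route.} For (i), the paper thickens $S_1,S_2$ to path-connected sets and pulls $g$ back along the straight-line homotopy $h(s,t)=(1-t)\gamma_1(s)+t\gamma_2(s)$ between paths $\gamma_j\subset S_j$. It then applies Proposition~\ref{prop:separationg} together with the planar duality of Proposition~\ref{cor:conn=sep} to $g\circ h\colon I^2\to\mathbb{R}$. For (ii), it pushes $S_1,S_2$ off the $i$th faces, orders them via Lemma~\ref{lem:twosep}, and glues Tietze extensions into a function $\widehat g$. This $\widehat g$ has $\widehat g-p$ of opposite signs on the two faces and satisfies $\widehat g^{\,-1}\bracl{\left\{p\right\}}\subset g^{-1}\bracl{\left\{p\right\}}$, so Proposition~\ref{prop:separationg} applies again.

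\emph{Your route.} You replace all of this by complement duality plus an intermediate-value observation. A connected $T\subset I^n\setminus g^{-1}\bracl{\left\{p\right\}}$ lies in one sign region, yet it must meet both $S_1$ and $S_2$. The main input is Proposition~\ref{prop:severalresultsonsep}~\ref{prop::severalresultsonsep:5} in part (i), which rests on the unicoherence of $I^n$. So you bypass Proposition~\ref{cor:conn=sep}, Lemma~\ref{lem:twosep} and the Tietze construction. Your argument also shows that compactness and connectedness of $S_1,S_2$ are not needed; only $S_1,S_2\in\Conn$ (resp.\ $\Sep$) is used.

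\emph{Possible simplifications.} In (i), you can apply Proposition~\ref{prop:severalresultsonsep}~\ref{prop::severalresultsonsep:6} directly to the open set $I^n\setminus g^{-1}\bracl{\left\{p\right\}}\in\Sep$. In (ii), the neighborhood from Corollary~\ref{cor:openneigh} is superfluous. If $g^{-1}\bracl{\left\{p\right\}}\notin\Sep$, there is already a connected $C\subset I^n\setminus g^{-1}\bracl{\left\{p\right\}}$ meeting both $i$th faces. Such a $C$ meets $S_1$ and $S_2$, so $p\in g\bracl{C}$, a contradiction.

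\emph{What the paper's route gives in return.} The homotopy argument localizes the result. In (i), the connecting piece of the fiber is found inside $h\bracl{I^2}$, the union of the segments from $\gamma_1(s)$ to $\gamma_2(s)$.
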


\begin{proof}
\ref{tw::S1S2SepConn:1}: Our first step is to establish the result under the assumption that both $S_1$ and $S_2$ are path-connected. For $j \in [2]$ and $\varepsilon \in \left\{-, +\right\}$, let us take $x^{j, \varepsilon} \in S_j \cap I^n_{i, \varepsilon}$. Moreover, for $j \in [2]$, let $\gamma_j \colon I \to S_j$ be a path that joins $x^{j, -}$ to $x^{j, +}$, and let $h \colon I^2 \to I^n$ be a homotopy between $\gamma_1$ and $\gamma_2$ defined by 
\begin{align*}
    h(s, t) = (1 - t)\gamma_1(s) + t\gamma_2(s).
\end{align*}
Finally, we define a continuous function 
\begin{align*}
    \widehat{g} = g \circ h \colon I^2 \to \mathbb{R}.
\end{align*}
Clearly,
\begin{align*}
\widehat{g}(s, t) = \begin{cases}
\widehat{g}(s, 0) = g \circ \gamma_1(s) < p, & \mbox{ if } (s, t) \in I^2_{2, -};\\
         \widehat{g}(s, 1) = g \circ \gamma_2(s) > p, & \mbox{ if } (s, t) \in I^2_{2, +}.
         \end{cases}          
\end{align*}
Hence, by virtue of Proposition \ref{prop:separationg} and Proposition \ref{cor:conn=sep}, we conclude that $p \in \textsf{Sep}_2\brac{\widehat{g}} = \textsf{Conn}_1\brac{\widehat{g}}$. Let $S \in \textsf{Conn}^2_1$ be a compact and connected set such that $\restr{\widehat{g}}{S} = p$. Then, $\restr{g}{h\bracl{S}} = p$. The set $h\bracl{S}$ is connected, so it suffices to argue that 
\begin{align}\label{tw:S1S2SepConn:prop1}
    h\bracl{S} \cap I^n_{i, -} \neq \emptyset \neq h\bracl{S} \cap I^n_{i, +}.
\end{align}
To prove (\ref{tw:S1S2SepConn:prop1}), let us fix $(s, t) \in I^2_{1, -}$. We get
\begin{align*}
    h(s, t) = h(0, t) = (1 - t) x^{1, -} + t x^{2, -},
\end{align*}
so $h(s, t) \in I^n_{i, -}$, as $x^{1, -}, x^{2, -} \in I^n_{i, -}$. Thus, $h\bracl{I^2_{1, -}} \subset I^n_{i, -}$. In that context, since $S \cap I^2_{1, -} \neq \emptyset$, then $h\bracl{S} \cap I^n_{i, -} \neq \emptyset$. In a similar manner, we show that $h\bracl{S} \cap I^n_{i, +} \neq \emptyset$.

We now prove the result without the additional assumption that $S_1$ and $S_2$ are necessarily path-connected. Since the sets $S_1$ and $S_2$ are compact, then there exists $\varepsilon>0$ such that $\restr{g}{S_1^\varepsilon} < p$ and $\restr{g}{S_2^\varepsilon} > p$ where $S_1^\varepsilon = \overline{N}_{I^n}\brac{S_1, \varepsilon}$ and $S_2^\varepsilon = \overline{N}_{I^n}\brac{S_2, \varepsilon}$. The sets $S_1^\varepsilon$ and $S_2^\varepsilon$ are compact and path-connected, and connect $i$th opposite faces of $I^n$. Thus, the result follows immediately from the previous step.
\vspace{10pt}

\ref{tw::S1S2SepConn:2}: We can assume that 
\begin{align*}
    S_1 \cap I^n_{i, -} = S_2 \cap I^n_{i, -} = \emptyset = S_1 \cap I^n_{i, +} = S_2 \cap I^n_{i, +}.
\end{align*}
Indeed, let $V_1$ be an open neighborhood of $S_1$ such that $\restr{g}{V_1} < p$, and $V_2$ be an open neighborhood of $S_2$ such that $\restr{g}{V_2} > p$. By virtue of Proposition \ref{prop:severalresultsonsep} \ref{prop::severalresultsonsep:2} and \ref{prop::severalresultsonsep:5}, there exist compact and connected subsets $S_1' \subset V_1, S_2' \subset V_2$ such that $S_1', S_2' \in \Sep$ and 
\begin{align*}
    S_1' \cap I^n_{i, -} = S_2' \cap I^n_{i, -} = \emptyset = S_1' \cap I^n_{i, +} = S_2' \cap I^n_{i, +}.
\end{align*}
Moreover, $\restr{g}{S_1'} < p$ and $\restr{g}{S_2'} > p$.

 By virtue of Lemma \ref{lem:twosep}, for some $\varepsilon \in \left\{-, +\right\}$, the sets $S_1$ and $I^n_{i, \varepsilon}$ do not belong to the same connected component of $I^n \setminus S_2$, and the sets $S_2$ and $I^n_{i, \varepsilon^*}$ do not belong to the same connected component of $I^n \setminus S_1$. Let~$C_1$ be the connected component of $I^n \setminus S_1$ that contains $I^n_{i, \varepsilon^*}$, and $C_2$ be the connected component of $I^n \setminus S_2$ that contains $I^n_{i, \varepsilon}$. Obviously, $C_1 \cap S_2 = \emptyset = C_2 \cap S_1$. Hence, 
 \begin{align*}
     C_1 \cup C_2 \subset I^n \setminus \brac{S_1 \cup S_2},\; I^n_{i, -} \cup I^n_{i, +} \subset C_1 \cup C_2,
 \end{align*}
 so it must be $C_1 \cap C_2 = \emptyset$ since $S_1 \cup S_2 \in \Sep$. 

By virtue of the Tietze Extension Theorem, let $g_1 \colon I^n \to \mathbb{R}$ be a continuous extension of $\restr{g}{S_1} \colon S_1 \to \mathbb{R}$ that preserves upper boundedness, i.e. 
\begin{align*}
    \restr{g_1}{S_1} = \restr{g}{S_1},\; \sup g_1 = \sup \restr{g}{S_1}.
\end{align*}
Similarly, let $g_2 \colon I^n \to \mathbb{R}$ be a continuous extension of $\restr{g}{S_2} \colon S_2 \to \mathbb{R}$ that preserves lower boundedness,~i.e. 
\begin{align*}
    \restr{g_2}{S_2} = \restr{g}{S_2},\; \inf g_2 = \inf \restr{g}{S_2}.
\end{align*}
Since the sets $S_1$ and $S_2$ are compact, then $\sup \restr{g}{S_1} < p$ and $\inf \restr{g}{S_2} > p$. Thus, in particular, $g_1 < p$ and $g_2 > p$. Let us define a function $\widehat{g} \colon I^n \to \mathbb{R}$ as follows. For $x \in I^n$, let
    \begin{align*}
\widehat{g}(x) = \begin{cases}
g_1(x), & \mbox{ if } x \in C_1;\\
         g_2(x), & \mbox{ if } x \in C_2;\\
         g(x), & \mbox{ otherwise. }
         \end{cases}          
\end{align*}

We claim that the function $\widehat{g}$ is continuous. Indeed, the sets $I^n \setminus S_1$ and $I^n \setminus S_2$ are open, so $C_1$ and $C_2$ are open, and then the set $I^n \setminus \brac{C_1 \cup C_2}$ is closed. Hence, to guarantee the continuity of $\widehat{g}$, it is enough that this function remains continuous on each of the sets $\overline{C_1}$, $\overline{C_2}$, and $I^n \setminus \brac{C_1 \cup C_2}$ individually. The continuity on $I^n \setminus \brac{C_1 \cup C_2}$ is obvious. For $j \in [2]$, from Proposition \ref{prop:connprelimfacts} \ref{prop::connprelimfacts:3} we have 
\begin{align*}
    \partial_{I^n} C_j \subset S_j \subset I^n \setminus \brac{C_1 \cup C_2}.
\end{align*}
Thus, 
\begin{align*}
    \restr{\widehat{g}}{\partial_{I^n} C_j} = \restr{g}{\partial_{I^n} C_j} = \restr{g_j}{\partial_{I^n} C_j},
\end{align*}
which in turn yields $\restr{\widehat{g}}{\overline{C_j}} = \restr{g_j}{\overline{C_j}}$. In a~consequence, $\widehat{g}$ is a continuous function.

Since $\widehat{g}$ is continuous, and $\restr{\widehat{g}}{I^n_{i, \varepsilon^*}} < p$ and $\restr{\widehat{g}}{I^n_{i, \varepsilon}} > p$, then $p \in \textsf{Sep}_i\brac{\widehat{g}}$ from Proposition \ref{prop:separationg}. But $\widehat{g}^{\,-1}\bracl{\left\{p\right\}} \subset g^{-1}\bracl{\left\{p\right\}}$, so $p \in \textsf{Sep}_i(g)$.
\end{proof}

\begin{cor}\label{cor:Sepconnected}
Let $n, m \in \mathbb{N}, i \in [n]$ and $g \colon I^n \to \mathbb{R}^m$ be a continuous function. Then, the sets $\textsf{Conn}_{i}(g)$ and $\textsf{Sep}_{i}(g)$ are compact. Furthermore, these sets are connected provided that $m=1$.
\end{cor}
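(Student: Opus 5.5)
Compactness comes first, for general $m$. The sets $\textsf{Conn}_i(g)$ and $\textsf{Sep}_i(g)$ are contained in the compact set $g\bracl{I^n}$. The only exception is that $\textsf{Sep}_i(g)$ may also contain points outside the image, whose fiber is empty; an empty set separates only when $I^n$ itself does not connect $i$th opposite faces, which never happens, so $\textsf{Sep}_i(g) \subset g\bracl{I^n}$ too. Hence it suffices to show closedness. Let $p_k \to p$ with $p_k \in \textsf{Conn}_i(g)$ (resp. $\textsf{Sep}_i(g)$). The fibers $F_k = g^{-1}\bracl{\left\{p_k\right\}}$ are nonempty compact sets. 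Passing to a subsequence, $F_k \to F$ in the Hausdorff metric for some compact $F$. By Observation \ref{obs:hausdorff} \ref{obs::hausdorff:1}, any limit of points $x_k \in F_k$ satisfies $g(x) = \lim g(x_k) = p$, so $F \subset g^{-1}\bracl{\left\{p\right\}}$. Proposition \ref{prop:convergenceSk} gives $F \in \Conn$ (resp. $F \in \Sep$). Both properties pass to supersets, since a connected subset of $F$ lies in the fiber, and the complement of the fiber is contained in $I^n \setminus F$. Therefore $p$ belongs to the respective set.

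Connectedness for $m=1$ means showing that each set is an interval. Take $a<p<b$ with $a, b \in \textsf{Conn}_i(g)$ (resp. $\textsf{Sep}_i(g)$). By Observation \ref{obs:conncompactconnected} \ref{obs::conncompactconnected:1} (resp. Proposition \ref{prop:severalresultsonsep} \ref{prop::severalresultsonsep:5}), we choose compact connected $S_1 \subset g^{-1}\bracl{\left\{a\right\}}$ and $S_2 \subset g^{-1}\bracl{\left\{b\right\}}$, both in $\Conn$ (resp. $\Sep$). Then $\restr{g}{S_1} = a < p$ and $\restr{g}{S_2} = b > p$. Theorem \ref{tw:S1S2SepConn} \ref{tw::S1S2SepConn:1} (resp. \ref{tw::S1S2SepConn:2}) yields $p \in \textsf{Conn}_i(g)$ (resp. $\textsf{Sep}_i(g)$). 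Thus each set is order-convex, hence connected.

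The only delicate point is the compactness step. There one has to check that Hausdorff limits of fibers sit inside the limit fiber, and that membership in $\Conn$ and $\Sep$ is monotone under enlarging the set. Both are immediate from the definitions and the cited observations. The genuinely hard work was already done in Theorem \ref{tw:S1S2SepConn}.
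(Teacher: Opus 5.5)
Your proof is correct and follows the paper's argument essentially step for step: boundedness, then closedness via Hausdorff limits of fibers together with Proposition~\ref{prop:convergenceSk}, and for $m=1$ order-convexity via Theorem~\ref{tw:S1S2SepConn}. One small citation slip: the inclusion $F \subset g^{-1}\bracl{\left\{p\right\}}$ needs Observation~\ref{obs:hausdorff}~\ref{obs::hausdorff:2}, which says every point $x \in F$ is a limit of points $x_k \in F_k$ and hence gives $g(x)=\lim p_k = p$; Observation~\ref{obs:hausdorff}~\ref{obs::hausdorff:1} does not give this.
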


\begin{proof}
Obviously, the set $\textsf{Sep}_{i}(g)$ is bounded, as $g^{-1}\bracl{\left\{p\right\}} = \emptyset$ for all points $p$~outside a certain ball. For closedness, let us fix a sequence $\brac{p_m}_{m=1}^\infty$ of points $p_m \in \textsf{Sep}_{i}(g)$ convergent to some point $p \in \mathbb{R}^m$. There exists a subsequence $\brac{p_{m_k}}_{k=1}^\infty$ such that the sequence of sets $g^{-1}\bracl{\left\{p_{m_k}\right\}} \in \Sep$ converges to a~subset $A \subset I^n$. Since $A \in \Sep$ by virtue of Proposition \ref{prop:convergenceSk} \ref{prop::convergenceSk:2}, and $A \subset g^{-1}\bracl{\left\{p\right\}}$ from Observation~\ref{obs:hausdorff}~\ref{obs::hausdorff:2}, then $p \in \textsf{Sep}_{i}(g)$. The proof that $\textsf{Conn}_i(g)$ is compact proceeds analogously, but relies on Proposition~\ref{prop:convergenceSk}~\ref{prop::convergenceSk:1}.

Let us now assume that $m=1$ and the set $\textsf{Sep}_{i}(g)$ consists of more than one point. Since the set $\textsf{Sep}_{i}(g)$ is compact, we shall show that it is a compact interval. For this purpose, let us fix \hbox{$p_1, p_2 \in \textsf{Sep}_{i}(g)$} and $p \in \mathbb{R}$ such that $p_1 < p < p_2$. Let $S_1, S_2 \in \Sep$ be compact and connected sets such that $\restr{g}{S_1} = p_1$ and $\restr{g}{S_2} = p_2$. Then, $p \in \textsf{Sep}_i(g)$ from Theorem \ref{tw:S1S2SepConn} \ref{tw::S1S2SepConn:2}. The proof that $\textsf{Conn}_i(g)$ is connected proceeds analogously, but relies on Theorem \ref{tw:S1S2SepConn} \ref{tw::S1S2SepConn:1}.
\end{proof}

\subsection{Characterization result}\label{sub:charact} This part is devoted to proving Theorem \ref{tw:existenceoffunctionconn}, which completely characterizes the existence of a~continuous function $g \colon I^n \to \mathbb{R}$ such that $\textsf{Conn}_i(g) = A_i$ and $\textsf{Sep}_i(g) = B_i$ for all $i \in [n]$ and given sets $A_i$ and $B_i$.

\begin{prop}\label{prop:preimageofconn}
Let $n \in \mathbb{N}, i \in [n]$ and $g \colon I^n \to \mathbb{R}$ be a continuous function. Then,
\begin{align*}
    &\textsf{Conn}_i(g) \neq \emptyset \implies g^{-1}\bracl{\textsf{Conn}_i(g)} \in \Sep,\\
    &\textsf{Sep}_i(g) \neq \emptyset \implies g^{-1}\bracl{\textsf{Sep}_i(g)} \in \Conn.
\end{align*}
\end{prop}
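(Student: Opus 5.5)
By Corollary \ref{cor:Sepconnected}, $\textsf{Conn}_i(g)$ is a nonempty compact interval $[a,b]$ (possibly with $a=b$). The plan is to show that the closed set $F := g^{-1}\bracl{[a,b]}$ separates $i$th opposite faces. Suppose it does not. Then $I^n \setminus F = g^{-1}\bracl{(-\infty,a)} \cup g^{-1}\bracl{(b,\infty)}$ connects $i$th opposite faces. These two sets are disjoint and open, so a connected subset of their union lies in one of them. Hence, say, $U := g^{-1}\bracl{(b,\infty)} \in \Conn$; the other case is symmetric. By Observation \ref{obs:conncompactconnected} \ref{obs::conncompactconnected:2}, there is a compact path-connected $S_2 \subset U$ with $S_2 \in \Conn$. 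By compactness, $\restr{g}{S_2} > b' := \min_{S_2} g > b$. Next take $p = b$. Since $b \in \textsf{Conn}_i(g)$, there is a compact connected $S_1 \subset g^{-1}\bracl{\{b\}}$ with $S_1 \in \Conn$. Theorem \ref{tw:S1S2SepConn} \ref{tw::S1S2SepConn:1} needs the strict inequality $\restr{g}{S_1} < p$, so we choose a point $q$ with $b < q < b'$. Then $\restr{g}{S_1} = b < q$ and $\restr{g}{S_2} > q$, so that theorem gives $q \in \textsf{Conn}_i(g)$. This contradicts $q > b = \max \textsf{Conn}_i(g)$. Hence $F \in \Sep$.

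The second implication is handled the same way. Let $\textsf{Sep}_i(g) = [a,b]$ and $F = g^{-1}\bracl{[a,b]}$, and suppose $F \notin \Conn$. We use the sets $g^{-1}\bracl{(-\infty,a)}$ and $g^{-1}\bracl{(b,\infty)}$. Their union is $I^n \setminus F$, and it separates $i$th opposite faces, because its complement $F$ does not connect them. The key fact is Proposition \ref{prop:sumofnonsep2} \ref{prop:sumofnonsep2::1}: for disjoint open sets, if neither separates, then neither does their union. So one of the two sets separates, say $V := g^{-1}\bracl{(b,\infty)} \in \Sep$. Proposition \ref{prop:severalresultsonsep} \ref{prop::severalresultsonsep:6} then gives a compact connected $S_2 \subset V$ with $S_2 \in \Sep$, and as before $\restr{g}{S_2} > b' > b$. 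For $S_1$ we take a compact connected set in $g^{-1}\bracl{\{b\}}$ that lies in $\Sep$; it exists by Proposition \ref{prop:severalresultsonsep} \ref{prop::severalresultsonsep:5}. Picking $q \in (b,b')$, Theorem \ref{tw:S1S2SepConn} \ref{tw::S1S2SepConn:2} yields $q \in \textsf{Sep}_i(g)$, which is again a contradiction.

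The main point to get right is the case split. For the Conn case this uses the elementary fact that a connected set inside a union of two disjoint open sets lies in one of them. For the Sep case it uses the earlier additivity result Proposition \ref{prop:sumofnonsep2} \ref{prop:sumofnonsep2::1}. The rest is the strict-inequality bookkeeping needed to apply Theorem \ref{tw:S1S2SepConn}, which we handle by inserting the intermediate level $q$.
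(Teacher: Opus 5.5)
Your proof is correct and follows essentially the same argument as the paper. In both, you obtain a compact connected set in the open complement of $g^{-1}\bracl{[a,b]}$ lying entirely on one side of the interval, insert a level strictly between $b$ and $\min g$ on that set, and apply Theorem \ref{tw:S1S2SepConn} to produce a point of $\textsf{Conn}_i(g)$ (resp. $\textsf{Sep}_i(g)$) beyond its maximum. The only difference is cosmetic: the paper extracts $S$ from the whole complement and then uses connectedness of $S$ to place $g\bracl{S}$ on one side, so it does not need Proposition \ref{prop:sumofnonsep2} \ref{prop:sumofnonsep2::1} in the $\textsf{Sep}$ case (that proposition is itself proved by exactly this argument).
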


\begin{proof}
Let us assume that $\textsf{Conn}_i(g) \neq \emptyset$. By virtue of Corollary \ref{cor:Sepconnected}, it follows that $\textsf{Conn}_i(g) = [p, q]$ for some $p \le q$. Let us suppose that $g^{-1}\bracl{\textsf{Conn}_i(g)} \notin \Sep$. Then, $I^n \setminus g^{-1}\bracl{\textsf{Conn}_i(g)} \in \Conn$. Since the set $\textsf{Conn}_i(g)$ is compact, then the set $I^n \setminus g^{-1}\bracl{\textsf{Conn}_i(g)}$ is open, so, by virtue of Observation~\ref{obs:conncompactconnected}~\ref{obs::conncompactconnected:2}, there exists a compact and connected subset $S \subset I^n \setminus g^{-1}\bracl{\textsf{Conn}_i(g)}$ such that $S \in \Conn$. In that context, $g\bracl{S} \cap [p, q] = \emptyset$. Thus, since $S$ is connected, then either $\restr{g}{S} < p$ or $\restr{g}{S} > q$. We may assume that $\restr{g}{S} > q$, as the other case follows analogously. Since the set $S$ is compact, then there exists $q' \in \mathbb{R}$ such that $\restr{g}{S} > q' > q$. Let us note that $q \in \textsf{Conn}_i(g)$, so there exists a compact and connected set $S' \in \Conn$ such that $\restr{g}{S'} = q < q'$. Therefore, by virtue of Theorem \ref{tw:S1S2SepConn} \ref{tw::S1S2SepConn:1}, we conclude that $q' \in \textsf{Conn}_i(g)$, yielding an immediate contradiction.

The fact that $g^{-1}\bracl{\textsf{Sep}_i(g)} \in \Conn$ is established analogously to the previous case, this time invoking Proposition \ref{prop:severalresultsonsep} \ref{prop::severalresultsonsep:6} and Theorem \ref{tw:S1S2SepConn} \ref{tw::S1S2SepConn:2}.
\end{proof}

\begin{cor}\label{cor:cardinalitycharacterization}
Let $n \in \mathbb{N}, i \in [n]$ and $g \colon I^n \to \mathbb{R}$ be a continuous function. Then,
\begin{enumerate}[label=(\roman*)]
\item $\textsf{Sep}_i(g) = \emptyset \iff$ $\textsf{Conn}_i(g)$ is a compact interval; \label{cor::cardinalitycharacterization:1}
\item $\textsf{Conn}_i(g) = \emptyset \iff$ $\textsf{Sep}_i(g)$ is a compact interval; \label{cor::cardinalitycharacterization:2}
\item $\abs{\textsf{Sep}_i(g)} = 1 \iff \abs{\textsf{Conn}_i(g)} = 1 \iff \textsf{Sep}_i(g) = \textsf{Conn}_i(g)$. \label{cor::cardinalitycharacterization:3}
\end{enumerate}
\end{cor}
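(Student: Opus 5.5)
The plan is to combine Theorem~\ref{tw:Sepnonempty}, Corollary~\ref{cor:Sepconnected} (both sets are compact and connected subsets of $\mathbb{R}$, hence each is empty, a singleton, or a nondegenerate compact interval) and Observation~\ref{obs:emptySep} \ref{obs::emptySep:3}, \ref{obs::emptySep:4}. One more ingredient is needed: if both sets are nonempty, then they coincide and are singletons. Given this, the three equivalences follow by bookkeeping.

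\textbf{Key step: both nonempty implies equal singletons.} Suppose $\textsf{Conn}_i(g) \neq \emptyset \neq \textsf{Sep}_i(g)$. By Observation~\ref{obs:emptySep} \ref{obs::emptySep:3}, \ref{obs::emptySep:4}, both sets are singletons, say $\{a\}$ and $\{b\}$. The fiber $g^{-1}\bracl{\{a\}}$ connects $i$th opposite faces and $g^{-1}\bracl{\{b\}}$ separates them. Since a set in $\Conn$ meets every set in $\Sep$, the two fibers intersect, so $a = b$. I expect this to be the main (though modest) obstacle, since it is the only place where the interaction between the two sets is used rather than the size of each one separately.

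\textbf{(i).} For ($\Rightarrow$), if $\textsf{Sep}_i(g) = \emptyset$ then $\textsf{Conn}_i(g) \neq \emptyset$ by Theorem~\ref{tw:Sepnonempty}. It is then a compact interval by Corollary~\ref{cor:Sepconnected}, possibly degenerate, which fits the paper's convention $[a,a] = \{a\}$. For ($\Leftarrow$), if $\textsf{Conn}_i(g)$ has more than one point, Observation~\ref{obs:emptySep} \ref{obs::emptySep:3} gives $\textsf{Sep}_i(g) = \emptyset$.

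The singleton case of ($\Leftarrow$) cannot hold as literally stated, since part (iii) forces $\textsf{Sep}_i(g) \neq \emptyset$ whenever $\textsf{Conn}_i(g)$ is a singleton. So in (i) and (ii) ``compact interval'' must be read as ``nondegenerate compact interval'', consistent with the introduction's remark that if one set is empty then the other is a nondegenerate compact interval.

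Accordingly, ($\Rightarrow$) in (i) also has to exclude a singleton. Suppose $\textsf{Sep}_i(g) = \emptyset$ and $\textsf{Conn}_i(g) = \{a\}$. Then Proposition~\ref{prop:preimageofconn} gives $g^{-1}\bracl{\{a\}} \in \Sep$, so $a \in \textsf{Sep}_i(g)$, a contradiction. Hence $\textsf{Conn}_i(g)$ is a nondegenerate compact interval.

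\textbf{(ii).} This is symmetric to (i). If $\textsf{Conn}_i(g) = \emptyset$, then $\textsf{Sep}_i(g)$ is nonempty by Theorem~\ref{tw:Sepnonempty} and compact and connected by Corollary~\ref{cor:Sepconnected}. If it were $\{b\}$, Proposition~\ref{prop:preimageofconn} would give $g^{-1}\bracl{\{b\}} \in \Conn$, so $b \in \textsf{Conn}_i(g)$, a contradiction. The converse follows from Observation~\ref{obs:emptySep} \ref{obs::emptySep:4}.

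\textbf{(iii).} Assume $|\textsf{Sep}_i(g)| = 1$. Then $\textsf{Conn}_i(g) \neq \emptyset$ by the contrapositive of (ii), since a singleton is not a nondegenerate interval. The key step then gives $\textsf{Conn}_i(g) = \textsf{Sep}_i(g)$, a singleton. The argument starting from $|\textsf{Conn}_i(g)| = 1$ is symmetric and uses (i).

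Conversely, if $\textsf{Sep}_i(g) = \textsf{Conn}_i(g)$, the common set is nonempty by Theorem~\ref{tw:Sepnonempty}. It cannot have two points, since Observation~\ref{obs:emptySep} \ref{obs::emptySep:3} would then force $\textsf{Sep}_i(g) = \emptyset$. So the common set is a singleton, and all three conditions in (iii) are equivalent.
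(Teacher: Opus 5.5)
Your proof is correct and follows essentially the same route as the paper. Both use Theorem~\ref{tw:Sepnonempty}, Corollary~\ref{cor:Sepconnected}, Observation~\ref{obs:emptySep} \ref{obs::emptySep:3}, \ref{obs::emptySep:4}, and Proposition~\ref{prop:preimageofconn} to exclude the singleton case, and your reading of ``compact interval'' as nondegenerate matches the paper's own proof, which infers $\abs{\textsf{Conn}_i(g)} > 1$ from $\textsf{Conn}_i(g)$ being a compact interval. Your ``both nonempty implies equal singletons'' step simply writes out the second equivalence in (iii), which the paper leaves as derivable without difficulty.
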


\begin{proof}
\ref{cor::cardinalitycharacterization:1}: If $\textsf{Conn}_i(g)$ is a compact interval, then $\abs{\textsf{Conn}_i(g)} > 1$, so the set $\textsf{Sep}_i(g)$ is empty from Observation \ref{obs:emptySep}~\ref{obs::emptySep:3}. 

Let us suppose that $\textsf{Sep}_i(g) = \emptyset$ and $\textsf{Conn}_i(g)$ is not a compact interval. Then, $\abs{\textsf{Conn}_i(g)} \le 1$ from Corollary \ref{cor:Sepconnected}. Since $\textsf{Sep}_i(g) = \emptyset$, then Theorem \ref{tw:Sepnonempty} yields $\textsf{Conn}_i(g) \neq \emptyset$, so $\abs{\textsf{Conn}_i(g)} = 1$. In that context, $\textsf{Conn}_i(g) \subset \textsf{Sep}_i(g)$ by virtue of Proposition \ref{prop:preimageofconn}. In particular, $\abs{\textsf{Sep}_i(g)} \ge 1$, leading to a~contradiction. 

\ref{cor::cardinalitycharacterization:2}: The proof proceeds analogously to that of \ref{cor::cardinalitycharacterization:1}, except that we use Observation \ref{obs:emptySep} \ref{obs::emptySep:4}.

\ref{cor::cardinalitycharacterization:3}: The first equivalence follows as a consequence of properties \ref{cor::cardinalitycharacterization:1} and \ref{cor::cardinalitycharacterization:2}, and Corollary \ref{cor:Sepconnected}, while the second equivalence can be derived without difficulty.
\end{proof}

\begin{cor}\label{cor:mutualintersectionconn}
Let $n \ge 2$, $i, j \in [n]$ and $g \colon I^n \to \mathbb{R}$ be a continuous function. 
\begin{enumerate}[label=(\roman*)]
\item If $\textsf{Conn}_i(g) = \emptyset$, then $\bigcap_{k\neq i}^n\textsf{Conn}_k(g)$ is a compact interval. \label{cor::mutualintersectionconn:1}
\item $\textsf{Sep}(g) \neq \emptyset \iff \abs{\textsf{Conn}_k(g)} \le 1$ for some $k \in [n]$. \label{cor::mutualintersectionconn:2}
\item If the sets $\textsf{Conn}_i(g)$ and $\textsf{Conn}_j(g)$ are nonempty, then $\textsf{Conn}_i(g) \cap \textsf{Conn}_j(g) \neq \emptyset$. \label{cor::mutualintersectionconn:3}
\item If $\textsf{Conn}_k(g) \neq \emptyset$ for each $k \in [n]$, then $\bigcap_{k=1}^n \textsf{Conn}_k(g) \neq \emptyset$. \label{cor::mutualintersectionconn:4}
\item If $\textsf{Conn}_k(g) \neq \emptyset$ for each $k \neq i$, then 
\begin{align*}
   \textstyle g^{-1}\bracl{\bigcap_{k\neq i}^n\textsf{Conn}_k(g)} \in \Conn.
\end{align*} \label{cor::mutualintersectionconn:5}

\vspace{-10pt}
\item The sets $\textsf{Conn}(g)$ and $\textsf{Sep}(g)$ are compact and connected. \label{cor::mutualintersectionconn:6}
\end{enumerate}
\end{cor}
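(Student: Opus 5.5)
Items \ref{cor::mutualintersectionconn:3}–\ref{cor::mutualintersectionconn:5} all follow from one observation. By Corollary \ref{cor:Sepconnected}, every $\textsf{Conn}_k(g)$ is compact, so each $g^{-1}\bracl{\textsf{Conn}_k(g)}$ is closed. By Proposition \ref{prop:preimageofconn}, whenever $\textsf{Conn}_k(g) \neq \emptyset$ this closed set belongs to $\textsf{Sep}_k^n$. We then feed these preimages into Proposition \ref{prop:intofsepisconn}, putting $A_k = I^n$ (trivially in $\textsf{Sep}_k^n$) at any index we do not need. The only care needed is that preimages commute with intersections, i.e. $\bigcap_k g^{-1}\bracl{\textsf{Conn}_k(g)} = g^{-1}\bracl{\bigcap_k \textsf{Conn}_k(g)}$.

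For \ref{cor::mutualintersectionconn:4}, Proposition \ref{prop:intofsepisconn} \ref{prop:intofsepisconn::1} applied to $A_k = g^{-1}\bracl{\textsf{Conn}_k(g)}$ for all $k \in [n]$ gives a point in the intersection of all the preimages. Its image lies in $\bigcap_k \textsf{Conn}_k(g)$. For \ref{cor::mutualintersectionconn:3} with $i \neq j$ (the case $i = j$ is trivial), we take $A_i$ and $A_j$ to be the two preimages and $A_k = I^n$ otherwise, and argue in the same way. For \ref{cor::mutualintersectionconn:5}, Proposition \ref{prop:intofsepisconn} \ref{prop:intofsepisconn::2} applied to the closed sets $A_k = g^{-1}\bracl{\textsf{Conn}_k(g)}$, $k \neq i$, gives $\bigcap_{k \neq i} A_k = g^{-1}\bracl{\bigcap_{k\neq i}\textsf{Conn}_k(g)} \in \Conn$.

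For \ref{cor::mutualintersectionconn:1}, suppose $\textsf{Conn}_i(g) = \emptyset$. By Corollary \ref{cor:cardinalitycharacterization} \ref{cor::cardinalitycharacterization:2}, $\textsf{Sep}_i(g)$ is a compact interval. It must be nondegenerate, since by Corollary \ref{cor:cardinalitycharacterization} \ref{cor::cardinalitycharacterization:3} a singleton $\textsf{Sep}_i(g)$ would force $\abs{\textsf{Conn}_i(g)} = 1$. By Observation \ref{obs:emptySep} \ref{obs::emptySep:5}, $\textsf{Sep}_i(g) \subset \textsf{Conn}_k(g)$ for every $k \neq i$. Each $\textsf{Conn}_k(g)$ is compact and connected (Corollary \ref{cor:Sepconnected}), so the intersection $\bigcap_{k \neq i}\textsf{Conn}_k(g)$ is a compact interval of $\mathbb{R}$. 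It contains a nondegenerate interval, so it is a nondegenerate compact interval.

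For \ref{cor::mutualintersectionconn:2}, first suppose $\textsf{Sep}(g) \neq \emptyset$. Then $\textsf{Sep}_k(g) \neq \emptyset$ for some $k$, and Observation \ref{obs:emptySep} \ref{obs::emptySep:3} gives $\abs{\textsf{Conn}_k(g)} \le 1$. Conversely, suppose $\abs{\textsf{Conn}_k(g)} \le 1$. If $\textsf{Conn}_k(g) = \emptyset$, then Theorem \ref{tw:Sepnonempty} gives $\textsf{Sep}_k(g) \neq \emptyset$. If it is a singleton, then Corollary \ref{cor:cardinalitycharacterization} \ref{cor::cardinalitycharacterization:3} gives $\textsf{Sep}_k(g) = \textsf{Conn}_k(g) \neq \emptyset$.

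For \ref{cor::mutualintersectionconn:6}, Observation \ref{obs:emptySep} \ref{obs::emptySep:2} gives $\textsf{Sep}(g) = \textsf{Sep}_j(g)$ for some $j$, which is compact and connected by Corollary \ref{cor:Sepconnected}. The set $\textsf{Conn}(g)$ is a finite union of compact connected sets, hence compact. By \ref{cor::mutualintersectionconn:3} its nonempty members pairwise intersect, so the union is connected, e.g. by Proposition \ref{prop:connprelimfacts} \ref{prop::connprelimfacts:1} with $S_0$ one of the nonempty members.

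The main subtlety is \ref{cor::mutualintersectionconn:3} and \ref{cor::mutualintersectionconn:4}. The key realization is that one should pass to preimages and use Proposition \ref{prop:preimageofconn} together with Proposition \ref{prop:intofsepisconn}, rather than argue by cases on whether $\textsf{Sep}(g)$ is empty. One must also check that Proposition \ref{prop:preimageofconn} applies, which needs $\textsf{Conn}_k(g) \neq \emptyset$, and that the preimages are closed, which needs compactness of $\textsf{Conn}_k(g)$.
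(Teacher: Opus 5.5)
Your proposal is correct and follows essentially the same route as the paper. Items (iii)--(v) pass to preimages and combine Proposition \ref{prop:preimageofconn} with Proposition \ref{prop:intofsepisconn}; for (iii) the paper cites Corollary \ref{cor:Sepigivesconnj}, which is exactly your padding with $A_k = I^n$. Items (i) and (vi) go through Corollary \ref{cor:cardinalitycharacterization}, Observation \ref{obs:emptySep} and item (iii), and for (ii) you simply unpack the cases that the paper gets by citing Corollary \ref{cor:cardinalitycharacterization} \ref{cor::cardinalitycharacterization:1}.
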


\begin{proof}
\ref{cor::mutualintersectionconn:1}: This is an immediate consequence of Corollary \ref{cor:cardinalitycharacterization} \ref{cor::cardinalitycharacterization:2} and Observation \ref{obs:emptySep} \ref{obs::emptySep:5}.

\ref{cor::mutualintersectionconn:2}: The result follows directly from Corollary \ref{cor:cardinalitycharacterization} \ref{cor::cardinalitycharacterization:1}.

\ref{cor::mutualintersectionconn:3}: We can assume that $i \neq j$. By virtue of Proposition \ref{prop:preimageofconn}, we have $g^{-1}\bracl{\textsf{Conn}_i(g)} \in \textsf{Sep}_i^n$ and $g^{-1}\bracl{\textsf{Conn}_j(g)} \in \textsf{Sep}_j^n$. Since $\textsf{Conn}_i(g)$ is compact from Corollary \ref{cor:Sepconnected}, then we deduce that $g^{-1}\bracl{\textsf{Conn}_i(g)} \in \textsf{Conn}_j^n$ from Corollary \ref{cor:Sepigivesconnj}. Hence 
\begin{align*}
g^{-1}\bracl{\textsf{Conn}_i(g) \cap \textsf{Conn}_j(g)} = 
g^{-1}\bracl{\textsf{Conn}_i(g)} \cap g^{-1}\bracl{\textsf{Conn}_j(g)} \neq \emptyset,
\end{align*}
which completes the proof.

\ref{cor::mutualintersectionconn:4}: This can be directly deduced from Proposition \ref{prop:preimageofconn} and Proposition \ref{prop:intofsepisconn} \ref{prop:intofsepisconn::1}.

\ref{cor::mutualintersectionconn:5}: This follows directly from Proposition \ref{prop:preimageofconn} and Proposition \ref{prop:intofsepisconn} \ref{prop:intofsepisconn::2}.

\ref{cor::mutualintersectionconn:6}: It follows immediately from \ref{cor::mutualintersectionconn:3} and Corollary \ref{cor:Sepconnected}, and Observation \ref{obs:emptySep} \ref{obs::emptySep:2}.
\end{proof}

\begin{twmainB}\label{tw:existenceoffunctionconn}
Let $n \ge 2$ and let us fix subsets $A_i, B_i \subset I^n$ for $i \in [n]$. Then, there exists a~continuous function $g \colon I^n \to \mathbb{R}$ such that $\textsf{Conn}_i(g) = A_i$ and $\textsf{Sep}_i(g) = B_i$ for each $i \in [n]$ if and only if the following conditions are satisfied.
\begin{enumerate}[label=(\roman*)]
\item Every $A_i$ and $B_i$ is either a compact interval, a singleton, or the empty set. \label{tw::existenceoffunctionconn:a}
\item For each $i \in [n]$, if $A_i = \emptyset$, then $B_i$ is a compact interval, if $A_i$ is a singleton, then $A_i = B_i$, and if $A_i$ is a compact interval, then $B_i = \emptyset$. \label{tw::existenceoffunctionconn:b}
\item For each $i \in [n]$, $B_i \subset \bigcap_{j\neq i}^n A_j$. \label{tw::existenceoffunctionconn:c}
\item If all $A_i$ are nonempty, then $\bigcap_{i=1}^n A_i$ is nonempty as well. \label{tw::existenceoffunctionconn:d}
\item If $n=2$, then $A_1 = B_2$ and $A_2 = B_1$. \label{tw::existenceoffunctionconn:e}
\end{enumerate}
\end{twmainB}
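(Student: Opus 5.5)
The plan is to prove necessity directly from the results of Subsections \ref{sub:compandconn} and \ref{sub:charact}, and to prove sufficiency by explicit constructions, split according to whether some $A_i$ is empty.

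\textbf{Necessity.} Let $g$ be given and put $A_i=\textsf{Conn}_i(g)$, $B_i=\textsf{Sep}_i(g)$. I will obtain each condition from an earlier result.
\begin{enumerate}[label=(\roman*)]
\item This follows from Corollary \ref{cor:Sepconnected}: compact connected subsets of $\mathbb{R}$ are compact intervals, singletons or $\emptyset$.
\item This is Corollary \ref{cor:cardinalitycharacterization}.
\item This is Observation \ref{obs:emptySep} \ref{obs::emptySep:5}.
\item This is Corollary \ref{cor:mutualintersectionconn} \ref{cor::mutualintersectionconn:4}.
\item This is Proposition \ref{cor:conn=sep}.
\end{enumerate}

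\textbf{Sufficiency: structural reductions.} First I record what the conditions force, using Observation \ref{obs:emptySep} as a guide.
\begin{itemize}
\item If two sets $A_i, A_j$ ($i\neq j$) are singletons, then by (ii) $B_i=A_i$, and by (iii) $B_i\subset A_j$. Hence $A_i=A_j$.
\item If some $A_i=\emptyset$, then $B_i=[a,b]$ is nondegenerate. By (iii), $[a,b]\subset A_j$ for every $j\neq i$. Consequently each such $A_j$ is a nondegenerate interval and $B_j=\emptyset$, unless $n=2$, where (v) forces $A_j=B_i$.
\end{itemize}
So there are two cases.

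\emph{Case 1: all $A_i$ are nonempty.} By (iv) I fix $c\in\bigcap_i A_i$, and write $A_i=[a_i,b_i]$ with $a_i\le c\le b_i$.

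I take $g\equiv c$ outside a union of thin pairwise disjoint tubes $T_i^{\pm}$, chosen as follows.
\begin{itemize}
\item Each tube $T_i^{\pm}$ is a thin cylinder parallel to $e_i$ running from $I^n_{i,-}$ to $I^n_{i,+}$, placed away from the other faces.
\item On $T_i^+$, $g$ is a radial profile rising from $c$ on the boundary of the tube to $b_i$ on its axis. On $T_i^-$ it decreases from $c$ to $a_i$. When $b_i=c$ (resp. $a_i=c$) the corresponding tube is omitted.
\end{itemize}
With these choices I expect the following.
\begin{itemize}
\item For each $p\in(c,b_i]$, the fiber $g^{-1}[\{p\}]$ is a cylindrical shell or axis inside $T_i^+$. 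It connects the $i$th faces and no other pair of faces.
\item Values beyond $[a_i,b_i]$ do not occur in direction $i$.
\item The fiber over $c$ is the complement of the open tubes. It therefore connects every pair of faces.
\item The fiber over $c$ separates the $i$th faces only when both $T_i^{\pm}$ are absent, i.e. exactly when $A_i=\{c\}$. This is consistent with (ii), since in that case $B_i=\{c\}$.
\end{itemize}
Thus $\textsf{Conn}_i(g)=A_i$. The identity $\textsf{Sep}_i(g)=B_i$ then follows from Corollary \ref{cor:cardinalitycharacterization}, because $B_i$ is determined by $A_i$ through (ii).

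\emph{Case 2: some $A_i=\emptyset$.} Then $B_i=[a,b]$. I take $g$ to depend on $x_i$ through a monotone profile: equal to $a$ for $x_i\le 1/3$, rising linearly to $b$ on $[1/3,2/3]$, and equal to $b$ for $x_i\ge 2/3$. Every $p\in[a,b]$ then has a fiber containing a hyperplane $\{x_i=\mathrm{const}\}$, so it separates the $i$th faces and connects every $j$th pair with $j\neq i$.

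To enlarge $A_j=[a_j,b_j]\supset[a,b]$, I insert thin bumps parallel to $e_j$.
\begin{itemize}
\item A bump up to $b_j$ lives inside the slab $x_i\ge 2/3$, where $g=b$.
\item A bump down to $a_j$ lives inside the slab $x_i\le 1/3$, where $g=a$.
\end{itemize}
Fibers over $p\in(b,b_j]$ then connect only the $j$th faces. They neither connect nor separate the $i$th faces, because they stay in a slab away from $I^n_{i,\pm}$ and are thin. Moreover, the hyperplane parts of the fibers over $[a,b]$ are untouched, since the bumps sit in the constant slabs. For $n=2$ no bumps are needed, by (v).

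\textbf{Main obstacle.} The main difficulty is verifying that no unintended fiber acquires the Conn or Sep property. Concretely:
\begin{itemize}
\item In Case 1, the fiber over $c$ must not separate the $i$th faces when a tube $T_i^{\pm}$ is present. The tube interior lies in the complement of the fiber and itself joins the $i$th faces, so this should be fine.
\item In Case 2, the bump fibers must not accidentally separate the $i$th faces.
\end{itemize}
I will handle both points by comparison with Proposition \ref{prop:separationg} and the thinness of the tubes, and then invoke Corollary \ref{cor:cardinalitycharacterization} to pin down the $\textsf{Sep}$ sets from the $\textsf{Conn}$ sets.
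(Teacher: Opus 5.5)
Your proposal is correct and follows essentially the same route as the paper. Necessity comes from exactly the same earlier results. Sufficiency uses the same split (some $A_i$ empty versus all $A_i$ nonempty), with a constant background plus thin cylinders along $e_j$ carrying radial profiles to realize $A_j$, a profile in $x_i$ whose level hyperplanes realize $B_i$, and Corollary \ref{cor:cardinalitycharacterization} to recover the $\textsf{Sep}$ sets from the $\textsf{Conn}$ sets.

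The differences are cosmetic. You use separate monotone up/down tubes where the paper uses one tube per direction with a V-shaped radial profile, and you spread the bumps over both constant slabs. One point you leave implicit is worth stating: for $n=2$, conditions (ii), (iii), (v) force $A_1=A_2=B_1=B_2=\{c\}$ in your Case 1, so no tubes occur. This matters because in $I^2$ a strip along $e_1$ would stop the fiber over $c$ from connecting the second faces; the paper handles $n=2$ separately for this reason.
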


\begin{proof}
Let $g \colon I^n \to \mathbb{R}$ be a continuous function such that $\textsf{Conn}_i(g) = A_i$ and $\textsf{Sep}_i(g) = B_i$ for each $i \in [n]$. Each of the above conditions follows straightforwardly from previously established results. More precisely, condition \ref{tw::existenceoffunctionconn:a} is a consequence of Corollary \ref{cor:Sepconnected}, \ref{tw::existenceoffunctionconn:b} follows from Corollary \ref{cor:cardinalitycharacterization}, \ref{tw::existenceoffunctionconn:c} from Observation \ref{obs:emptySep} \ref{obs::emptySep:5},  \ref{tw::existenceoffunctionconn:d} from Corollary \ref{cor:mutualintersectionconn} \ref{cor::mutualintersectionconn:4}, and \ref{tw::existenceoffunctionconn:e} from Proposition \ref{cor:conn=sep}. 

Let us assume that $n \ge 3$ and each of the above conditions is satisfied. Let us suppose that $A_i = \emptyset$ for some $i \in [n]$. Without loss of generality, we can assume that $i=1$. Then, $B_1$ is a compact interval from \ref{tw::existenceoffunctionconn:b}, and $A_j$ is a compact interval for $j > 1$ from \ref{tw::existenceoffunctionconn:c}. Moreover, $B_j = \emptyset$ for $j > 1$ from \ref{tw::existenceoffunctionconn:b}. Let us denote $B_1 = [b^l, b^r]$, and $A_j = [a_j^l, a_j^r]$ for $j > 1$. From \ref{tw::existenceoffunctionconn:c}, it follows that $a_j^l \le b^l < b^r \le a_j^r$ for any $j > 1$. Let us denote
\begin{align*}
    \Gamma^s = \left\{x \in I^n \colon\, x_1 = s\right\}
\end{align*}
for $s \in I$, and let 
\begin{align*}
    &C_j = \left\{x \in I^n \colon \, x_1 = 1/2^j, \, x_k = 1/2 \text{ for all } k>1, k\neq j \right\}, \,C_j^s = \left\{x \in I^n \colon\, \text{dist}(x, C_j) = s \right\}
\end{align*}
for $j>1$ and $s \ge 0$. From a geometric perspective, the set $C_j$ is a line segment, which connects $j$th opposite faces of $I^n$, while $C_j^s$ is a cylinder with $C_j$ as its axis. Let
 \begin{align*}
        &f \colon [1/2, 1] \to \mathbb{R},\; f_j^1 \colon [0, 1/(2\cdot 5^j)] \to \mathbb{R},\; f_j^2 \colon [1/(2 \cdot 5^j), 1/5^j] \to \mathbb{R}
    \end{align*}
 be linear functions such that 
 \begin{align*}
        f(1/2) = b^l,\; f(1) = b^r,\; f_j^1(0) = a_j^r,\; f_j^1\brac{1/(2 \cdot 5^j)} = a_j^l,\; f_j^2\brac{1/(2 \cdot 5^j)} = a_j^l,\; f_j^2(1/5^j) = b^l.
    \end{align*}
Let us define a function $g \colon I^n \to \mathbb{R}$ as follows. For $x \in I^n$, let
\begin{align*}
g(x) = \begin{cases}
f(s), & \mbox{ if } s \in [1/2, 1] \mbox{ and } x \in \Gamma^s;\\
 f_j^1(s),       & \mbox{ if } j>1, s \in [0, 1/(2 \cdot 5^j)] \mbox{ and } x \in C_j^s;\\
  f_j^2(s),      & \mbox{ if } j>1, s \in [1/(2 \cdot 5^j), 1/5^j] \mbox{ and } x \in C_j^s;\\
  b^l,& \mbox{ otherwise.}
         \end{cases}          
\end{align*}
The function $g$ is easily seen to be well defined and continuous. 

We claim that 
\begin{align*}
    \textsf{Conn}_j(g) = A_j,\; \textsf{Sep}_j(g) = B_j
\end{align*}
for any $j \in [n]$. Indeed, from Corollary \ref{cor:cardinalitycharacterization}, it suffices to argue that $B_1 = \textsf{Sep}_1(g)$ and $A_j = \textsf{Conn}_j(g)$ for $j>1$, since then $\textsf{Conn}_1(g) = \emptyset = A_1$ and $\textsf{Sep}_j(g) = \emptyset = B_j$ for $j>1$. Since $\Gamma^s \in \textsf{Sep}_1^n$ for each $s \in [1/2, 1]$ and $C_j^s \in \textsf{Conn}_j^n$ for each $s \in \bracl{0, 1/5^j}$, then $B_1 \subset \textsf{Sep}_1(g)$ and $A_j \subset \textsf{Conn}_j(g)$. For $j>1$, let $T_j = \bigcup_{0 \le s \le 1/5^j} C_j^s$. Let $p \notin B_1$. Then,
\begin{align*}
    \textstyle g^{-1}\bracl{\left\{p\right\}} \subset \bigcup_{j=2}^n T_j.
\end{align*}
Since $T_j \notin \textsf{Sep}_1^n$ for any $j>1$, then $\bigcup_{j=2}^n T_j \notin \textsf{Sep}_1^n$ by virtue of Proposition \ref{prop:sumofnonsep} \ref{prop:sumofnonsep::1}, so $g^{-1}\bracl{\left\{p\right\}} \notin \textsf{Sep}_1^n$, and consequently $p \notin \textsf{Sep}_1(g)$. Thus $B_1 = \textsf{Sep}_1(g)$. Let $j>1$ and $p \notin A_j$. Then, 
\begin{align*}
    \textstyle g^{-1}\bracl{\left\{p\right\}} \subset \bigcup_{k \neq j} T_k
\end{align*}
and, since $T_k \notin \textsf{Conn}_j^n$ for $k\neq j$, we have $\bigcup_{k \neq j} T_k \notin \textsf{Conn}_j^n$ by virtue of Proposition \ref{prop:sumofnonsep} \ref{prop:sumofnonsep::3}. Thus $g^{-1}\bracl{\left\{p\right\}} \notin \textsf{Conn}_j^n$, and consequently $p \notin \textsf{Conn}_j(g)$. Therefore, $A_j = \textsf{Conn}_j(g)$.

Now, let us suppose that $A_i \neq \emptyset$ for any $i \in [n]$. Let $I_1$ be the set of all indices $i \in [n]$ such that $A_i$ is a singleton, and $I_2$ be the set of all indices $i \in [n]$ such that $A_i$ is a compact interval. From \ref{tw::existenceoffunctionconn:a}, it follows that $I_1 \cup I_2 = [n]$. From \ref{tw::existenceoffunctionconn:b}, we get $B_i = A_i$ for $i \in I_1$, and $B_i = \emptyset$ for $i \in I_2$. Let us denote $A_i = [a_i^l, a_i^r]$ for $i \in I_2$. From \ref{tw::existenceoffunctionconn:d}, it follows that there exists $p \in \mathbb{R}$ such that $a_i^l \le p \le a_i^r$ for any $i \in I_2$, and $A_i = \left\{p\right\}$ for any $i \in I_1$. We define line segments $\widehat{C}_j$ and cylinders $\widehat{C}_j^s$, for $j \in I_2$ and $s \ge 0$, as follows. Let
\begin{align*}
\widehat{C}_j = \left\{x \in I^n \colon\, x_i = 1/2^j \text{ for } i\neq j\right\}, \, \widehat{C}_j^s = \left\{x \in I^n \colon\, \text{dist}(x, \widehat{C}_j) = s \right\}.
\end{align*}
For $j \in I_2$, let
\begin{align*}
    \hat{f}_j^1 \colon [0, 1/(2\cdot 5^j)] \to \mathbb{R},\; \hat{f}_j^2 \colon [1/(2 \cdot 5^j), 1/5^j] \to \mathbb{R}
\end{align*}
be linear functions defined analogously as before, namely
\begin{align*}
        \hat{f}_j^1(0) = a_j^r.\; \hat{f}_j^1\brac{1/(2 \cdot 5^j)} = a_j^l,\; \hat{f}_j^2\brac{1/(2 \cdot 5^j)} = a_j^l,\; \hat{f}_j^2(1/5^j) = p.
    \end{align*}
Let us define a~function $g \colon I^n \to \mathbb{R}$ as follows. For $x \in I^n$, let
\begin{align*}
g(x) = \begin{cases}
\hat{f}_j^1(s),       & \mbox{ if } j \in I_2, s \in [0, 1/(2 \cdot 5^j)] \mbox{ and } x \in \widehat{C}_j^s;\\
  \hat{f}_j^2(s),      & \mbox{ if } j \in I_2, s \in [1/(2 \cdot 5^j), 1/5^j] \mbox{ and } x \in \widehat{C}_j^s;\\
  p,& \mbox{ otherwise.}
         \end{cases}          
\end{align*}
Analogously to the previous case, it is easy to convince oneself that the function $g$ is well defined and continuous, and $\textsf{Conn}_j(g) = A_j$ and $\textsf{Sep}_j(g) = B_j$ for any $j \in [n]$.

Now, let us consider the remaining case $n=2$, and let us assume that $A_i = \emptyset$ for some $i \in [2]$. Without loss of generality we can assume that $i=1$. From \ref{tw::existenceoffunctionconn:b}, it follows that $B_1 = [a, b]$ for some $a < b$. Then, $B_2 = A_1 = \emptyset$ and $A_2 = B_1 = [a, b]$. Thus, it suffices to define a function $g \colon I^2 \to \mathbb{R}$ as follows. For $(x, y) \in I^2$, let  
\begin{align*}
    g(x, y) = (b-a)x + a.
\end{align*}

Now, let us assume that $A_1$ and $A_2$ are nonempty. Since $A_1 = B_2$ and $A_2 = B_1$, then $B_1$ and $B_2$ are nonempty. Thus, from \ref{tw::existenceoffunctionconn:b}, $A_1$ and $A_2$ are singletons, and $B_1 = A_1$ and $B_2 = A_2$. Therefore, 
\begin{align*}
    A_1 = A_2 = B_1 = B_2 = \left\{p\right\}
\end{align*}
for some $p \in \mathbb{R}$, and it suffices to take $g \colon I^2 \to \mathbb{R}$ defined by $g = p$ on $I^2$. 
\end{proof}

\subsection{A remark on the $m$-dimensional case}\label{sub:remark}

\begin{tw}\label{tw:AinRm}
\vspace{-3pt}
Let $n, m \in \mathbb{N}, m > 1, n > 1, i \in [n]$, and $A \subset \mathbb{R}^m$. Then, the following conditions are equivalent.
\begin{enumerate}[label=(\roman*)]
\item The set $A$ is compact. \label{tw::AinRm:1}
\item There exists a continuous function $g \colon I^n \to \mathbb{R}^m$ such that $\textsf{Conn}_i(g) = A$. \label{tw::AinRm:2}
\item There exists a continuous function $g \colon I^n \to \mathbb{R}^m$ such that $\textsf{Sep}_i(g) = A$. \label{tw::AinRm:3}
\end{enumerate}
\end{tw}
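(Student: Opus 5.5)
The implications \ref{tw::AinRm:2} $\implies$ \ref{tw::AinRm:1} and \ref{tw::AinRm:3} $\implies$ \ref{tw::AinRm:1} are immediate from Corollary \ref{cor:Sepconnected}, which gives compactness of $\textsf{Conn}_i(g)$ and $\textsf{Sep}_i(g)$ for every $m$. The work is in constructing, for a given compact $A \subset \mathbb{R}^m$, suitable functions $g$. If $A = \emptyset$, for \ref{tw::AinRm:2} we take any function whose fibers are points, such as an injective continuous $g \colon I^n \to \mathbb{R}^m$ when $n \le m$. When $n > m$, Definition \ref{def:connandsep} forces $\textsf{Conn}(g) \neq \emptyset$, but only for some index, so we instead take $g(x) = (x_j, 0, \ldots, 0)$ with $j \neq i$. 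Its fibers are hyperplanes $\{x_j = c\}$, which lie in $\textsf{Conn}_i^n$. This choice is wrong for \ref{tw::AinRm:2}, so we use $g(x) = (x_i,0,\ldots,0)$ instead: its fibers $\{x_i=c\}$ neither connect $i$th faces (for $n>1$) nor fail to separate them. Hence that $g$ works for $\textsf{Conn}_i(g)=\emptyset$, and $g(x)=(x_j,0,\ldots,0)$ with $j\ne i$ gives $\textsf{Sep}_i(g)=\emptyset$, because the complement of $\{x_j=c\}$ contains a segment in direction $i$.

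For nonempty compact $A$, the idea is to use $m>1$ so that $A$ can be "hit" by a one-parameter family of fibers living in a thin region, while all other values are attained only on small sets. Since $A$ is compact and nonempty, there is a continuous surjection $\varphi \colon C \to A$ from the Cantor set $C \subset [0,1]$. Extending $\varphi$ to $[0,1]$ is impossible in general because $A$ may be disconnected, but $m \ge 2$ allows us to extend $\varphi$ to a continuous $\Phi \colon [0,1] \to \mathbb{R}^m$ with $\Phi|_C = \varphi$. On each gap $(a,b)$ of $C$, $\Phi$ runs through a path leaving $A$, say along an arc through points outside $A$ that is chosen injective on the gap and disjoint from $A$ away from the endpoints. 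Because $\mathbb{R}^m \setminus A$ is dense and $m\ge2$, such arcs can be chosen to avoid $A$ except at their endpoints, to have diameters tending to $0$, and to have pairwise disjoint interiors. The last condition is where $m>1$ is essential.

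For \ref{tw::AinRm:2}, choose $j \neq i$ (possible since $n>1$) and set $g(x) = \Phi(x_j)$. Then $g^{-1}[\{p\}] = \{x : x_j \in \Phi^{-1}(p)\}$ is a union of hyperplanes $\{x_j = t\}$, each of which connects the $i$th faces. So $\textsf{Conn}_i(g) = \Phi[[0,1]]$, which is too big. To fix this, the construction must be modified so that points on the gap-arcs have fibers that do not connect. We therefore use a second coordinate and define $g(x) = \Psi(x_j, x_i)$, where $\Psi(t,s)$ equals $\varphi(t)$ for $t \in C$ and, over a gap $(a,b)$, varies with $s$ through a family of arcs avoiding $A$. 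This is arranged so that for $p \notin A$ the set $\Psi^{-1}(p)$ meets $\{s=0\}$ and $\{s=1\}$ in no common connected piece, while the values stay close to $A$ near $C$ to keep $g$ continuous. One way to achieve this is to let the arc over the gap $(a,b)$ at height $s$ be the original arc translated by a small vector $\delta_{(a,b)}\, s\, v$, with $\delta_{(a,b)}\to0$ and $v$ chosen so that distinct $s$ give disjoint arc interiors; this choice of $v$ is again possible in dimension $\ge2$. Then a point $p \notin A$ is attained over at most one value of $s$ in each gap, so its fiber lies in a set of the form $\{x_i = s(x_j)\}$ restricted to a gap strip and does not connect the $i$th faces. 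For $p \in A$, the fiber contains a full hyperplane $\{x_j = t\}$ with $t \in C$, $\varphi(t)=p$. For \ref{tw::AinRm:3}, the same $\Psi$ is used with the roles of the coordinates swapped, $g(x)=\Psi(x_i,x_j)$. For $p\in A$ the fiber then contains $\{x_i=t\}$, which separates. For $p\notin A$ the fiber lies inside a slab $a<x_i<b$ and is a graph over $x_j$, so its complement contains a segment in direction $i$ at a suitable $x_j$, meaning the fiber does not separate.

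The main obstacle is making the gap-arcs and their translates simultaneously continuous at points of $C$, avoiding $A$, and such that distinct parameters give disjoint values. I would first try to handle this by a general-position argument in $\mathbb{R}^m$, $m \ge 2$, choosing arcs with shrinking diameters using compactness of $A$.
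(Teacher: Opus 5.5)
Your plan is sound and differs from the paper's, but its key step, the construction of $\Psi$, is missing, and the mechanism you suggest for it fails. The plan is to parametrize $A$ by the Cantor set and use $g(x)=\Psi(x_j,x_i)$ for (ii) and $g(x)=\Psi(x_i,x_j)$ for (iii), where $\Psi\colon I^2\to\mathbb{R}^m$ and $\Psi(t,s)=\varphi(t)$ for $t\in C$. The implications to (i), and the case $A=\emptyset$ with $g=(x_i,0,\ldots,0)$ resp.\ $(x_j,0,\ldots,0)$, are fine. The problems with your recipe for $\Psi$ are these.

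First, arcs avoiding $A$ except at their endpoints need not exist. $\mathbb{R}^m\setminus A$ is not dense when $A$ has interior (e.g.\ $A=I^m$), and an arc starting at an interior point of $A$ initially lies in $A$.

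Second, translating a gap arc by $\delta_{(a,b)}sv$ moves its endpoints. Then $\Psi(a,s)=\varphi(a)+\delta_{(a,b)}sv\neq\varphi(a)$ for $s>0$, which contradicts $\Psi=\varphi$ on $C\times I$, so continuity fails along $\{a\}\times I$. Translates of a general arc in a fixed direction also need not be disjoint.

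Third, disjointness across different gaps is asserted without proof. Your argument for (iii) uses it when you say the fiber of $p\notin A$ ``lies inside a slab $a<x_i<b$'', but in general that fiber may meet many slabs. You yourself leave this as ``the main obstacle''.

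None of these extra properties is needed. It suffices that $\Psi$ is continuous and that, for $p\notin A$ and each gap $G$, $\Psi^{-1}(p)\cap(G\times I)$ has at most one point $(t_G,s_G)$. For a gap $(a,b)$ with $P=\varphi(a)\neq Q=\varphi(b)$, put
\[
\Psi(t,s)=P+\tau(Q-P)+s\,\|Q-P\|\sin(\pi\tau)\,v,\qquad \tau=\frac{t-a}{b-a},
\]
with $v$ a unit vector orthogonal to $Q-P$; this is where $m\ge 2$ enters. This map is injective on $(a,b)\times I$, equals $\varphi$ at $t=a,b$, and satisfies $\diam\Psi\bigl[[a,b]\times I\bigr]\le 2\|Q-P\|$. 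Uniform continuity of $\varphi$ then gives continuity of $\Psi$ at points of $C\times I$. If $P=Q$, put $\Psi\equiv P\in A$ on that strip.

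With this $\Psi$, (ii) follows as you say: a connected subset of the fiber of $p\notin A$ lies over a single gap, hence in $\{x_i=s_G\}$, so it cannot meet both $i$th faces. For (iii), pick $c\in I\setminus\{s_G\}_G$, which is possible because this set is countable. The segment $\{x_j=c,\ x_k=1/2\ (k\neq i,j)\}$ then joins the $i$th faces while missing the fiber.

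For comparison, the paper uses no one-parameter family of arcs. It prescribes only two disjoint arcs per gap, placed on $I^n_{i,-}$ and $I^n_{i,+}$, and extends by Tietze. It obtains (ii) and (iii) from a single $g$ with $\textsf{Conn}_i(g)=\textsf{Sep}_j(g)=A$, where the inclusion $\textsf{Sep}_j(g)\subset\textsf{Conn}_i(g)$ does the work your repaired argument does by hand in (iii).
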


\begin{proof}
The implications \ref{tw::AinRm:2} $\implies$ \ref{tw::AinRm:1} and \ref{tw::AinRm:3} $\implies$ \ref{tw::AinRm:1} follow from Corollary \ref{cor:Sepconnected}. Let $j \in [n]$ be such that $j \neq i$. To prove the implications \ref{tw::AinRm:1} $\implies$ \ref{tw::AinRm:2} and \ref{tw::AinRm:1} $\implies$ \ref{tw::AinRm:3} simultaneously, it suffices to construct a continuous function $g \colon I^n \to \mathbb{R}^m$ such that $\textsf{Conn}_i(g) = \textsf{Sep}_j(g) = A$.

Let $A$ be a compact subset. If $A = \emptyset$, then we define a continuous function $g_0 \colon I^n_{i, -} \cup I^n_{i, +} \to \mathbb{R}^m$ as follows. We put
\begin{align*}
g_0(x) = \begin{cases}
p,       & \mbox{ if } x \in I^n_{i, -};\\
  q,      & \mbox{ if } x \in I^n_{i, +},
         \end{cases}          
\end{align*}
where $p, q \in \mathbb{R}^m$ are some distinct points. Let $g \colon I^n \to \mathbb{R}^m$ be a continuous extension of $g_0$ provided by the Tietze Extension Theorem. Clearly, $\textsf{Sep}_j(g) \subset \textsf{Conn}_i(g) = \emptyset$ from Observation \ref{obs:emptySep} \ref{obs::emptySep:5}.

Now, let us consider the case $A \neq \emptyset$. We can assume that $A \subset I^m$. By virtue of the \hbox{Hahn-Mazurkiewicz} Theorem \cite[Theorem 31.5]{willard}, let $f_0 \colon I \to I^m$ be a continuous surjection of $I$ onto $I^m$. Let 
\begin{align*}
    x_\text{min} = \min f_0^{-1}\bracl{A},\; x_\text{max} = \max f_0^{-1}\bracl{A},
\end{align*}
and let us define a function $f \colon I \to I^m$ in the following manner. We put 
\begin{align*}
f(x) = \begin{cases}
f_0(x),       & \mbox{ if } x \in [x_\text{min}, x_\text{max}];\\
  f_0(x_\text{min}),      & \mbox{ if } x \in [0, x_\text{min}];\\
  f_0(x_\text{max}), & \mbox{ if } x \in [x_\text{max}, 1].
         \end{cases}          
\end{align*}
Clearly, $f$ is a continuous function, $f(0) \in A$, $f(1) \in A$, and $A \subset f\bracl{I}$.

For each pair $(a, b) \in I^2$ such that $a \le b$, let $\gamma^{a, b}_1 \colon [a, b] \to \mathbb{R}^m$ and $\gamma^{a, b}_2 \colon [a, b] \to \mathbb{R}^m$ be two arcs joining $f(a)$ to $f(b)$ whose only points of intersection are their endpoints, and such that diameters of $\gamma^{a, b}_1\bracl{a, b}$ and $\gamma^{a, b}_2\bracl{a, b}$ are at most $\norm{f(a) - f(b)}$. We define two functions $f_1, f_2 \colon I \to \mathbb{R}^m$ as follows. For $t \in f^{-1}\bracl{A}$, we put 
\begin{align*}
    f_1(t) = f(t) = f_2(t).
\end{align*}
For $t \in I \setminus f^{-1}\bracl{A}$, let $S$ be the connected component of $I \setminus f^{-1}\bracl{A}$ that contains the point $t$. Since $f(0), f(1) \in A$, then $S = (a, b)$ for some $0 \le a < b \le 1$. We put 
\begin{align*}
    f_1(t) = \gamma^{a, b}_1(t),\; f_2(t) = \gamma^{a, b}_2(t).
\end{align*}
Taking into account the controlled diameters of the arcs $\gamma^{a, b}_1$ and $\gamma^{a, b}_2$, it is not hard to verify that the functions $f_1$ and $f_2$ are continuous. Moreover, from the definition of $\gamma^{a, b}_1$ and $\gamma^{a, b}_2$, it follows that $f_1\bracl{S} \cap f_2\bracl{S} = \emptyset$ for any connected component $S$ of $I \setminus f^{-1}\bracl{A}$.

For $t \in I$, let
\begin{align*}
 &\Gamma_t = \left\{x \in I^n \colon\, x_j = t\right\},\; c_t^- = I^n_{i, -} \cap \Gamma_t,\; c_t^+ = I^n_{i, +} \cap \Gamma_t,\\ & \Gamma = I^n_{i, -} \cup I^n_{i, +} \cup \textstyle\bigcup_{t \in f^{-1}\bracl{A}} \Gamma_t.
\end{align*}
We define a function $g_0 \colon \Gamma \to \mathbb{R}^m$ as follows. For $x \in \Gamma$, let
\begin{align*}
g_0(x) = \begin{cases}
f(t), & \text{ if } t \in f^{-1}\bracl{A} \text{ and } x \in \Gamma_t;\\
f_1(t), & \text{ if } t \in I \setminus f^{-1}\bracl{A} \text{ and } x\in c_t^-;\\
f_2(t), & \text{ if } t \in I \setminus f^{-1}\bracl{A} \text{ and } x \in c_t^+.
\end{cases}
\end{align*}
Since $f_1(t) = f(t) = f_2(t)$ for $t \in f^{-1}\bracl{A}$, then $\restr{g_0}{c_t^-} = f_1(t)$ and $\restr{g_0}{c_t^+} = f_2(t)$ for each $t \in I$. The continuity of the function $g_0$ follows immediately. Finally, since the set $\Gamma$ is compact, then, by virtue of the Tietze Extension Theorem, there exists a continuous extension $g \colon I^n \to \mathbb{R}^m$ of $g_0$. 

We claim that $\textsf{Sep}_j(g) = \textsf{Conn}_i(g) = A$. Indeed, for $t \in f^{-1}\bracl{A}$, it follows that $\Gamma_t \subset g^{-1}\bracl{\left\{f(t)\right\}}$ and $\Gamma_t \in \textsf{Sep}^n_j$. Thus, since $A \subset f\bracl{I}$, we get 
\begin{align*}
    A = \left\{f(t) \colon\, t \in f^{-1}\bracl{A}\right\} \subset \textsf{Sep}_j(g) \subset \textsf{Conn}_i(g)
\end{align*}
from Observation \ref{obs:emptySep} \ref{obs::emptySep:5}. Let us suppose that $\textsf{Conn}_i(g) \not\subset A$, i.e. there exists $p \notin A$ such that \hbox{$g^{-1}\bracl{\left\{p\right\}} \in \textsf{Conn}_i^n$}. Let $S$ be a connected subset of $g^{-1}\bracl{\left\{p\right\}}$ such that $S \cap I^n_{i, -} \neq \emptyset \neq S \cap I^n_{i, +}$. Since $p \notin A$, then $S \cap \Gamma_t = \emptyset$ for each $t \in f^{-1}\bracl{A}$. Let $t_1, t_2 \in I \setminus f^{-1}\bracl{A}$ be such that $S \cap c_{t_1}^- \neq \emptyset \neq S \cap c_{t_2}^+$. Without loss of generality we can assume that $t_1 \le t_2$. 

We claim that $t_1$ and $t_2$ belong to the same connected component of $I \setminus f^{-1}\bracl{A}$. Indeed, let us suppose that $t_1$ and $t_2$ belong to distinct connected components of $I \setminus f^{-1}\bracl{A}$. Then, there exists $t \in f^{-1}\bracl{A}$ such that $t_1 < t < t_2$. Since $S \cap \Gamma_{t_1} \neq \emptyset \neq S \cap \Gamma_{t_2}$ and $S$ is connected, then clearly $S \cap \Gamma_t \neq \emptyset$. However, this contradicts the fact that $S \cap \Gamma_t = \emptyset$ for each $t \in f^{-1}\bracl{A}$.

Let $x \in S \cap c_{t_1}^-$ and $y \in S \cap c_{t_2}^+$. Then, 
\begin{align*}
    g(x) = g(y) = p,\; g(x) = f_1(t_1),\; g(y) = f_2(t_2).
\end{align*} 
Thus $f_1(t_1) = f_2(t_2)$. This leads to a contradiction with the fact that $f_1\bracl{S'} \cap f_2\bracl{S'} = \emptyset$ for any connected component $S'$ of $I \setminus f^{-1}\bracl{A}$. Hence $\textsf{Conn}_i(g) = A$, so $\textsf{Sep}_j(g) \subset \textsf{Conn}_i(g) \subset A$ from Observation~\ref{obs:emptySep}~\ref{obs::emptySep:5}, and consequently $\textsf{Sep}_j(g) = A$.
\end{proof}

\medskip
\textbf{Acknowledgements:} The author gratefully acknowledges Przemysław Górka for valuable consultations, editorial assistance, and substantive support during the preparation of this paper. Furthermore, the author thanks the Hausdorff Institute in Bonn for its hospitality and the stimulating research environment during the stay in the Trimester Program \enquote{Metric Analysis}, funded by the Deutsche Forschungsgemeinschaft (DFG, German Research Foundation) under Germany's Excellence Strategy – EXC-2047/1 – 390685813. This research was funded in whole or in part by {\it Narodowe Centrum Nauki}, grant 2021/42/E/ST1/00162.

\bibliography{gen_lebesgue_level_sets}{}
\bibliographystyle{plain}
\smallskip
	{\small Micha{\l} Dybowski}\\
	\small{Faculty of Mathematics and Information Science,}\\
	\small{Warsaw University of Technology,}\\
	\small{Pl. Politechniki 1, 00-661 Warsaw, Poland} \\
	{\tt michal.dybowski.dokt@pw.edu.pl}\\

\end{document}